\documentclass{amsart}
\usepackage[a4paper, margin=2.3cm]{geometry}

\usepackage[utf8]{inputenc}
\usepackage{amsmath}
\usepackage{amssymb}
\usepackage{amsthm}
\usepackage{graphicx}
\usepackage[dvipsnames]{xcolor}
\usepackage{enumitem}
\usepackage{tikz}
\usetikzlibrary{arrows.meta, positioning, calc}
\usepackage{tikz-cd}
\usepackage{subcaption}
\usepackage{csquotes}
\usepackage[T1]{fontenc}
\usepackage[hidelinks]{hyperref}
\hypersetup{
    pdftitle={An Lp-Theory for Time-Periodic Mixed-Order Partial Differential Equations under General Boundary Conditions},
    pdfauthor={Guillaume Neuttiens and Jonas Sauer},
    pdfsubject={Time-periodic mixed-order partial differential equations under general boundary conditions},
    pdfkeywords={time-periodic partial differential equations, mixed-order systems, boundary value problems, Newton polygons, dynamic boundary conditions}
}
\usepackage{geometry}
\usepackage{comment}
\usepackage[colorinlistoftodos,prependcaption,color=yellow,textsize=tiny,textwidth=2cm]{todonotes}
\usepackage{epigraph}
\usepackage{bbm}

\makeatletter

\def\l@section{\@tocline{1}{2pt}{1pc}{}{\bfseries}}

\makeatother
\newcommand{\eps}{\varepsilon}

\newcommand{\dd}{\,\mathrm{d}}

\renewcommand{\Re}{\mathop{\mathrm{Re}}}
\renewcommand{\Im}{\mathop{\mathrm{Im}}}

\newcommand{\calb}{{\mathcal B}}

\newcommand{\cald}{{\mathcal D}}
\newcommand{\cale}{{\mathcal E}}
\newcommand{\calf}{{\mathcal F}}
\newcommand{\calg}{{\mathcal G}}
\newcommand{\calh}{{\mathcal H}}
\newcommand{\cali}{{\mathcal I}}

\newcommand{\call}{{\mathcal L}}

\newcommand{\caln}{{\mathcal N}}
\newcommand{\calo}{{\mathcal O}}

\newcommand{\cals}{{\mathcal S}}

\newcommand{\calu}{{\mathcal U}}
\newcommand{\calv}{{\mathcal V}}

\newcommand{\R}{\mathbb{R}}

\newcommand{\Z}{\mathbb{Z}}
 \newcommand{\C}{\mathbb{C}}
\newcommand{\N}{\mathbb{N}}

\DeclareMathOperator{\Ext}{Ext}

\DeclareMathOperator{\id}{\mathsf{id}}

\DeclareMathOperator{\supp}{supp}

\DeclareMathOperator{\ran}{ran}

\DeclareMathOperator{\sgn}{sgn}
\DeclareMathOperator{\ord}{ord}
\DeclareMathOperator{\conv}{conv}
\DeclareMathOperator{\ad}{ad}

\newcommand{\opA}{\mathsf{A}}
\newcommand{\opB}{\mathsf{B}}

\newcommand{\opL}{\mathsf{L}}

\newcommand{\set}[1]{\ensuremath{\{#1\}}}

\newcommand{\seqkN}[1]{\ensuremath{(#1_k)_{k\in\N}}}

\newcommand{\op}{\mathsf{op}}
\newcommand{\Tr}{\mathsf{Tr}}
\newcommand{\ic}{\mathsf{i}}

\newcommand{\opnorm}[1]{{\lvert\kern-0.25ex\lvert\kern-0.25ex\lvert #1 \rvert\kern-0.25ex\rvert\kern-0.25ex\rvert}}

\newcommand{\CR}[1]{C^{#1}}

\newcommand{\CRi}{\CR \infty}
\newcommand{\CRci}{\CR \infty_c}

\newcommand{\newCCtr}[2][d]{
\newcounter{#2}\setcounter{#2}{0}
\expandafter\xdef\csname kyedtheconst#2\endcsname{#1}
}
\newcommand{\Cc}[2][nolabel]{
\stepcounter{#2}
\expandafter\ensuremath{\csname kyedtheconst#2\endcsname_{\arabic{#2}}}
\ifthenelse{\equal{#1}{nolabel}}
{}
{\expandafter\xdef\csname kyedconst#1\endcsname
{\expandafter\ensuremath{\csname kyedtheconst#2\endcsname_{\arabic{#2}}}}}
}
\newcommand{\Ccn}[2][nolabel]{
\expandafter\ensuremath{\csname kyedtheconst#2\endcsname}
\ifthenelse{\equal{#1}{nolabel}}
{}
{\expandafter\xdef\csname kyedconst#1\endcsname
{\expandafter\ensuremath{\csname kyedtheconst#2\endcsname}}}
}

\newcommand{\Cclast}[1]{
\expandafter\ensuremath{\csname kyedtheconst#1\endcsname_{\arabic{#1}}}
}

\newcommand{\Ccllast}[1]{
\addtocounter{#1}{-1}
\expandafter\ensuremath{\csname kyedtheconst#1\endcsname_{\arabic{#1}}}
\addtocounter{#1}{1}
}
\newcommand{\const}[1]{
\expandafter{\ifcsname kyedconst#1\endcsname
  \csname kyedconst#1\endcsname
\else
  \errmessage{Undefined Kyedconstant #1.}%
\fi}
}

\newcommand{\Comp}{\mathsf{Comp}}
\newcommand{\opC}{\mathsf{C}}
\newcommand{\opE}{\mathsf{E}}
\newcommand{\opG}{\mathsf{G}}
\newcommand{\opT}{\mathsf{T}}
\newcommand{\opV}{\mathsf{V}}
\theoremstyle{plain}
\newtheorem{theorem}{Theorem}[section]
\newtheorem{proposition}[theorem]{Proposition}
\newtheorem{lemma}[theorem]{Lemma}
\newtheorem{corollary}[theorem]{Corollary}
\theoremstyle{definition}
\newtheorem{definition}[theorem]{Definition}
\newtheorem{example}[theorem]{Example}

\newtheorem{remark}[theorem]{Remark}

\title[$L^p$-Theory for Mixed-Order PDEs under General Boundary Conditions]{An $L^p$-Theory for Time-Periodic Mixed-Order Partial Differential Equations under General Boundary Conditions}
\author{Guillaume~Neuttiens}
\address{Guillaume~Neuttiens\newline Friedrich-Schiller-Universit\"at Jena \newline Inselplatz 5, 07743 Jena, Germany}
\email{guillaume.neuttiens@uni-jena.de}
\author{Jonas~Sauer}
\address{Jonas~Sauer\newline Friedrich-Schiller-Universit\"at Jena \newline Inselplatz 5, 07743 Jena, Germany}
\email{jonas.sauer@uni-jena.de}

\begin{document}

\subjclass[2020]{35B10, 35G15,  46E35}
\keywords{Time-periodic partial differential equations; mixed-order systems;
boundary value problems; Newton polygons; dynamic boundary conditions}

\begin{abstract}
We develop an $L^p$-theory for time-periodic boundary value problems associated with partial differential equations and systems of mixed order.
Our approach is based on anisotropic function spaces described by order functions and their associated Newton polygons.
We develop a trace theory on the half-space, including a characterization of the trace spaces by real interpolation.

For general boundary conditions, we introduce an abstract framework in which well-posed\-ness is characterized by a complementing condition formulated in terms of the traces of solutions.
In particular, the admissible data space, including the compatibility conditions induced by the boundary operators, emerges naturally from the abstract framework.
For mixed-order differential operators, the complementing condition is reduced to the invertibility of a complemented boundary matrix, yielding an explicit criterion for well-posedness in $L^p$-based spaces.
The resulting theory applies to general Newton polygon structures and allows for boundary operators involving time derivatives.
As applications, we establish time-periodic $L^p$-well-posedness for the Cahn--Hilliard--Gurtin system and for parabolic problems with dynamic boundary conditions.
\end{abstract}

\maketitle

\tableofcontents

\section{Introduction}
The study of boundary value problems of the form
\begin{align*}
    \left\{
    \begin{array}{rcl}
        \opA u &=& f,\\
        B_i u &=& g_i,\qquad i\in\{1,\ldots,m\},
    \end{array}
    \right.
\end{align*}
is well developed for elliptic and parabolic operators $\opA$.
For elliptic operators of order $2m$, the seminal works of Agmon, Douglis, and Nirenberg \cite{agmon1959estimates,agmon1964estimates} establish a comprehensive well-posedness theory under suitable complementing boundary conditions.
For initial-boundary value problems with $\opA=\partial_t+A$, parameter ellipticity together with boundary conditions of Lopatinski\u{\i}--Shapiro type yields an analogous $L^p$-theory, see \cite{AgV63,DHP07,PrS16}.
Corresponding results in the time-periodic setting were obtained in \cite{KyS17,kyed2019time}.

For operators with genuinely mixed scaling properties, however, a comparable boundary value theory has largely been unavailable.
In many relevant situations, the competing scalings can be encoded geometrically by a Newton polygon.
A corresponding symbolic and functional-analytic framework was developed in \cite{denk2013general,denk1998newton,DeV02,gindikin2012method}.
In our earlier work \cite{NeS25}, we obtained an $L^2$-theory for a restricted class of time-periodic mixed-order operators that are neither elliptic nor parameter elliptic.
In particular, that work shows that the classical Lopatinski\u{\i}--Shapiro conditions are in general insufficient to guarantee well-posedness in this setting.

In the present article, we develop a comprehensive $L^p$-theory for a
broad class of mixed-order boundary value problems.
We work on the time-periodic half-space $\mathbb{T}\times \R^n_+$ with the torus $\mathbb{T}:=\R/T\Z$ for some fixed time-period $T>0$, and treat constant-coefficient operators whose mixed scaling is encoded by a Newton polygon (see Section \ref{sec:newt_poly}), together with general boundary operators that may contain both spatial and temporal derivatives.
We mention the following examples:
\begin{enumerate}
    \item \emph{Higher Order Regularity for the Heat Equation:}
    Since Dirichlet boundary conditions fulfill the Lopatinski\u{\i}-Shapiro conditions, it follows from the general theory for parabolic boundary problems that the problem
        \begin{align}\label{eqn: heat_equation_intro}
        \left\{\begin{array}{rlll}
        \partial_t u - \Delta u&=&f     & \text{on } \mathbb{T}\times \R^n_+, \\
         u&=&g    & \text{on } \mathbb{T}\times \R^{n-1},
        \end{array}\right.
    \end{align}
    admits for each $f\in \mathbb{F}$ and $g\in \mathbb{G}$ a unique solution $u\in \mathbb{E}$, where
    \begin{align*}
        \mathbb{E}&:=W^{1,p}_\perp(\mathbb{T};L^p(\R^n_+))\cap L^p_\perp(\mathbb{T};W^{2,p}(\R^n_+)),\\
        \mathbb{F}&:=L^p_\perp(\mathbb{T}\times \R^n_+),\\
        \mathbb{G}&:=W^{1-\frac{1}{2p},p}_\perp(\mathbb{T};L^p(\R^{n-1}))\cap L^p_\perp(\mathbb{T};W^{2-\frac1p,p}(\R^{n-1})),
    \end{align*}
    see \cite{kyed2019time} for this result in the time-periodic setting.
    By standard operator-theoretic arguments, and for $g=0$, this can be extended to higher regularity in the sense that one may replace $\mathbb{E}$ by
    \begin{align*}
        \mathbb{E}'&:= W^{1,p}_\perp(\mathbb{T};W^{1,p}(\R^n_+))\cap L^p_\perp(\mathbb{T};W^{3,p}(\R^n_+)),
    \end{align*}
    if one also replaces $\mathbb{F}$ by $\mathbb{F}':= L^p_\perp(\mathbb{T};W^{1,p}_0(\R^n_+))$, where $W^{1,p}_0(\R^n_+)$ refers to the Sobolev space with vanishing trace.
    This higher-regularity result is, however, not optimal.
    Namely, the condition $f\in \mathbb{F}'$ is not necessary: there exist solutions $u\in \mathbb{E}'$ with $g=0$ for which $f\notin \mathbb{F}'$.
    Indeed, formally taking the trace of $\partial_t u-\Delta u=f$ and using the boundary condition $u=0$ yields $-\Delta u=f$ on the boundary.
    Consequently, besides the interior regularity $f\in L^p(\mathbb{T};W^{1,p}(\R^n_+))$ one is naturally led to the boundary compatibility condition $\Tr_0 f=-\Tr_0\Delta u\in W^{\frac12-\frac1{2p},p}_\perp(\mathbb{T};L^p(\R^{n-1}))$, rather than the stronger condition $\Tr_0 f=0$. 
    Our framework turns this into a purely mechanical computation and shows automatically that this compatibility condition is also sufficient, see Theorem \ref{js011}.
    In fact, it immediately identifies a linear data space $\mathbb{D}$ for the pair $(f,g)$ such that \eqref{eqn: heat_equation_intro} becomes an isomorphism between the solution space and the data space.
    In particular, the inhomogeneous boundary case $g\ne 0$ is treated simultaneously.
    \vspace{.2cm}
    \item \emph{Parabolic Problems with Dynamic Boundary Conditions:}
    Dynamic boundary conditions arise naturally in many models from physics and chemistry.
    For example, the linearized Cahn-Hilliard equation with dynamic boundary conditions and surface diffusion
     \begin{equation}\label{eqn: ch_dynamic_diff_intro}
            \left\{\begin{array}{rcll}
            \partial_tu+\Delta^2u&=& f     & \text{ on }\mathbb{T}\times \mathbb{R}^n_+ \\
             \partial_n \Delta u&=&g_1    & \text{ on }\mathbb{T}\times \mathbb{R}^{n-1}\\
             \partial_t u +  \partial_n u-\Delta_{\mathbb{R}^{n-1}}u &=& g_2& \text{ on }\mathbb{T}\times \mathbb{R}^{n-1},
            \end{array}\right.
     \end{equation}
    models phase separation in binary mixtures when the interaction between the container wall and the mixture is short-ranged, leading to an evolution equation on the boundary, see \cite{denk2008parabolic,PRZ06}.
    Here, our framework yields that for any $f\in \mathbb{F}$ and $g=(g_1,g_2)\in \mathbb{G}_1\times \mathbb{G}_2$ there is a unique solution $u\in \mathbb{E}$, where
    \begin{align*}
        \mathbb{E}&:= W^{1,p}_\perp(\mathbb{T};L^p(\R^n_+))\cap L^p_\perp(\mathbb{T};W^{4,p}(\R^n_+)), \\
        \mathbb{F}&:= L^p_\perp(\mathbb{T}\times\R^n_+),\\
        \mathbb{G}_1&:=W^{\frac14-\frac1{4p},p}_\perp(\mathbb{T};L^p(\R^{n-1}))\cap L^p_\perp(\mathbb{T};W^{1-\frac1p,p}(\R^{n-1})),\\
        \mathbb{G}_2&:=(\partial_t-\Delta_{\R^{n-1}})\mathbb{G}', \quad \mathbb{G}':=W^{1-\frac{1}{4p},p}_\perp(\mathbb{T};L^p(\R^{n-1}))\cap L^p_\perp(\mathbb{T};W^{4-\frac1p,p}(\R^{n-1})).
    \end{align*}
    The norm on $\mathbb{G}_2$ is initially defined as the quotient norm $\|g_2\|_{\mathbb{G}_2}:=\inf\{\|h\|_{\mathbb{G}'}\mid h\in \mathbb{G'}, \partial_t h-\Delta_{\R^{n-1}}h=g_2\}$.
    Corollary \ref{cor: ellipticity_HB_BH} below shows that $\partial_t-\Delta_{\R^{n-1}}$ is in fact an isomorphism between $\mathbb{G}'$ and $\mathbb{G}_2$.
    Consequently, the infimum is attained at the unique $h\in \mathbb{G}'$ satisfying $\partial_t h-\Delta_{\R^{n-1}}h=g_2$.
    Our framework furthermore admits a complementary formulation: we may replace the solution space $\mathbb{E}$ by the stronger space
    \begin{align*}
        \mathbb{E}'&:=\{u\in \mathbb{E}\mid \Tr_0 u \in \mathbb{H}_0 \},
        \qquad
        \|u\|_{\mathbb{E}'}:= \|u\|_{\mathbb{E}}+\|\Tr_0u\|_{\mathbb{H}_0},
    \end{align*}
    where 
    \begin{align*}
        \mathbb{H}_0:= W^{\frac{7}{4}-\frac{1}{4p},p}_\perp(\mathbb{T};L^p(\R^{n-1}))\cap W^{1,p}_\perp(\mathbb{T};W^{3-\frac1p,p}(\R^{n-1}))\cap L^p_\perp(\mathbb{T};W^{5-\frac1p,p}(\R^{n-1})),
    \end{align*}
    if we also replace the space $\mathbb{G}_2$ for the dynamic boundary condition by
    \begin{align*}
        \mathbb{G}_2'&:= W^{\frac{3}{4}-\frac{1}{4p},p}_\perp(\mathbb{T};L^p(\R^{n-1}))\cap L^p_\perp(\mathbb{T};W^{3-\frac1p,p}(\R^{n-1})),
    \end{align*}
    see Theorem \ref{thm:CH}.
    The well-posedness result based on the solution space $\mathbb{E}'$ is the time-periodic analogue of Theorem 2.1 in \cite{PRZ06} for the half-space (see also Theorem 6.2 in \cite{denk2008parabolic} for the case $p=2$), whereas the well-posedness result for the larger solution space $\mathbb{E}$ appears to be new.
    We also obtain corresponding results for the heat equation with dynamic boundary conditions in Theorem \ref{thm: heat_dyn}.
    \vspace{.2cm}
    \item \emph{The Cahn-Hilliard-Gurtin System:}
    The abstract framework also extends to important classes of systems, where it still provides a systematic approach.
    As an example, we consider the Cahn-Hilliard-Gurtin system
        \begin{align}\label{eqn: CHG_system_1_intro}
        \left\{\begin{array}{rcll}
            \partial_tu_1 -\Delta u_2 &=&f_1 & \text{in } \mathbb{T}\times\R^n_+,      \\
            \Delta u_1-\partial_tu_1+u_2 &=&f_2  & \text{in } \mathbb{T}\times\R^n_+,\\
            \partial_n u_1&=&g_1  & \text{on } \mathbb{T}\times \mathbb{R}^{n-1},\\
            \partial_n u_2&=&g_2 &\text{on } \mathbb{T}\times \mathbb{R}^{n-1},
        \end{array}\right.
        \end{align}
    introduced by Gurtin in \cite{Gur96} as an extension of the classical Cahn–Hilliard equation incorporating microscopic relaxation effects.
    In Theorem \ref{thm: main}, we show that for all $f=(f_1, f_2)\in \mathbb{F}$ and $g=(g_1,g_2)\in \mathbb{G}$, system \eqref{eqn: CHG_system_1_intro} admits a unique solution $u=(u_1, u_2)\in\mathbb{E}$, where
        \begin{align*}
    \begin{array}{rlrl}
        \mathbb{E}_1&:=W^{1,p}_\perp(\mathbb{T};W^{1,p}(\R^n_+))\cap L^p_\perp(\mathbb{T};W^{3,p}(\R^n_+)), 
        &\mathbb{E}_2&:=L^p_\perp(\mathbb{T};W^{2,p}(\R^n_+)), \\
        \mathbb{F}_1&:=L^p_\perp(\mathbb{T}\times \R^n_+),
        &\mathbb{F}_2&:=L^p_\perp(\mathbb{T};W^{1,p}(\R^n_+)),\\
        \mathbb{G}_1&:=W^{1-\frac1{2p},p}_{\perp}(\mathbb{T};L^p(\R^{n-1}))\cap L^p_\perp(\mathbb{T};W^{3-\frac1p,p}(\R^{n-1})), &\mathbb{G}_2&:=L^p_\perp(\mathbb{T};W^{1-\frac1p,p}(\R^{n-1})),
    \end{array}
    \end{align*}
    and $\mathbb{E}:=\mathbb{E}_1\times \mathbb{E}_2$, $\mathbb{F}:=\mathbb{F}_1\times \mathbb{F}_2$, $\mathbb{G}:=\mathbb{G}_1\times \mathbb{G}_2$.
    In this sense, the result is the time-periodic analogue of Theorem 3.1 in \cite{Wil12}.
    The case $p=2$ was established in our earlier work \cite{NeS25}.
    The main new ingredient required to extend the result to arbitrary $p\in (1,\infty)$ is the trace theorem developed in the present paper, see Theorem \ref{thm: trace_space}.
\end{enumerate}
These examples illustrate the flexibility of the framework.
They involve a wide variety of function spaces and mixed-order differential operators acting in the interior and on the boundary, including boundary operators with both spatial and temporal derivatives, as well as nontrivial compatibility conditions among the data.
We achieve this by providing a comprehensive boundary value theory resting on the following pillars:
\vspace{.2cm}
\begin{enumerate}
    \item We extend the theory of Newton polygon spaces by developing a unified scale of potential spaces generated by order functions.
    This framework simultaneously contains classical Sobolev spaces, anisotropic Sobolev spaces, mixed-order spaces, Newton polygon spaces, and additional scales that cannot be represented by Newton polygons.
    
    The geometric idea behind Newton polygon spaces is that the regularity of a function can often be encoded by a polygon in the $(|\xi|,\tau)$-plane.\footnote{\label{footnote001}Historically, Newton polygons are drawn in the $(|\xi|,\tau)$-plane rather than the $(\tau,|\xi|)$-plane.
    Consequently, a point $(r,s)$ of a Newton polygon corresponds naturally to the mixed Sobolev space $W^{s,p}(\mathbb T;W^{r,p}(\mathbb R^n))$, where the temporal regularity $s$ is conventionally written before the spatial regularity $r$.
    Accordingly, throughout this article spaces of the form $H^{(s,r),p}$ or $E(s,r)$ are represented by the point $(r,s)$ in the Newton polygon.}
    An equivalent, but more analytic, point of view is to replace a Newton polygon by its support function.
    Order functions, introduced in \cite{denk2013general}, can naturally be interpreted in this way (after restricting the support function to the relevant directions), thereby relating the symbolic framework of \cite{denk2013general} to the classical language of convex geometry.
    We adopt this perspective throughout the present work.
    This viewpoint makes the cone structure of Newton polygons transparent and clarifies many of the algebraic properties of order functions.
    Working with order functions has two decisive advantages.
    First, algebraic operations become considerably simpler. While the natural operation on Newton polygons is the Minkowski sum, the corresponding operation for order functions is ordinary addition.
    Second, every order function can be written as the difference of two strictly positive order functions.
    Consequently, the associated class of potential spaces is genuinely larger than the class arising from Newton polygons.

    A further difficulty arises from interpolation.
    Even when one starts with Newton polygon spaces (or, more generally, spaces generated by order functions), interpolation naturally produces hybrid spaces involving both Besov and Bessel-potential regularity.
    To accommodate these systematically, we introduce the notion of a Newton polygon scale.
    This framework contains the classical Newton polygon spaces together with these hybrid spaces and is stable under interpolation.
    This abstraction is indispensable for the trace theory developed later, since the trace spaces are, in general, precisely such hybrid interpolation spaces.
    For example, the space $\mathbb{G}_2$ appearing in the dynamic boundary condition example above admits a natural description as an interpolation space between spaces generated by order functions, but not in terms of Newton polygons.
    \vspace{.2cm}
    \item We develop an abstract boundary value theory which separates the analytic properties of the differential operator from the structure of the underlying function spaces.
    The central result is Theorem \ref{js007}, which shows that once an abstract trace operator is available and an operator is invertible on the kernel of this trace operator, well-posedness of the corresponding boundary value problem is characterized by an abstract complementing boundary condition.
    This provides a systematic mechanism for deriving boundary value problems from whole-space isomorphism results while encoding the compatibility conditions in an intrinsic compatibility space.
    A key ingredient is the notion of an abstract complementing boundary condition introduced in Definition \ref{def: abstract_complementing_condition_upper}.
    Given the operator in the interior and a boundary operator $\opB$, we construct a complementary operator $\opC$ such that well-posedness reduces to the simple requirement that the combined operator $(\opB,\opC)$ be injective.
    Its range determines precisely which combinations of interior and boundary data are compatible.
    In particular, all data are admissible exactly when $(\opB,\opC)$ is an isomorphism, see Lemma \ref{js012}.
    
    The remaining difficulty is that differential operators are typically not invertible on the kernel of the trace operator.
    To overcome this, we introduce the class of admissible operators.
    Such operators admit a factorization into a component whose inverse descends directly to the half-space and a component that is invertible on the kernel of the trace operator and can therefore be treated by the abstract boundary value theory.
    Following an idea of Arkeryd \cite{Ark67} and extending the approach of \cite{kyed2019time,NeS25}, admissibility can be verified by means of a Paley–Wiener type theorem whenever the factors preserve supports and are $\mathcal{N}$-elliptic in the sense of Definition \ref{def: Newton_ellipticity}.
    These properties are satisfied by elliptic and parabolic operators, as well as all operators appearing in the examples above.
    Combining admissibility with Theorem \ref{js007} yields Theorem \ref{thm: abstract_complementing_condition}, our second main abstract result, which establishes well-posedness for admissible operators under an abstract complementing boundary condition.
    \vspace{.2cm}
    \item To make the abstract theory of the previous pillar applicable, it remains to identify suitable trace spaces for the function spaces under consideration.
    We address this by developing an $L^p$-trace theory for Newton polygon spaces.
    In contrast to the classical anisotropic setting, the geometry of the trace spaces is considerably more subtle.
    Moreover, unlike in the Hilbert space case $p= 2$, the trace spaces are in general no longer Newton polygon spaces.
    Heuristically, one would have to decorate each vertex of the Newton polygon to indicate independently whether the temporal and spatial regularities are measured on Besov or Bessel-potential scales.
    Rather than introducing such additional geometric bookkeeping, we instead develop the notion of a Newton polygon scale in Section \ref{sec:newt_poly_scale}.
    This provides a clean and flexible framework in which we identify the trace spaces explicitly in Theorem \ref{thm: trace_space}.
    We further show that they still admit a natural description as interpolation spaces between closely related Newton polygon spaces, see Theorem \ref{thm: trace_space_interpolation}.
    This trace theory provides the final ingredient needed to apply the abstract boundary value framework to mixed-order boundary value problems.
\end{enumerate}
Together, these three pillars yield the $L^p$-well-posedness theory for mixed-order boundary value problems associated with $\mathcal N$-elliptic operators,  established in Theorem \ref{thm: main_result}.
In particular, the examples discussed above no longer require separate ad hoc arguments.
Once the operator has been identified, they follow from a uniform and essentially mechanical application of the theorem.

\medskip

Compared with \cite{NeS25}, the present theory applies for general $p\in(1,\infty)$, substantially enlarges the class of admissible operators and boundary conditions, systematically identifies the associated compatibility spaces, and develops the trace theory required for this extension.

\medskip

The structure of this work is as follows.
In Section \ref{sec:pre}, we introduce the basic notation and collect the required background on Newton polygons, order functions, and mixed-order systems.
We then describe the time-periodic setting, including the Fourier transform on the group $G=\mathbb{T}\times\mathbb{R}^n$ and the decomposition into time-independent and purely oscillatory parts.
Finally, we introduce the abstract notion of a Newton polygon scale and show that the relevant structural properties, as well as $\mathcal{N}$-ellipticity, are preserved under interpolation.

Section \ref{sec:whole_space} is devoted to the whole-space problem.
We first establish isomorphism results for scalar $\mathcal{N}$-elliptic operators and mixed-order systems in the scale of Bessel-potential spaces.
We subsequently use interpolation arguments to transfer these results to Besov-type spaces.

In Section \ref{sec:half_space}, we develop the theory for the half-space under general boundary conditions.
We begin with an abstract operator framework in which compatibility spaces and abstract complementing boundary conditions arise naturally.
After introducing restricted and supported spaces, we study operators preserving half-space support and define the class of admissible operators.
We then identify suitable quotient-space realizations and equivalent norms before developing the $L^p$-trace theory for Newton polygon spaces.
These ingredients are combined in the final subsection to obtain the general well-posedness theorem for mixed-order equations governed by $\mathcal{N}$-elliptic operators under abstract complementing boundary conditions.

In Section \ref{sec:app}, we apply the general theory to the examples discussed above.
We determine the optimal data and compatibility spaces for higher-order regularity of the heat equation with Dirichlet boundary conditions, establish the $L^p$-well-posedness result for the Cahn--Hilliard--Gurtin system, and treat parabolic equations with dynamic boundary conditions.

\section{Preliminaries and Notation}\label{sec:pre}
The natural numbers will be denoted by $\N:=\set{1,2,3,\ldots}$, and we set $\N_0:=\N\cup\set{0}$. $\Z$ denotes the integers, $\R$ the real numbers, and $\C$ the complex numbers.
The imaginary unit is denoted by $\ic$.
Throughout this work the time period $T>0$, the integrability exponent $p\in (1,\infty)$, and the dimension $n\in\N$ with $n\ge 2$ are fixed.
The notion $A\lesssim B$ denotes an estimate of the form $A\le CB$, where the constant $C$ does not depend on $A$ and $B$.
We will write $A\simeq B$ if both $A\lesssim B$ and $B\lesssim A$.
Given two topological vector spaces $E, F$ we write $\mathcal{L}(E, F)$ for the space of all continuous linear operators $E\to F$ and $\call_{\mathrm{iso}}(E,F)$ for the space of continuous, invertible linear operators $E\to F$ with continuous inverse.
Both spaces are endowed with the usual topology of uniform convergence on bounded subsets.
In particular, if  $E$ and $F$ are normed spaces, the topology on $\call(E,F)$ is given by the operator norm. If $E=F$, we write $\call(E)$ instead of $\call(E,E)$.

\medskip

We use the convention that for a set $Y$, a topological space $X$ and a map $\varphi:X\to Y$, we denote by $\varphi X$ the image space $\varphi(X)$ equipped with the final topology.
If additionally $Y$ is a vector space, $X$ a (locally convex) topological vector space, and $\varphi$ linear with a closed kernel, then $\varphi X$ is a (locally convex) topological vector space as well, see e.g.\@ \cite[Theorem 2.4.3]{Eng89} and \cite[Chapter 4 and Proposition 7.9]{Tre67}.
Similarly, if $X$ is a Fr\'echet or normed space, then so is $\varphi X$, see \cite[Chapters 10 and 11]{Tre67}.
In particular, if $X$ is a Banach space, then so is $\varphi X$, and its norm is given by the quotient norm
\begin{align*}
\|f\|_{\varphi X}=\inf\{\|F\|_{X}\mid F\in X, \ \varphi(F)=f\}.
\end{align*}
For a continuous map $\varphi:X\to Y$ between two topological vector spaces, we define its transpose ${}^t \varphi:Y'\to X'$ via $\langle {}^t \varphi(y'),x\rangle:=\langle y',\varphi(x)\rangle$ for all $y'\in Y'$ and $x\in X$.
Then ${}^t \varphi$ is continuous as well if $X'$ and $Y'$ carry both the weak or both the strong dual topology \cite[Proposition 19.5]{Tre67}.

\medskip

For $\xi\in\R^n$, we write $\langle \xi\rangle:=(1+|\xi|^2)^{\frac12}$.
Two Banach spaces $A_0$ and $A_1$ are called compatible if they embed into a common Hausdorff space.
For two such compatible couples $\set{A_0,A_1}$ and $\set{B_0,B_1}$ we write $\call(\set{A_0,A_1},\set{B_0,B_1})$ for the set of linear operators $T:A_0+A_1\to B_0+B_1$ such that $T|_{A_0}\in \call(A_0,B_0)$ and $T|_{A_1}\in \call(A_1,B_1)$.
For $\theta\in (0,1)$ and $q\in [1,\infty]$, the complex interpolation is denoted by $[A_0,A_1]_\theta$ and the real interpolation with fine index $q$ by $(A_0,A_1)_{\theta,q}$.

\subsection{Newton Polygons and Order Functions}\label{sec:newt_poly}
In this work, we are particularly interested in differential operators $\opL(\partial_t,\partial_x)$ whose symbol $L(\tau,\xi)$, $(\tau,\xi)\in\R\times\R^n$, is neither homogeneous nor quasi-homogeneous, where quasi-homogeneity means that there exist $N\ge 0$ and $\rho>0$ such that $L(\eta^\rho\tau,\eta\xi)=\eta^N L(\tau,\xi)$ for all $(\tau,\xi)\in\R\times\R^n$ and $\eta>0$.\footnote{cf.\ Example \ref{js004}.}
The analysis of such mixed-order operators is considerably more involved, since many of the standard techniques for elliptic and parabolic equations rely on the presence of a single (quasi)-homogeneity.

To describe the interaction of several competing homogeneities, the notion of a Newton polygon was introduced, see \cite{denk1998newton,denk2013general}.
Rather than encoding the symbol by a single homogeneity, a Newton polygon records the different homogeneities present in the symbol simultaneously.
In the following, we develop the part of this theory needed for the present work from a viewpoint that emphasizes the convex-geometric and algebraic structure underlying Newton polygons.
\begin{definition}
    Let $E=\{(x_1, y_1),...,(x_M, y_M)\}\subseteq [0, \infty)^2$ be a finite set. The Newton polygon $\mathcal{N}(E)$ of $E$ is the convex envelope of the elements of $E$ and of their projections to the coordinate axes, i.e.,
    \begin{equation*}
        \mathcal{N}(E):=\conv\left (E\cup \bigcup_{m=1}^M \{(x_m, 0), (0, y_m)\} \cup\{(0,0)\}\right). 
    \end{equation*}
    Given a Newton polygon $\mathcal{N}$, we denote by $\mathcal{N}_V:=\{(r_0,s_0), (r_1, s_1),..., (r_{J+1}, s_{J+1})\}$ the set of its vertices, ordered counterclockwise starting from $(r_0, s_0)=(0,0)$.
    If $J\ge 1$, we say that $\mathcal{N}$ is \emph{regular in time} if $r_2\neq r_1$.
    It is \emph{regular in space} if $s_{J+1}\neq s_J$.
    In particular, we say that $\mathcal{N}$ is \emph{regular} if it is regular in both time and space.
\end{definition}
The rectangular Newton polygon\footnote{Note the flip in notation: $\caln_{(s,r)}$ corresponds to the point $(r,s)$ in the plane, in accordance with footnote \ref{footnote001}.} $\caln_{(s,r)}:=\caln(\set{r,s})$, $r,s\in [0,\infty)$, is a particular example of a Newton polygon that is neither regular in space nor regular in time, see Figure \ref{figure002}.

\medskip

We first record the elementary structure of the class of Newton polygons. For $y\in[0,\infty)$, set
\begin{align}\label{def:elem_polyg}
\mathcal{N}_\infty:=\conv\set{(0,0),(0,1)},
\qquad
\mathcal{N}_y:=\conv\set{(0,0),(1,0),(0,y)}.
\end{align}
\begin{figure}[t]
\centering

\begin{subfigure}[t]{0.23\textwidth}
\centering
\begin{tikzpicture}
  
  \filldraw[draw=Sepia,fill=Sepia!30, thick]
  (0,0)--(1.7,0)--(0, 2) -- cycle;
  
  \draw[->] (-0.5,0) -- (2.5,0) node[below] {$\vert \xi\vert$};
  \draw[->] (0,-0.5) -- (0,2.5) node[left] {$\tau$};

  \fill[Sepia] (0,0) circle (1.5pt);
  
  \fill[Sepia] (1.7,0) circle (1.5pt);
  \node[below] at (1.7,0) {$(1,0)$};

  \fill[Sepia] (0,2) circle (1.5pt);
  \node[left] at (0,2) {$(0,y)$};

  \node at (.6,.7) {$\mathcal{N}_y$};
\end{tikzpicture}
\end{subfigure}
\hfill
\begin{subfigure}[t]{0.35\textwidth}
\centering
\begin{tikzpicture}
  
  \filldraw[draw=Sepia,fill=Sepia!30, thick]
  (0,0)--(1.7,0)--(1.7, 2)-- (0,2) -- cycle;
  
  \draw[->] (-0.5,0) -- (2.5,0) node[below] {$\vert \xi\vert$};
  \draw[->] (0,-0.5) -- (0,2.5) node[left] {$\tau$};

  \fill[Sepia] (0,0) circle (1.5pt);
  
  \fill[Sepia] (1.7,0) circle (1.5pt);
  \node[below] at (1.7,0) {$(r,0)$};

  \fill[Sepia] (1.7,2) circle (1.5pt);
  \node[above right] at (1.7,2) {$(r,s)$};

  \fill[Sepia] (0, 2) circle (1.5pt);
  \node[left] at (0,2) {$(0,s)$};

  \node at (.9,1) {$\mathcal{N}_{(s,r)}$};
\end{tikzpicture}
\end{subfigure}
\hfill
\begin{subfigure}[t]{0.36\textwidth}
\centering
\begin{tikzpicture}
  \fill[Sepia!30, thick]
  (0,0)--(3,0)--(2, 1.5)-- (1,2)-- (0,2.25) -- cycle;
  \fill[Sepia!40] (3,0) -- (2,1.5) -- (2,0) -- cycle;
  \fill[Sepia!40] (2,1.5) -- (1,2) -- (1,1.5) -- cycle;
  \fill[Sepia!40] (1,2) -- (0,2.25) -- (0,2) -- cycle;
  \draw[draw=Sepia, thick]
  (0,0)--(3,0)--(2, 1.5)-- (1,2)-- (0,2.25) -- cycle;
  
  \draw[->] (-0.5,0) -- (3.7,0) node[below] {$\vert \xi\vert$};
  \draw[->] (0,-0.5) -- (0,2.5) node[left] {$\tau$};

  \fill[Sepia] (0,0) circle (1.5pt);
  \node[below left] at (0,0) {$(r_0,s_0)$};
  
  \fill[Sepia] (3,0) circle (1.5pt);
  \node[below] at (2.8,0) {$(r_1,s_1)$};

  \fill[Sepia] (2,1.5) circle (1.5pt);
  \node[right] at (2,1.55) {$(r_2,s_2)$};

  \fill[Sepia] (1, 2) circle (1.5pt);
  \node[above right] at (1,2) {$(r_3,s_3)$};

  \fill[Sepia] (0, 2.25) circle (1.5pt);
   \node[left] at (0,2.2) {$(r_4,s_4)$};

  \node at (1.3,1) {$\mathcal{N}$};
\end{tikzpicture}
\end{subfigure}
\caption{Visualization of an elementary Newton polygon $\caln_y$, a rectangular Newton polygon $\caln_{(s,r)}$, and a generic Newton polygon $\caln$ generated by the three shaded elementary Newton polygons.}
\label{figure002}
\end{figure}
The following proposition shows that the set of all Newton polygons is the convex cone\footnote{Here, (Minkowski) addition and scalar multiplication are defined by $K+L:=\set{x+y\mid x\in K, y\in L}$ and $\alpha K:=\set{\alpha x\mid x\in K}$ for $\alpha\in \R$ and $K,L\subseteq\R^2$. For convex $K$ and $m\in\N$, $m$-fold repeated addition equals $mK$.} generated by these elementary Newton polygons.

\begin{proposition}\label{prop: elementary_decomposition_Newton_polygon}
The set of Newton polygons is a convex cone generated by the elementary Newton polygons $\mathcal{N}_y$, $y\in [0,\infty]$.
\end{proposition}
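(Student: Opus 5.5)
Here is the plan. I will prove the statement in two halves.
\begin{enumerate}
\item The set $\mathcal{K}$ of Newton polygons is closed under Minkowski addition and under multiplication by $\alpha\ge0$.
\item Every Newton polygon is a finite nonnegative combination of the $\mathcal{N}_y$, $y\in[0,\infty]$, and conversely every such combination is a Newton polygon.
\end{enumerate}
Throughout I will use the elementary fact that $\conv(A)+\conv(B)=\conv(A+B)$ for finite sets $A,B\subseteq\R^2$.

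For the first half, scalar multiplication is immediate: $\alpha\,\mathcal{N}(E)=\mathcal{N}(\alpha E)$, because projections to the axes commute with scaling. (The case $\alpha=0$ gives $\{(0,0)\}=\mathcal{N}(\emptyset)$, or $\mathcal{N}(\{(0,0)\})$.) For addition, write $\mathcal{N}(E)=\conv(\widehat E)$, where $\widehat E$ is $E$ together with its axis projections and the origin; define $\widehat F$ likewise. Then
\[
\mathcal{N}(E)+\mathcal{N}(F)=\conv(\widehat E+\widehat F).
\]
I claim this equals $\mathcal{N}(E+F)$. Take a point of $\widehat E+\widehat F$, say $(x_i,0)+(0,y_j')$ or $(x_i,y_i)+(x_j',0)$. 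Each such point lies coordinatewise between $0$ and some point of $E+F$. The key observation is that any Newton polygon is a down-set in $[0,\infty)^2$: if it contains $(a,b)$, it contains the rectangle $[0,a]\times[0,b]$. This holds because the rectangle is $\conv\{(0,0),(a,0),(0,b),(a,b)\}$, and the projections of a convex combination are convex combinations of the projections. Hence $\widehat E+\widehat F\subseteq\mathcal{N}(E+F)$. The reverse inclusion is clear, since $E+F\subseteq\widehat E+\widehat F$ and projections of sums are sums of projections.

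For the generation statement, the converse direction is immediate: $\mathcal{N}_y=\mathcal{N}(\{(1,0),(0,y)\})$ and $\mathcal{N}_\infty=\mathcal{N}(\{(0,1)\})$ are Newton polygons, so the first half shows every finite combination of them is one too. For the forward direction, take $\mathcal{N}$ with vertices $(r_0,s_0)=(0,0),(r_1,s_1)=(r_1,0),\dots,(r_{J+1},s_{J+1})=(0,s_{J+1})$, ordered counterclockwise. The boundary then consists of the two axis segments and the edges $e_j$ from $(r_j,s_j)$ to $(r_{j+1},s_{j+1})$, $j=1,\dots,J$, along which $r$ decreases weakly and $s$ increases weakly. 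By the down-set property, $r$ is nonincreasing along the upper-right boundary chain. Set $\Delta r_j=r_j-r_{j+1}\ge0$ and $\Delta s_j=s_{j+1}-s_j\ge0$.
\begin{itemize}
\item If $\Delta r_j>0$, take $\mathcal{N}^{(j)}:=\Delta r_j\,\mathcal{N}_{y_j}$ with $y_j=\Delta s_j/\Delta r_j$, i.e.\ the triangle with legs $\Delta r_j$ and $\Delta s_j$.
\item If $\Delta r_j=0$ (a vertical edge, which can only be the last one), take $\mathcal{N}^{(j)}:=\Delta s_j\,\mathcal{N}_\infty$.
\end{itemize}
The claim is then
\[
\mathcal{N}=\sum_{j=1}^J\mathcal{N}^{(j)}.
\]

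To verify the claim I would use the standard fact that the Minkowski sum of convex polygons has as edges the union of the summands' edges, sorted by slope (outer normal angle). The summands' nontrivial edges are the horizontal leg, the vertical leg, and the hypotenuse with direction $(-\Delta r_j,\Delta s_j)$. Their horizontal legs add up to $\sum_j\Delta r_j=r_1$, their vertical legs to $\sum_j\Delta s_j=s_{J+1}$, and the hypotenuses, sorted by slope, reproduce exactly the chain $e_1,\dots,e_J$ because $\mathcal{N}$ is convex. The boundary of the sum therefore coincides with $\partial\mathcal{N}$, both are anchored at the origin, and so the sets are equal.

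The main obstacle is making this edge-sorting argument rigorous without heavy machinery. What I would try first is to compare support functions: $h_{K+L}=h_K+h_L$, and a compact convex set is determined by its support function. For a direction $(a,b)$ with $a,b\ge0$, one checks that $h_{\mathcal{N}}(a,b)=\max_j(ar_j+bs_j)$. Telescoping this along the convex chain gives
\[
h_{\mathcal{N}}(a,b)=\sum_j\max\bigl(0,\;-a\Delta r_j+b\Delta s_j\bigr)+a r_1\quad\text{(suitably arranged)},
\]
which equals $\sum_j h_{\mathcal{N}^{(j)}}(a,b)$, since $h_{\mathcal{N}^{(j)}}(a,b)=\max(a\Delta r_j,\,b\Delta s_j)$. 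The identity uses the monotone slopes, i.e.\ convexity. Directions with a negative component are handled by the down-set property, since then the support function reduces to a coordinate maximum, and that coordinate maximum is additive over the summands.
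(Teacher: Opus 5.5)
Your proof is correct, and it differs from the paper's in both halves.

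\emph{Closure under addition.} The paper only notes that $\mathcal{N}_1+\mathcal{N}_2$ is a compact convex polygon in $[0,\infty)^2$ that contains the origin and the axis projections of its points, hence a Newton polygon. You instead prove the explicit identity $\mathcal{N}(E)+\mathcal{N}(F)=\mathcal{N}(E+F)$ via the down-set property, which is more concrete.

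\emph{Generation.} Your summands $\mathcal{N}^{(j)}$ are exactly the paper's $\alpha_j'\mathcal{N}_{y_j}$, but the equality with $\mathcal{N}$ is verified differently.
\begin{itemize}
\item The paper compares horizontal sections. At height $s$, the right endpoint of the section of the sum is obtained by minimizing $\sum_j t_j/y_j$ subject to $\sum_j t_j=s$. This is solved greedily using that the $y_j$ are nonincreasing, and it requires both sets to be down-sets so that these endpoints determine them.
\item You compare support functions. On $[0,\infty)^2$, $h_{\mathcal{N}}(a,b)$ is the maximum of the partial sums $ar_1+\sum_{j<k}(b\Delta s_j-a\Delta r_j)$. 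Because the slopes are monotone, these increments change sign at most once, from nonnegative to nonpositive. Hence the maximum equals $ar_1$ plus the positive parts of the increments, which is $\sum_j h_{\mathcal{N}^{(j)}}(a,b)$. Directions with a negative component reduce to additive coordinate maxima.
\end{itemize}

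\emph{What each route buys.} Your route treats the degenerate edges $y_j\in\{0,\infty\}$ uniformly. It replaces the greedy optimization by a one-line sign argument. It also essentially proves $\mu(\mathcal{N})=\sum_j\alpha_j' o_{y_j}$ directly, which is the content of Proposition~\ref{prop: correspondence_order_functions} and matches the support-function viewpoint the paper adopts next. The paper's section argument stays purely elementary and avoids the fact that a compact convex set is determined by its support function.

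\emph{One small slip.} With your counterclockwise labeling, a vertical edge can only be the first edge $e_1$ (the case $r_2=r_1$, not regular in time), not the last one; it is the last edge that may be horizontal. This does not affect the argument.
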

\begin{proof}
It follows directly from the definition that a nonnegative scalar multiple of a Newton polygon is again a Newton polygon.
The same holds for Minkowski sums: if $\mathcal{N}_1=\mathcal{N}(E_1)$ and $\mathcal{N}_2=\mathcal{N}(E_2)$, then $\mathcal{N}_1+\mathcal{N}_2$ is a compact convex subset of $[0,\infty)^2$ containing the origin, and together with each of its points it contains its projections onto the coordinate axes.
Since it is a polygon, it is therefore a Newton polygon.

\medskip

It remains to show that every Newton polygon is generated by the elementary ones. Let $\mathcal{N}$ be a Newton polygon with vertices $\mathcal{N}_V=\set{(r_0,s_0),(r_1,s_1),\ldots,(r_{J+1},s_{J+1})}$, ordered counterclockwise as in the definition.
For $j\in\set{1,\ldots,J}$, set
\begin{align*}
\alpha_j:=r_j-r_{j+1},
\qquad
\beta_j:=s_{j+1}-s_j.
\end{align*}
Thus the edge joining $(r_j,s_j)$ and $(r_{j+1},s_{j+1})$ has edge vector $(-\alpha_j,\beta_j)$. If $\alpha_j>0$, set $y_j:=\beta_j/\alpha_j$ and $\alpha_j':=\alpha_j$.
If $\alpha_j=0$, set $y_j:=\infty$ and $\alpha_j':=\beta_j$.
Then $\alpha_j'\mathcal{N}_{y_j}$ is the triangle $\conv\set{(0,0),(\alpha_j,0),(0,\beta_j)}$ if $\alpha_j,\beta_j>0$, and a segment if $\alpha_j=0$ or $\beta_j=0$.

\medskip

Set $\widetilde{\mathcal{N}}:=\sum_{j=1}^J\alpha_j'\mathcal{N}_{y_j}$.
We show that $\widetilde{\mathcal{N}}=\mathcal{N}$ by comparing their horizontal sections.
Fix $s\in[0,s_{J+1}]$.
A point of $\widetilde{\mathcal{N}}$ at height $s$ is obtained by choosing numbers $t_j\in[0,\beta_j]$ such that $\sum_{j=1}^Jt_j=s$.
At height $t_j$, the right endpoint of the horizontal section of $\alpha_j'\mathcal{N}_{y_j}$ has first coordinate $\alpha_j-t_j/y_j$ if $y_j>0$, and $a_j$ if $y_j=0$.
Consequently, finding the right endpoint of the horizontal section of $\widetilde{\mathcal{N}}$ amounts to minimizing $\sum_{j=1}^J t_j/y_j$ under the constraint $\sum_{j=1}^Jt_j=s$.
The convexity of $\mathcal{N}$ implies that the quantities $y_j$ are nonincreasing, so that this minimum is attained by using the available height first in the summands with the largest $y_j$.
Let $k\in\set{1,\ldots,J}$ be such that
\begin{align*}
s=s_k+\theta\beta_k,
\qquad
\theta\in[0,1].
\end{align*}
Since $s_k=\sum_{j=1}^{k-1}\beta_j$, the minimizing choice is
$t_j=\beta_j$ for $j<k$, $t_k=\theta\beta_k$, and $t_j=0$ for $j>k$.
Hence the right endpoint of the horizontal section of $\widetilde{\mathcal{N}}$ at height $s$ has first coordinate
\begin{align*}
(1-\theta)\alpha_k+\sum_{j=k+1}^J\alpha_j =(1-\theta)r_k+\theta r_{k+1},
\end{align*}
where we used $r_{J+1}=0$.
This is precisely the first coordinate of the point at height $s$ on the edge joining $(r_k,s_k)$ and $(r_{k+1},s_{k+1})$.

Therefore every horizontal section of $\widetilde{\mathcal N}$ coincides with the corresponding horizontal section of $\mathcal N$.
Since both are Newton polygons, it follows that $\mathcal{N}=\widetilde{\mathcal{N}}$.
Consequently, the elementary Newton polygons generate the cone of all Newton polygons.
\end{proof}
It will be convenient to enlarge this cone to a linear space in which differences of Newton polygons can also be considered.
Abstractly, this can be achieved by taking the Grothendieck completion of the commutative monoid of Newton polygons under Minkowski addition, whose elements are virtual polygons as introduced in \cite{PuK92}; see also \cite{Sch18}.
We shall instead work with a more concrete analytic realization in terms of support functions.
The support function of a convex body $K\subseteq \R^2$ is defined by
\begin{align*}
    \mu(K)(x) := \max\set{\langle v,x\rangle\mid v\in K}, \qquad x\in\R^2.
\end{align*}
Support functions fulfill $\mu(K+L)=\mu(K)+\mu(L)$ and $\mu(\alpha K)=\alpha\mu(K)$ for $\alpha>0$, and they determine the body uniquely.
Thus, the cone of Newton polygons is identified with a cone of functions, and its linear span is a concrete realization of the corresponding Grothendieck completion.
This leads to the following definition, for which we recall the definition of the elementary Newton polygons $\caln_y$ in \eqref{def:elem_polyg}.
\begin{definition}\label{def_order_function}
\begin{enumerate}
    \item For $y\in [0,\infty]$, we define the \emph{elementary order function} $o_y:\R^2\to \R$ by the support function of $\mathcal{N}_y$, i.e., $o_y(x):=\max\set{\langle v,x\rangle\mid v\in \mathcal{N}_y}$.
    \item An \emph{order function} is a finite linear combination of elementary order functions.
    \item A \emph{strictly positive} order function is a finite conical linear combination of elementary order functions, i.e., all coefficients are non-negative.
    \item For $(r,s)\in \R^2$, we define the order function $\mu_{(s,r)}:=so_\infty+ro_0$.
\end{enumerate}
\end{definition}
In particular, every order function is the difference of two strictly positive order functions.
Let us record that strictly positive order functions are in one-to-one correspondence to Newton polygons.
\begin{proposition}\label{prop: correspondence_order_functions}
Let $\mu=\sum_{\ell=1}^L a_\ell o_{y_\ell}$, $a_\ell\ge 0$, $y_\ell\in[0,\infty]$,
be a strictly positive order function, and define $\mathcal{N}(\mu):=\sum_{\ell=1}^L a_\ell\mathcal{N}_{y_\ell}$.
Then $\mathcal{N}(\mu)$ is a Newton polygon and
\begin{align*}
\mu(x)=\max\set{\langle v,x\rangle\mid v\in \mathcal{N}(\mu)}, \qquad x\in \R^2.
\end{align*}
Conversely, let $\mathcal{N}$ be a Newton polygon and define $\mu(\mathcal{N}):\R^2\to\R$ via
\begin{align*}
\mu(\mathcal{N})(x):=\max\set{\langle v,x\rangle\mid v\in\mathcal{N}}.
\end{align*}
Then $\mu(\mathcal{N})$ is a strictly positive order function.
Moreover, $\mathcal{N}(\mu(\mathcal{N}))=\mathcal{N}$ and  $\mu(\mathcal{N}(\mu))=\mu$.
\end{proposition}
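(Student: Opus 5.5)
The argument rests on two facts about support functions recalled just before Definition \ref{def_order_function}. First, $\mu(K+L)=\mu(K)+\mu(L)$ and $\mu(\alpha K)=\alpha\mu(K)$ for $\alpha\ge 0$, where $\mu(K)$ denotes the support function of the convex body $K$. Second, a compact convex set is uniquely determined by its support function. Together with the cone structure from Proposition \ref{prop: elementary_decomposition_Newton_polygon}, everything should reduce to bookkeeping.

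\textbf{Step 1.} Let $\mu=\sum_\ell a_\ell o_{y_\ell}$ with $a_\ell\ge0$. By Proposition \ref{prop: elementary_decomposition_Newton_polygon}, the set of Newton polygons is a convex cone containing every $\mathcal{N}_y$. Hence $\mathcal{N}(\mu)=\sum_\ell a_\ell\mathcal{N}_{y_\ell}$ is a Newton polygon.

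One caveat: if all $a_\ell=0$, or $L=0$, we use the convention that the empty sum is $\{(0,0)\}=\mathcal{N}(\emptyset)$, since the convex envelope of $\{(0,0)\}$ is a Newton polygon.

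By additivity and positive homogeneity of support functions, the support function of $\mathcal{N}(\mu)$ is
\[
\sum_\ell a_\ell\,\mu(\mathcal{N}_{y_\ell})=\sum_\ell a_\ell o_{y_\ell}=\mu,
\]
where the last equality is exactly Definition \ref{def_order_function}(1). This gives the displayed identity, and it also shows $\mu(\mathcal{N}(\mu))=\mu$.

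\textbf{Step 2.} Let $\mathcal{N}$ be a Newton polygon. Proposition \ref{prop: elementary_decomposition_Newton_polygon}, more precisely its proof, gives a representation
\[
\mathcal{N}=\sum_{j=1}^J\alpha_j'\mathcal{N}_{y_j},\qquad \alpha_j'\ge0 .
\]
Taking support functions yields
\[
\mu(\mathcal{N})=\sum_j\alpha_j' o_{y_j},
\]
which is a strictly positive order function. The degenerate cases $J=0$ or $\mathcal{N}=\{0\}$ give the zero function, which is the empty conical combination.

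\textbf{Step 3.} It remains to show $\mathcal{N}(\mu(\mathcal{N}))=\mathcal{N}$. There is a subtlety here: $\mathcal{N}(\mu)$ was defined through a chosen representation of $\mu$, and such representations are not unique. For instance, $o_y$ might be expressible in several ways, and $o_\infty$ versus segment conventions could differ.

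So I would first note that $\mathcal{N}(\mu)$ is well defined. If two conical representations give the same function $\mu$, then by Step 1 the two resulting polygons have the same support function $\mu$. By uniqueness of support functions, they are the same convex body.

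With well-definedness in hand, $\mathcal{N}(\mu(\mathcal{N}))$ is a compact convex set whose support function is $\mu(\mathcal{N})$, again by Step 1. The injectivity of $K\mapsto\mu(K)$ then gives $\mathcal{N}(\mu(\mathcal{N}))=\mathcal{N}$.

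If needed, the uniqueness statement can be justified via the Hahn--Banach separation theorem: $K=\{v:\langle v,x\rangle\le\mu(K)(x)\ \forall x\}$ for every compact convex $K$.

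\textbf{Main obstacle.} The main point requiring care is this well-definedness and uniqueness issue, including the degenerate elementary polygon $\mathcal{N}_\infty$ (a segment) and the empty sum. All remaining steps are formal consequences of linearity of support functions.
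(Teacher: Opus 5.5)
Your proposal is correct and follows the paper's own one-line argument: the cone generation from Proposition~\ref{prop: elementary_decomposition_Newton_polygon} combined with additivity, positive homogeneity and injectivity of support functions; your explicit check that $\mathcal{N}(\mu)$ does not depend on the chosen conical representation of $\mu$ supplies a point the paper leaves implicit. The only slip is the aside that ``$J=0$'' gives the zero function: in the paper's vertex labelling, $J=0$ describes an axis segment such as $r\mathcal{N}_0$, whose support function is $r\,o_0\neq 0$, so in that degenerate case you should invoke the generation statement itself rather than the empty sum from its proof.
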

\begin{proof}
Follows directly from the fact that the cone of Newton polygons is generated by the elementary Newton polygons and that it is identified with the cone of support functions.
\end{proof}
In particular, it holds $\mu_{(s,r)}=\mu(\caln_{(s,r)})$ for $r,s\in [0,\infty)$.
\begin{remark}\label{js015}
Our definition of strictly positive order functions differs from that of \cite{denk2013general}, but the two notions are equivalent because by Proposition \ref{prop: correspondence_order_functions} above and Remark 2.22 in \cite{denk2013general}, both are in one-to-one correspondence with Newton polygons.
To be more precise, the special geometry of Newton polygons implies that $\mu(x)$ depends only on the positive part $x_+:=(\max\{x_1,0\},\max\{x_2,0\})$:
The estimate $\mu(\mathcal N)(x)\leq\mu(\mathcal N)(x_+)$ follows since $\mathcal N\subseteq[0,\infty)^2$ and thus $\langle v,x\rangle\leq\langle v,x_+\rangle$ for every $v\in\mathcal N$.
For the converse inequality, let $v\in\mathcal N$ be such that $\mu(\mathcal N)(x_+)=\langle v,x_+\rangle$.
Since Newton polygons are closed under coordinatewise decrease, the point
$\widetilde v\in[0,\infty)^2$ defined by
\begin{align*}
    \widetilde v_j:=
    \begin{cases}
        v_j,& x_j\geq0,\\
        0,& x_j<0,
    \end{cases}
    \qquad j\in\{1,2\},
\end{align*}
belongs to $\mathcal N$.
Hence $\mu(\mathcal N)(x)\geq\langle\widetilde v,x\rangle=\langle v,x_+\rangle=\mu(\mathcal N)(x_+)$, so that in summary
\begin{align*}
    \mu(\mathcal N)(x) =\mu(\mathcal N)(x_+).
\end{align*}
Parametrizing vectors in $(0,\infty)^2$ by $x=r(1,\gamma)$, $r,\gamma\in (0,\infty)$, and using the homogeneity $\mu(rx)=r\mu(x)$, we see that all information contained in $\mu$ is tested by the single number $\gamma\in (0,\infty)$.
This is exactly the point of view taken in \cite{denk2013general}.
\end{remark}

\subsection{Weight Functions}

For homogeneous symbols $p(\xi)$ depending only on $\xi$, a classical way to define ellipticity is to ask for a lower estimate of the form $\vert p\vert \geq C \vert \xi\vert^d$ with $d$ the degree of homogeneity.
The weight functions associated with Newton polygons generalize this idea by providing two-sided estimates for more complex behaviors.

Let $\mathcal{N}$ be a Newton polygon and $\mathcal{N}_V$ the set of its vertices.
We use the coordinatewise partial order on $\mathbb R^2$ given by
\begin{align*}
    (r,s)\preceq(\widetilde r,\widetilde s)
    \quad\Longleftrightarrow\quad
    r\leq \widetilde r
    \ \text{and}\ 
    s\leq \widetilde s.
\end{align*}
Then we define the set $\mathcal{N}_{SV}$ of \emph{significant vertices} as
\begin{align*}
    \mathcal{N}_{SV}:=\set{v\in \mathcal{N}_V\mid (\forall \widetilde v \in \mathcal{N}) \ (v\preceq \widetilde v \Rightarrow v=\widetilde v)},
\end{align*}
see Figure \ref{figure001}.
Specifically, for a nontrivial Newton polygon $\caln\ne \set{(0,0)}$ we have
\begin{equation*}
    \mathcal{N}_{SV}= \left\{\begin{array}{ll}
    \mathcal{N}_V\setminus\{(0,0)\}     &\text{if $\mathcal{N}$ is regular,}  \\
    \mathcal{N}_V\setminus \{(0,0), (r_1, 0)\}     & \text{if $\mathcal{N}$ is regular in space but not in time,}\\
    \mathcal{N}_V \setminus \{(0,0), (0,s_{J+1})\}  & \text{if $\mathcal{N}$ is regular in time but not in space,}\\
    \mathcal{N}_V \setminus \{(0,0), (r_1, 0), (0, s_{J+1})\}  & \text{if $\mathcal{N}$ is neither regular in space nor in time}.
    \end{array}\right.
\end{equation*}
\begin{figure}[ht]
\centering

\begin{subfigure}[t]{0.48\textwidth}
\centering
\begin{tikzpicture}
  \begin{scope}
  \clip (3,1) -- (5,1) -- (5,3) -- (3,3) -- cycle;
  \shade[inner color=Sepia!40,outer color=white] (3,1) circle[radius=2];
  \end{scope}
  \node[Sepia] at (4,1.75) {$\{v\preceq\widetilde{v}\}$};
  
  \filldraw[draw=Sepia,fill=Sepia!30, thick]
  (0,0)--(3,0)--(3, 1)-- (0,2.5)--cycle;
  
  \draw[->] (-0.5,0) -- (5,0) node[below] {$\vert \xi\vert$};
  \draw[->] (0,-0.5) -- (0,4) node[left] {$\tau$};

  \fill[Sepia] (0,0) circle (1.5pt);
  
  \fill[blue] (3,1) circle (1.5pt);
  \node[below left, font=\small] at (3,1) {$v$};

  \fill[Sepia] (0,2.5) circle (1.5pt);

  \fill[Sepia] (3, 0) circle (1.5pt);
\end{tikzpicture}
\end{subfigure}
\hfill
\begin{subfigure}[t]{0.48\textwidth}
\centering
\begin{tikzpicture}
  \begin{scope}
  \clip (3,0) -- (5,0) -- (5,2) -- (3,2) -- cycle;
  \shade[inner color=Sepia!40,outer color=white] (3,0) circle[radius=2];
  \end{scope}
  \node[Sepia] at (4,.75) {$\{w\preceq\widetilde{v}\}$};
  
  \filldraw[draw=Sepia,fill=Sepia!30, thick]
  (0,0)--(3,0)--(3, 1)-- (0,2.5)--cycle;
  
  \draw[->] (-0.5,0) -- (5,0) node[below] {$\vert \xi\vert$};
  \draw[->] (0,-0.5) -- (0,4) node[left] {$\tau$};

  \fill[Sepia] (0,0) circle (1.5pt);
  
  \fill[blue] (3,1) circle (1.5pt);
  \node[below left, font=\small] at (3,1) {$v$};

  \fill[Sepia] (0,2.5) circle (1.5pt);

  \fill[red] (3, 0) circle (1.5pt);
  \node[above left, font=\small] at (3, 0) {$w$};
\end{tikzpicture}
\end{subfigure}
\caption{Visualization of a significant vertex $v$ (left) and a non-significant vertex $w$ (right).
Observe that $w\preceq v$, so that the vertex $v$ obstructs the significance of the vertex $w$.}
\label{figure001}
\end{figure}
\begin{definition}\label{def: gen_weight_fct}
Let $\mu$ be an order function.
    \begin{enumerate}
        \item If $\mu$ is strictly positive, a symbol $w$ of the form $w= \sum_{i=1}^I a_i\langle \tau\rangle^{s_i}\langle \xi \rangle ^{r_i}$ with $a_i>0$, $i\in\set{1,\ldots,I}$, is called a \emph{weight function for} $\mu$, if $\mathcal{N}_{SV}\subseteq \{(r_i, s_i) \mid i\in\set{1,..., I}\}\subseteq \caln$, where $\caln:=\caln(\mu)$ is the corresponding Newton polygon.
        \item In the general case, choose strictly positive order functions $\mu_1$, $\mu_2$ such that $\mu=\mu_1-\mu_2$.
        If $w_i$ is a weight function for $\mu_i$, $i\in\set{1,2}$, we call $w:=\frac{w_1}{w_2}$ a weight function for $\mu$.
    \end{enumerate}
\end{definition}
A prominent example of a weight function for a strictly positive order function $\mu$ is $w_\mu:=\sum \langle\tau\rangle^{s}\langle\xi\rangle^{r}$, where the sum runs over all $(r,s)\in \caln(\mu)_{SV}$.
For elementary order functions this gives
    \begin{align*}
        w_{o_0}(\tau, \xi)=\langle\xi\rangle, \quad w_{o_\infty}(\tau, \xi)=\langle\tau\rangle \quad \text{and} \quad w_{o_{y}}(\tau, \xi) =\langle\tau\rangle^y+\langle \xi\rangle, \quad y\in (0,\infty).
    \end{align*}
For $s,r\in\R$, the monomial $w_{(s,r)}(\tau,\xi):= \langle\tau\rangle^s\langle\xi\rangle^r$ is a weight function for $\mu_{(s,r)}$.

We next show that for two order functions $\mu$ and $\nu$, the product of a weight function for $\mu$ and a weight function for $\nu$ is a weight function for $\mu+\nu$.
The following geometric lemma will be useful.
\begin{lemma}\label{lem:significant_vertices_sum}
\begin{enumerate}
    \item\label{lem:significant_vertices_sumi} Let $P,Q\subseteq\mathbb{R}^2$ be convex polygons. Then every vertex of the sum polygon $P+Q$ is the sum of a vertex of $P$ and a vertex of $Q$.
    \item\label{lem:significant_vertices_sumii} Let $\mathcal N_1$ and $\mathcal N_2$ be Newton polygons. Then every significant vertex of $\mathcal N_1+\mathcal N_2$ can be written as $v=v_1+v_2$ with $v_k\in(\mathcal N_k)_{SV}$ for $k\in\{1,2\}$.
\end{enumerate}
\end{lemma}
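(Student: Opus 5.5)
For part \eqref{lem:significant_vertices_sumi}, the plan is to use the standard characterization of vertices through linear functionals. A point $v$ of a compact convex polygon $K$ is a vertex if and only if there is a direction $x\in\R^2$ such that $v$ is the unique maximizer of $\langle\cdot,x\rangle$ on $K$. (Exposed points and extreme points coincide for polygons.)

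Let $v$ be a vertex of $P+Q$ with such an $x$, and write $v=p+q$ with $p\in P$ and $q\in Q$. Since $\mu(P+Q)(x)=\mu(P)(x)+\mu(Q)(x)$, the point $p$ must maximize $\langle\cdot,x\rangle$ on $P$, and $q$ must maximize it on $Q$. These maximizers are also unique. Indeed, if $p'\neq p$ were another maximizer on $P$, then $p'+q\ne v$ would be a second maximizer on $P+Q$, a contradiction. Hence $p$ and $q$ are exposed points of $P$ and $Q$, i.e. vertices.

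For part \eqref{lem:significant_vertices_sumii}, let $v$ be a significant vertex of $\caln_1+\caln_2$. By \eqref{lem:significant_vertices_sumi}, $v=v_1+v_2$ with $v_k\in(\caln_k)_V$, and by the uniqueness just shown this decomposition is the only one. It remains to prove that each $v_k$ is significant.

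Suppose instead that $v_1$ is not significant. Then there is $\widetilde v_1\in\caln_1$ with $v_1\preceq\widetilde v_1$ and $\widetilde v_1\ne v_1$. The point $\widetilde v_1+v_2$ lies in $\caln_1+\caln_2$, satisfies $v\preceq \widetilde v_1+v_2$, and differs from $v$. This contradicts the significance of $v$. The same argument applies to $v_2$.

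The only point requiring care is the vertex/exposed-point equivalence and the uniqueness of the maximizers. This is routine for convex polygons, even degenerate ones such as segments or points, since then vertices are the endpoints or the point itself. I therefore expect no genuine obstacle. Note that the significance argument in part \eqref{lem:significant_vertices_sumii} does not even use uniqueness, only the membership $\widetilde v_1+v_2\in\caln_1+\caln_2$.
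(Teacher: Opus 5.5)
Your proof is correct. Part (ii) is exactly the paper's argument. For part (i) you take a slightly different route.

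\begin{itemize}
\item The paper works directly with extreme points. It writes $z=x+y$ with $x\in P$, $y\in Q$. If $x$ lay strictly inside a segment $[x_1,x_2]\subseteq P$, then $z$ would lie strictly inside $[x_1+y,x_2+y]\subseteq P+Q$, contradicting extremality of $z$.
\item You instead characterize vertices as exposed points. You then use additivity of support functions, $\mu(P+Q)=\mu(P)+\mu(Q)$, to force both summands to be the unique maximizers of the same linear functional.
\end{itemize}

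The paper's version is more elementary and holds verbatim for arbitrary convex sets, reading ``vertex'' as ``extreme point''. It never needs the polygon-specific fact that every extreme point is exposed, which can fail for general compact convex sets.

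Your version relies on that fact, which is harmless here. In return it gives a bit more: the decomposition $v=v_1+v_2$ is unique, and $v_1,v_2$ are exposed by the same direction. This fits naturally with the support-function viewpoint the paper adopts for order functions. As you note yourself, this uniqueness is not needed for part (ii).
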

\begin{proof}
\begin{enumerate}
    \item Let $z$ be a vertex of $P+Q$, and choose $x\in P$, $y\in Q$ such that $z=x+y$.
    Suppose that $x_1,x_2\in P$ are such that $x\in [x_1,x_2]$.
    Then $z\in [x_1+y,x_2+y]$, and since $z$ is an extreme point of $P+Q$, it holds $z\in \set{x_1+y,x_2+y}$.
    Consequently, $x\in \set{x_1,x_2}$, and thus $x$ is an extreme point of $P$, that is, a vertex.
    By the same argument, $y$ is a vertex of $Q$.
    \item Let $v\in(\mathcal N_1+\mathcal N_2)_{SV}$.
    By part \ref{lem:significant_vertices_sumi}, there exist vertices $v_k\in(\mathcal N_k)_V$, $k\in\{1,2\}$, such that $v=v_1+v_2$.
    Suppose that $v_1\notin(\mathcal N_1)_{SV}$, i.e., there exists $\widetilde v_1\in\mathcal N_1$ such that $v_1\preceq\widetilde v_1$ and $v_1\neq\widetilde v_1$.
    Consequently,
    \begin{align*}
        v=v_1+v_2 \preceq \widetilde v_1+v_2,
        \qquad
        v\neq\widetilde v_1+v_2.
    \end{align*}
    Since $\widetilde v_1+v_2\in\mathcal N_1+\mathcal N_2$, this contradicts $v\in(\mathcal N_1+\mathcal N_2)_{SV}$.
    Thus $v_1\in(\mathcal N_1)_{SV}$.
    The same argument shows that $v_2\in(\mathcal N_2)_{SV}$.
\end{enumerate}
\end{proof}
\begin{corollary}\label{cor: product_weight_functions}
    Let $\mu_1, \mu_2$ be order functions, and $w_k$ a weight function for $\mu_k$, $k\in\set{1,2}$.
    Then $w_1 w_2$ is a weight function for $\mu_1+\mu_2$, and $w_1^{-1}$ is a weight function for $-\mu_1$.
    Moreover, for every $\alpha\in \R$ there exists a weight function $w_\alpha$ for $\alpha\mu_1$ such that $w_1^\alpha\simeq w_\alpha$.
\end{corollary}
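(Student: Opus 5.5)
We begin with the strictly positive case and then reduce the general case to it. Let $\mu_1,\mu_2$ be strictly positive with weight functions $w_k=\sum_i a^{(k)}_i\langle\tau\rangle^{s^{(k)}_i}\langle\xi\rangle^{r^{(k)}_i}$, where the exponent pairs lie in $\caln_k:=\caln(\mu_k)$ and contain $(\caln_k)_{SV}$. Expanding the product gives
\begin{align*}
w_1w_2=\sum_{i,j}a^{(1)}_ia^{(2)}_j\langle\tau\rangle^{s^{(1)}_i+s^{(2)}_j}\langle\xi\rangle^{r^{(1)}_i+r^{(2)}_j}.
\end{align*}
The coefficients are positive, and every exponent pair lies in $\caln_1+\caln_2=\caln(\mu_1+\mu_2)$; the last identity holds by Proposition \ref{prop: correspondence_order_functions}, since support functions are additive. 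By Lemma \ref{lem:significant_vertices_sum}\ref{lem:significant_vertices_sumii}, every significant vertex of $\caln_1+\caln_2$ has the form $v_1+v_2$ with $v_k\in(\caln_k)_{SV}$. Both $v_1$ and $v_2$ occur among the exponents of $w_1$ and $w_2$ respectively, so $v_1+v_2$ occurs in the product. Hence $w_1w_2$ is a weight function for $\mu_1+\mu_2$.

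For general order functions, write $w_k=u_k/z_k$, where $u_k$ is a weight function for $\nu_k$ and $z_k$ is a weight function for $\lambda_k$, with $\mu_k=\nu_k-\lambda_k$ and $\nu_k,\lambda_k$ strictly positive. Then
\begin{align*}
w_1w_2=\frac{u_1u_2}{z_1z_2}.
\end{align*}
By the positive case, $u_1u_2$ and $z_1z_2$ are weight functions for $\nu_1+\nu_2$ and $\lambda_1+\lambda_2$. Since $\mu_1+\mu_2=(\nu_1+\nu_2)-(\lambda_1+\lambda_2)$, Definition \ref{def: gen_weight_fct}(2) applies. The inverse statement is immediate: $w_1^{-1}=z_1/u_1$ and $-\mu_1=\lambda_1-\nu_1$. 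A sum with a single positive term $a\langle\tau\rangle^0\langle\xi\rangle^0$ is a weight function for the zero order function, because the corresponding Newton polygon is $\{(0,0)\}$ and has no significant vertices. This handles degenerate representations.

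For the statement about $\alpha\mu_1$, first take $\alpha\ge0$ and $\mu_1$ strictly positive. The key step is to show that any weight function $w$ for $\mu$ satisfies $w\simeq w_\mu$. The upper bound uses convexity: for $(r,s)\in\caln$ we have $\langle\tau\rangle^s\langle\xi\rangle^r\le\max_{v\in\caln_V}\langle\tau\rangle^{v_2}\langle\xi\rangle^{v_1}$. To see this, take logarithms, so that $(r,s)\mapsto s\log\langle\tau\rangle+r\log\langle\xi\rangle$ is linear and attains its maximum over $\caln$ at a vertex. Since the logarithmic weights are nonnegative, the maximum may be taken over $\preceq$-maximal vertices, which are the significant ones (or $(0,0)$ when $\caln$ is trivial). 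The lower bound is trivial because $w$ contains the significant monomials. Hence
\begin{align*}
w\simeq\max_{v\in\caln_{SV}}\langle\tau\rangle^{v_2}\langle\xi\rangle^{v_1}.
\end{align*}
Then
\begin{align*}
w_1^\alpha\simeq\max_{v\in(\caln_1)_{SV}}\langle\tau\rangle^{\alpha v_2}\langle\xi\rangle^{\alpha v_1}.
\end{align*}
Scaling by $\alpha>0$ preserves vertices, significance and $\preceq$, so $(\alpha\caln_1)_{SV}=\alpha(\caln_1)_{SV}$. Therefore $w_\alpha:=w_{\alpha\mu_1}$ works; for $\alpha=0$ take $w_\alpha=1$. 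For general $\mu_1=\nu-\lambda$, the equivalences $u\simeq w_\nu$ and $z\simeq w_\lambda$ give $w_1^\alpha\simeq w_{\alpha\nu}/w_{\alpha\lambda}$ when $\alpha\ge0$, and the reciprocal when $\alpha<0$. The right-hand side is a weight function for $\alpha\mu_1$.

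The main obstacle I expect is the equivalence $w\simeq\max_{SV}$, in particular the reduction from all vertices to significant vertices. Here I would argue as follows: any non-significant vertex is $\preceq$ some point of $\caln$, and that point is a convex combination of vertices. Iterating along the finite $\preceq$ order then ends at significant vertices, using that $\langle\tau\rangle,\langle\xi\rangle\ge1$.
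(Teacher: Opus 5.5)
Your proof is correct. The product and inverse parts match the paper's argument exactly, but you take a genuinely different and longer route for the statement about $\alpha\mu_1$.

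\textbf{Your route.} You first establish $w\simeq\max_{v\in\caln_{SV}}\langle\tau\rangle^{v_2}\langle\xi\rangle^{v_1}$ for strictly positive $\mu$. This is essentially the identity \eqref{lem: point_inside_e1} from the proof of Lemma \ref{lemma: point_inside}. You then compare maxima after scaling the polygon.

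\textbf{The paper's route.} The paper needs no geometry at this point. It uses the elementary equivalence $(\sum_{i=1}^I x_i)^\beta\simeq\sum_{i=1}^I x_i^\beta$ for finitely many $x_i\ge 0$ and $\beta>0$. This gives $w^{|\alpha|}\simeq\sum_i a_i^{|\alpha|}\langle\tau\rangle^{|\alpha|s_i}\langle\xi\rangle^{|\alpha|r_i}$. The right-hand side is already a weight function for $|\alpha|\mu$, because its exponent set is $|\alpha|$ times the original one and $\caln_{SV}(|\alpha|\mu)=|\alpha|\caln_{SV}(\mu)$.

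\textbf{Comparison.} Your route costs the reduction from all vertices to significant vertices. Your sketch (``iterating along the finite $\preceq$ order'') is somewhat loose, since a point $\tilde v\succeq v$ is a convex combination of vertices that need not themselves dominate $v$. The clean version is the paper's trick: among the maximizers of the linear functional, pick one maximizing $v_1+v_2$. The maximizers form a vertex or an edge, so this point can be taken to be a vertex, and it must be significant. In exchange, your route yields more. It shows directly that any two weight functions for the same strictly positive order function are equivalent, i.e.\ the strictly positive case of Lemma \ref{lemma: additivity}\ref{lemma: additivityi}, which the paper only obtains afterwards via Lemma \ref{lemma: point_inside}.

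\textbf{Minor slip.} By the definition of $\caln_{SV}$, the trivial polygon $\{(0,0)\}$ does have $(0,0)$ as a significant vertex. This is harmless for your argument, because the constant weight contains that monomial.
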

\begin{proof}
	We first show that $w_1w_2$ is a weight function for $\mu_1+\mu_2$.
    Assume first that $\mu_1$ and $\mu_2$ are strictly positive.
    Write $w_k=\sum_{i_k=1}^{I_k} a_{k,i_k}\langle \tau\rangle^{s_{i_k}}\langle \xi\rangle^{r_{i_k}}$, where ${\mathcal{N}}_{SV}(\mu_k) \subseteq \{(r_{i_k}, s_{i_k}) \mid i_k\in\set{1,..., I_k}\}\subseteq \mathcal{N}(\mu_k)$.
    Then
    \begin{align*}
        w_1w_2=\sum_{i_1=1}^{I_1}\sum_{i_2=1}^{I_2} a_{1,i_1}a_{2,i_2}\langle \tau\rangle^{s_{i_1}+s_{i_2}}\langle \xi\rangle^{r_{i_1}+r_{i_2}}.
    \end{align*}
    Every coefficient remains positive, and since $\caln(\mu_1+\mu_2)=\caln(\mu_1)+\caln(\mu_2)$, it holds $(r_{i_1}+r_{i_2},s_{i_1}+s_{i_2})\in \caln(\mu_1+\mu_2)$ for every $i_1\in \set{1,\ldots,I_1}$ and $i_2\in \set{1,\ldots,I_2}$.

    \medskip

    Since $\mathcal{N}_{SV}(\mu_1+\mu_2)\subseteq \mathcal{N}_{SV}(\mu_1)+\mathcal{N}_{SV}(\mu_2)$ by Lemma \ref{lem:significant_vertices_sum}, every significant vertex of $\mathcal{N}(\mu_1+\mu_2)$ is of the form $(r_{i_1}+r_{i_2},s_{i_1}+s_{i_2})$ for some $i_1\in\set{1,\ldots,I_1}$ and $i_2\in \set{1,\ldots,I_2}$.
    This shows that $w_1w_2$ is a weight function for $\mu_1+\mu_2$ in the case of strictly positive order functions.

    \medskip

    If $\mu_1$ and $\mu_2$ are arbitrary, choose strictly positive order functions $\nu_1$, $\nu_1'$, $\nu_2$, $\nu_2'$, such that $\mu_k=\nu_k-\nu_k'$ for $k\in\set{1,2}$, and corresponding weight functions $v_1$, $v_1'$, $v_2$, $v_2'$ such that $w_k=\frac{v_k}{v_k'}$.
    By the previous point, $v_1v_2$ is a weight function for $\nu_1+\nu_2$, and $v_1'v_2'$ is a weight function for $\nu_1'+\nu_2'$.
    Hence, $w_1w_2=\frac{v_1v_2}{v_1'v_2'}$ is a weight function for $\nu_1+\nu_2-(\nu_1'+\nu_2')=\mu_1+\mu_2$.
    Moreover, $w_1^{-1}=\frac{v_1'}{v_1}$ is a weight function for $\nu_1'-\nu_1=-\mu_1$.
    
    \medskip
    
    Finally, observe that for weight function $w$ for a strictly positive order function $\mu$ of the form $w=\sum_{i=1}^{I} a_{i}\langle \tau\rangle^{s_{i}}\langle \xi\rangle^{r_{i}}$ we have $w^{|\alpha|}(\tau,\xi)\simeq w_{|\alpha|}(\tau,\xi):=\sum_{i=1}^{I} a_{i}^{|\alpha|} \langle \tau\rangle^{|\alpha| s_{i}}\langle \xi\rangle^{|\alpha| r_{i}}$.
    Since we have both $\caln(|\alpha| \mu)=|\alpha|\caln(\mu)$ and $\caln_{SV}(|\alpha| \mu)=|\alpha|\caln_{SV}(\mu)$, the function $w_{|\alpha|}$ is a weight function for $|\alpha|\mu$.
    Therefore, we may use $\alpha\mu_1=|\alpha|\nu_1-|\alpha|\nu_1'$ for $\alpha\ge 0$ and $\alpha\mu_1=|\alpha|\nu_1'-|\alpha|\nu_1$ for $\alpha<0$ to obtain that $w_1^\alpha=(v_1/v_1')^\alpha\simeq v_{1,\alpha}/v_{1,\alpha}'=:w_\alpha$.
\end{proof}

The vertices of a Newton polygon can be thought of as the points of 'maximal weights', as explained in the following lemma, cf.\ \cite[Remark 2.17]{denk2013general}.
Recall that $w_{(s,r)}(\tau,\xi)=\langle\tau\rangle^s\langle\xi\rangle^r$.
\begin{lemma}\label{lemma: point_inside}
    Let $w$ be a weight function for a strictly positive order function $\mu$ and $(r,s)\in [0,\infty)^2$.
    Then $(r, s)\in \mathcal{N}(\mu)$ if and only if there exists a constant $c>0$ such that
    \begin{equation*}
        w_{(s,r)} \leq c w. 
    \end{equation*}
    In particular, if $v$ is a weight function for a strictly positive order function $\nu$ such that $\mathcal{N}(\nu)\subseteq \mathcal{N}(\mu)$, then there exists $c>0$ such that $v\leq c w$.
\end{lemma}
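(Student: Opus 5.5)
Write $\caln:=\caln(\mu)$ and $w=\sum_{i=1}^I a_i w_{(s_i,r_i)}$ with $\caln_{SV}\subseteq\{(r_i,s_i)\}\subseteq\caln$ and $a_i>0$. Since all $a_i>0$ and all exponents are nonnegative, $w\simeq\max_i w_{(s_i,r_i)}$. It is convenient to pass to logarithmic coordinates: put $X:=\log\langle\xi\rangle\ge0$ and $Y:=\log\langle\tau\rangle\ge0$, so that $w_{(s,r)}=e^{\langle (r,s),(X,Y)\rangle}$. As $(\tau,\xi)$ ranges over $\R\times\R^n$, the pair $(X,Y)$ ranges over all of $[0,\infty)^2$. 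The claim is therefore equivalent to:
\[
(r,s)\in\caln\iff\sup_{x\in[0,\infty)^2}\Bigl(\langle (r,s),x\rangle-\max_i\langle (r_i,s_i),x\rangle\Bigr)<\infty .
\]

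For the direction ``$\Rightarrow$'', I will first show that $\max_i\langle (r_i,s_i),x\rangle=\mu(x)$ for all $x\in[0,\infty)^2$. The inequality ``$\le$'' holds because every $(r_i,s_i)$ lies in $\caln$. For ``$\ge$'', the maximum of the linear functional $\langle\cdot,x\rangle$ over $\caln$ is attained at a vertex $v$. If $v$ is not significant, then by definition there is $\widetilde v\in\caln$ with $v\preceq\widetilde v$. Since $x\ge0$, we get $\langle\widetilde v,x\rangle\ge\langle v,x\rangle$, so $\widetilde v$ is also a maximizer. Among all maximizers, choose one that is maximal for $\preceq$; this exists because $\caln$ is compact. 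The set of maximizers is a face of $\caln$, i.e.\ a vertex or an edge, and a $\preceq$-maximal point of $\caln$ in it can be taken to be a vertex. Hence it is a significant vertex, and therefore one of the $(r_i,s_i)$. This step is the main obstacle: one has to check carefully that the $\preceq$-maximal element of the maximizing face is a vertex that is $\preceq$-maximal in all of $\caln$. I would do this by picking, within the face, the endpoint that is $\preceq$-larger, or either endpoint if they are incomparable, and then arguing by contradiction using the face property. Once this is settled, for $(r,s)\in\caln$ we get $\langle (r,s),x\rangle\le\mu(x)=\max_i\langle (r_i,s_i),x\rangle$, hence $w_{(s,r)}\le\max_i w_{(s_i,r_i)}\le (\min_i a_i)^{-1}w$.

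For the direction ``$\Leftarrow$'', suppose $(r,s)\in[0,\infty)^2\setminus\caln$. Because $\caln$ is a closed convex set that is downward closed in $[0,\infty)^2$, the point $(r,s)$ can be strictly separated from $\caln$ by some $x\in[0,\infty)^2$. Indeed, take a separating normal $x$; by Remark~\ref{js015} we have $\mu(x)=\mu(x_+)$, and $\langle (r,s),x\rangle\le\langle (r,s),x_+\rangle$ because $(r,s)\ge0$, so $x$ may be replaced by $x_+$. This gives $\langle (r,s),x\rangle>\mu(x)\ge\max_i\langle (r_i,s_i),x\rangle$. Scaling $x\mapsto tx$ with $t\to\infty$ makes the difference tend to $+\infty$. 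Translating back through $X=\log\langle\xi\rangle$ and $Y=\log\langle\tau\rangle$, the ratio $w_{(s,r)}/w$ is unbounded, which contradicts the assumed estimate.

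For the final assertion, write $v=\sum_j b_j w_{(s_j',r_j')}$ with all $(r_j',s_j')\in\caln(\nu)\subseteq\caln(\mu)$. Applying the first part to each term gives $w_{(s_j',r_j')}\le c_j w$, and summing over $j$ yields $v\le c\,w$.
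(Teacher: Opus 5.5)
Your proposal is correct and follows essentially the same route as the paper. Both use the logarithmic substitution $x=(\log\langle\xi\rangle,\log\langle\tau\rangle)$ and the key identity $\mu(x)=\max_i\langle (r_i,s_i),x\rangle$ on $[0,\infty)^2$, obtained by locating a significant vertex in the exposed face (the paper maximizes $v_1+v_2$ over the face, which is your $\preceq$-maximal endpoint choice). The converse likewise rests on scaling $x\mapsto tx$ together with $\mu(x)=\mu(x_+)$, which the paper phrases directly as $rx_1+sx_2\le\mu(x)$ for all $x$ rather than contrapositively via strict separation; the degenerate direction $x=0$, where the face is all of $\caln$, is trivial since both sides vanish.
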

\begin{proof}
    Write $w(\tau,\xi)= \sum_{i=1}^I a_i \langle\tau\rangle^{s_i} \langle\xi\rangle^{r_i}$ with $a_i>0$ and $\mathcal{N}(\mu)_{SV}\subseteq\set{(r_i,s_i)\mid i\in\set{1,\ldots,I}}\subseteq\mathcal N(\mu)$.
    We claim that for every $x\in[0,\infty)^2$ it holds
    \begin{align}\label{lem: point_inside_e1}
        \mu(x) = \max \set{\langle(r_i,s_i),x\rangle\mid i\in\set{1,\ldots,I}}.
    \end{align}
    Indeed, let
    \begin{align*}
        F_x:=\set{v\in\mathcal N(\mu)\mid \langle v,x\rangle=\mu(x)}
    \end{align*}
    be the exposed face in direction $x$.
    Choose $v\in F_x$ maximizing $v_1+v_2$.
    By linearity, we may assume that $v$ is an endpoint of $F_x$, and hence a vertex of $\mathcal N(\mu)$.
    If $v$ were not significant, there would exist $\widetilde v\in\mathcal N(\mu)$ with $v\preceq\widetilde v$ and $v\neq\widetilde v$.
    Since $x\in[0,\infty)^2$, we would have $\mu(x)=\langle v,x\rangle\leq\langle\widetilde v,x\rangle\leq\mu(x)$, so that $\widetilde v\in F_x$.
    But $\widetilde v_1+\widetilde v_2>v_1+v_2$, contradicting the choice of $v$.
    Thus, $F_x$ contains a significant vertex of $\caln(\mu)$, which shows \eqref{lem: point_inside_e1}.
    For $(\tau,\xi)\in \R\times\R^n$, we therefore obtain
    \begin{align}\label{eqn: weight_support_equivalence}
        a_*e^{\mu(x)} \leq w(\tau,\xi) \leq Ae^{\mu(x)},
    \end{align}
    where $a_*:=\min\set{a_i\mid i\in \set{1,\ldots,I}}$, $A:=\sum_{i=1}^I a_i$, and  $x:=\bigl(\log\langle\xi\rangle,\log\langle\tau\rangle\bigr)\in[0,\infty)^2$.
    Let $(r,s)\in\mathcal N(\mu)$.
    Then $rx_1+sx_2\le \mu(x)$ and \eqref{eqn: weight_support_equivalence} yield
    \begin{align*}
        w_{(s,r)}(\tau,\xi)
        =e^{r\log\langle\xi\rangle+s\log\langle\tau\rangle}
        \leq e^{\mu(x)}
        \leq a_*^{-1}w(\tau,\xi).
    \end{align*}

    Conversely, suppose that $w_{(s,r)}(\tau,\xi)\leq cw(\tau,\xi)$ for all $(\tau,\xi)\in\R\times\R^n$.
    Let $x=(x_1,x_2)\in[0,\infty)^2$ and $t>0$.
    Choose $(\tau_t,\xi_t)\in \R\times\R^n$ such that
    \begin{align*}
        \log\langle\xi_t\rangle=tx_1,
        \qquad
        \log\langle\tau_t\rangle=tx_2.
    \end{align*}
    In particular, $e^{t(rx_1+sx_2)}=w_{(s,r)}(\tau_t,\xi_t)\le cw(\tau_t,\xi_t)\le cAe^{t\mu(x)}$, where we have used \eqref{eqn: weight_support_equivalence} again.
    Taking logarithms, dividing by $t$, and letting $t\to \infty$ yields $rx_1+sx_2\leq \mu(x)=\max\set{\langle x,v\rangle\mid v\in \mathcal N(\mu)}$ for all $x\in [0,\infty)^2$, and thus for all $x\in \R^2$ since $\mu(x)=\mu(x_+)$ by Remark \ref{js015}.
    Thus, $(r,s)\in\mathcal N(\mu)$.
\end{proof}
Two weight functions for the same order function are equivalent, as we will show next.
\begin{lemma}\label{lemma: additivity}
    Let $\mu, \nu$ be order functions and $\alpha,\beta\in \R$.
    \begin{enumerate}
    \item\label{lemma: additivityi} If $w$ and $v$ are weight functions for $\mu$, then $w\simeq v$.
    \item\label{lemma: additivityii} If $w,v,u$ are weight functions for $\mu$, $\nu$, and $\alpha\mu+\beta\nu$, respectively, then $u\simeq w^\alpha v^\beta$.
    \end{enumerate}
\end{lemma}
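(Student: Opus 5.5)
The plan is to reduce part (i) to the strictly positive case and then deduce part (ii) from part (i) together with Corollary \ref{cor: product_weight_functions}.

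\emph{Part (i), strictly positive case.} Let $\mu$ be strictly positive and let $w,v$ be weight functions for $\mu$ in the sense of Definition \ref{def: gen_weight_fct}(1). Both have the form $\sum_i a_i\langle\tau\rangle^{s_i}\langle\xi\rangle^{r_i}$ with $a_i>0$ and exponents in $\caln(\mu)$. The ``in particular'' clause of Lemma \ref{lemma: point_inside}, applied with $\nu=\mu$, gives $v\le cw$. Exchanging the roles gives $w\le c'v$, hence $w\simeq v$. An alternative route is the two-sided bound \eqref{eqn: weight_support_equivalence}: both functions are comparable to $e^{\mu(x)}$ with $x=(\log\langle\xi\rangle,\log\langle\tau\rangle)$.

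\emph{Part (i), general case.} Here $w=w_1/w_2$ with $w_k$ a weight function for a strictly positive $\mu_k$, and $v=v_1/v_2$ with $v_k$ a weight function for a strictly positive $\nu_k$, where $\mu=\mu_1-\mu_2=\nu_1-\nu_2$. The two decompositions may differ, so I would pass to a common decomposition. From $\mu_1+\nu_2=\nu_1+\mu_2$, Corollary \ref{cor: product_weight_functions} shows that $w_1v_2$ and $v_1w_2$ are both weight functions for this single strictly positive order function. By the strictly positive case, $w_1v_2\simeq v_1w_2$. Dividing by the positive function $w_2v_2$ gives $w\simeq v$.

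The step I expect to need the most care is this general case. The difficulty is that the decomposition $\mu=\mu_1-\mu_2$ is not unique. The cross-multiplication trick handles this, but it relies on the identification $\caln(\mu_1+\nu_2)=\caln(\nu_1+\mu_2)$. That identification holds because strictly positive order functions correspond bijectively to Newton polygons (Proposition \ref{prop: correspondence_order_functions}): equal functions give equal polygons.

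\emph{Part (ii).} By Corollary \ref{cor: product_weight_functions}, there are weight functions $w_\alpha$ for $\alpha\mu$ with $w^\alpha\simeq w_\alpha$, and $v_\beta$ for $\beta\nu$ with $v^\beta\simeq v_\beta$. By the product statement of the same corollary, $w_\alpha v_\beta$ is a weight function for $\alpha\mu+\beta\nu$. Part (i) then gives $u\simeq w_\alpha v_\beta\simeq w^\alpha v^\beta$, which is the claim.
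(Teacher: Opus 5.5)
Your proof is correct and matches the paper's argument essentially step for step. You use Lemma \ref{lemma: point_inside} in both directions for the strictly positive case, and you cross-multiply to the common strictly positive order function $\mu_1+\nu_2=\mu_2+\nu_1$ for the general case. For part (ii) you combine the power and product statements of Corollary \ref{cor: product_weight_functions} with part (i); the paper compresses these two uses of the corollary into a single step.
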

\begin{proof}
\begin{enumerate}
	\item If $w_1',w_2'$ are weight functions for a strictly positive order function $\mu'$, then Lemma \ref{lemma: point_inside} applied in both directions yields $w_1'\simeq w_2'$.
	Let now $\mu=\mu_1-\mu_2=\nu_1-\nu_2$ for strictly positive order functions $\mu_1$, $\mu_2$, $\nu_1$, $\nu_2$, and let $w_1$, $w_2$, $v_1$, $v_2$ be corresponding weight functions with $w=\frac{w_1}{w_2}$ and $v=\frac{v_1}{v_2}$.
	By Corollary \ref{cor: product_weight_functions}, $w_1v_2$ and $w_2v_1$ are weight functions for the strictly positive order function $\mu_1+\nu_2=\mu_2+\nu_1$, and thus $w_1v_2\simeq w_2v_1$ by the already established result.
	This implies the assertion.
	\item By Corollary \ref{cor: product_weight_functions}, there exists a weight function $\widetilde u$ for $\alpha\mu+\beta\nu$ such that $\widetilde u\simeq w^\alpha v^\beta$.
	Since $u$ and $\widetilde u$ are weight functions for the same order function, part \ref{lemma: additivityi} gives $u\simeq\widetilde u$, and hence $u\simeq w^\alpha v^\beta$.
\end{enumerate}
\end{proof}
Knowing if a symbol $m$ and its derivatives can be estimated from above and from below by a given weight function will turn out to be crucial to establish maximal regularity estimates in the next chapter. We thus introduce the following definitions. We will use the multi-index notations 
\begin{equation*}
   (\tau, \xi)^{(\alpha,\beta)}:=\tau^\alpha \xi_1^{\beta_1}...\xi_n^{\beta_n} \qquad  \partial_{(\tau,\xi)} ^{(\alpha, \beta)}:= \partial_\tau^\alpha \partial_{\xi_1}^{\beta_1}... \partial_{\xi_n}^{\beta_n}
\end{equation*}
for $\alpha\in \mathbb{N}_0, \beta\in \mathbb{N}_0^n$.

\begin{definition}\label{def: Newton_ellipticity}Let $m : (\mathbb{R}\setminus \{0\})\times \mathbb{R}^n\to \mathbb{C}$ be smooth and let $\mu$ be an order function. 
    \begin{enumerate}
        \item\label{def: Newton_ellipticityi} $\mu$ is an \emph{upper order function} for $m$ if for all $\lambda>0$ and any weight function $w$ for $\mu$ there exists a constant $C_\lambda>0$ such that $|m(\tau,\xi)|\le C_\lambda w(\tau,\xi)$ for all $(\tau, \xi)$ with $|\tau| \ge \lambda$.
        \item\label{def: Newton_ellipticityii} $\mu$ is a \emph{strong upper order function} if for all $\alpha \in \{0, 1\}$, $\beta \in \{0,1\}^n$, $\mu$ is an upper order function for the symbol $(\tau, \xi)^{(\alpha,\beta)} \partial_{(\tau,\xi)} ^{(\alpha, \beta)} m(\tau, \xi)$.
        \item\label{def: Newton_ellipticityiii} $m$ is \emph{$\mathcal{N}$-elliptic with order function} $\mu$, if $\mu$ is an upper order function for $m$ and if $-\mu$ is an upper order function for $1/m$.
        \item\label{def: Newton_ellipticityiv} We say that $m$ is \emph{strongly $\mathcal{N}$-elliptic} with order function $\mu$ if it is $\mathcal{N}$-elliptic and  $\mu$ is a strong upper order function for $m$.
    \end{enumerate}
\end{definition}
By Lemma \ref{lemma: additivity}, replacing ``any weight function'' by ``some weight function'' in the above points \ref{def: Newton_ellipticityi} and \ref{def: Newton_ellipticityiii} yields an equivalent definition.
Moreover, by Corollary \ref{cor: product_weight_functions}, $m$ is $\caln$-elliptic with order function $\mu$ if and only if for all $\lambda>0$ and any weight function $w$ for $\mu$ there exist constants $c_\lambda,C_\lambda>0$ such that
\begin{align*}
    c_\lambda w(\tau,\xi)\le |m(\tau,\xi)|\le C_\lambda w(\tau,\xi) \quad \text{for all $(\tau,\xi)$ with $|\tau|\ge \lambda$}.
\end{align*}
\begin{remark}\label{remark: different_def_ellipticity}
	Our definitions differ slightly from those in \cite{denk2013general} (see Definition 2.39) in that both the upper and lower estimates are required only away from $\tau=0$, but uniformly on \emph{every} strip $|\tau|\geq\lambda$, $\lambda>0$, not just \emph{some} strip $|\tau|\ge \lambda_0$.
This formulation is adapted to the purely oscillatory time-periodic setting considered below, where the frequency $\tau=0$ is removed.
\end{remark}
\begin{example}[Symbols with parabolic scaling]\label{js004}
    Let $N\geq 0, \rho>0$ and $P\in C^\infty ((\mathbb{R}\times \mathbb{R}^n)\setminus\{(0,0)\})$.
    We say that $P$ is \emph{quasihomogeneous} or more precisely \emph{$\rho$-homogeneous of degree $N$} if it holds 
    \begin{equation*}
        P(\eta^\rho \tau, \eta\xi)= \eta^N P(\tau, \xi)
    \end{equation*}
    for all $(\tau, \xi)\in (\mathbb{R}\times \mathbb{R}^n)\setminus\{(0,0)\}$ and $\eta>0$.
    Such a symbol always admits a strong upper order function $No_{\frac{1}{\rho}}$. 
    If, furthermore, $P(\tau, \xi) \neq 0$ for all $(\tau, \xi)\in (\mathbb{R}\times \mathbb{R}^n)\setminus\{(0,0)\}$, then $P$ is strongly $\mathcal{N}$-elliptic with order function $No_{\frac{1}{\rho}}$.
    Thus, the notion of $\mathcal{N}$-ellipticity generalizes the usual definition of parabolicity for quasihomogeneous symbols. Remark that the Newton polygon in that case has a simple triangular shape.
    \medskip 

    In Section \ref{sec:heat}, we will be interested in the symbol of the heat equation 
    \begin{equation*}
        H(\tau, \xi):= \ic\tau+\vert\xi\vert^2. 
    \end{equation*}
    Clearly, $H$ is $2$-homogeneous of degree 2, and thus strongly $\mathcal{N}$-elliptic with order function $\mu_H:= 2o_{\frac{1}{2}}$ by the preceding observation.
    The associated triangular Newton polygon $\caln(\mu_H)=2\caln_\frac12$ is described by the vertices $\mathcal{N}(\mu_H)_V= \{(0,0), (2, 0), (0, 1)\}$, and so $\mathcal{N}(\mu_H)_{SV}= \{(2, 0), (0, 1)\}$.
    Hence, a weight function for $\mu_H$ is
    \begin{equation*}
        w_{\mu_H}(\tau,\xi) = \langle\tau\rangle + \langle\xi\rangle^2. 
    \end{equation*}
\end{example}

\begin{example}[The Cahn-Hilliard-Gurtin determinant]\label{example: CHG_ellipticity}
    The following symbol is an example of a strongly $\mathcal{N}$-elliptic symbol with a non-triangular Newton polygon: 
    \begin{equation*}
        D(\tau, \xi):=   \ic\tau+\vert \xi\vert^2
(\ic\tau+\vert \xi\vert^2), \qquad (\tau,\xi)\in \mathbb{R}\times \mathbb{R}^n.
    \end{equation*}
    We will see later that this symbol is associated with the determinant of the Cahn-Hilliard-Gurtin (CHG) system. Remark that $D$ is not quasihomogeneous in the above sense. 
    \medskip 

    We define the order function $\mu_D:= 2o_0+2o_{\frac{1}{2}}$ and claim that $D$ is strongly $\mathcal{N}$-elliptic with order function $\mu_D$. Here, the Newton polygon $\caln(\mu_D)=2\caln_0+2\caln_\frac12$ is described by the vertices $\mathcal{N}(\mu_D)_V=\{(0,0), (4, 0), (2, 1), (0, 1)\}$, and so $\mathcal{N}(\mu_D)_{SV}=\{(4, 0), (2, 1), (0, 1)\}$.
    Hence, a weight function for $\mu_D$ is 
    \begin{equation*}
        w_{\mu_D}(\tau, \xi)= \langle \tau\rangle + \langle\tau\rangle\langle \xi\rangle^2 + \langle\xi\rangle^4. 
    \end{equation*}
    Since $D$ is a polynomial, the upper estimates 
    \begin{equation*}
       \vert (\tau, \xi)^{(\alpha,\beta)} \partial_{(\tau,\xi)} ^{(\alpha, \beta)} D(\tau, \xi)\vert\lesssim w_{\mu_D}(\tau, \xi), \qquad (\tau,\xi)\in \R\times \R^n,
    \end{equation*}
    follow from the triangle inequality and the Leibniz rule.
    For the lower estimates, we write
    \begin{align*}
        \vert D(\tau, \xi)\vert &=\sqrt{\vert \tau\vert^2+2 \vert \xi\vert^2\vert \tau\vert^2+\vert \xi\vert^4\vert \tau\vert^2+\vert \xi\vert^8}
        \geq \sqrt{\vert \tau\vert^2+\vert \xi\vert^4\vert \tau\vert^2+\vert \xi\vert^8}
        \geq \frac{1}{\sqrt{3}}(\vert \tau\vert+\vert \xi\vert^2\vert \tau\vert +\vert \xi\vert^4),
    \end{align*}
    where the last inequality is simply $\|x\|_2\ge \frac1{\sqrt{d}}\|x\|_1$ applied with $d=3$ to $x=(|\tau|,|\xi|^2|\tau|,|\xi|^4)\in \R^3$.
    Let $\lambda>0$. For $|\tau|\ge \lambda$ we obtain due to $|\tau|\ge \frac12\min\set{1,\lambda}(1+|\tau|)$ a constant $C_\lambda>0$ such that
    \begin{align*}
    	\vert D(\tau, \xi)\vert \gtrsim_\lambda 1+ \vert \tau\vert+\vert \xi\vert^2\vert \tau\vert +\vert \xi\vert^4\gtrsim_\lambda  w_{\mu_D}(\tau,\xi),
    \end{align*}
    i.e., there is $C_\lambda>0$ such that $\vert D(\tau, \xi)\vert\ge C_\lambda  w_{\mu_D}(\tau,\xi)$ for all $(\tau,\xi)$ with $|\tau|\ge\lambda$.
\end{example}
We next provide the following permanence properties of upper order functions and $\mathcal{N}$-ellipticity.
\begin{proposition}\label{prop: arithmetics_order_functions}
    Let $m, m_1, m_2 : (\mathbb{R}\setminus \{0\}) \times\mathbb{R}^n\to \mathbb{C}$ be smooth symbols and let $\mu, \mu_1, \mu_2$ be order functions. 
    \begin{enumerate}[leftmargin=*]
        \item\label{prop: arithmetics_order_functionsi} If $\mu$ is a (strong) upper order  function for $m_1$ and $m_2$ then it is a (strong) upper order function for $m_1+m_2$. 
        \item\label{prop: arithmetics_order_functionsii} If $\mu_i$ is a (strong) upper order function for $m_i$, $i\in\{1,2\}$, then $\mu_1+\mu_2$ is a (strong) upper order function for $m_1m_2$. Furthermore, if $m_i$ is (strongly) $\mathcal{N}$-elliptic with order function $\mu_i$, $i\in\{1,2\}$, then $m_1m_2$ is (strongly) $\mathcal{N}$-elliptic with order function $\mu_1+\mu_2$.
        \item\label{prop: arithmetics_order_functionsiii} If $m$ is (strongly) $\mathcal{N}$-elliptic with order function $\mu$, then $\frac{1}{m}$ is (strongly) $\mathcal{N}$-elliptic with order function $-\mu$. 
        \item\label{prop: arithmetics_order_functionsiv} If $m$ is (strongly) $\mathcal{N}$-elliptic with order function $\mu$ and $m(\tau, \xi)> 0 $ for all $(\tau, \xi)\in (\mathbb{R}\setminus\{0\}) \times \mathbb{R}^n$ then for all $s\in \mathbb{R}, $ $m^s$ is (strongly) $\mathcal{N}$-elliptic with order function $s\mu$.
    \end{enumerate}
\end{proposition}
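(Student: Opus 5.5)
The plan is to reduce every assertion to pointwise estimates against weight functions, using Corollary \ref{cor: product_weight_functions} and Lemma \ref{lemma: additivity} to handle products and powers of weights. Throughout, fix $\lambda>0$ and restrict to the strip $|\tau|\ge\lambda$. By the remark after Definition \ref{def: Newton_ellipticity}, we may test with \emph{some} weight function for the relevant order function rather than all of them.

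For \ref{prop: arithmetics_order_functionsi}, the triangle inequality gives $|m_1+m_2|\le (C^1_\lambda+C^2_\lambda)w$. For the strong version, note that the symbol $(\tau,\xi)^{(\alpha,\beta)}\partial^{(\alpha,\beta)}$ is linear in $m$, so the same argument applies to each $(\alpha,\beta)$.

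For \ref{prop: arithmetics_order_functionsii}, let $w_i$ be weights for $\mu_i$. Then $w_1w_2$ is a weight for $\mu_1+\mu_2$ by Corollary \ref{cor: product_weight_functions}, and $|m_1m_2|\le C w_1w_2$. For the strong version I will use the Leibniz rule. Since $\alpha\in\{0,1\}$ and $\beta\in\{0,1\}^n$, the operator $\partial^{(\alpha,\beta)}$ differentiates at most once in each variable. Hence
\[
(\tau,\xi)^{(\alpha,\beta)}\partial^{(\alpha,\beta)}(m_1m_2)=\sum (\tau,\xi)^{\gamma}\partial^{\gamma}m_1\cdot(\tau,\xi)^{\delta}\partial^{\delta}m_2,
\]
where the sum runs over the splittings $\gamma+\delta=(\alpha,\beta)$ with $\gamma,\delta\in\{0,1\}^{1+n}$. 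The monomial weight factors exactly because the indices are disjoint. Each summand is then bounded by $C w_1 w_2$.

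The ellipticity statement needs in addition a bound for $1/(m_1m_2)=(1/m_1)(1/m_2)$. This follows from the upper estimate already proved, applied with the order functions $-\mu_1$ and $-\mu_2$, together with $-\mu_1-\mu_2=-(\mu_1+\mu_2)$.

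For \ref{prop: arithmetics_order_functionsiii}, the non-strong case is just the symmetry of the definition, since $1/(1/m)=m$ and $-(-\mu)=\mu$. The strong case is the main step: we must show that $-\mu$ is a strong upper order function for $1/m$. Writing $\partial^{(\alpha,\beta)}(1/m)$ via the quotient rule (Faà di Bruno for $x\mapsto 1/x$), with only first-order derivatives in each distinct variable, we obtain a finite sum of terms of the form
\[
\pm\, m^{-1-k}\prod_{j=1}^k\partial^{\gamma_j}m,
\]
where the $\gamma_j$ form a partition of $(\alpha,\beta)$ into disjoint nonzero blocks. Multiplying by $(\tau,\xi)^{(\alpha,\beta)}=\prod_j(\tau,\xi)^{\gamma_j}$, each term is bounded by
\[
|m|^{-1-k}\prod_j|(\tau,\xi)^{\gamma_j}\partial^{\gamma_j}m|\lesssim (c_\lambda w)^{-1-k}(C_\lambda w)^k\simeq w^{-1},
\]
which is a weight function for $-\mu$. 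The point to check carefully is that each factor $(\tau,\xi)^{\gamma_j}\partial^{\gamma_j}m$ really is one of the controlled symbols. It is, because $\gamma_j\in\{0,1\}^{1+n}$ and the strong upper estimates cover all such multi-indices.

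For \ref{prop: arithmetics_order_functionsiv}, the positivity of $m$ makes $m^s$ well defined and smooth. The two-sided bound $c w\le m\le Cw$ raises to $c' w^s\le m^s\le C' w^s$ for either sign of $s$. By Corollary \ref{cor: product_weight_functions} there is a weight $w_s$ for $s\mu$ with $w^s\simeq w_s$. This gives ellipticity, with $1/m^s=m^{-s}$ handled identically.

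In the strong case, the same disjoint-index chain rule expresses $\partial^{(\alpha,\beta)}m^s$ as a sum of terms
\[
c\, m^{s-k}\prod_{j=1}^k\partial^{\gamma_j}m
\]
over partitions into disjoint blocks $\gamma_j$. After multiplying by the monomial, each term is bounded by $w^{s-k}w^k=w^s\simeq w_s$. I expect the only real care to be the combinatorics of these chain-rule expansions, and the disjointness of the multi-indices makes them clean.
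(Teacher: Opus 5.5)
Your proposal is correct and follows the paper's proof essentially step by step. The ingredients match: the triangle inequality for (i); Corollary \ref{cor: product_weight_functions} plus the Leibniz rule for (ii); the symmetry of the definition plus a Fa\`a di Bruno expansion with terms bounded by $w^k/w^{k+1}=w^{-1}$ for (iii); and $m\simeq_\lambda w$ raised to the power $s$, together with the analogous expansion for $m^s$, for (iv). Your explicit observation that $(\alpha,\beta)\in\{0,1\}^{1+n}$ splits into disjoint blocks is exactly what makes the paper's brief expansions work: the monomial $(\tau,\xi)^{(\alpha,\beta)}$ factors exactly across the blocks, and every block is a multi-index covered by the strong upper estimates.
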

\begin{proof}
    Part \ref{prop: arithmetics_order_functionsi} is trivial, and part \ref{prop: arithmetics_order_functionsii} follows from Corollary \ref{cor: product_weight_functions} and the Leibniz formula for the derivatives of a product.
    For part \ref{prop: arithmetics_order_functionsiii}, the definition implies directly that $\frac1m$ is $\mathcal N$-elliptic with order function $-\mu$.
    It remains to verify the upper estimates for the derivatives.
    By Faà di Bruno's formula, for $\alpha+|\beta|\ge 1$ the function $\partial_{(\tau, \xi)}^{(\alpha, \beta)}  \frac{1}{m}$ can be written as 
    \begin{equation*}
        \partial_{(\tau,\xi)}^{(\alpha,\beta)} \frac{1}{m}(\tau, \xi)=  \sum_{k=1}^{\alpha+\vert\beta\vert} \sum_{\substack{\delta_1+\ldots +\delta_k=(\alpha,\beta)\\\delta_j\in \{0,1\}\times \{0,1\}^n}}  \frac{C_{k, \delta_1,\ldots,\delta_k}}{m^{k+1}(\tau, \xi)}\prod_{r=1}^k \partial_{(\tau, \xi)}^{\delta_r}m(\tau, \xi).
    \end{equation*}
    Hence, using \ref{prop: arithmetics_order_functionsi}, it is enough to show that $-\mu$ is an upper order function for each of the expressions $(\tau, \xi)^{(\alpha, \beta)} \frac{1}{m^{k+1}(\tau, \xi)}\prod_{r=1}^k \partial_{(\tau, \xi)}^{\delta_r}m(\tau, \xi)$.
    Choose a weight function $w$ for $\mu$ and $\lambda>0$.
    Using $\delta_1+\ldots +\delta_k=(\alpha,\beta)$, we obtain
    \begin{align*}
        \vert (\tau,\xi)^{(\alpha,\beta)} \frac{1}{m^{k+1}(\tau, \xi)}\prod_{r=1}^k \partial_{(\tau, \xi)}^{\delta_r}m(\tau, \xi)\vert
        &= \frac{1}{|m^{k+1}(\tau, \xi)|}\prod_{r=1}^k \vert (\tau, \xi)^{\delta_r}\vert \vert\partial_{(\tau, \xi)}^{\delta_r}m(\tau, \xi)\vert \lesssim_\lambda  \frac{w(\tau,\xi)^k}{w(\tau,\xi)^{k+1}}
    \end{align*}
    for all $(\tau,\xi)$ with $|\tau|>\lambda$, where the last step is a consequence of the strong $\mathcal{N}$-ellipticity of $m$.
    Since $w^k/w^{k+1}=w^{-1}$ is a weight function for $-\mu$ in light of Corollary \ref{cor: product_weight_functions}, we have proved \ref{prop: arithmetics_order_functionsiii}.
    For part \ref{prop: arithmetics_order_functionsiv}, we observe that on every strip $|\tau|\geq\lambda$, $\mathcal N$-ellipticity gives $m\simeq_\lambda w$ for any weight function $w$ for $\mu$.
    Hence $m^s\simeq_\lambda w^s$, and Lemma \ref{lemma: additivity} shows that $w^s$ is equivalent to a weight function for $s\mu$.
    The derivative estimates follow by a similar application of Faà di Bruno's formula as above, where one writes 
    \begin{equation*}
       \partial_{(\tau, \xi)}^{(\alpha, \beta)} m^s(\tau, \xi)= \sum_{k=1}^{\alpha+\vert\beta\vert} \sum_{\substack{\delta_1+\ldots +\delta_k=(\alpha,\beta)\\\delta_j\in \{0,1\}\times \{0,1\}^n}} C_{k, \delta_1,\ldots ,\delta_k}m(\tau, \xi)^{s-k}\prod_{r=1}^k \partial_{(\tau, \xi)}^{\delta_r}m(\tau, \xi).
       \qedhere
    \end{equation*}
\end{proof}
Let us mention that a weight function $w$ for a strictly positive order function $\mu$ is smooth and strongly $\mathcal{N}$-elliptic with order function $\mu$.
Consequently, by Proposition \ref{prop: arithmetics_order_functions}, any weight function $w=\frac{w_1}{w_2}$ associated with an order function $\mu=\mu_1-\mu_2$ is strongly $\mathcal{N}$-elliptic with order function $\mu$.
\subsection{Mixed-Order Systems}\label{sec:mix_sys}

The notion of a mixed-order system has been introduced in \cite{agmon1964estimates} in order to study the regularity of solutions of systems of elliptic equations. The definition was later adapted to the Newton polygon setting in \cite{denk2013general}.
We recall this framework here and adapt it to our time-periodic setting in order to extend the scalar results developed above to systems whose components have different orders of regularity.
The passage to systems is essential even for scalar boundary value problems, since general boundary conditions naturally give rise to mixed-order systems on the boundary.
In the following, we consider a matrix-valued symbol  $L:(\mathbb{R}\setminus \{0\})\times \mathbb{R}^n\to \mathbb{C}^{m\times m}$ for some $m\in \mathbb{N}$. We denote by $\mathcal{D}: (\mathbb{R}\setminus\{0\})\times \mathbb{R}^n\to \mathbb{C}$, $\mathcal{D}(\tau, \xi):=\det L(\tau, \xi)$ its determinant.
\begin{definition}\label{def: mixed_order_systems}
    The symbol $L$ is associated with a \emph{(strong) mixed-order system} if there exist  order functions $s_1,...,s_m, t_1,...,t_m$ such that the following is true.
    \begin{enumerate}
        \item The order function $t_j-s_i$ is a (strong) upper order function of the symbol $L_{ij}: \mathbb{R}\times \mathbb{R}^n\to \mathbb{C}$ for all $i,j\in\{1,..., m\}$.
        \item The determinant $\mathcal{D}$ is (strongly) $\mathcal{N}$-elliptic for the order function $\delta:=\sum_{l=1}^m t_l-s_l$.
    \end{enumerate}
\end{definition}
While solving systems, we will make a regular use of the adjugate symbol of $L$, denoted $\ad L$. 
Here 
\begin{equation*}
    (\ad L)_{ij}:= (-1)^{i+j}\det L^{(ji)}, \qquad i,j\in\{1,.., m\}, 
\end{equation*}
where $L^{(ji)}$ denotes the $(m-1)\times (m-1)$ matrix obtained by removing the $j$-th row and the $i$-th column of $L$.
\begin{proposition}\label{prop: ad}
    If $L:\mathbb{R}\setminus \{0\}\times \mathbb{R}^n\to \mathbb{C}^{m\times m}$ is a (strong) mixed-order system for the order functions $s_1,..., s_m$, $t_1,...,t_m$ then $\ad L$ is a (strong) mixed-order system for the order functions $S_1,...,S_m$, and $T_1,...,T_m$, with 
    \begin{equation*}
        S_i=\sum_{\substack{k=1\\k\neq i}}^m -t_k, \qquad T_j:=\sum_{\substack{k=1\\k\neq j}}^m -s_k,\qquad i, j\in\{1,...,m\}.
    \end{equation*}
\end{proposition}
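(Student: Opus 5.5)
We verify the two conditions of Definition~\ref{def: mixed_order_systems} for $\ad L$ with the order functions $S_i$, $T_j$.

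\textbf{Entries.} Note that
\begin{align*}
T_j-S_i=\sum_{k\neq i}t_k-\sum_{k\neq j}s_k .
\end{align*}
The entry $(\ad L)_{ij}=(-1)^{i+j}\det L^{(ji)}$ is a signed sum, over all bijections $\sigma:\{1,\ldots,m\}\setminus\{j\}\to\{1,\ldots,m\}\setminus\{i\}$, of products $\prod_{k\neq j}L_{k\sigma(k)}$. By hypothesis, $t_{\sigma(k)}-s_k$ is a (strong) upper order function for $L_{k\sigma(k)}$. Proposition~\ref{prop: arithmetics_order_functions}\ref{prop: arithmetics_order_functionsii}, applied inductively over the $m-1$ factors, shows that
\begin{align*}
\sum_{k\neq j}\bigl(t_{\sigma(k)}-s_k\bigr)=\sum_{l\neq i}t_l-\sum_{k\neq j}s_k=T_j-S_i
\end{align*}
is a (strong) upper order function for each product. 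This value is independent of $\sigma$. Proposition~\ref{prop: arithmetics_order_functions}\ref{prop: arithmetics_order_functionsi} then handles the sum over $\sigma$. For $m=1$ the adjugate is $1$, all $S,T$ vanish, and the claim is trivial.

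\textbf{Determinant.} We use $\det(\ad L)=(\det L)^{m-1}=\mathcal D^{m-1}$, which holds because $L\,\ad L=\mathcal D\,I$ and $\mathcal D\neq 0$ off $\tau=0$ by $\mathcal N$-ellipticity. Repeated application of Proposition~\ref{prop: arithmetics_order_functions}\ref{prop: arithmetics_order_functionsii} shows that $\mathcal D^{m-1}$ is (strongly) $\mathcal N$-elliptic with order function $(m-1)\delta$. It remains to check that this matches the required order:
\begin{align*}
\sum_l (T_l-S_l)=\sum_l\Bigl(\sum_{k\neq l}t_k-\sum_{k\neq l}s_k\Bigr)=(m-1)\sum_k (t_k-s_k)=(m-1)\delta .
\end{align*}

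\textbf{Main subtlety.} The only delicate point is the strong version of the product rule. Upper order functions and strong upper order functions are defined only for $|\tau|\ge\lambda$, and the derivatives in the definition are of first order in each variable. The Leibniz rule for $\partial^{(\alpha,\beta)}$ with $\alpha\in\{0,1\}$, $\beta\in\{0,1\}^n$ distributes each variable's derivative to a single factor. Each resulting term is therefore a product of symbols $(\tau,\xi)^{\delta}\partial^{\delta}L_{k\sigma(k)}$ with $\delta\in\{0,1\}\times\{0,1\}^n$. This is exactly the content of Proposition~\ref{prop: arithmetics_order_functions}\ref{prop: arithmetics_order_functionsii}, so we may simply invoke it.
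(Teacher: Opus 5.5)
Your proposal is correct and follows the paper's proof essentially step by step. You expand $(\ad L)_{ij}$ as a signed sum over bijections, observe that $\sum_{k\neq j}(t_{\sigma(k)}-s_k)=T_j-S_i$ independently of $\sigma$, and apply the product and sum rules of Proposition~\ref{prop: arithmetics_order_functions}. You then conclude via $\det(\ad L)=\mathcal D^{m-1}$ being (strongly) $\mathcal N$-elliptic with order function $(m-1)\delta=\sum_l(T_l-S_l)$; your remarks on the case $m=1$ and on justifying $\det(\ad L)=\mathcal D^{m-1}$ are harmless additions.
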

\begin{proof}
    Let $i, j=1,..., m$ and denote $\mathcal{S}^{(ij)}$ the set of all bijections $\{1,..., m\}\setminus \{i\} \to \{1,..., m\}\setminus \{j\}$. Then we have 
    \begin{equation*}
        (\ad L)_{ij}= (-1)^{i+j}\det L^{(ji)}= (-1)^{i+j}\sum_{\sigma \in \mathcal{S}^{(ji)}}\sgn(\sigma) \prod_{\substack{k=1\\k\neq j}}L_{k\sigma(k)}.
    \end{equation*}
    Since $t_{\sigma(k)}-s_k$ is a (strong) upper order function for $L_{k\sigma(k)}$ for all $k\in\{1,\ldots, m\}$ with $k\neq j$ by definition, Proposition \ref{prop: arithmetics_order_functions} yields that $T_j-S_i= -\sum_{l\neq j} s_l + \sum_{l\neq i}t_l= \sum_{l\neq j}t_{\sigma(l)}-s_l$ is a (strong) upper order function for $(\ad L)_{ij}$. 

    To conclude, we remark that $\det(\ad L)= \mathcal{D}^{m-1}$. Since $\mathcal{D}$ is (strongly) $\mathcal{N}$-elliptic with order function $\delta= \sum_{l=1}^m t_l-s_l$, a further application of Proposition \ref{prop: arithmetics_order_functions} yields that $\mathcal{D}^{m-1}$ is (strongly) $\mathcal{N}$-elliptic for the order function $(m-1)\delta = \sum_{l=1}^m T_l-S_l$.
\end{proof}
\begin{example}\label{example: CHG_system}
    The Cahn-Hilliard-Gurtin system 
    \begin{equation}\label{eqn: CHG_system}
        \left\{\begin{array}{rcl}
        \partial_tu_1 -\Delta u_2 &=&f_1,       \\
        \Delta u_1-\partial_tu_1+u_2 &=&f_2,
        \end{array}\right.
    \end{equation}
    admits a structure of strong mixed-order system. Indeed, the symbol matrix associated with the problem is 
    \begin{equation}\label{eqn: CHG_symbol}
        L(\tau, \xi)=
        \begin{pmatrix}
        \ic\tau & \vert \xi\vert^2\\
        -\vert \xi\vert^2-\ic\tau & 1
        \end{pmatrix}, \qquad (\tau, \xi)\in \mathbb{R}\times \mathbb{R}^n.
    \end{equation}
    Choosing the order functions associated with the columns $t_1:=2o_{\frac{1}{2}}+ o_0$, $t_2:= 2o_0$ and the order functions associated with the rows $s_1:=0$, $s_2:= o_0$, one easily verifies that $t_j-s_i$ is a strong upper order function for the element $L_{ij}$, $i,j\in\set{1,2}$. 
    Furthermore, we showed in Example \ref{example: CHG_ellipticity} that the determinant is strongly $\mathcal{N}$-elliptic with order function $\mu_D=2o_{\frac{1}{2}}+2o_0$. Since 
    \begin{equation*}
        t_1+t_2-s_1-s_2= 2o_{\frac{1}{2}}+2o_0=\mu_D,
    \end{equation*}
    we conclude that $L$ is a strong mixed-order system. 
\end{example}
\subsection{The Time-Periodic Setting}\label{sec:time_per}
We recall the time-periodic setting as presented e.g.~in \cite{EiK17,KyS17,kyed2019time}.
Since any time-periodic function $f:\mathbb{R}\to \mathbb{C}$ of period $T>0$ can be considered as a function defined on the torus $\mathbb{T}:=\mathbb{R}/ T\mathbb{Z}$, we define $G:=\mathbb{T}\times \mathbb{R}^n$ and aim at solving our equations in some spaces of functions defined on $G$. Here the differential structure on $G$ is  inherited from $\mathbb{R}\times \mathbb{R}^n$ through the quotient projection $\pi: \mathbb{R}\times \mathbb{R}^n\to G$, that is, 
\begin{equation*}
    C^\infty(G)=\{f: G\to \mathbb{C} \mid f\circ \pi \in C^\infty(\mathbb{R}\times \mathbb{R}^n)\}
\end{equation*}
and differentiation is defined by
\begin{equation*}
    (\partial_t^k\partial_x^\alpha f)(\pi(t, x)):=\partial_t^k\partial^\alpha_x (f\circ \pi)(t, x).
\end{equation*}
for all $f\in C^\infty (G)$, $k\in \mathbb{N}, \alpha\in \mathbb{N}_0^n$.

We define the space of test functions on $G$ in the usual way. For $K\subseteq \R^n$, define $\CRi_K(G):=\set{f\in \CRi(G)\mid \supp f\subseteq \mathbb{T}\times K}$, which is a Fréchet space for the seminorms
\begin{align*}
 \rho_{m,K}(f):=\sup\set{\sum_{k+|\alpha|\le m} |\partial_t^k\partial_x^\alpha f(t,x)| \mid (t,x)\in \mathbb{T}\times K}.
\end{align*}
For a fixed,  exhaustive sequence $K_1\subseteq K_2\ldots\subseteq \R^n$ of compact sets  we can define the test function space $\CRci(G):=\bigcup_{j\in\N} \CRi_{K_j}(G)$ endowed with the canonical LF topology. We also introduce the set of distributions $\cald'(G)$ as the strong dual of $\CRci(G)$.

Remark that since derivatives are continuous operators on $\CRci(G)$, distributional derivatives can be introduced by transposition in the usual way.
Integration on $G$ is done with respect to the product of the (normalized) Haar measure on $\mathbb{T}$ and of the Lebesgue measure on $\mathbb{R}^n$.
In particular, it holds  $f\in L^p(G)$ if and only if $f\circ \pi_{\vert [0, T]\times \mathbb{R}^n}\in L^p([0, T]\times \mathbb{R}^n)$. 

Once weak derivatives and integration have been defined, we can introduce Sobolev spaces.
Since our focus is on evolution equations, we distinguish between temporal and spatial regularity.
We therefore define for $s,r\in\N$ the Sobolev space of dominating mixed smoothness
\begin{equation}\label{eqn: def_sobolev}
    H^{(s, r), p}(G):=\{f\in L^p(G)\mid \partial_t^s\partial_x^\alpha f, \partial_x^\alpha f, \partial_t^sf\in L^p(G) \text{ for all } \alpha\in \mathbb{N}_0^n \text{ with } \vert \alpha\vert=r\}.
\end{equation}
\subsubsection{Fourier Transform and Tempered Distributions on $G$}\label{sec:fourier}
The space $G$ defined above is a locally compact abelian group and thus admits a Pontryagin dual 
\begin{equation*}
    \widehat{G}:=\frac{2\pi}{T}\mathbb{Z}\times \mathbb{R}^n, 
\end{equation*}
where one identifies $\widehat{G}$ by associating to any $(k, \xi)\in \frac{2\pi}{T}\mathbb{Z}\times \mathbb{R}^n $ the character $\chi: G\to \mathbb{C}$, $\chi(t, x):=\mathrm{e}^{\ic x\cdot \xi+\ic tk}$, see e.g. \cite[Section 1.2]{Rud62}.
The measure on $\widehat{G}$ will be the product of the counting measure on $\frac{2\pi}{T}\mathbb{Z}$ and the Lebesgue measure on $\mathbb{R}^n$.\footnote{In fact, we  normalize the Lebesgue measure of $\R^n$ so that the Fourier transform is an isometry between $L^2(G)$ and $L^2(\widehat{G})$.
We suppress this normalization factor in the sequel.}
We can and will identify the Pontryagin dual of $\widehat{G}$ with $G$.

\medskip

One defines a natural differential structure on $\widehat{G}$ by 
\begin{equation*}
    C^\infty(\widehat{G}):=\{h\in C(\widehat{G})\mid h(k, \cdot)\in C^\infty(\mathbb{R}^n) \text{ for all } k\in \mathbb{Z}\}
\end{equation*}
 where for all $f\in C^\infty(\widehat{G}), (k, \xi)\in \widehat{G}$ and $\alpha\in \mathbb{N}^n$, we set 
 \begin{equation*}
     \partial_x^\alpha f(k, \xi):=(\partial_x^\alpha f(k, \cdot))(\xi).
 \end{equation*}

The Schwartz-Bruhat space on $G$ is
\begin{equation*}
    \mathcal{S}(G):=\{f\in C^\infty(G): \rho_{\alpha, \beta, \gamma}(f)<\infty \text{ for all } (\alpha, \beta, \gamma)\in \mathbb{N}_0\times \mathbb{N}_0^n\times \mathbb{N}_0^n\}, 
\end{equation*}
where the topology is induced by the seminorms
\begin{equation*}
    \rho_{\alpha, \beta, \gamma}(f):=\sup\set{\vert x^\gamma \partial_t^\alpha \partial_x^\beta f(t, x)\vert \mid (t, x)\in G}.
\end{equation*}
Similarly, we introduce its counterpart on the dual group $\hat{G}$
\begin{equation*}
    \mathcal{S}(\widehat{G}):=\{h\in C^\infty(\widehat{G}): \hat{\rho}_{\alpha, \beta, \gamma}(h)<\infty \text{ for all } (\alpha, \beta, \gamma)\in \mathbb{N}_0\times \mathbb{N}_0^n\times \mathbb{N}_0^n\},
\end{equation*}
with the seminorms 
\begin{equation*}
    \hat{\rho}_{\alpha, \beta, \gamma}(h):=\sup\set{\vert k^\alpha \xi^\gamma \partial_\xi^\beta h(k, \xi)\vert\mid (k, \xi)\in \widehat{G}}.
\end{equation*}
The tempered distribution spaces  $\mathcal{S}'(G)$ and $\mathcal{S}'(\widehat{G})$ are then introduced as the strong dual of $\mathcal{S}(G)$ and $\mathcal{S}(\widehat{G})$, respectively.

\medskip

As a locally compact abelian group, $G$ admits a Fourier transform  $\mathcal{F}_G: L^1(G)\to C(\widehat{G})$ defined by 
\begin{equation*}
    \mathcal{F}_Gf (k, \xi)= \int_{\mathbb{T}}\int_{\mathbb{R}^n}f(t, x)\exp(-\ic x\cdot \xi - \ic tk)dxdt, \qquad (k, \xi)\in \widehat{G}.
\end{equation*}
and similarly, $\widehat G$ admits a Fourier transform $\mathcal{F}_{\widehat{G}}: L^1(\widehat{G})\to C(G)$
\begin{equation*}
    \mathcal{F}_{\widehat G}h(t, x):=\sum_{k\in \mathbb{Z}}\int_{\mathbb{R}^n}h(k, \xi)\exp(-\ic x\cdot \xi -\ic tk)d\xi, \qquad (t, x)\in G.
\end{equation*}
It holds $\mathcal{F}_G\in \call_{\mathrm{iso}}(\cals(G),\cals(\widehat{G}))$ and $\mathcal{F}_{\widehat{G}}\in \call_{\mathrm{iso}}(\cals(\widehat{G}),\cals(G))$, see \cite[Theorem 3.2]{Waw68}.
By transposition,\footnote{Recall from the beginning of Section \ref{sec:pre} that ${}^t \varphi:Y'\to X'$ for $\varphi:X\to Y$.} the Fourier transform also defines an isomorphism $\calf_G:={}^t\mathcal{F}_{\widehat{G}}\in \call_{\mathrm{iso}}(\mathcal{S}'(G), \mathcal{S}'(\widehat{G}))$.
We will write $\mathcal{F}:=\mathcal{F}_G$.

\medskip
Armed with the Fourier transform on $G$, we can define Fourier multipliers in the usual way. 
We denote by $\mathcal{O}(\mathbb{R}\times \mathbb{R}^n)$ the set of symbols $\mathfrak{m}\in C^\infty (\mathbb{R}\times \mathbb{R}^n)$ such that $\mathfrak{m}$ and all its derivatives are polynomially bounded.
For example, any weight function $w$ for an order function $\mu$ is contained in $\mathcal{O}(\R\times\R^n)$.
We will also set 
\begin{equation*}
    \mathcal{O}(\widehat{G}):=\{ \mathfrak{m}_{\vert \widehat{G}} \mid \mathfrak{m}\in \mathcal{O}(\mathbb{R}\times \mathbb{R}^n)\}. 
\end{equation*}
For a symbol $m\in \mathcal{O}(\widehat{G})$, the multiplication $M_m:\cals(\widehat{G})\to\cals(\widehat{G})$, $M_m(\varphi):=m\varphi$ is linear and bounded, and we can write $mT:={}^t M_m(T)$ for $T\in \cals'(G)$.
Then for $m\in \mathcal{O}(\widehat{G})$, we define $\op[m]\in\call(\mathcal{S}'(G),\mathcal{S}'(G))$ via 
\begin{equation}\label{eqn: def_multiplier}
    \op[m]f :=\mathcal{F}^{-1}m\mathcal{F}f.
\end{equation}
By definition it holds $\op[m_1]\op[m_2]=\op[m_1m_2]=\op[m_2]\op[m_1]$ for all $m_1,m_2\in \calo(\widehat{G})$.

\medskip

A remark on the case $p=2$ is in order:
Since the Fourier transform $\mathcal{F}$ restricted to $L^2(G)$ is an isometric isomorphism $\mathcal{F}\in \call_{\mathrm{iso}}(L^2(G),L^2(\widehat{G}))$ by \cite[Theorem 1.6.1]{Rud62}, the definition in \eqref{eqn: def_multiplier} stays well-defined on $L^2(G)$ if $m\in L^\infty(\widehat{G})$, in which case one has $\op[m]\in \call(L^2(G),L^2(G))$.

\medskip 

In the case $p\neq 2$, the situation is more subtle, as no satisfactory necessary and sufficient condition is currently known for a symbol to define a bounded Fourier multiplier on $L^p(\mathbb{R}^n)$.
For our purposes, however, the following sufficient condition due to Marcinkiewicz (see, for example, Corollary 5.2.5 in \cite{grafakos2008classical}) will suffice.

\begin{theorem}[Marcinkiewicz]\label{thm: Marcinkiewicz}
    Let $p\in(1,\infty)$.
    Let $m$ be a bounded function on $\mathbb{R}^n$ that is of class $C^n$ away from the coordinate axes.
    If for every multi-index $\alpha\in\{0,1\}^n$ the function 
\begin{equation*}
    x\mapsto x^\alpha \partial^\alpha m(x)
\end{equation*}
is bounded, then $m$ is a Fourier multiplier in $L^p(\mathbb{R}^n)$.
\end{theorem}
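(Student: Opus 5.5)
The plan is the classical Littlewood--Paley route, as in Grafakos: reduce to a uniform bound over dyadic rectangles and then combine Littlewood--Paley theory with vector-valued maximal estimates.

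\textbf{Dyadic decomposition.} For $j=(j_1,\ldots,j_n)\in\Z^n$, let $R_j$ be the union of dyadic rectangles $\prod_k\{2^{j_k}\le |x_k|<2^{j_k+1}\}$ (over all sign choices). Let $\Delta_j$ be the sharp frequency projection onto $R_j$. By the one-dimensional Littlewood--Paley theorem applied iteratively in each coordinate (using the boundedness of the Hilbert transform and Fubini), we have
\begin{align*}
\|f\|_{L^p}\simeq \Bigl\|\Bigl(\sum_{j}|\Delta_j f|^2\Bigr)^{1/2}\Bigr\|_{L^p}
\end{align*}
for $p\in(1,\infty)$.

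\textbf{Representation on each rectangle.} Fix $j$ and the positive orthant piece of $R_j$, with corner $a=(2^{j_1},\ldots,2^{j_n})$. For $x\in R_j$ we write, by iterated integration (the fundamental theorem of calculus in each variable),
\begin{align*}
m(x)=\sum_{\alpha\in\{0,1\}^n}\int_{\prod_{k:\alpha_k=1}[a_k,x_k]}\partial^\alpha m(y_\alpha,a_{\alpha^c})\,\mathrm{d}y_\alpha .
\end{align*}
Here $m$ is of class $C^n$ on $R_j$, since $R_j$ avoids the coordinate axes. Consequently $m\mathbbm{1}_{R_j}$ is a superposition, with weight $\partial^\alpha m(y_\alpha,a_{\alpha^c})\,\mathrm{d}y_\alpha$, of indicator functions of subrectangles $\prod_{k:\alpha_k=1}[y_k,2^{j_k+1})$. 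The total mass of this weight is
\begin{align*}
\int|\partial^\alpha m|\,\mathrm{d}y_\alpha\lesssim \int_{R_j}|y^\alpha\partial^\alpha m|\prod_k\frac{\mathrm{d}y_k}{y_k},
\end{align*}
which is bounded uniformly in $j$, because each $\mathrm{d}y_k/y_k$ integral over a dyadic interval equals $\log 2$. The hypothesis on $x^\alpha\partial^\alpha m$ is used exactly here.

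\textbf{Reduction to rectangle projections.} It follows that $\Delta_j T_m f$ is an average, with uniformly bounded total variation, of operators $S_{\Omega}\Delta_j f$, where $S_\Omega$ denotes the rectangle projection onto a subrectangle $\Omega$. Cauchy--Schwarz then gives the pointwise bound
\begin{align*}
|\Delta_jT_mf|^2\lesssim \int |S_{\Omega_y}\Delta_jf|^2\,\mathrm{d}\nu_j(y),
\end{align*}
with $\|\nu_j\|\le C$.

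\textbf{Main obstacle: the square function estimate.} The hard step is to show that
\begin{align*}
\Bigl\|\Bigl(\sum_j\int|S_{\Omega_y}\Delta_j f|^2\,\mathrm{d}\nu_j\Bigr)^{1/2}\Bigr\|_{L^p}\lesssim\|f\|_{L^p}.
\end{align*}
I would write each rectangle projection as a composition of modulated Hilbert transforms, $S_{[b,c)}=\tfrac{\ic}{2}(M_bHM_{-b}-M_cHM_{-c})$ in each coordinate. Then I would invoke the $\ell^2$-valued (and $L^2(\nu)$-valued) boundedness of the Hilbert transform on $L^p$, i.e.\ the Marcinkiewicz--Zygmund extension, iterated over the coordinates. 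This yields a bound by $\|(\sum_j|\Delta_jf|^2)^{1/2}\|_{L^p}\lesssim\|f\|_{L^p}$. Finally, combining this with the lower Littlewood--Paley inequality applied to $T_mf$ gives $\|T_mf\|_{L^p}\lesssim\|f\|_{L^p}$, which is the claim.
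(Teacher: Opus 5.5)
Your proposal is correct and is essentially the classical argument behind Corollary 5.2.5 in Grafakos: product Littlewood--Paley theory with sharp dyadic cutoffs, the iterated fundamental-theorem-of-calculus representation of $m$ on each dyadic rectangle, and the $L^2$-valued bound for rectangle projections via modulated Hilbert transforms; the paper does not prove the theorem itself but simply cites that corollary. The only slips are cosmetic: the mass estimate should be taken over the slice $\{y_{\alpha^c}=a_{\alpha^c}\}$ with weight $\prod_{k:\alpha_k=1}\mathrm{d}y_k/y_k$ rather than over all of $R_j$, and the Cauchy--Schwarz step should use $|\nu_j|$; neither affects the uniform bound $(\log 2)^{|\alpha|}\sup|x^\alpha\partial^\alpha m|$.
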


In order to apply this result to our time-periodic setting, we will use the following transfer theorem, see for example \cite{EdG77} or \cite[Theorem 2.15]{EiK17} for a more modern exposition.
    \begin{theorem}[Transfer principle]\label{thm: transfer}
    Let $G$ and $H$ be locally compact abelian groups with respective Pontryagin dual $\widehat{G}$ and $\widehat{H}$.
    Let $\Phi: \widehat{G}\to \widehat{H}$ be a continuous homomorphism.
    If $m\in C_b(\widehat{H})$ is a Fourier multiplier in $L^p(H)$, then $m\circ \Phi\in C_b (\widehat{G})$ is a Fourier multiplier in $L^p(G)$.
\end{theorem}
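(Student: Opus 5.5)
The plan is to argue by duality and to transfer the known $L^p(H)$-bound by averaging over translations in $G$, in the spirit of de Leeuw's restriction argument but for a general continuous homomorphism.

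\textbf{Setup.} Using Pontryagin duality, identify the bidual of $G$ with $G$ and the bidual of $H$ with $H$. Let $\psi:H\to G$ be the dual homomorphism of $\Phi$. It is continuous and characterized by
\[
\gamma(\psi(y))=(\Phi\gamma)(y),\qquad \gamma\in\widehat G,\ y\in H.
\]
Let $C_m$ be the multiplier norm of $m$ on $L^p(H)$. It suffices to prove
\[
\Bigl|\int_{\widehat G} m(\Phi\gamma)\,\widehat f(\gamma)\,\widehat g(\gamma)\dd\gamma\Bigr|\le C_m\|f\|_{L^p(G)}\|g\|_{L^{p'}(G)}
\]
for $f,g$ in a dense class. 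I take $f,g\in L^1\cap L^\infty(G)$ with $\widehat f,\widehat g$ continuous and compactly supported. This class is dense in $L^p(G)$ and $L^{p'}(G)$ respectively. The bound then extends $\op[m\circ\Phi]$ to a bounded operator. Boundedness and continuity of $m\circ\Phi$ are immediate.

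\textbf{Step 1: transfer identity.} For $a\in G$ put $f_a:=f(\cdot+a)$. The composition $f_a\circ\psi$ is a superposition of the characters $\Phi\gamma$ of $H$:
\[
(f_a\circ\psi)(y)=\int_{\widehat G}\widehat f(\gamma)\,\gamma(a)\,(\Phi\gamma)(y)\dd\gamma .
\]
Take auxiliary functions $k_1\in L^p(H)$ and $k_2\in L^{p'}(H)$. The boundedness of $\op[m]$ on $L^p(H)$ and H\"older give
\[
\bigl|\langle \op[m]((f_a\circ\psi)k_1),(g_{-a}\circ\psi)k_2\rangle_H\bigr|\le C_m\|(f_a\circ\psi)k_1\|_p\,\|(g_{-a}\circ\psi)k_2\|_{p'} .
\]
Now integrate over $a\in G$. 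By Fubini and translation invariance of Haar measure on $G$,
\[
\int_G\int_H|f(\psi(y)+a)|^p|k_1(y)|^p\dd y\dd a=\|f\|_p^p\|k_1\|_p^p .
\]
This is the key point: no injectivity or properness of $\psi$ is needed. H\"older in $a$ then bounds the integrated left side by
\[
C_m\|f\|_p\|g\|_{p'}\|k_1\|_p\|k_2\|_{p'} .
\]

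\textbf{Step 2: computing the left side.} The integration over $a$ kills all cross terms in $\gamma,\gamma'$ except the diagonal. Expanding in Fourier variables, the integrated pairing becomes
\[
\int_{\widehat G}\widehat f(\gamma)\widehat g(\gamma)\int_{\widehat H} m(\Phi\gamma+\eta)\,\widehat{k_1}(\eta)\,\widehat{k_2}(-\eta)\dd\eta\dd\gamma .
\]
This requires a Fubini justification. If $\widehat{k_1}\widehat{k_2}(-\cdot)$ concentrates at $0\in\widehat H$ with total mass $\int_H k_1k_2$, then uniform continuity of $m$ on the compact set $\Phi(\supp\widehat f)$ plus a neighbourhood makes the inner integral tend to $m(\Phi\gamma)\int_H k_1k_2$. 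The convergence is uniform on $\supp\widehat f$.

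\textbf{Step 3 (main obstacle): approximate units on a general $H$.} I need a net $(k_1^\alpha,k_2^\alpha)$ with
\[
\int_H k_1^\alpha k_2^\alpha=1,\qquad \|k_1^\alpha\|_p\|k_2^\alpha\|_{p'}\to1,
\]
and $\widehat{k_1^\alpha}\,\widehat{k_2^\alpha}(-\cdot)$ concentrating at $0$. The first attempt is Følner sets $F_\alpha\subseteq H$, which exist since $H$ is abelian and hence amenable. I would set
\[
k_1^\alpha:=|F_\alpha|^{-1/p}\mathbbm 1_{F_\alpha},\qquad k_2^\alpha:=|F_\alpha|^{-1/p'}\mathbbm 1_{F_\alpha}.
\]
These satisfy the mass and norm conditions exactly. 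The relevant product is then the Fourier transform of $|F_\alpha|^{-1}\mathbbm 1_{F_\alpha}*\widetilde{\mathbbm 1}_{F_\alpha}$, a positive-definite function tending to $1$ uniformly on compacta of $H$ by the Følner property. Hence the corresponding probability measures on $\widehat H$ converge weakly to $\delta_0$. Weak convergence against the bounded continuous function $\eta\mapsto m(\Phi\gamma+\eta)$ suffices for Step 2. This weak convergence, with its uniformity in $\gamma\in\supp\widehat f$, is the step requiring the most care.

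Letting $\alpha\to\infty$ in Steps 1 and 2 gives the desired estimate with constant $C_m$.
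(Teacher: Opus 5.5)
Your proposal is correct in substance but takes a genuinely different route; two steps need repair, listed below. The paper does not prove this statement: it quotes it from Edwards--Gaudry and from Eiter--Kyed (Theorem 2.15), whose argument goes back to de Leeuw and Saeki. As I recall that classical route, the non-compactness is removed once and for all by a lemma. The lemma says that, for continuous $m$, the $L^p$-multiplier norm is unchanged when $\widehat H$ is given the discrete topology, i.e.\ when $H$ is replaced by its Bohr compactification. On compact groups the averaging over translations needs no weights, since Haar measure is finite, and a general continuous homomorphism causes no trouble.

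You instead stay on the original groups. The infinite Haar measure of $H$ is compensated by the F\o lner weights $k_1,k_2$, the amenable analogue of de Leeuw's Gaussian weights $e^{-\varepsilon|x|^2/p}$, $e^{-\varepsilon|x|^2/p'}$. You integrate rather than average over translations in $G$, which works because Tonelli gives $\int_G\|(f_a\circ\psi)k_1\|_p^p\dd a=\|f\|_p^p\|k_1\|_p^p$ for any continuous $\psi$. This is essentially Coifman--Weiss transference for the action $y\cdot x=x+\psi(y)$ of $H$ on $G$, written on the Fourier side.

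Your route is self-contained and gives the sharp bound $\|m\circ\Phi\|_{M_p}\le\|m\|_{M_p}$. The price is F\o lner nets, a L\'evy-type concentration step, and an oscillatory integral in $a$. For the concentration, test the densities $|F_\alpha|^{-1}|\widehat{\mathbf 1_{F_\alpha}}|^2$ against $\mathbf 1_W*\mathbf 1_{-W}$ for a small compact symmetric $W\subseteq\widehat H$. For the case the paper actually uses ($\psi$ the quotient map $\R^{1+n}\to\mathbb T\times\R^n$), cubes suffice and your proof reduces to de Leeuw's.

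First repair: the second factor must be translated by the same $a$. With $g_{-a}=g(\cdot-a)$ the $a$-integral does not collapse to the diagonal; already for $m\equiv1$,
\[
\int_G\int_H f(\psi(y)+a)\,g(\psi(y)-a)\,k_1(y)k_2(y)\dd y\dd a=\int_H k_1(y)k_2(y)\,(f*g)(2\psi(y))\dd y ,
\]
instead of $\int_G fg\cdot\int_H k_1k_2$. Pairing with $(g_a\circ\psi)k_2$ gives in the limit $\int_{\widehat G}m(\Phi\gamma)\widehat f(\gamma)\widehat g(-\gamma)\dd\gamma$. That is your target with $g$ replaced by its reflection, which has the same $L^{p'}$-norm.

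Second repair: Step 2 is not a Fubini argument, because $\int_G\gamma(a)\gamma'(a)\dd a$ does not converge absolutely for non-compact $G$. Instead, set $F(a):=\langle\op[m]((f_a\circ\psi)k_1),(g_a\circ\psi)k_2\rangle_H$; it lies in $L^1(G)$ by your Step 1 bound. By Parseval on $H$ and an absolutely convergent exchange in $\gamma,\gamma',\eta$, one has $F(a)=\int_{\widehat G}\Theta(\chi)\chi(a)\dd\chi$ with $\Theta$ continuous and compactly supported. Fourier inversion then gives $\int_G F=\Theta(0)$. Continuity of $\Theta$ is exactly where $k_1,k_2\in L^2(H)$ is used, which holds for normalized indicators of compact F\o lner sets. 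With these two changes the argument is complete.
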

In our case, the homomorphism $\Phi$ is given by the inclusion $\Phi: \widehat{G}=\frac{2\pi}{T}\mathbb{Z}\times \mathbb{R}^n\hookrightarrow \mathbb{R}\times \mathbb{R}^n$.  Thus, if $\mathfrak{m}\in \mathcal{O}(\mathbb{R}\times \mathbb{R}^n)$ is a Fourier multiplier in $L^p(\mathbb{R}\times \mathbb{R}^n)$, then its restriction $m:=\mathfrak{m}_{\vert \widehat{G}}\in \mathcal{O}(\widehat{G})$ is a Fourier multiplier in $L^p(G)$.
\subsubsection{Purely Oscillatory Spaces}\label{sec:oscill}
As detailed, for example, in \cite{KyS17}, an efficient way to deal with time-periodic problems is to decompose them into one time-independent problem and one time-dependent problem, called 'purely oscillatory', where one can ignore the eventual singularities of the Fourier multipliers at the origin $k=0$.
This is done by introducing the time-averaging projections $\mathcal{P}\in \call(\mathcal{D}(G))$ and $\mathcal{P}_\perp \in\call(\mathcal{D}(G))$ by
    \begin{equation*}
        \mathcal{P}f(t, x)=\int_{\mathbb{T}}f(s,x)\dd s, \qquad \mathcal{P}_\perp=\id_G-\mathcal{P}.
    \end{equation*}
Note that $\mathcal{P}$ projects onto time-independent functions, while $\mathcal{P}_\perp$ removes the temporal mean.
It is clear that $\mathcal{P}$ and $\mathcal{P}_\perp$ are complementary continuous projections, and we continue to write $\mathcal{P}$ and $\mathcal{P}_\perp$ for their transposes  ${}^t\mathcal{P},{}^t\mathcal{P}_\perp\in \call(\mathcal{D}'(G))$.
For any locally convex space $E(G)\subseteq \mathcal{D}'(G)$, we will write $E_\perp(G):= \mathcal{P}_\perp E(G)$. 
Introducing $\delta_{\mathbb{Z}}:\frac{2\pi}{T}\mathbb{Z}\to \R$ as 
\begin{equation*}
    \delta_{\mathbb{Z}}(k):=\left\{\begin{array}{ll}
    1     & \text{if } k=0,  \\
    0     & \text{if } k\in \frac{2\pi}{T}\mathbb{Z}\setminus \{0\},
    \end{array}\right.
\end{equation*}
we can write $\mathcal{P}=\op[\delta_{\mathbb{Z}}]$ and $\mathcal{P}_\perp= \op[1-\delta_{\mathbb{Z}}]$ as operators on $\mathcal{S}'(G)$.

\medskip

Those projections allow us to decompose differential equations. Given an equation of the form $\op[m]u=f$ for some $u, f\in \mathcal{S}'(G)$ and $m\in \mathcal{O}(\widehat{G})$, one has 
\begin{equation*}
    \op[m]u=f 
    \quad \Leftrightarrow \quad 
    \left\{\begin{array}{rcl}
    \mathcal{P}\op[m] u&=&\mathcal{P}f,      \\
    \mathcal{P}_\perp \op[m]u&=&\mathcal{P}_\perp f,  
    \end{array}\right. 
    \quad \Leftrightarrow \quad 
    \left\{\begin{array}{rcl}
    \op[\delta_{\mathbb{Z}}m] u&=&\mathcal{P}f    ,  \\
     \op[(1-\delta_{\mathbb{Z}})m]u&=&\mathcal{P}_\perp f.  
    \end{array}\right. 
\end{equation*}
Here the line associated with $\mathcal{P}$ is no longer time-dependent and often falls into the scope of classical elliptic theory.
We will thus focus mainly on the part associated with $\mathcal{P}_\perp$, often called \emph{purely oscillatory} problem.
Indeed, reducing the time-periodic problem to the purely oscillatory setting allows us to consider operators $\op[m]$ for which the symbol $m$ may exhibit singularities in $k=0$. 

\medskip

To this end, we introduce a cut-off function $\eta\in C^\infty(\mathbb{R})$ with $\mathbbm{1}_{[-\frac{\pi}{T},\frac{\pi}{T}]}\le 1-\eta\le \mathbbm{1}_{[-\frac{2\pi}{T},\frac{2\pi}{T}]}$.
In particular
\begin{equation}\label{eqn: def_cutoff}
        \eta(\tau)=\left\{\begin{array}{ll}
        0     & \text{if } \vert \tau\vert \leq \frac{\pi}{T}  \\
        1     & \text{if } \vert \tau \vert \geq \frac{2\pi}{T}.
        \end{array}\right.
    \end{equation}
Then $\eta_{\vert \frac{2\pi}{T}\mathbb{Z}}=1-\delta_{\mathbb{Z}}$. 
We set
\begin{align*}
    \mathcal{O}^\perp(\R\times\R^n)&:=\{\mathfrak{m}\in\CRi((\R\setminus\{0\})\times\R^n) \mid \eta \mathfrak{m} \in \mathcal{O}(\mathbb{R}\times \mathbb{R}^n) \},\\
    \mathcal{O}^\perp(\widehat{G})&:=\{(\eta \mathfrak{m})_{\vert \widehat{G}}\mid \mathfrak{m}\in\mathcal{O}^\perp(\R\times\R^n)\}.
\end{align*}
Then $m=(1-\delta_{\Z})m$ for $m\in \mathcal{O}^\perp(\widehat{G})$.
If there is no danger of confusion, for $\mathfrak{m}\in \mathcal{O}^\perp(\R\times\R^n)$ we will often write
\begin{equation}\label{eqn: def_multiplier_perp}
    \op[\mathfrak{m}]:= \op[(\eta\mathfrak{m})_{|\widehat{G}}]:\mathcal{S}'_\perp(G)\to \mathcal{S}'_\perp(G). 
\end{equation}

\subsection{Newton Polygon Scales}\label{sec:newt_poly_scale}
The previous section suggests to introduce a scale of function spaces $H^{\mu,p}_\perp(G)$ as image spaces of $L^p_\perp(G)$ under the operators $\op[w^{-1}]$, where $w$ is a weight function for $\mu$.
Indeed, we will need a more flexible framework of scales of spaces, which is stable under interpolation.
This is because interpolation spaces will naturally appear in the trace theory established in Section \ref{sec:trc_spc}.
The idea is to identify which minimal properties are required for a family of topological vector spaces $\{E(s, r)\mid s, r \in \mathbb{R}\}$ to be compatible with a Newton polygon (or more generally order function) representation.
We then show that those properties are preserved under interpolation.
\begin{definition}\label{def: Newton_polygon_scale}
\begin{enumerate}
    \item We consider a family of spaces $\mathcal{E}=\{ E(s,r): r, s \in \mathbb{R}\}$ such that $E(s,r) $ is a topological vector subspace of $\mathcal{S}'_\perp(G)$ for all $r,s\in \mathbb{R}$.
    Then the family $\mathcal{E}$ is a \emph{Newton polygon scale} if the following properties are verified:
    ~\begin{itemize}
        \item \emph{(lifting property)} For all $r, s\in \mathbb{R}$, the weight $w_{(s,r)}(\tau,\xi):=\langle\tau\rangle^s\langle\xi\rangle^r$ satisfies
        \begin{equation*}
            \op[w_{(s,r)}]\in \mathcal{L}_{\mathrm{iso}}(E(s,r), E(0,0)).
        \end{equation*}
        \item \emph{(convexity property)} Let $w$ be a weight function for a strictly positive order function $\mu$, and let $(r, s)\in \mathcal{N}(\mu)$. Then
        \begin{equation*}
            \op[\frac{w_{(s,r)}}{w}]\in \mathcal{L}(E(0,0)). 
        \end{equation*}
        \end{itemize}
    \item For a Newton polygon scale $\cale$ and a weight function $w$ for an order function $\mu$, we define
\begin{align*}
    E(\mu):=\op[w^{-1}]E(0,0).
\end{align*}
\end{enumerate}
\end{definition}
\begin{remark}\label{remark: equiv_weight_functions}
\begin{enumerate}
    \item\label{remark: equiv_weight_functionsi}
    If $w_1$ and $w_2$ are two weight functions for a strictly positive order function $\mu$, then linearity and the convexity property show that $\op[\frac{w_1}{w_2}]\in \mathcal{L}_{\mathrm{iso}}(E(0,0))$.
    \item\label{remark: equiv_weight_functionsii} The definition of $E(\mu)$ does not depend on the choice of the weight function.
    Indeed, let $v$ be another weight function for $\mu$, and choose strictly positive order functions $\mu_1$, $\mu_2$, $\nu_1$, $\nu_2$ with $\mu=\mu_1-\mu_2=\nu_1-\nu_2$ and corresponding $w_1$, $w_2$, $v_1$, $v_2$ weight functions with $w=\frac{w_1}{w_2}$, $v=\frac{v_1}{v_2}$.
    By Corollary \ref{cor: product_weight_functions}, both $w_1v_2$ and $w_2v_1$ are weight functions for the strictly positive order function $\mu_1+\nu_2=\mu_2+\nu_1$.
    Hence, $\op[w^{-1}v]=\op[\frac{w_2v_1}{w_1v_2}]\in \call_{\mathrm{iso}}(E(0,0))$ by part \ref{remark: equiv_weight_functionsi}, and therefore the identity map between the two realizations of $E(\mu)$ is a topological isomorphism.
    \item\label{remark: equiv_weight_functionsiii}
    In particular, we have $\op[w]\in \call_{\mathrm{iso}}(E(\mu),E(0,0))$ for any weight function $w$ for $\mu$.
    Moreover, it holds $E(\mu_{(s,r)})=E(s,r)$ for all $s,r\in\R$, since $w_{(s,r)}$ is a weight function for $\mu_{(s,r)}$.
\end{enumerate}
\end{remark}
\begin{proposition}\label{js014}
    Let $\cale$ be a Newton polygon scale, $\mu$ and $\nu$ be order functions and let $w$ be a weight function for $\mu$.
    Then it holds $\op[w]\in \call_{\mathrm{iso}}(E(\mu+\nu),E(\nu))$.
\end{proposition}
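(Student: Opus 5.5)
The plan is to reduce everything to the definitions $E(\mu+\nu)=\op[u^{-1}]E(0,0)$ and $E(\nu)=\op[v^{-1}]E(0,0)$, for chosen weight functions $u$ for $\mu+\nu$ and $v$ for $\nu$. Remark \ref{remark: equiv_weight_functions} \ref{remark: equiv_weight_functionsii} says these spaces do not depend on the chosen weights. By Corollary \ref{cor: product_weight_functions}, $wv$ is a weight function for $\mu+\nu$. We may therefore take $u:=wv$, so that $E(\mu+\nu)=\op[(wv)^{-1}]E(0,0)$ with its final (quotient) topology.

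The key steps, in order:

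\begin{enumerate}
\item Since $\op[\cdot]$ is multiplicative on $\calo^\perp$ symbols, we have
\[
\op[w]\op[(wv)^{-1}]=\op[v^{-1}].
\]
Hence $\op[w]$ maps $E(\mu+\nu)$ onto $\op[v^{-1}]E(0,0)=E(\nu)$.

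\item The map $\op[v]$ is injective on $\cals'_\perp(G)$, because $v$ does not vanish and $\op[v]\op[v^{-1}]=\id$ there. So $\op[v^{-1}]$ is a bijection from $E(0,0)$ onto $E(\nu)$. With the final topology it is a topological isomorphism, since a bijective quotient map is a homeomorphism. The same holds for $\op[(wv)^{-1}]:E(0,0)\to E(\mu+\nu)$. This is also the content of Remark \ref{remark: equiv_weight_functions} \ref{remark: equiv_weight_functionsiii}.

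\item Composing these maps gives
\[
\op[w]=\op[v^{-1}]\circ\bigl(\op[(wv)^{-1}]\bigr)^{-1}:E(\mu+\nu)\to E(\nu),
\]
a composition of two topological isomorphisms. Its inverse is $\op[w^{-1}]$ restricted to $E(\nu)$.
\end{enumerate}

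The main obstacle is the bookkeeping of the independence of the weight function. Changing from an arbitrary weight $u$ for $\mu+\nu$ to $wv$ only changes the space up to the isomorphism $\op[u/(wv)]\in\call_{\mathrm{iso}}(E(0,0))$ from the Remark, and this is exactly where the convexity property of the scale is used. One also has to check that all symbols involved (in particular $w^{\pm1}$) lie in $\calo^\perp$, so that the multiplier calculus on $\cals'_\perp(G)$ applies. This holds because weight functions are smooth, polynomially bounded, and have polynomially bounded derivatives.
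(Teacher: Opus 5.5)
Your proposal is correct and is essentially the paper's argument. The paper also takes a weight function $v$ for $\nu$ and uses Corollary \ref{cor: product_weight_functions} to see that $wv$ is a weight function for $\mu+\nu$. It then writes $\op[w]=\op[v]^{-1}\op[wv]$ as a composition of $\op[wv]\in\call_{\mathrm{iso}}(E(\mu+\nu),E(0,0))$ and $\op[v]^{-1}\in\call_{\mathrm{iso}}(E(0,0),E(\nu))$, both taken from Remark \ref{remark: equiv_weight_functions}\ref{remark: equiv_weight_functionsiii}, which is exactly your factorization $\op[v^{-1}]\circ\bigl(\op[(wv)^{-1}]\bigr)^{-1}$.
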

\begin{proof}
    Let $v$ be a weight function for $\nu$.
    Since $wv$ is a weight function for $\mu+\nu$ by Corollary \ref{cor: product_weight_functions}, we have $\op[wv]\in \call_{\mathrm{iso}}(E(\mu+\nu),E(0,0))$ and $\op[v]\in \call_{\mathrm{iso}}(E(\nu),E(0,0))$, so that $\op[w]=\op[v]^{-1}\op[wv]\in \call_{\mathrm{iso}}(E(\mu+\nu),E(\nu))$.
\end{proof}
\begin{proposition}\label{prop: properties_Newton_scales}
Let $\mathcal{E}=\{ E(s,r): r, s \in \mathbb{R}\}$ be a Newton polygon scale and $\mu, \nu$ strictly positive order functions.
Let $\caln(\mu)_{SV}\subseteq \{(r_i, s_i)\mid i\in\{1,..., I\}\}\subseteq \caln(\mu)$.
~\begin{enumerate}
    \item\label{prop: properties_Newton_scalesi} We have $E(\mu)= \bigcap_{i=1}^{I}E(s_i, r_i)$ algebraically and topologically, where the intersection space is equipped with the canonical initial topology.
    \item\label{prop: properties_Newton_scalesii}  For all $(r, s)\in \mathcal{N}(\mu)$ it holds $ E(\mu) \hookrightarrow E(s, r)$. Therefore if $\mathcal{N}(\nu)\subseteq \mathcal{N}(\mu)$, then $E(\mu)\hookrightarrow E(\nu)$. 
\end{enumerate}
\end{proposition}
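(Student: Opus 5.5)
I will choose the weight function $w:=\sum_{i=1}^I \langle\tau\rangle^{s_i}\langle\xi\rangle^{r_i}=\sum_{i=1}^I w_{(s_i,r_i)}$. By Definition \ref{def: gen_weight_fct} and the assumption $\caln(\mu)_{SV}\subseteq\{(r_i,s_i)\}\subseteq\caln(\mu)$, this is a weight function for $\mu$. By Remark \ref{remark: equiv_weight_functions}, $E(\mu)=\op[w^{-1}]E(0,0)$ is independent of this choice, and $\op[w]\in\call_{\mathrm{iso}}(E(\mu),E(0,0))$.

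For part \ref{prop: properties_Newton_scalesii}, let $(r,s)\in\caln(\mu)$ and $f\in E(\mu)$. Then
\[
\op[w_{(s,r)}]f=\op[\tfrac{w_{(s,r)}}{w}]\op[w]f,
\]
since these multipliers commute and compose multiplicatively on $\cals'_\perp(G)$. The convexity property gives $\op[\frac{w_{(s,r)}}{w}]\in\call(E(0,0))$, so $\op[w_{(s,r)}]f\in E(0,0)$ with a continuous dependence on $f$. Applying the inverse of the lifting isomorphism $\op[w_{(s,r)}]\in\call_{\mathrm{iso}}(E(s,r),E(0,0))$, I get $f=\op[w_{(s,r)}^{-1}]\op[w_{(s,r)}]f\in E(s,r)$ continuously. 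This proves $E(\mu)\hookrightarrow E(s,r)$. For the second claim, take a weight function $v$ for $\nu$ built from $\caln(\nu)_{SV}\subseteq\caln(\nu)\subseteq\caln(\mu)$. Then $\op[\frac{v}{w}]=\sum_j\op[\frac{w_{(s_j,r_j)}}{w}]\in\call(E(0,0))$ by linearity of the convexity property, and the same factorization argument gives $E(\mu)\hookrightarrow E(\nu)$.

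For part \ref{prop: properties_Newton_scalesi}, the inclusion $E(\mu)\hookrightarrow\bigcap_i E(s_i,r_i)$ follows from part \ref{prop: properties_Newton_scalesii}, since every $(r_i,s_i)\in\caln(\mu)$. For the converse, let $f\in\bigcap_iE(s_i,r_i)$. Then $\op[w]f=\sum_i\op[w_{(s_i,r_i)}]f\in E(0,0)$ by the lifting property, with
\[
\|\op[w]f\|\lesssim\sum_i\|f\|_{E(s_i,r_i)}
\]
(or the corresponding seminorm estimate in the locally convex case). Therefore $f=\op[w^{-1}]\op[w]f\in E(\mu)$, and the map $f\mapsto f$ from the intersection to $E(\mu)$ is continuous, because $E(\mu)$ carries the final topology transported by $\op[w^{-1}]$, i.e.\ $\op[w^{-1}]\in\call_{\mathrm{iso}}(E(0,0),E(\mu))$.

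The main obstacle is the bookkeeping of which space each multiplier identity holds in. I must check that $\op[w^{-1}]\op[w]=\id$ on $\cals'_\perp(G)$; this holds because $w$ and $w^{-1}$ lie in $\calo(\widehat G)$. I must also check that the topologies agree, as initial topologies versus final ones. The latter follows because all maps involved are continuous between spaces whose topologies are defined via the isomorphisms $\op[w]$ and $\op[w_{(s_i,r_i)}]$.
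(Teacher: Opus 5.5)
Your proof is correct and follows essentially the paper's argument. It uses the same weight $w=\sum_i w_{(s_i,r_i)}$, the factorization $\op[w_{(s,r)}]=\op[\frac{w_{(s,r)}}{w}]\op[w]$ together with the lifting and convexity properties, and the universal property of the initial topology on the intersection. The only cosmetic difference is in the second claim of part (ii): you obtain $E(\mu)\hookrightarrow E(\nu)$ directly from a weight function $v$ for $\nu$ and the linearity of the convexity property, whereas the paper applies part (i) to $\nu$ and checks the embedding vertex by vertex.
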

\begin{proof}
    \begin{enumerate}[leftmargin=*]
    	\item Let us write $X:=\bigcap_{i=1}^{I }E(s_i, r_i)$.
	Observe that it suffices to show that $\op[w]\in \call_{\mathrm{iso}}(X,E(0,0))$ for $w=\sum_{i=1}^{I} w_{(s_i,r_i)}$.
	Since $\op[w_{(s_i,r_i)}]\in \call(E(s_i,r_i),E(0,0))$ for all $i\in \set{1,\ldots,I}$ by the lifting property, the defining property of the initial topology on $X$ implies $\op[w_\mu]\in \call(X, E(0,0))$.

	\medskip
	
	For the inverse, observe that for each $i\in \set{1,\ldots,I}$,
	\begin{align*}
		\op[w^{-1}]=\op[w_{(s_i,r_i)}]^{-1}\op\left[\frac{w_{(s_i,r_i)}}{w}\right]\in \call(E(0,0),E(s_i,r_i))
	\end{align*}
	by the lifting and convexity properties.
	Therefore, the defining property of the initial topology on $X$ yields $\op[w^{-1}]\in \call (E(0,0),X)$.
	\item Let $(r,s)\in\mathcal N(\mu)$.
	Since $\op[w_\mu]\in\mathcal L(E(\mu),E(0,0))$, the lifting and convexity properties yield
	\begin{align*}
		\op[w_{(s_j,r_j)}]^{-1} \op\left[\frac{w_{(s_j,r_j)}}{w_\mu}\right] \op[w_\mu] \in \mathcal L(E(\mu),E(s,r)).
	\end{align*}
	Since the operator on the left is the canonical inclusion, it follows that $E(\mu)\hookrightarrow E(s,r)$.
	If $\mathcal N(\nu)\subseteq\mathcal N(\mu)$, then $E(\mu)\hookrightarrow E(\sigma,\rho)$ for every vertex $(\rho,\sigma)$ of $\mathcal N(\nu)$, and thus $E(\mu)\hookrightarrow E(\nu)$ by the defining property of the initial topology on $E(\nu)$.
    \end{enumerate}
\end{proof}
\begin{definition}\label{def: ellipticity_scales}
    Let $\mathcal{E}=\{ E(s,r): r, s \in \mathbb{R}\}$ be a Newton polygon scale and let $\mu$ be an order function.
    An operator $A: \mathcal{S}'_\perp(G)\to \mathcal{S}'_\perp(G)$ is \emph{$\mathcal{N}$-elliptic} for $\mu$ in the scale $\mathcal{E}$ if for every order function $\nu$, it holds $A\in \mathcal{L}_{\mathrm{iso}}(E(\mu+\nu), E(\nu))$. 
\end{definition}\label{js017}
In particular, if $A$ commutes with all the $\op[w_{(s,r)}]$ (this is the case for any pseudodifferential operator $\op[P]$ with constant coefficients), then $A$ is $\mathcal{N}$-elliptic for the order function $\mu$ in the scale $\cale$ if and only if 
\begin{equation*}
    A\circ \op[w]^{-1} \in \mathcal{L}_{\mathrm{iso}}(E(0,0), E(0,0))
\end{equation*}
for any weight function $w$ associated with $\mu$, since $\op[v]A\op[wv]^{-1}=A\op[w]^{-1}$ for a weight function $v$ for $\nu$ using commutativity.
It is clear that if  $P\in \mathcal{O}^\perp(\R\times\R^n)$ is strongly $\mathcal{N}$-elliptic in the sense of Definition \ref{def: Newton_ellipticity}, then Theorem \ref{thm: whole space} together with Remark \ref{remark: lift_whole_space} yields $\op[P]\circ \op[w_{-\mu}]\in \mathcal{L}_{\mathrm{iso}}(L^p_\perp(G))$.
Consequently, strong $\caln$-ellipticity of the symbol $P$ implies $\caln$-ellipticity of the operator $\op[P]$ in the scale $\mathcal H$.

\medskip 

We now investigate the interplay of those properties with interpolation.
Let $\mathcal{I}$ be an interpolation functor on the category of Banach spaces.
For an interpolation couple of Banach spaces $(A_0, A_1)$, we will write $\mathcal{I}(A_0, A_1)$ for the associated interpolation space. 
Let $\mathcal{E}=\{ E(s,r): r, s \in \mathbb{R}\}$ be a Newton polygon scale of Banach spaces.
For $r,s\in \R$ define
\begin{align*}
    E_\mathcal{I}^{(0)}(s,r):=\mathcal{I}(E(s-1, r),E(s+1, r)),
    \qquad 
    E_\mathcal{I}^{(1)}(s,r):=\mathcal{I}(E(s, r-1),E(s, r+1)),
\end{align*}
and for $k\in\set{0,1}$ the $\mathcal{I}$-interpolated scales $\mathcal{E}_{\mathcal{I}}^{(k)}=\{E_\mathcal{I}^{(k)}(s,r) \mid r, s\in \mathbb{R}\}$.
We now show that any interpolation scale of a Newton polygon scale is again a Newton polygon scale.

\begin{proposition}\label{prop: ellipticity_interpolation}
    Let $\mathcal{E}=\{ E(s,r): r, s \in \mathbb{R}\}$ be a Newton polygon scale of Banach spaces, $\cali$ an interpolation functor, and $k\in\set{0,1}$.
    Then $\mathcal{E}_{\mathcal{I}}^{(k)}$ is also a Newton polygon scale.
    Furthermore, if an operator $A: \mathcal{S}'_\perp(G)\to \mathcal{S}'_\perp(G)$ which commutes with all the $\op[w_{(s,r)}]$, $r,s\in\R$, is $\mathcal{N}$-elliptic with order function $\mu$ in the scale $\mathcal{E}$, then it is also $\mathcal{N}$-elliptic in the scale $\mathcal{E}_{\mathcal{I}}^{(k)}$.
\end{proposition}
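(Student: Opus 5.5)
The plan is to reduce everything to the two defining properties of $\mathcal E$ and the functoriality of $\mathcal I$. The key observation is that all operators involved are Fourier multipliers $\op[m]$, which commute with each other. So an operator bounded (or invertible) between the endpoint spaces of $\mathcal E$ is automatically bounded (or invertible) between the interpolated ones. We treat $k=0$; the case $k=1$ is identical with the roles of $\tau$ and $\xi$ exchanged. The couples used below are compatible because all spaces sit in $\mathcal S'_\perp(G)$.

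\emph{Lifting property.} Fix $r,s\in\R$. We write $\op[w_{(s,r)}]=\op[w_{(s,0)}]\op[w_{(0,r)}]$. By the lifting property of $\mathcal E$ together with Proposition \ref{js014}, applied with the weight $w_{(s,r)}$ for $\mu_{(s,r)}$ and with $\nu=\mu_{(\sigma,0)}$ for $\sigma=\pm1$, we get
\[
\op[w_{(s,r)}]\in\call_{\mathrm{iso}}(E(s\pm1,r),E(\pm1,0)),
\]
and the inverse is $\op[w_{(s,r)}^{-1}]$ in both cases. Because $\op[w_{(s,r)}]$ and its inverse are single linear maps on $\mathcal S'_\perp(G)$, they are morphisms of couples
\[
\{E(s-1,r),E(s+1,r)\}\to\{E(-1,0),E(1,0)\}
\]
and back. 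Applying $\mathcal I$ then gives $\op[w_{(s,r)}]\in\call_{\mathrm{iso}}(E^{(0)}_{\mathcal I}(s,r),E^{(0)}_{\mathcal I}(0,0))$, which is the lifting property for $\mathcal E^{(0)}_{\mathcal I}$.

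\emph{Convexity property.} Let $w$ be a weight function for a strictly positive $\mu$ and let $(r,s)\in\caln(\mu)$. We set $m:=w_{(s,r)}/w$. We must show $\op[m]\in\call(E(\sigma,0))$ for $\sigma=\pm1$; interpolation then yields $\op[m]\in\call(E^{(0)}_{\mathcal I}(0,0))$. By the lifting property, $\op[w_{(\sigma,0)}]\in\call_{\mathrm{iso}}(E(\sigma,0),E(0,0))$. Since multipliers commute,
\[
\op[m]=\op[w_{(\sigma,0)}]^{-1}\,\op[m]\,\op[w_{(\sigma,0)}]
\]
on $E(\sigma,0)$. The middle factor is bounded on $E(0,0)$ by the convexity property of $\mathcal E$, so $\op[m]\in\call(E(\sigma,0))$.

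\emph{$\caln$-ellipticity.} By the remark after Definition \ref{def: ellipticity_scales}, since $A$ commutes with the $\op[w_{(s,r)}]$, it suffices to show $A\op[w]^{-1}\in\call_{\mathrm{iso}}(E^{(0)}_{\mathcal I}(0,0))$ for one weight function $w$ for $\mu$. In $\mathcal E$ we have $A\op[w]^{-1}\in\call_{\mathrm{iso}}(E(0,0))$. Conjugating by $\op[w_{(\sigma,0)}]$ as above, and using that $A$ also commutes with it, gives $A\op[w]^{-1}\in\call_{\mathrm{iso}}(E(\sigma,0))$ for $\sigma=\pm1$. Alternatively, this follows directly from the definition with $\nu=\mu_{(\sigma,0)}$ and Proposition \ref{js014}.

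The remaining point, and the main obstacle, is to make sure the inverse is the same map on both endpoints, so that it is a morphism of couples and hence interpolates. Define the candidate inverse on $\mathcal S'_\perp(G)$ as $\op[w]A^{-1}$, using that $A\in\call_{\mathrm{iso}}(E(\mu+\nu),E(\nu))$ for every $\nu$. Injectivity of $A$ on $E(\mu+\nu)$ for each $\nu$ shows that the inverses on $E(\sigma,0)$, $\sigma=\pm1$, agree on $E(-1,0)\cap E(1,0)$. Hence they define a single operator on $E(-1,0)+E(1,0)$. Functoriality of $\mathcal I$ then gives $A\op[w]^{-1}\in\call_{\mathrm{iso}}(E^{(0)}_{\mathcal I}(0,0))$, and with the remark after Definition \ref{def: ellipticity_scales} this yields $\caln$-ellipticity of $A$ for $\mu$ in $\mathcal E^{(0)}_{\mathcal I}$.
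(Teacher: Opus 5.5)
Your proposal is correct and follows essentially the same route as the paper. The paper also obtains lifting from $\op[w_{(s,r)}]\in\call_{\mathrm{iso}}(E(s\pm1,r),E(\pm1,0))$ followed by interpolation, proves convexity by conjugating $\op[m]$ with $\op[\langle\tau\rangle]^{\pm1}$, and handles $\caln$-ellipticity by rerunning that conjugation argument for $A\op[w]^{-1}$ and its inverse. Your additional check that the inverses on $E(\pm1,0)$ coincide, so that the inverse is a morphism of couples, makes explicit a point the paper passes over. That check tacitly uses $E(1,0)\hookrightarrow E(-1,0)$, which follows from the convexity property, so that both preimages lie in a single space on which $A\op[w]^{-1}$ is injective.
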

\begin{proof}
    By symmetry, it suffices to show the result for $k=0$.
    We first prove the lifting property. For all $r,s\in\mathbb{R}$, we can write 
    \begin{equation*}
        \op[w_{(s,r)}]=\op[\langle \tau\rangle^{-1}]\op[\langle \tau\rangle^{s+1} \langle \xi\rangle^{r}]=\op[w_{(1,0)}]^{-1}\op[w_{(s+1,r)}].
    \end{equation*}
    Since the lifting property in the scale $\mathcal{E}$ yields
    \begin{equation*}
        \op[w_{(1,0)}]\in \mathcal{L}_{\mathrm{iso}}(E(1,0), E(0,0)), \qquad \op[w_{(s+1,r)}]\in \mathcal{L}_{\mathrm{iso}}(E(s+1, r), E(0,0))
    \end{equation*}
    we get $ \op[w_{(s,r)}]\in \mathcal{L}_{\mathrm{iso}}(E(s+1, r), E(1, 0))$. By analogy, $\op[w_{(s,r)}]\in \mathcal{L}_{\mathrm{iso}}(E(s-1, r), E(-1, 0))$, so that interpolation yields
    \begin{equation*}
        \op[w_{(s,r)}]\in \mathcal{L}_{\mathrm{iso}}(\mathcal{I}(E(s-1, r), E(s+1, r)), \mathcal{I}(E(-1, 0), E(1, 0)))= \mathcal{L}_{\mathrm{iso}}(E_\mathcal{I}^{(0)}(s, r), E_\mathcal{I}^{(0)}(0,0)),
    \end{equation*}
    which is precisely the lifting property. 

    \medskip
    
    To show the convexity property, let $\mu$ be a strictly positive order function, $w$ a weight function for $\mu$, and $(r, s)\in \mathcal{N}(\mu)$. We want to show $\op[m]\in \mathcal{L}(E_\mathcal{I}^{(0)}(0,0), E_\mathcal{I}^{(0)}(0,0))$ for
    \begin{equation*}
        m:=\frac{w_{(s,r)}}{w}.
    \end{equation*}
    By the convexity property in the scale $\mathcal{E}$, we know that $\op[m]\in \mathcal{L}(E(0,0), E(0,0))$.
    Writing $\op[m]=\op[\langle \tau\rangle]^{-1}\op[m]\op[\langle \tau\rangle]$ and using the lifting property as above, we obtain $\op[m]\in \mathcal{L}(E(1,0), E(1, 0))$.
    By analogy, it also holds $\op[m]\in \mathcal{L}(E(-1,0), E(-1, 0))$.
    Thus, by interpolation
    \begin{equation*}
        \op[m]\in \mathcal{L}(\mathcal{I}(E(-1,0), E(1, 0)), \mathcal{I}(E(-1,0), E(1, 0)))=\mathcal{L}(E_\mathcal{I}^{(0)}(0,0), E_\mathcal{I}^{(0)}(0,0)),
    \end{equation*}
    which is the convexity property.

    \medskip
    
    Finally, $\mathcal{N}$-ellipticity in $\mathcal{E}$ for $A$ reads as $A\op[{w}^{-1}]\in \mathcal{L}_{\mathrm{iso}}(E(0,0), E(0,0))$. Now replacing $\op[m]$ by $A\op[w^{-1}]$ and $A^{-1}\op[w]$ in the above argument ensures that it implies
    \begin{equation*}
        A\op[w^{-1}], A^{-1}\op[w]\in \mathcal{L}(E_\mathcal{I}^{(0)}(0,0), E_\mathcal{I}^{(0)}(0,0)),
    \end{equation*}
    which is $\mathcal{N}$-ellipticity in the scale $\mathcal{E}_{\mathcal{I}}^{(0)}$.
    This completes the proof.
\end{proof}

\section{The Whole-Space Problem}\label{sec:whole_space}

This chapter is devoted to well-posedness results for time-periodic equations and mixed-order systems.
We begin by showing how the Newton polygon theory developed in Section \ref{sec:pre} can be applied to the time-periodic framework introduced in the preceding chapter, yielding general and straightforward well-posedness results in purely oscillatory Bessel potential spaces.
We then establish a collection of interpolation results, which allow us to extend these results to the scale of Besov spaces.

\subsection{Results in Bessel-Potential Spaces}\label{sec:Bessel}

For $s,r\in \R$, we define
\begin{align*}
    H^{(s,r),p}_\perp(G):=\op[w_{(s,r)}]^{-1}L^p_\perp(G).
\end{align*}
Since for $s,r\in \mathbb{N}$ a  weight function for $\mu_{(s,r)}$ is given by $w_{(s, r)}(\tau, \xi)=\langle \tau\rangle^s\langle \xi\rangle^r$, we may use a standard cut-off method (see for example \cite[Theorem 6.2.3]{BeL76}) to deduce that the above definition of $H^{(s,r),p}_\perp(G)$ is consistent with \eqref{eqn: def_sobolev} for $s,r\in \mathbb{N}$.
\begin{proposition}\label{js016}
    The \emph{Bessel-potential scale}
\begin{align*}
    \mathcal{H}:=\set{H^{(s,r),p}_\perp(G)\mid s,r\in\R}
\end{align*}
is a Newton polygon scale in the sense of Definition \ref{def: Newton_polygon_scale}.
\end{proposition}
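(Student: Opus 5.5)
The proof has two parts, one for each property in Definition \ref{def: Newton_polygon_scale}, with all operators understood as acting on $\mathcal{S}'_\perp(G)$.

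\emph{Lifting property.} This is essentially built into the definition. Since $w_{(s,r)}$ and $w_{(s,r)}^{-1}$ lie in $\mathcal{O}(\R\times\R^n)$, the operators $\op[w_{(s,r)}]$ and $\op[w_{(s,r)}^{-1}]$ are mutually inverse on $\mathcal{S}'_\perp(G)$. Their restrictions to $\widehat G$ commute with $\mathcal{P}_\perp$, so they preserve purely oscillatory distributions. By definition, $H^{(s,r),p}_\perp(G)=\op[w_{(s,r)}^{-1}]L^p_\perp(G)$ carries the final topology, which here is the norm $\|u\|:=\|\op[w_{(s,r)}]u\|_{L^p}$, because $\op[w_{(s,r)}^{-1}]$ is injective. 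Hence $\op[w_{(s,r)}]$ is an isometric isomorphism onto $H^{(0,0),p}_\perp(G)=L^p_\perp(G)$.

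\emph{Convexity property.} Here we must show that $m:=w_{(s,r)}/w$ is an $L^p_\perp(G)$ Fourier multiplier whenever $w$ is a weight function for a strictly positive $\mu$ and $(r,s)\in\caln(\mu)$. The plan is to use the Marcinkiewicz theorem (Theorem \ref{thm: Marcinkiewicz}) on $\R\times\R^n$ and then the transfer principle (Theorem \ref{thm: transfer}) through the inclusion $\widehat G\hookrightarrow\R\times\R^n$. The symbol $m$ is smooth on all of $\R\times\R^n$, since $w\ge a_*>0$ everywhere. Boundedness of $m$ is exactly Lemma \ref{lemma: point_inside}.

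For the Marcinkiewicz derivative condition, note that $w_{(s,r)}$ is a weight function for $\mu_{(s,r)}$ and $w$ is one for $\mu$. By the remark after Proposition \ref{prop: arithmetics_order_functions}, both are strongly $\caln$-elliptic, and the upper estimates hold globally, not only for $|\tau|\ge\lambda$, because these symbols are smooth near $\tau=0$. This gives $|(\tau,\xi)^{(\alpha,\beta)}\partial^{(\alpha,\beta)}w_{(s,r)}|\lesssim w_{(s,r)}$ and the analogous bound for $w^{-1}$ against $w^{-1}$. Combining the Leibniz rule with the Faà di Bruno computation of Proposition \ref{prop: arithmetics_order_functions}\ref{prop: arithmetics_order_functionsiii} then yields
\[
|(\tau,\xi)^{(\alpha,\beta)}\partial^{(\alpha,\beta)}_{(\tau,\xi)}m|\lesssim \frac{w_{(s,r)}}{w}\lesssim 1
\]
for all $\alpha\in\{0,1\}$ and $\beta\in\{0,1\}^n$, where the last step again uses Lemma \ref{lemma: point_inside}.

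This step is the main technical obstacle. It requires checking the derivative bounds directly: for $\langle\xi\rangle^r$ one has $|\xi_j\partial_{\xi_j}\langle\xi\rangle^r|\lesssim\langle\xi\rangle^r$, and mixed derivatives factor into such bounds. Similarly, $\xi_j\partial_{\xi_j}$ or $\tau\partial_\tau$ applied to a monomial $\langle\tau\rangle^{s_i}\langle\xi\rangle^{r_i}$ of $w$ is bounded by that monomial with positive coefficient, hence by $w$. Since the Marcinkiewicz theorem only requires the condition for derivatives in distinct coordinate directions, this suffices.

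Finally, transference gives that $m|_{\widehat G}$ is an $L^p(G)$ multiplier. Because it commutes with $\mathcal{P}_\perp$, it maps $L^p_\perp(G)$ into itself, which establishes the convexity property.
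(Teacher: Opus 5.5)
Your proof is correct and follows the paper's route. The lifting property is immediate from the definition, and the convexity property comes from Theorem~\ref{thm: Marcinkiewicz} on $\R\times\R^n$ plus the transfer principle, with boundedness of $w_{(s,r)}/w$ coming from $(r,s)\in\caln(\mu)$ via Lemma~\ref{lemma: point_inside}. The only difference is that the paper multiplies by the cutoff $\eta$, so the strip estimates ($|\tau|\ge\lambda$) of Proposition~\ref{prop: arithmetics_order_functions} suffice, whereas you verify the Marcinkiewicz bounds globally by direct computation on the positive monomials $\langle\tau\rangle^{s_i}\langle\xi\rangle^{r_i}$. Note that this direct computation, not smoothness near $\tau=0$ as you first claim, is what justifies the global bounds, because the strip $|\tau|<\lambda$ is unbounded in $\xi$.
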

\begin{proof}
    The lifting property holds by definition.
    In order to show the convexity property, let $w$ be a weight function for a strictly positive order function $\mu$ and $(r,s)\in \caln(\mu)$, and set
    \begin{equation*}
        \mathfrak{m}:= \eta\frac{w_{(s,r)}}{ w}
    \end{equation*}
    with $\eta$ the cutoff function defined in (\ref{eqn: def_cutoff}). 
    It is clear that the symbol $\eta$ has strong upper order function $\chi\equiv 0$.
    Since $(r,s)\in \caln(\mu)$, $w_{(s,r)}$ has strong upper order function $\mu$.
    Moreover, $w$ is strongly $\mathcal{N}$-elliptic with order function $\mu$, so that Proposition \ref{prop: arithmetics_order_functions} yields that $\mathfrak{m}$ also has strong upper order function $\chi \equiv 0$.
    Thus it is a Fourier multiplier in $L^p(\mathbb{R}\times \mathbb{R}^n)$ by Marcinkiewicz theorem \ref{thm: Marcinkiewicz}.
    By the transfer principle (Theorem \ref{thm: transfer}), we conclude that its restriction 
    \begin{equation*}
        m:= (1-\delta_{\mathbb{Z}})\Big(\frac{w_{(s,r)}}{ w}\Big)_{|\widehat{G}}
    \end{equation*}
    to $\widehat{G}$ is a Fourier multiplier in $L^p_\perp(G)$. Consequently, $\op[m]\in \call(L^p_\perp(G))$, which shows the convexity property.
\end{proof}
In particular, for any order function $\mu$, the space $H^{\mu,p}_\perp(G)$ is now well-defined via
\begin{align}\label{def: polygon_space}
    H^{\mu,p}_\perp(G):=\op[w^{-1}]L^p_\perp(G),    
\end{align}
where $w$ is any weight function for $\mu$.
We will realize the topology on $H^{\mu,p}_\perp(G)$ via the norm $\|f\|_{\mu, p}:=\|\op[w]f\|_{L^p_\perp(G)}$, where $w$ is a weight function for $\mu$.
Since different weight functions for $\mu$ induce equivalent norms, the resulting topology is independent of the particular choice.
Henceforth, we fix one weight function for each order function $\mu$.

\medskip

We next state well-posedness results for time-periodic mixed-order scalar equations on the whole space, see Theorem \ref{thm: whole space} below. 

\begin{proposition}\label{prop_boundedness}
    Let $\mu$ and $\nu$ be order functions and $P\in \mathcal{O}^\perp(\R\times\R^n)$.
    Assume that $\mu$ is a strong upper order function for $P$.
    Then $\op[P]\in \mathcal{L}(H^{\mu+\nu, p}_\perp(G),H^{\nu, p}_\perp(G))$. If $p=2$ then one can replace ``strong upper order function'' by ``upper order function'' in the above statement. 
\end{proposition}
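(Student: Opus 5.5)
The plan is to reduce the statement to $L^p_\perp(G)$-boundedness of a single Fourier multiplier by conjugating with weight functions. Fix weight functions $w_\mu$ for $\mu$ and $w_\nu$ for $\nu$. By Corollary \ref{cor: product_weight_functions}, $w_\mu w_\nu$ is a weight function for $\mu+\nu$. Hence, by the definition \eqref{def: polygon_space} and Remark \ref{remark: equiv_weight_functions}, we have $\op[w_\mu w_\nu]\in\call_{\mathrm{iso}}(H^{\mu+\nu,p}_\perp(G),L^p_\perp(G))$ and $\op[w_\nu]\in\call_{\mathrm{iso}}(H^{\nu,p}_\perp(G),L^p_\perp(G))$. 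All operators involved are Fourier multipliers and therefore commute, so
\begin{align*}
\op[P]=\op[w_\nu]^{-1}\,\op\Big[\frac{P}{w_\mu}\Big]\,\op[w_\mu w_\nu]
\end{align*}
on $H^{\mu+\nu,p}_\perp(G)$. It therefore suffices to show that $\op[P/w_\mu]\in\call(L^p_\perp(G))$.

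For this, consider the symbol $\mathfrak m:=\eta P/w_\mu$ on $\R\times\R^n$, with $\eta$ the cut-off from \eqref{eqn: def_cutoff}. The cut-off $\eta$ has strong upper order function $0$ and vanishes near $\tau=0$. The symbol $P$ has strong upper order function $\mu$. The symbol $w_\mu^{-1}$ is strongly $\caln$-elliptic with order function $-\mu$ by Proposition \ref{prop: arithmetics_order_functions}\ref{prop: arithmetics_order_functionsiii}, using the remark after that proposition that weight functions are strongly $\caln$-elliptic. Proposition \ref{prop: arithmetics_order_functions}\ref{prop: arithmetics_order_functionsii} then shows that $\mathfrak m$ has strong upper order function $0$. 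Since $\eta$ vanishes for $|\tau|\le\pi/T$, the estimates are needed only on the strip $|\tau|\ge \pi/T$ (take $\lambda=\pi/T$), where they hold uniformly. Consequently $|(\tau,\xi)^{(\alpha,\beta)}\partial^{(\alpha,\beta)}\mathfrak m|$ is bounded for all $\alpha\in\{0,1\}$ and $\beta\in\{0,1\}^n$. The derivatives falling on $\eta$ are supported in a compact $\tau$-range and are handled by the same product rule.

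By Marcinkiewicz' Theorem \ref{thm: Marcinkiewicz} on $\R^{1+n}$, $\mathfrak m$ is an $L^p(\R\times\R^n)$-multiplier. The transfer principle, Theorem \ref{thm: transfer}, then shows that its restriction $(\eta P/w_\mu)|_{\widehat G}$ is an $L^p(G)$-multiplier. This restriction equals $(1-\delta_\Z)P/w_\mu$ on $\widehat G$, so $\op[P/w_\mu]$ in the sense of \eqref{eqn: def_multiplier_perp} is bounded on $L^p_\perp(G)$, and the factorization above gives the claim. This is the same argument as in the proof of Proposition \ref{js016}.

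For $p=2$, Plancherel replaces Marcinkiewicz. The isometry $\calf\in\call_{\mathrm{iso}}(L^2(G),L^2(\widehat G))$ shows that every $m\in L^\infty(\widehat G)$ gives a bounded operator on $L^2(G)$. Boundedness of $(1-\delta_\Z)P/w_\mu$ on $\widehat G$ requires only the upper order bound $|P|\le C_\lambda w_\mu$ for $|\tau|\ge 2\pi/T$, so derivative control is unnecessary.

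The main obstacle is bookkeeping rather than analysis. One must check that the product and quotient rules of Proposition \ref{prop: arithmetics_order_functions} give the strong (derivative) bounds of $\mathfrak m$ uniformly on the relevant strip, including the terms where derivatives hit $\eta$. One must also make sure that $\op[P/w_\mu]$ is well defined on $\cals'_\perp(G)$, since $P/w_\mu$ belongs only to $\calo^\perp$, which is why the convention \eqref{eqn: def_multiplier_perp} is needed.
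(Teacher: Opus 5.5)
Your proof is correct and follows the paper's own argument. The paper also reduces, via conjugation with weight functions as in the discussion after Definition \ref{def: ellipticity_scales}, to $L^p_\perp$-boundedness of $\op[P/w]$. It then applies Marcinkiewicz and the transfer principle to $\eta P/w$ exactly as in Proposition \ref{js016}, and for $p=2$ uses Plancherel together with the mere boundedness of $\eta P/w$.
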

\begin{proof}
     As in the discussion after Definition \ref{def: ellipticity_scales}, it suffices to show $\op[\frac{P}{w}]\in \call(L^p_\perp(G))$.
     Therefore, replacing $\mathfrak{m}$ in the proof of Proposition \ref{js016} by $\eta\frac{P}{ w}$, the result follows.
    If $p=2$ and $\mu$ is only an upper order function for $P$, the same argument shows that $\eta\frac{P}{ w}$ has upper order function $\chi\equiv 0$, i.e., it is bounded. Therefore, $\op[\frac{P}{w}]\in \call(L^2_\perp(G))$ by Plancherel.
\end{proof}
\begin{theorem}\label{thm: whole space}
    Let $\mu$ and $\nu$ be order functions and $P\in \mathcal{O}^\perp(\R\times\R^n)$.
    Assume furthermore that $P$ is strongly $\mathcal{N}$-elliptic for the order function $\mu$.
    Then $\op[P]\in \mathcal{L}_{\mathrm{iso}}(H^{\mu+\nu, p}_\perp(G),H^{\nu, p}_\perp(G))$ with $\op[P]^{-1}=\op[\frac{1}{P}]$.
    In particular, for all $f\in H^{\nu, p}_\perp(G)$, the equation $\op[P]u=f$ has a unique solution $u\in H^{\nu+\mu, p}_\perp(G)$, and it holds 
    \begin{equation*}
        \|u\|_{\mu+\nu, p}\leq C\|f\|_{\nu, p},
    \end{equation*}
    where the constant $C>0$ does not depend on $f$. In the case $p=2$, one can replace ``strongly $\mathcal{N}$-elliptic'' by ``$\mathcal{N}$-elliptic'' in the above statement. 
\end{theorem}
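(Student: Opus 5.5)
The plan is to reduce everything to boundedness of Fourier multipliers on $L^p_\perp(G)$, exactly as in the proof of Proposition \ref{prop_boundedness}, and then to check the inverse algebraically. By Proposition \ref{prop_boundedness}, since strong $\caln$-ellipticity includes that $\mu$ is a strong upper order function for $P$, we already have $\op[P]\in\call(H^{\mu+\nu,p}_\perp(G),H^{\nu,p}_\perp(G))$. For the inverse, Proposition \ref{prop: arithmetics_order_functions}\ref{prop: arithmetics_order_functionsiii} shows that $\frac1P$ is strongly $\caln$-elliptic with order function $-\mu$. In particular, $-\mu$ is a strong upper order function for $\frac1P$.

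Next I check that $\frac1P\in\calo^\perp(\R\times\R^n)$, so that $\op[\frac1P]$ is defined on $\mathcal{S}'_\perp(G)$ via \eqref{eqn: def_multiplier_perp}. Smoothness on $(\R\setminus\{0\})\times\R^n$ follows because $P$ does not vanish there; this is guaranteed by the lower bound $|P|\ge c_\lambda w$ on every strip $|\tau|\ge\lambda$. Polynomial bounds for $\eta/P$ and its derivatives follow from Faà di Bruno's formula together with the lower bound $|P|\gtrsim w$ on $\supp\eta\subseteq\{|\tau|\ge\pi/T\}$. Here $w^{-1}$ is bounded by a polynomial, since $w\ge c>0$.

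I expect this to be the main (mild) obstacle. Definition \ref{def: Newton_ellipticity} only controls the derivatives $\partial^{(\alpha,\beta)}$ with $\alpha\in\{0,1\}$, $\beta\in\{0,1\}^n$. For higher derivatives of $P$ I will use that $P\in\calo^\perp$, so that $\eta P$ and all its derivatives are polynomially bounded, and combine this with the lower bound on $P$. Then Proposition \ref{prop_boundedness}, applied with $P$ replaced by $\frac1P$, $\mu$ replaced by $-\mu$, and $\nu$ replaced by $\mu+\nu$, gives $\op[\frac1P]\in\call(H^{\nu,p}_\perp(G),H^{\mu+\nu,p}_\perp(G))$.

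Finally, on $\mathcal{S}'_\perp(G)$ we have
\begin{align*}
\op[P]\op[\tfrac1P]=\op[(\eta P)(\eta/P)|_{\widehat G}]=\op[\eta^2|_{\widehat G}]=\op[1-\delta_\Z]=\mathcal P_\perp=\id,
\end{align*}
using $\eta|_{\frac{2\pi}{T}\Z}=1-\delta_\Z$. The same holds for the product in the other order. Hence $\op[P]$ is an isomorphism with inverse $\op[\frac1P]$. Unique solvability and the estimate $\|u\|_{\mu+\nu,p}\le C\|f\|_{\nu,p}$ follow by taking $u=\op[\frac1P]f$ and $C=\|\op[\frac1P]\|$.

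For $p=2$, the $p=2$ clause of Proposition \ref{prop_boundedness} is used. Mere $\caln$-ellipticity gives that $\mu$ is an upper order function for $P$ and $-\mu$ is one for $\frac1P$ directly from the definition, so Plancherel suffices and no derivative bounds are needed.
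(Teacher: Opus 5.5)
Your proposal is correct and follows the paper's proof essentially verbatim. Both argue as follows: boundedness of $\op[P]$ from Proposition~\ref{prop_boundedness}; non-vanishing of $P$ for $\tau\neq 0$ to get $\frac1P\in\mathcal{O}^\perp(\R\times\R^n)$; Proposition~\ref{prop: arithmetics_order_functions}\ref{prop: arithmetics_order_functionsiii} to see that $-\mu$ is a strong upper order function for $\frac1P$; and Proposition~\ref{prop_boundedness} once more. Your explicit check $\op[P]\op[\frac1P]=\op[\eta^2|_{\widehat G}]=\mathcal{P}_\perp$ and your $p=2$ remark fill in details the paper leaves implicit.

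One small slip: $w\ge c>0$ holds only when $\mu$ is strictly positive. For general $\mu$, write $w=w_1/w_2$ with $w_1\ge c>0$; then $w^{-1}=w_2/w_1$ is still polynomially bounded, and your argument goes through unchanged.
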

\begin{proof}
    By Proposition \ref{prop_boundedness} we have $\op[P]\in \mathcal{L}(H^{\mu+\nu,p}_\perp(G),H^{\nu,p}_\perp(G))$. 
    Since $P$ is strongly $\mathcal{N}$-elliptic, we have $P(\tau, \xi)\neq 0$ for $\tau\neq 0$. Thus $P\in \mathcal{O}^\perp(\R\times\R^n)$ implies $\frac{1}{P}\in \mathcal{O}^\perp(\R\times\R^n)$.
    Furthermore, $P$ being strongly $\mathcal{N}$-elliptic for the order function $\mu$ implies that $-\mu$  is a strong upper order function for $\frac{1}{P}$ by Proposition \ref{prop: arithmetics_order_functions} and the conclusion follows again from Proposition \ref{prop_boundedness}.
\end{proof}
\begin{remark}\label{remark: lift_whole_space}
    Let $w$ be a weight function for an order function $\mu$.
    Since $w$ is strongly $\mathcal{N}$-elliptic with order function $\mu$, Theorem \ref{thm: whole space} shows $\op[w]\in \mathcal{L}_{\mathrm{iso}}(H^{\mu+\nu, p}_\perp(G), H^{\nu, p}_\perp(G))$ for any order function $\nu$.
\end{remark}
We next prove the counterpart of Theorem \ref{thm: whole space} for mixed-order systems. Here we consider a smooth  matrix-valued  symbol $L: (\mathbb{R}\setminus \{0\})\times \mathbb{R}^n\to \mathbb{C}^{m\times m}$ as well as $\mathcal{D}:=\det L: (\mathbb{R}\setminus\{0\}) \times \mathbb{R}^n\to \mathbb{C}$.
We assume that the symbols $L_{ij}$ belong to the class $\mathcal{O}^\perp(\R\times\R^n)$.
\begin{theorem}\label{thm: whole_space_systems}
    Let $L$ be as above and assume that $L$ is a strong mixed-order system in the sense of Definition \ref{def: mixed_order_systems} with order functions $s_1, ..., s_m$, and $t_1,..., t_m$.
   Let $\nu$ be any order function.
    We define 
    \begin{equation*}
        \mathbb{E}:=\prod_{j=1}^m H^{\nu+t_j,p}_\perp(G) \qquad \text{and} \qquad \mathbb{F}:=\prod_{i=1}^m H^{\nu+s_i,p}_\perp(G).
    \end{equation*}
    Then $\op[L]\in\mathcal{L}_{\mathrm{iso}}(\mathbb{E},\mathbb{F})$ with $\op[L]^{-1}=\op[\ad L]\op[\mathcal{D}^{-1}]$, where $\mathcal{D}^{-1}$ here has to be understood as the diagonal symbol matrix with all diagonal entries equal to $\frac{1}{\mathcal{D}}$.
    In particular, for all $f=(f_1, ..., f_m)\in \mathbb{F}$, the system $\op[L]u=f$ admits a unique solution $u=(u_1,...,u_m)\in \mathbb{E}$, and it holds
    \begin{equation*}
        \sum_{j=1}^m \| u_j\|_{\nu+ t_j,p}\leq C \sum_{i=1}^m \| f_i\|_{\nu+s_i,p}
    \end{equation*}
    for some constant $C>0$ not depending on $f_1,..., f_m$. 
\end{theorem}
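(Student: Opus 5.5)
The strategy is to reduce everything to the scalar results, Proposition \ref{prop_boundedness} and Theorem \ref{thm: whole space}, applied entrywise, using the adjugate identity $L\,\ad L=\ad L\, L=\mathcal{D}\,I$. Since all symbols are in $\mathcal{O}^\perp(\R\times\R^n)$ and $\op[\cdot]$ is multiplicative and commutative on scalar symbols, the matrix-valued operators $\op[L]$, $\op[\ad L]$ and $\op[\mathcal{D}^{-1}]$ compose according to matrix multiplication of symbols. This holds on $\mathcal{S}'_\perp(G)^m$, provided $\mathcal{D}^{-1}\in\mathcal{O}^\perp$. The latter follows as in the proof of Theorem \ref{thm: whole space}: strong $\mathcal{N}$-ellipticity gives $\mathcal{D}\neq0$ for $\tau\ne0$ together with the upper bounds for $1/\mathcal{D}$ and its derivatives from Proposition \ref{prop: arithmetics_order_functions}\ref{prop: arithmetics_order_functionsiii}.

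First I check boundedness of $\op[L]:\mathbb{E}\to\mathbb{F}$. The $i$-th component is $\sum_j\op[L_{ij}]u_j$. Since $t_j-s_i$ is a strong upper order function for $L_{ij}$, Proposition \ref{prop_boundedness} with $\mu=t_j-s_i$ and base order $\nu+s_i$ gives $\op[L_{ij}]\in\call(H^{\nu+t_j,p}_\perp(G),H^{\nu+s_i,p}_\perp(G))$, because $(t_j-s_i)+(\nu+s_i)=\nu+t_j$. Summing over $j$ gives the claim.

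Next I construct the inverse $R:=\op[\ad L]\op[\mathcal{D}^{-1}]$ and show $R\in\call(\mathbb{F},\mathbb{E})$. By Theorem \ref{thm: whole space} with order function $\delta=\sum_l(t_l-s_l)$, the operator $\op[\mathcal{D}^{-1}]$ maps $H^{\nu+s_i,p}_\perp(G)$ isomorphically onto $H^{\nu+s_i-\delta,p}_\perp(G)$ componentwise. By Proposition \ref{prop: ad}, $(\ad L)_{ji}$ has strong upper order function $T_i-S_j=\sum_{k\ne j}t_k-\sum_{k\ne i}s_k=\delta+t_j-s_i$. Note the index placement: entry $(j,i)$ with row $j$ and column $i$ gives $T_i-S_j$. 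Applying Proposition \ref{prop_boundedness} with this order function and base $\nu+s_i-\delta$ gives $\op[(\ad L)_{ji}]\in\call(H^{\nu+s_i-\delta,p}_\perp,H^{\nu+t_j,p}_\perp)$, since $(\delta+t_j-s_i)+(\nu+s_i-\delta)=\nu+t_j$. Hence the $j$-th component of $Rf$ lies in $H^{\nu+t_j,p}_\perp(G)$ with the required bound.

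Finally, symbol multiplicativity gives $\op[L]R=\op[L\,\ad L\,\mathcal{D}^{-1}]=\id$ on $\mathbb{F}$ and $R\op[L]=\op[\ad L\,L\,\mathcal{D}^{-1}]=\id$ on $\mathbb{E}$. The scalar $\mathcal{D}^{-1}$ commutes with everything, which justifies both orderings. So $\op[L]$ is bijective with bounded inverse, and the estimate is just the boundedness of $R$. The only step needing care is the bookkeeping of row and column order functions for the adjugate in Proposition \ref{prop: ad}. A secondary point is ensuring that products of $\mathcal{O}^\perp$ symbols, after the cut-off $\eta$, still compose correctly on $\widehat G$. This holds because $\eta^2=\eta$ on $\widehat G$, since $\eta|_{\widehat G}=1-\delta_{\Z}$.
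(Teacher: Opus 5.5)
Your strategy is the same as the paper's: entrywise boundedness of $\op[L]$ via Proposition \ref{prop_boundedness}, and the inverse $\op[\ad L]\op[\mathcal{D}^{-1}]$ via Theorem \ref{thm: whole space} and Proposition \ref{prop: ad}. The boundedness of $\op[L]$ and the two-sided inverse identity are handled correctly. The inverse step, however, is wrong as written. It contains three errors that happen to cancel:
\begin{enumerate}
\item $\op[\mathcal{D}^{-1}]=\op[\mathcal{D}]^{-1}$ \emph{gains} $\delta$. Theorem \ref{thm: whole space} gives $\op[\mathcal{D}^{-1}]\in\call_{\mathrm{iso}}(H^{\nu+s_i,p}_\perp(G),H^{\nu+s_i+\delta,p}_\perp(G))$, not a map onto $H^{\nu+s_i-\delta,p}_\perp(G)$.
\item The adjugate order is $T_i-S_j=\sum_{k\ne j}t_k-\sum_{k\ne i}s_k=\delta-t_j+s_i$, not $\delta+t_j-s_i$.
\item Proposition \ref{prop_boundedness} with upper order function $\mu$ and base $\nu'$ yields a map $H^{\mu+\nu',p}_\perp(G)\to H^{\nu',p}_\perp(G)$. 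With your data $\mu=\delta+t_j-s_i$, $\nu'=\nu+s_i-\delta$, it gives $\call(H^{\nu+t_j,p}_\perp(G),H^{\nu+s_i-\delta,p}_\perp(G))$, which is the reverse of what you assert. In the step for $\op[L]$ you used the proposition in the right direction.
\end{enumerate}
These intermediate claims are false in general. Take $m=1$, $L=P$ with a nonzero strictly positive order function $\mu$, $t_1=\mu$, $s_1=0$ and $\ad L=1$. Your chain then asserts that $\op[1/P]$ maps $H^{\nu,p}_\perp(G)$ onto $H^{\nu-\mu,p}_\perp(G)$, and that $H^{\nu-\mu,p}_\perp(G)\hookrightarrow H^{\nu+\mu,p}_\perp(G)$.

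The correct bookkeeping is exactly the paper's. First, $\op[\mathcal{D}^{-1}]$ maps $H^{\nu+s_i,p}_\perp(G)$ into $H^{\nu+s_i+\delta,p}_\perp(G)$. Then apply Proposition \ref{prop_boundedness} to $(\ad L)_{ji}$ with strong upper order function $T_i-S_j=\delta-t_j+s_i$ and base $\nu+t_j$. This gives $\op[(\ad L)_{ji}]\in\call(H^{\nu+s_i+\delta,p}_\perp(G),H^{\nu+t_j,p}_\perp(G))$, since $(\delta-t_j+s_i)+(\nu+t_j)=\nu+s_i+\delta$. With this local correction your argument is complete. Your remarks on the multiplicativity of $\op$ and on $\eta^2=\eta$ on $\widehat G$ spell out what the paper compresses into ``mutually inverse''.
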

\begin{proof}
    By definition of a (strong) mixed-order system, $t_j-s_i$ is a (strong) upper order function for $L_{ij}$.
    Thus by Proposition \ref{prop_boundedness}, $\op[L_{ij}]\in \mathcal{L}(H^{\nu+t_j,p}_\perp(G),H^{\nu+s_i,p}_\perp(G))$ for all $i, j\in\{1,...,m\}$. 

\medskip

    In a similar fashion,  the determinant $\mathcal{D}$ is strongly $\mathcal{N}$-elliptic for the order function $\delta:=\sum_{l=1}^m (t_l-s_l)$.
    Thus, by Theorem \ref{thm: whole space}, $\op[\mathcal{D}]^{-1}\in \mathcal{L}(H^{\nu+s_j, p}_\perp(G),H^{\nu+s_j+\delta, p}_\perp(G))$ for all $j\in\{1,...,m\}$.
    Furthermore, by Proposition \ref{prop: ad} and \ref{prop_boundedness}, we have 
    \begin{equation*}
        \op[(\ad L)_{ij}]\in \mathcal{L}(H^{\nu+s_j+\delta, p}_\perp(G),H^{\nu+s_j +\delta +S_i-T_j, p}_\perp(G))
    \end{equation*}
    for all $i,j\in\{1,\ldots,m\}$, where $S_i$ and $T_j$ are defined in Proposition \ref{prop: ad}.
    Since $\nu+s_j +\delta +S_i-T_j=\nu+t_i$, the boundedness of $\op[\ad L]\op[\mathcal{D}^{-1}]$ follows.
    Since $\op[L]$ and $\op[\ad L]\op[\mathcal{D}^{-1}]$ are mutually inverse, this completes the proof.
   
\end{proof}
\subsection{Real Interpolation Results}
In this section, we focus on Besov spaces defined as real interpolation of Bessel potential spaces associated with Newton polygons.
The goal is to establish the analogue of Theorem \ref{thm: whole space} for those.
The literature concerned with interpolation of function spaces defined on $\mathbb{R}^n$ is already extensively developed (see for example \cite{BeL76}, \cite{triebel1995interpolation} or \cite{amann2019linear}).
Rather than reproducing this theory for our purely oscillatory spaces, we establish a convenient way to transfer the available results to our setting. 
This will be done by using the following well-known fact on the interplay of interpolation and retractions. 
\begin{lemma}\label{lem:interp_retraction}
    Let $A_k$, $B_k$, $k\in \{0,1\}$, be Banach spaces.
    If $r_k\in \mathcal{L}(A_k,B_k)$ is a retraction with corresponding coretraction $e_k\in \mathcal{L}(B_k,A_k)$ such that $r_0x=r_1x$ for all $x\in A_0\cap A_1$, then for all $\theta\in (0,1)$ and $q\in (1,\infty)$ it holds
    \begin{align*}
        r[A_0,A_1]_\theta=[B_0,B_1]_\theta \quad \text{and} \quad r(A_0,A_1)_{\theta,q}=(B_0,B_1)_{\theta,q},
    \end{align*}
    with equivalent norms, where $r:A_0+A_1\to B_0+B_1$ is given by $r(a_0+a_1)=r_0a_0+r_1a_1$.
\end{lemma}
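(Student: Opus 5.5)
The plan is to prove the two inclusions $r\,\mathcal{I}(A_0,A_1)\subseteq \mathcal{I}(B_0,B_1)$ and $\mathcal{I}(B_0,B_1)\subseteq r\,\mathcal{I}(A_0,A_1)$ separately, for $\mathcal{I}=[\cdot,\cdot]_\theta$ and $\mathcal{I}=(\cdot,\cdot)_{\theta,q}$, using only the assumed compatibility of the retractions. Here $r\,\mathcal{I}(A_0,A_1)$ carries the quotient norm. I will also show that the two norms are equivalent.

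\emph{First inclusion.} The hypothesis $r_0x=r_1x$ on $A_0\cap A_1$ makes $r$ well defined on $A_0+A_1$. Hence $r\in\call(\set{A_0,A_1},\set{B_0,B_1})$. By the interpolation property of both functors, $r$ maps $\mathcal{I}(A_0,A_1)$ boundedly into $\mathcal{I}(B_0,B_1)$. Passing to the quotient norm gives $\|b\|_{\mathcal{I}(B_0,B_1)}\lesssim\|b\|_{r\mathcal{I}(A_0,A_1)}$. This step needs no coretraction.

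\emph{Second inclusion.} Here no compatible coretraction on $B_0+B_1$ is available, so I cannot simply interpolate an inverse. Instead I will lift the data piece by piece using the discrete $J$-method; the complex case is reduced to it afterwards. Let $b\in(B_0,B_1)_{\theta,q}$ and write $b=\sum_{j\in\Z}u_j$ in $B_0+B_1$, with $u_j\in B_0\cap B_1$ and $\bigl\|(2^{-j\theta}J(2^j,u_j))_j\bigr\|_{\ell^q}\lesssim\|b\|_{\theta,q}$.

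The key step is to find $a_j\in A_0\cap A_1$ with $ra_j=u_j$ and $J(2^j,a_j)\lesssim J(2^j,u_j)$ uniformly in $j$. Given such $a_j$, the series $a:=\sum_j a_j$ converges in $A_0+A_1$ and lies in $(A_0,A_1)_{\theta,q}$. Continuity of $r$ on $A_0+A_1$ then gives $ra=b$ together with the reverse norm bound.

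The natural candidates from the hypotheses are $e_0u_j\in A_0$, with $r_0e_0u_j=u_j$, and $e_1u_j\in A_1$, with $r_1e_1u_j=u_j$. Their difference $d_j:=e_0u_j-e_1u_j$ lies in $A_0+A_1$ and satisfies $rd_j=0$. I would try to correct one candidate by a splitting of $d_j$ chosen near-optimally for the $K$-functional $K(2^j,d_j)$, so as to land in $A_0\cap A_1$ while keeping $r$-image $u_j$. This gives $A_0$- and $A_1$-norms controlled by $\|u_j\|_{B_0}$ and $2^{-j}\|u_j\|_{B_1}$ respectively. The candidate is $a_j:=e_0u_j-d_j^{(0)}=e_1u_j+d_j^{(1)}$, where $d_j=d_j^{(0)}+d_j^{(1)}$ is the near-optimal splitting; one must still ensure $rd_j^{(0)}=0$.

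\emph{Complex case.} I would derive it from the real case by the reiteration theorem. Alternatively, one can lift the analytic functions $f:S\to B_0+B_1$ pointwise by the same corrected construction. For the $J$-method route, the standard implication $F\in\mathcal F(B_0,B_1)\Rightarrow F$ is a limit of finite sums $\sum e^{\lambda_k z^2}u_k$ reduces the problem to the same lifting estimate.

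\emph{Main obstacle.} The hard step is the uniform lifting $J(2^j,a_j)\lesssim J(2^j,u_j)$ with $ra_j=u_j$. The difficulty is that $e_0$ and $e_1$ need not agree on $B_0\cap B_1$. I will first try the $K$-functional correction described above. The delicate point is keeping $rd_j^{(0)}=0$; I would handle it by replacing $d_j^{(0)}$ with $d_j^{(0)}-e_0r_0d_j^{(0)}$, which lies in $\ker r_0$ and is still controlled in $A_0$-norm by boundedness of $e_0r_0$. The analogous correction on the $A_1$ side uses $e_1r_1$. The remaining check is that this symmetric correction keeps $a_j$ in $A_0\cap A_1$.
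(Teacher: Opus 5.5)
Your first inclusion is correct. The key lifting step, however, cannot be carried out, and your correction does not close it.

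After replacing $d_j^{(0)}$ by $d_j^{(0)}-e_0r_0d_j^{(0)}$, your element is $a_j=e_0u_j-d_j^{(0)}+e_0r_0d_j^{(0)}$. It lies in $A_0$ and satisfies $r_0a_j=u_j$, but it lies in $A_1$ only if $e_0r_0d_j^{(0)}\in A_1$. All you know is $r_0d_j^{(0)}=-r_1d_j^{(1)}\in B_0\cap B_1$, and nothing forces $e_0$ to map $B_0\cap B_1$ into $A_1$.

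You were right that the hypotheses give no coretraction on $B_0+B_1$. But read that way, the statement is false, so no argument can succeed. Take $A_0:=\mathbb{C}\times\set{0}$ and $A_1:=\set{0}\times\mathbb{C}$ inside $\mathbb{C}^2$, $B_0=B_1:=\mathbb{C}$, $r(x,y):=x+y$, $e_0c:=(c,0)$ and $e_1c:=(0,c)$. Then $r_ke_k=\mathrm{id}$, and the condition $r_0=r_1$ on $A_0\cap A_1=\set{0}$ holds vacuously. Since $A_0\cap A_1$ is dense in $[A_0,A_1]_\theta$ and in $(A_0,A_1)_{\theta,q}$ (equivalently, $K(t,(x,y))=|x|+t|y|$), both interpolation spaces are $\set{0}$. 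On the other hand, $[B_0,B_1]_\theta=(B_0,B_1)_{\theta,q}=\mathbb{C}$. In your construction, $d_j=(u_j,-u_j)$ has the unique splitting $d_j^{(0)}=(u_j,0)$, and the corrected element is $a_j=(u_j,0)\notin A_1$.

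What is missing is a hypothesis, not a sharper estimate. The coretractions must be compatible, i.e. $e_0y=e_1y$ for $y\in B_0\cap B_1$, so that $e\in\mathcal{L}(\set{B_0,B_1},\set{A_0,A_1})$ is a morphism of couples. This is the setting of Theorem 1.2.4 in \cite{triebel1995interpolation}, which the paper cites as its proof. It also holds in the paper's applications, Proposition \ref{prop: retraction} and Corollary \ref{cor: retraction_perp}, where a single operator $E$ (resp.\ $E_\perp$) acts on both endpoint spaces. The lemma should be read with this extra assumption.

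With it, the reverse inclusion takes one line for every interpolation functor $\mathcal{I}$: $\mathcal{I}(B_0,B_1)=re\,\mathcal{I}(B_0,B_1)\subseteq r\,\mathcal{I}(A_0,A_1)$, and $\|b\|_{r\mathcal{I}(A_0,A_1)}\le\|eb\|_{\mathcal{I}(A_0,A_1)}\lesssim\|b\|_{\mathcal{I}(B_0,B_1)}$. Neither the $J$-method lifting nor a separate treatment of the complex case is then needed.

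Separately, your plan to deduce the complex case from the real one by reiteration would not work. Reiteration identifies real interpolation spaces of complex interpolation spaces, but it does not recover $[A_0,A_1]_\theta$ itself from real interpolation data.
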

\begin{proof}
    Follows from Theorem 1.2.4 in \cite{triebel1995interpolation}, see e.g. Lemma 1.51 in \cite{denk2013general}.
\end{proof}
In order to apply Lemma \ref{lem:interp_retraction}, we construct a retraction between the spaces $W^{s, p}(\mathbb{R}, X)^2$ and $W^{s, p}_\perp(\mathbb{T}, X)$, where $X$ is an arbitrary Banach space.
To this end, recall that for compatible Banach couples $\{A_0,A_1\}$ and $\{B_0,B_1\}$, the space $\mathcal{L }(\{A_0,A_1\},\{B_0,B_1\})$ consists of all linear operators mapping $A_0+A_1$ to $B_0+B_1$ whose restrictions to $A_j$ are bounded operators from $A_j$ to $B_j$, $j\in\{0,1\}$.
The formulation of the following proposition simultaneously involving two Sobolev spaces emphasizes that the same operators $R$ and $E$ act boundedly on both endpoint spaces. This will be important later when applying Lemma \ref{lem:interp_retraction} in order to prove interpolation results.
\begin{proposition}\label{prop: retraction}
    Let $X$ and $Y$ be compatible Banach spaces and $s,s'\in \mathbb{N}_0$. There exist
    \begin{align*}
        R&\in\mathcal{L}(\{W^{s,p}(\mathbb{R}, X)^2,W^{s', p}(\mathbb{R}, Y)^2\}, \{W^{s,p}(\mathbb{T}, X),W^{s', p}(\mathbb{T}, Y)\}),\\
        E&\in\mathcal{L}(\{W^{s, p}(\mathbb{T}, X),W^{s', p}(\mathbb{T}, Y)\},\{W^{s, p}(\mathbb{R}, X)^2,W^{s', p}(\mathbb{R}, Y)^2\}),
    \end{align*}
    such that $R E=\id_{W^{s, p}(\mathbb{T}, X)}$.
\end{proposition}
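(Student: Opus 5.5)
We construct $R$ and $E$ from a partition of unity and periodization, so that both are built from operations that commute with $X$- or $Y$-valued structure and preserve integer-order Sobolev regularity. Parametrize $\mathbb{T}$ by $[0,T)$. Fix $\psi\in\CRci(\R)$ with $\supp\psi\subseteq(-T,T)$ and $\sum_{k\in\Z}\psi(t-kT)=1$ for all $t\in\R$; for instance, take $\psi$ a smooth bump normalized by its periodization. Both operators depend only on these smooth cut-offs and not on $s,s'$, $X$ or $Y$. Hence boundedness on each endpoint, and agreement on the intersection, will hold automatically. This is exactly what membership in $\call(\{\cdot,\cdot\},\{\cdot,\cdot\})$ requires.

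\emph{Extension.} For $f\in W^{s,p}(\mathbb{T};X)$ let $\tilde f=f\circ\pi$ be its $T$-periodic lift to $\R$. Set $E f:=(\chi_1\tilde f,\chi_2\tilde f)$, where $\chi_1,\chi_2\in\CRci(\R)$ are cut-offs and $\chi_1+\chi_2$ equals $1$ on a neighborhood of a full period. A simpler choice suffices: take $Ef:=(\psi\tilde f,0)$. By the Leibniz rule, each derivative $\partial_t^j(\psi\tilde f)$ with $j\le s$ is a finite combination of $\psi^{(i)}\partial_t^{j-i}\tilde f$. These are supported in $(-T,T)$, which covers at most two periods, so
\begin{align*}
\|\psi\tilde f\|_{W^{s,p}(\R;X)}\lesssim \|f\|_{W^{s,p}(\mathbb{T};X)},
\end{align*}
and the same bound holds for $Y$ and $s'$.

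\emph{Retraction.} For $(g_1,g_2)$, define the periodization
\begin{align*}
R(g_1,g_2):=\pi_*\Big(\sum_{k\in\Z} g_1(\cdot+kT)\Big)+\pi_*\Big(\sum_{k\in\Z}\big(\tfrac{1}{?}\big)\Big),
\end{align*}
or more simply $R(g_1,g_2):=\sum_{k}g_1(\cdot+kT)$ viewed on $\mathbb{T}$. Then
\begin{align*}
R E f=\sum_k\psi(\cdot+kT)\tilde f=\tilde f=f.
\end{align*}
The second component is present only to match the stated format with $W^{s,p}(\R;X)^2$. One may take $R$ to ignore it, or, following \cite{denk2013general}, use two cut-offs in order to avoid wrap-around.

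The main obstacle is boundedness of periodization $W^{s,p}(\R;X)\to W^{s,p}(\mathbb{T};X)$, because the series $\sum_k g(\cdot+kT)$ need not converge in $L^p$ for general $g\in L^p(\R)$. Restricted to $[0,T]$ the sum has infinitely many overlapping terms, and $\sum_k\|g\|_{L^p(I_k)}$ is an $\ell^1$ quantity that is not controlled by the $\ell^p$ norm of the pieces. To fix this, I would insert a localization before periodizing. Define
\begin{align*}
R(g_1,g_2):=\sum_k(\phi g_1)(\cdot+kT)+\sum_k(\phi g_2)(\cdot+kT+T/2),
\end{align*}
with $\phi\in\CRci((-T,T))$. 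Each sum then has at most two nonzero terms on a period, so boundedness follows from the Leibniz rule as above.

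It remains to choose the pair in $E$ consistently. Let $\phi_1,\phi_2$ be localized cut-offs, the second shifted by $T/2$, and let $\psi_1,\psi_2$ satisfy $\sum_k[\phi_1\psi_1(\cdot+kT)+\phi_2\psi_2(\cdot+kT+T/2)]=1$. The simplest consistent choice is $\phi_i\equiv1$ on $\supp\psi_i$, where $\psi_1,\psi_2$ form a periodic partition of unity subordinate to two overlapping intervals of length less than $T$. Then $RE=\id$, and the bounds on $W^{s,p}$ and $W^{s',p}$ hold simultaneously, with operators independent of the exponent and target space.
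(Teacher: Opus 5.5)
Your final construction (localize with cut-offs $\phi_i\equiv 1$ on the supports of a partition of unity $\psi_1,\psi_2$ subordinate to two intervals of length less than $T$, then periodize) is correct and is essentially the paper's proof: periodizing a function supported in an interval $I_j$ of length less than $T$ is exactly the paper's $e_{U_j}(\,\cdot\,|_{I_j}\circ\kappa_j^{-1})$, and you rightly saw that this localization is what makes periodization bounded. Just make sure the second component of $E$ carries the same $T/2$ shift as the second sum in $R$ (e.g.\ $\psi_2\,\tilde f(\cdot-T/2)$), or drop the shift altogether; otherwise $REf$ picks up $\tilde f(\cdot+T/2)$ in the second term.
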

\begin{proof}

    Denote by $\pi: \mathbb{R}\to \mathbb{T}$ the quotient map.
    Let $I_1,I_2\subseteq\R$ be two open intervals of length $|I_1|,|I_2|<T$ such that $U_1:=\pi(I_1),U_2:=\pi(I_2)\subseteq \mathbb{T}$ fulfill $U_1\cup U_2=\mathbb{T}$.
    Moreover, let $\chi_1,\chi_2,\psi_1,\psi_2\in C^\infty(\mathbb{T})$ be such that $\chi_1+\chi_2=1$, $\supp\chi_j, \supp\psi_j\subseteq U_j$, and $\psi_j=1$ on $\supp\chi_j$ for $j\in\{1,2\}$.
    In particular, the map $\kappa_j:=\pi|_{I_j}:I_j\to U_j$ is a $C^\infty$-diffeomorphism.
    Now define
    \begin{align*}
        R(v_1,v_2) &:=\psi_1[e_{U_1} (v_1|_{I_1}\circ\kappa_1^{-1})]+\psi_2[e_{U_2}(v_2|_{I_2}\circ\kappa_2^{-1})], \\
        Eu&:=(e_{I_1}[(\chi_1 u)\circ \kappa_1],e_{I_2}[(\chi_2 u)\circ \kappa_2]),
    \end{align*}
    where $e_{U_j}$ and $e_{I_j}$ denote zero extension from $U_j$ to $\mathbb{T}$ and from $I_j$ to $\R$.
    Using Leibniz' formula and the fact that the quotient map $\pi$ is measure preserving, we show $R\in \call(W^{s,p}(\R,X)^2,W^{s,p}(\mathbb{T},X))$ and $E\in \call(W^{s,p}(\mathbb{T},X), W^{s,p}(\R,X)^2)$.
    Moreover, by construction it holds $RE=\id_{W^{s,p}(\mathbb{T},X)}$.
    
    Since the same estimates for $R$ and $E$ are true if we replace $s$ and $X$ by $s'$ and $Y$, respectively, the result follows. 
\end{proof}
\begin{corollary}\label{cor: retraction_perp}
     Let $X$ and $Y$ be compatible Banach spaces and $s,s'\in \mathbb{N}$. There exist
    \begin{align*}
        R_\perp&\in\mathcal{L}(\{W^{s,p}(\mathbb{R}, X)^2,W^{s', p}(\mathbb{R}, Y)^2\}, \{W^{s,p}_\perp(\mathbb{T}, X),W^{s', p}_\perp(\mathbb{T}, Y)\}),\\
        E_\perp&\in\mathcal{L}(\{W^{s, p}_\perp(\mathbb{T}, X),W^{s', p}_\perp(\mathbb{T}, Y)\},\{W^{s, p}(\mathbb{R}, X)^2,W^{s', p}(\mathbb{R}, Y)^2\}),
    \end{align*}
    such that $R_\perp E_\perp=\id_{W^{s, p}_\perp(\mathbb{T}, X)}$.
\end{corollary}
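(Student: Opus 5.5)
We compose the operators of Proposition \ref{prop: retraction} with the mean-removing projection $\mathcal{P}_\perp$. Let $R$ and $E$ be the operators provided there for the couple $\{W^{s,p}(\R,X)^2,W^{s',p}(\R,Y)^2\}$. We set
\begin{align*}
    R_\perp:=\mathcal{P}_\perp R, \qquad E_\perp:=E|_{W^{s,p}_\perp(\mathbb{T},X)+W^{s',p}_\perp(\mathbb{T},Y)}.
\end{align*}
Here $\mathcal{P}$ acts on $X+Y$-valued functions on $\mathbb{T}$ by temporal averaging. The claimed identity is then immediate: for $u\in W^{s,p}_\perp(\mathbb{T},X)$ we have $R_\perp E_\perp u=\mathcal{P}_\perp RE u=\mathcal{P}_\perp u=u$, because $RE=\id$ and $\mathcal{P}_\perp u=u$ for mean-free $u$.

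The remaining work is boundedness on both endpoints, with the same formula on each. For $E_\perp$ this is inherited directly from $E$: the space $W^{s,p}_\perp(\mathbb{T},X)$ is a closed subspace of $W^{s,p}(\mathbb{T},X)$ carrying the induced norm, and likewise for $Y$. For $R_\perp$ it suffices to show $\mathcal{P}_\perp\in\call(W^{s,p}(\mathbb{T},X))$ with range $W^{s,p}_\perp(\mathbb{T},X)$, and the same for $(s',Y)$. Since $\mathcal{P}f=\int_{\mathbb{T}}f\dd t$ is a constant-in-time $X$-valued function, we have $\|\mathcal{P}f\|_{W^{s,p}(\mathbb{T},X)}=\|\mathcal{P}f\|_X\le\|f\|_{L^1(\mathbb{T},X)}\lesssim\|f\|_{L^p(\mathbb{T},X)}$, using the finite normalized Haar measure. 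Its time derivatives vanish. Hence $\mathcal{P}$, and therefore $\mathcal{P}_\perp=\id-\mathcal{P}$, is bounded on every $W^{k,p}(\mathbb{T},X)$.

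Two consistency points need checking. First, the two realizations of $\mathcal{P}$ (on $X$- and on $Y$-valued functions) agree on the intersection, since both are the same Bochner integral in $X+Y$; hence $R_\perp$ is a single well-defined operator on the sum space. Second, the range is correct: the range of $\mathcal{P}_\perp$ on $W^{s,p}(\mathbb{T},X)$ is exactly the mean-free subspace, which coincides with $W^{s,p}_\perp(\mathbb{T},X)=\mathcal{P}_\perp W^{s,p}(\mathbb{T},X)$ in the image-space notation of Section \ref{sec:oscill}. The quotient norm agrees with the induced norm because $\mathcal{P}_\perp$ is a bounded projection.

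None of these steps is a real obstacle. The only point requiring care is the compatibility of all definitions on $X+Y$-valued functions, so that Lemma \ref{lem:interp_retraction} can later be applied.
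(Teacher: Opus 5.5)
Your proposal is correct and follows the paper's proof: set $R_\perp:=\mathcal{P}_\perp\circ R$ and use as coretraction $E$ composed with the inclusion of the mean-free subspace, so that $R_\perp E_\perp=\mathcal{P}_\perp RE|_{W^{s,p}_\perp}=\id$. Beyond the paper, you also check that $\mathcal{P}$ is bounded on each $W^{k,p}(\mathbb{T},X)$ and that the formulas are consistent on the sum space, which the paper leaves implicit; one small precision is that the quotient norm on $W^{s,p}_\perp(\mathbb{T},X)$ is equivalent to, not equal to, the induced norm, and equivalence is all that is needed.
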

\begin{proof}
    The projection $\mathcal{P}_\perp \in \mathcal{L}(W^{s, p}(\mathbb{T},X),  W^{s, p}_\perp(\mathbb{T},X))$ is a retraction, the right-inverse being given by the inclusion $i \in \mathcal{L}( W^{s, p}_\perp(\mathbb{T},X), W^{s, p}(\mathbb{T};X))$, where $ i(f):=f$.
    Since the composition of two retractions is a retraction, $R_\perp:= \mathcal{P}_\perp \circ R$ is as intended, with $R$ the retraction constructed in Proposition \ref{prop: retraction}.
\end{proof}

We now introduce the purely oscillatory Besov spaces. We start by recalling the vector-valued definition of those spaces on $\mathbb{R}^n$ as interpolation spaces.  
\begin{definition}\label{def: besov}
    Let $X$ be a Banach space, $s\in \mathbb{R}$ . The $X$-valued Besov space is 
    \begin{equation*}
        B^{s, p}(\mathbb{R}^n, X):= B^s_{pp}(\mathbb{R}^n, X):= (H^{s_0,p}(\mathbb{R}^n, X), H^{s_1,p}(\mathbb{R}^n, X))_{\theta, p},
    \end{equation*}
    where $s_0,s_1\in \R$, $s_0\ne s_1$, and $\theta\in (0,1)$ are such that $s=(1-\theta)s_0+\theta s_1$.
\end{definition}
This definition does not depend on the particular choice of $s_0,s_1$, and is in line with more traditional definitions of Besov spaces via Littlewood-Paley decompositions, see \cite[Proposition 6.1]{MeV12} or \cite[Theorem VII.4.1.3]{amann2019linear} for real interpolation results of the Bessel-potential scale in the Banach-space valued setting.
We note that for noninteger $s$ one often writes $W^{s,p}(\R^n,X):=B^{s,p}(\R^n,X)$.
Following Definition \ref{def: besov}, we introduce the purely oscillatory Besov spaces as interpolation of the purely oscillatory Bessel potential spaces. 
\begin{definition}\label{def: pure_osc_besov}
     Let $X$ be a Banach space, $s\in\R$.
     The \emph{ $X$-valued purely oscillatory Besov space} is 
    \begin{align*}
        &B^{s, p}_\perp(\mathbb{T}, X):=(H^{s_0, p}_\perp(\mathbb{T}, X), H^{s_1, p}_\perp(\mathbb{T}, X))_{\theta, p},
    \end{align*}
    where $s_0,s_1\in \R$, $s_0\ne s_1$, and $\theta\in (0,1)$ are such that $s=(1-\theta)s_0+\theta s_1$.
    If $s$ is noninteger, we also write $W^{s,p}_\perp(\mathbb{T},X):=B^{s,p}_\perp(\mathbb{T},X)$.
\end{definition}
The existence of a retraction $R_\perp$ for the purely oscillatory vector-valued Bessel potential spaces shows $B^{s,p}_\perp(\mathbb{T}, X)= \mathcal{P}_\perp B^{s,p}(\mathbb{R}, X)$ and the independence of the definition from $s_0,s_1$.
For all $s, r\in \mathbb{R}$, we write
\begin{align*}
    &HB^{(s,r), p}_\perp (G):= H^{s, p}_\perp (\mathbb{T}, B^{r, p}(\mathbb{R}^n))\\
    &BH_\perp^{(s,r), p}(G):=  B^{s, p}_\perp (\mathbb{T}, H^{r, p}(\mathbb{R}^n))\\
    &B^{(s, r), p}_\perp(G):= B^{s, p}_\perp (\mathbb{T}, B^{r, p}(\mathbb{R}^n)). 
\end{align*}
We deduce the following interpolation results. 
\begin{lemma}\label{lemma: interpolation_same_time_param}
    Let $s,r,s_0,r_0,s_1,r_1\in \mathbb{R}$ with $r_0\ne r_1$, $s_0\ne s_1$ and $\theta\in (0, 1)$.
    Define $s_\theta:=(1-\theta)s_0+\theta s_1$, $r_\theta:=(1-\theta)r_0+\theta r_1$.
    Then it holds
    \begin{align*}
        (H^{(s, r_0), p}_\perp(G), H^{(s, r_1),p}_\perp(G))_{\theta, p}=HB^{(s, r_\theta),p}_\perp(G) \\
        (H^{(s_0, r), p}_\perp(G), H^{(s_1, r),p}_\perp(G))_{\theta, p}=BH^{(s_\theta, r),p}_\perp(G)
    \end{align*}
    with $r_\theta:=(1-\theta)r_0+\theta r_1$, $s_\theta:=(1-\theta) s_0+ \theta s_1$.
\end{lemma}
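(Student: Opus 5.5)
The plan is to reduce both identities to known vector-valued interpolation results on $\R$ and $\R^n$. The two transfer tools are the retraction of Corollary \ref{cor: retraction_perp} and Lemma \ref{lem:interp_retraction}. I treat the two identities separately, since they use the structure differently.

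\emph{First identity (spatial interpolation at fixed time regularity).} By the lifting property of the scale $\mathcal H$ (Proposition \ref{js016}), the operator $\op[\langle\tau\rangle^{s}]$ is an isomorphism $H^{(s,r_k),p}_\perp(G)\to H^{(0,r_k),p}_\perp(G)$ for $k\in\{0,1\}$. It also commutes with all spatial operations, so it maps the interpolation space isomorphically as well. The same operator should identify $HB^{(s,r_\theta),p}_\perp(G)$ with $HB^{(0,r_\theta),p}_\perp(G)=L^p_\perp(\mathbb{T};B^{r_\theta,p}(\R^n))$. This holds because $H^{s,p}_\perp(\mathbb{T},X)=\op[\langle\tau\rangle^{-s}]L^p_\perp(\mathbb{T},X)$ for any Banach space $X$.

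Thus it suffices to treat $s=0$. Here I use that $H^{(0,r),p}_\perp(G)=L^p_\perp(\mathbb{T};H^{r,p}(\R^n))$, since $\op[\langle\xi\rangle^{r}]$ acts only in space. Then I need the classical identity
\[
(L^p(\mathbb{T};A_0),L^p(\mathbb{T};A_1))_{\theta,p}=L^p(\mathbb{T};(A_0,A_1)_{\theta,p}),
\]
valid for $q=p$ by Fubini in the $K$-functional (cf.\ \cite[1.18.4]{triebel1995interpolation}). It is applied with $A_k=H^{r_k,p}(\R^n)$, whose interpolation space is $B^{r_\theta,p}(\R^n)$ by Definition \ref{def: besov}. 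The $\perp$-restriction is then handled by the retraction $\mathcal P_\perp$ with coretraction the inclusion, via Lemma \ref{lem:interp_retraction}. $\mathcal P_\perp$ acts consistently on $L^p(\mathbb{T};A_0)+L^p(\mathbb{T};A_1)$.

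\emph{Second identity (temporal interpolation at fixed spatial regularity).} Analogously, $\op[\langle\xi\rangle^{r}]$ is an isomorphism $H^{(s_k,r),p}_\perp(G)\to H^{(s_k,0),p}_\perp(G)=H^{s_k,p}_\perp(\mathbb{T};L^p(\R^n))$ and $BH^{(s_\theta,r),p}_\perp(G)\to BH^{(s_\theta,0),p}_\perp(G)$. So it suffices to take $r=0$ and $X=L^p(\R^n)$. In this case the claim is exactly Definition \ref{def: pure_osc_besov}: $B^{s_\theta,p}_\perp(\mathbb{T},X)=(H^{s_0,p}_\perp(\mathbb{T},X),H^{s_1,p}_\perp(\mathbb{T},X))_{\theta,p}$.

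The definition's independence of $s_0,s_1$ is what must be justified. I would obtain it from the real-line statement (Definition \ref{def: besov} and \cite[Theorem VII.4.1.3]{amann2019linear}) transferred by $R_\perp,E_\perp$ through Lemma \ref{lem:interp_retraction}. For general real $s_0,s_1$ I would first lift by $\op[\langle\tau\rangle^{-m}]$ with $m$ large, and use the $H^{s,p}$-analogue of the retraction.

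\emph{Main obstacle.} The delicate point is that Corollary \ref{cor: retraction_perp} provides the retraction only for integer $s,s'$ on Sobolev spaces, while here $s_0,s_1$ are real Bessel-potential exponents. I would first try to show that the localisation operators $R$ and $E$ commute with $\op[\langle\tau\rangle^{\sigma}]$ up to lower-order terms. Failing that, I would obtain the fractional Bessel spaces by complex interpolation between integer ones: complex interpolation of $H^{k,p}(\R,X)$, valid if $X$ is UMD (here $X=L^p(\R^n)$ or $B^{r,p}(\R^n)$ with $p\in(1,\infty)$, which are UMD). The retraction then extends by the complex part of Lemma \ref{lem:interp_retraction}, and the reiteration theorem gives the result for arbitrary real $s_0\neq s_1$.
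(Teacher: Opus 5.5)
Your argument is correct. For the first identity, however, you order the reductions differently from the paper.

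\textbf{First identity.} The paper first passes to the real line with the retraction $R_\perp$. Only then does it remove the time regularity by $J^{-s}=\op[\langle\tau\rangle^{-s}]$ and apply \cite[Theorem 1.18.4]{triebel1995interpolation} on $\R$. You instead lift by $\op[\langle\tau\rangle^{s}]$ directly on $G$ and identify $H^{(0,r_k),p}_\perp(G)=L^p_\perp(\mathbb{T};H^{r_k,p}(\R^n))$. You then apply the Bochner-space interpolation identity on the measure space $\mathbb{T}$ itself, and handle the mean-zero constraint with the projection $\mathcal{P}_\perp$ and Lemma \ref{lem:interp_retraction}.

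This is a genuine gain. The paper's route needs $R_\perp$ on Bessel-potential spaces of arbitrary real order $s$. Proposition \ref{prop: retraction} and Corollary \ref{cor: retraction_perp} only construct it for integer orders. Your route needs no localisation at all.

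\textbf{Second identity.} You and the paper agree that the assertion is just Definition \ref{def: pure_osc_besov}. Your preliminary spatial lift to $r=0$ is harmless but unnecessary, since the definition allows any Banach space $X$, in particular $X=H^{r,p}(\R^n)$. You are right that the only real content is the independence of that definition from $s_0,s_1$, which the paper merely asserts via the retraction.

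If you want to supply that independence, work on the torus directly rather than through $R_\perp$. Since $L^p(\mathbb{T};H^{r,p}(\R^n))\cong L^p(G)$, the Marcinkiewicz theorem and the transfer principle (as in Proposition \ref{js016}) show that $\op[\langle\tau\rangle]$ is sectorial with bounded imaginary powers on $L^p_\perp(\mathbb{T};X)$. Hence $\{H^{s,p}_\perp(\mathbb{T};X)\}_{s\in\R}$ is a complex interpolation scale, and the reiteration theorem yields the independence.

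Your complex-interpolation repair of the retraction would in any case require exactly the torus-side identification $[H^{k_0,p}_\perp(\mathbb{T};X),H^{k_1,p}_\perp(\mathbb{T};X)]_\vartheta=H^{(1-\vartheta)k_0+\vartheta k_1,p}_\perp(\mathbb{T};X)$. Once you have that, the retraction is no longer needed.
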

\begin{proof}
    Since $H^{(s, r_j), p}_\perp(G)=H^{s,p}_\perp(\mathbb{T};H^{r_j,p}(\mathbb{R}^n))$ for $j\in\{0,1\}$, we may use Lemma \ref{lem:interp_retraction} and the retraction $R_\perp$ from Proposition \ref{prop: retraction} to reduce the first assertion to the interpolation identity
    \begin{align*}
     (H^{s, p}(\R;A_0), H^{s,p}(\R,A_1))_{\theta, p}= H^{s,p}(\R,(A_0,A_1)_{\theta,p})   
    \end{align*}
    with $A_j:=H^{r_j,p}(\R^n)$.
    By the canonical lift $J^{-s}:=\op[\langle \tau\rangle^{-s}]$ this reduces further to
    \begin{align*}
        (L^p(\R;A_0), L^p(\R;A_1))_{\theta, p}= L^p(\R;(A_0,A_1)_{\theta,p}),
    \end{align*}
    which holds due to \cite[Theorem 1.18.4]{triebel1995interpolation}.

    \medskip

    For the second assertion, we observe that $H^{(s_j, r), p}_\perp(G)=H^{s_j,p}_\perp(\mathbb{T};E)$ with $E:=H^{r,p}(\R^n)$, so that the result follows from Definition \ref{def: pure_osc_besov}.
\end{proof}
\begin{lemma}\label{lemma: interpolation_intersection}
     For all $r, s, r_0, s_0\geq 0$ such that $s_0\geq s$ ,  $r_0\geq r$ and $\theta\in (0, 1)$ ,
      \begin{align*}
          (H^{(s, r),p}_\perp (G), H^{(s, r_0),p}_\perp&(G)\cap H^{(s_0, r),p}_\perp(G))_{\theta, p}\\
          &=(H^{(s, r),p}_\perp(G), H^{(s, r_0),p}_\perp(G))_{\theta, p}\cap( H^{(s, r), p}_\perp(G),H^{(s_0, r),p}_\perp(G))_{\theta, p}\\
          &=HB_\perp^{(s, r_\theta), p}(G) \cap BH_\perp ^{(s_\theta, r), p}(G)
      \end{align*}
      with $r_\theta:=(1-\theta)r+\theta r_0$, $s_\theta:=(1-\theta)s+\theta s_0$.
\end{lemma}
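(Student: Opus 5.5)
The second equality follows from Lemma \ref{lemma: interpolation_same_time_param}: it identifies $(H^{(s,r),p}_\perp(G),H^{(s,r_0),p}_\perp(G))_{\theta,p}=HB^{(s,r_\theta),p}_\perp(G)$ and $(H^{(s,r),p}_\perp(G),H^{(s_0,r),p}_\perp(G))_{\theta,p}=BH^{(s_\theta,r),p}_\perp(G)$. In the degenerate cases $r_0=r$ or $s_0=s$ the corresponding interpolation is trivial. The real content is the first equality, and only the inclusion ``$\supseteq$'' is nontrivial. The inclusion ``$\subseteq$'' follows from monotonicity of the real interpolation functor, since $H^{(s,r_0)}\cap H^{(s_0,r)}$ embeds into each factor. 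To prove ``$\supseteq$'', I would first apply the lift $\op[w_{(s,r)}]$, which is an isomorphism on all spaces involved (lifting property of $\mathcal H$, Proposition \ref{js016}). This reduces to $s=r=0$, with $a:=s_0-s\ge 0$ and $b:=r_0-r\ge 0$. The task then becomes
\begin{align*}
(L^p_\perp(G),H^{(a,0),p}_\perp(G)\cap H^{(0,b),p}_\perp(G))_{\theta,p}\supseteq (L^p_\perp,H^{(0,b),p}_\perp)_{\theta,p}\cap(L^p_\perp,H^{(a,0),p}_\perp)_{\theta,p}.
\end{align*}

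My preferred route is the classical theorem on interpolating intersections of domains of commuting operators. Set $A:=\op[\langle\tau\rangle^{a}]$ and $B:=\op[\langle\xi\rangle^{b}]$ on $X:=L^p_\perp(G)$. Their domains are $H^{(a,0),p}_\perp$ and $H^{(0,b),p}_\perp$, and they commute with resolvents.

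Both operators are positive and in fact admit a bounded $H^\infty$-calculus of angle $0$. For $A$, the symbols $(\lambda+\langle\tau\rangle^a)^{-1}\langle\tau\rangle^a$ satisfy Marcinkiewicz conditions uniformly in sectors, so they are $L^p$ multipliers by Theorem \ref{thm: Marcinkiewicz} and the transfer principle (Theorem \ref{thm: transfer}). The same argument applies to $B$. Grisvard's theorem (see also \cite[Theorem 1.14.? / Section 1.15]{triebel1995interpolation}) then gives
\begin{align*}
(X,D(A)\cap D(B))_{\theta,p}=(X,D(A))_{\theta,p}\cap(X,D(B))_{\theta,p}.
\end{align*}
A hands-on alternative is to use the $K$-functional. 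Given $f$ in the right-hand side, split $f=\op[m_t]f+\op[1-m_t]f$ with a smooth cutoff $m_t(\tau,\xi)=\varphi(t^{1/a}\langle\tau\rangle)\varphi(t^{1/b}\langle\xi\rangle)$. Then estimate $K(t,f;X,D(A)\cap D(B))\lesssim K(t,f;X,D(A))+K(t,f;X,D(B))$ using multiplier bounds uniform in $t$, again verified by Marcinkiewicz plus transfer.

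The main obstacle is exactly this ``$\supseteq$'' inclusion: verifying the commuting-resolvent and uniform multiplier hypotheses in the purely oscillatory setting, which requires the cutoff $\eta$ near $\tau=0$. If this becomes awkward, I would instead transfer the corresponding $\mathbb R\times\mathbb R^n$ result, which is known from anisotropic theory \cite{denk2013general}. The retraction $R_\perp,E_\perp$ of Corollary \ref{cor: retraction_perp}, applied with $X=L^p$ and $Y=H^{b,p}(\R^n)$, is compatible with intersections, so Lemma \ref{lem:interp_retraction} would carry the identity over to $G$.
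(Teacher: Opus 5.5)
Your argument is correct and is essentially the paper's. The paper also derives the nontrivial inclusion from Grisvard's theorem on $(X,D(A)\cap D(B))_{\theta,p}$ for positive operators with commuting resolvents (Triebel, Theorem 1.15.5), and then applies Lemma~\ref{lemma: interpolation_same_time_param}. The only difference is that the paper first transfers to $\R\times\R^n$ via $R_\perp$ and uses $1-\Delta$, $1+\partial_t$ with bounded imaginary powers to identify the relevant fractional-power domains. Your operators $\op[\langle\tau\rangle^{a}]$, $\op[\langle\xi\rangle^{b}]$ on $L^p_\perp(G)$ instead have exactly $H^{(a,0),p}_\perp(G)$ and $H^{(0,b),p}_\perp(G)$ as domains, so they need only positivity.

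One correction concerns the degenerate cases $r_0=r$ or $s_0=s$, which you call trivial. For the second equality they are not: there the interpolation returns $H^{(s,r),p}_\perp(G)$, not $HB^{(s,r),p}_\perp(G)$ resp.\ $BH^{(s,r),p}_\perp(G)$. For $p<2$ the identity even fails. For instance, take $r_0=r$, $s_0>s$ and the function $e^{2\pi\ic t/T}g(x)$ with $g\in H^{r,p}(\R^n)\setminus B^{r,p}(\R^n)$; it lies in the middle space but not in $HB^{(s,r),p}_\perp(G)$. So the lemma must be read with $r_0>r$ and $s_0>s$, as Lemma~\ref{lemma: interpolation_same_time_param} requires.
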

\begin{proof}
    Using the retraction $R_\perp$ as above, it is enough to show the isomorphism for the spaces defined on $\mathbb{R}\times \mathbb{R}^n$. Therefore, momentarily we write $H(s,r):=H^{(s,r),p}(\mathbb{R}\times \mathbb{R}^n)$ with the understanding that this notation is limited to this proof.
    Consider the operator $A: =1-\Delta= \op[1+\vert \xi\vert^2]$ and $B:=1+\partial_t=\op[1+i\tau]$ acting in $H(s, r)$ with respective domains
    \begin{equation*}
        D(A):=H(s, r+2), \qquad  D(B):=H(s+1, r).
    \end{equation*} 
    
    Since $A$ and $B$ have bounded imaginary powers in $H(s,r)$ (in fact, they admit a bounded $H^\infty$-calculus by a Fourier multiplier argument as in \cite[Example 10.2]{KuW04}) , it holds $D(A^{k\theta})=[H(s,r),H(s,r+2k)]_{\theta}=H(s,r+2k\theta)$ and $D(B^{k\theta})=[H(s,r),H(s+k,r)]_{\theta}=H(s+k\theta,r)$ for all $\theta\in (0,1)$ and $k\in \mathbb{N}$, see \cite[Theorem 6.6.9]{Haa06}.
    In particular, $D(A^{\frac{r_0-r}{2}})=H(s,r_0)$ and $D(B^{s_0-s})=H(s_0,r)$.
    Moreover, since every operator with bounded imaginary powers is sectorial and hence positive, $A$ and $B$ fulfill the hypothesis of Theorem 1.15.5 in \cite{triebel1995interpolation}, and we get 
    \begin{align*}
         (H(s,r), H(s, r_0)\cap H(s_0, r))_{\theta, p}&=(H(s, r), D(A^{\frac{r_0-r}{2}})\cap D(B^{s_0-s}))_{\theta, p}\\&=(H(s, r), D(A^{\frac{r_0-r}{2}}))_{\theta, p} \cap (H(s, r), D(B^{s_0-s}))_{\theta, p}\\&=(H(s, r), H(s, r_0))_{\theta, p}\cap(H(s, r), H(s_0, r))_{\theta, p}.
    \end{align*}
    The conclusion follows from Lemma \ref{lemma: interpolation_same_time_param}. 
\end{proof}

If $\mathcal{I}$ stands for the real interpolation functor $\mathcal{I}(E, F):=(E, F)_{\frac{1}{2},p} $, then Lemma \ref{lemma: interpolation_same_time_param} yields
\begin{equation*}
    \mathcal{H}_{\mathcal{I}}^{(0)}= \mathcal{BH}, \qquad \mathcal{H}_{\mathcal{I}}^{(1)}=\mathcal{HB},
\end{equation*}
where 
\begin{equation*}
    \mathcal{HB}:=\{HB^{(s,r),p}_\perp(G) \mid r, s\in \mathbb{R}\},\qquad \mathcal{BH}:=\{BH^{(s,r),p}_\perp(G) \mid r, s\in \mathbb{R}\}.
\end{equation*}
Since the scale $\mathcal{H}$ is a Newton polygon scale by Proposition \ref{js016}, so are $\mathcal{HB}$ and $\mathcal{BH}$ by Proposition \ref{prop: ellipticity_interpolation}.
We therefore obtain the following.

\begin{corollary}\label{cor: ellipticity_HB_BH}
    If $P\in \mathcal{O}^\perp(\R\times\R^n)$ is strongly $\mathcal{N}$-elliptic with order function $\mu$ in the sense of Definition \ref{def: Newton_ellipticity}, then for all order functions $\nu$ it holds 
    \begin{equation*}
        \op[P]\in \mathcal{L}_{\mathrm{iso}}(X^{\mu+\nu, p}_\perp(G), X^{\nu, p}_\perp(G))
    \end{equation*}
    where $X\in \{H, HB, BH\}$.
    If $p=2$, one can replace ``strongly $\mathcal{N}$-elliptic'' by ``$\mathcal{N}$-elliptic'' in the above.
\end{corollary}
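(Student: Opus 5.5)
The plan is to reduce everything to the Bessel-potential case and then transfer the result along the interpolated scales by Proposition \ref{prop: ellipticity_interpolation}.

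\emph{Case $X=H$.} This is exactly Theorem \ref{thm: whole space}. It gives
\[
\op[P]\in\call_{\mathrm{iso}}(H^{\mu+\nu,p}_\perp(G),H^{\nu,p}_\perp(G))
\]
for every order function $\nu$, with inverse $\op[1/P]$. This is the statement that $\op[P]$ is $\caln$-elliptic for $\mu$ in the scale $\mathcal H$ in the sense of Definition \ref{def: ellipticity_scales}. For $p=2$, the same theorem needs only $\caln$-ellipticity of $P$.

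\emph{Cases $X\in\{HB,BH\}$.}
\begin{enumerate}
\item Take $\cali(E,F):=(E,F)_{\frac12,p}$. By Lemma \ref{lemma: interpolation_same_time_param}, with $(s_0,s_1)=(s-1,s+1)$ or $(r_0,r_1)=(r-1,r+1)$, we have $\mathcal H^{(0)}_\cali=\mathcal{BH}$ and $\mathcal H^{(1)}_\cali=\mathcal{HB}$ as scales, with equivalent norms.
\item By Proposition \ref{js016} and Proposition \ref{prop: ellipticity_interpolation}, both $\mathcal{BH}$ and $\mathcal{HB}$ are Newton polygon scales. Hence $X^{\mu,p}_\perp(G):=\op[w^{-1}]X^{(0,0),p}_\perp(G)$ is well defined and does not depend on the choice of weight function.
\item The operator $\op[P]$ is a constant-coefficient Fourier multiplier, so it commutes with every $\op[w_{(s,r)}]$.
\item The second assertion of Proposition \ref{prop: ellipticity_interpolation} therefore transfers $\caln$-ellipticity of $\op[P]$ in $\mathcal H$ to the scales $\mathcal{BH}$ and $\mathcal{HB}$. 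Unwinding Definition \ref{def: ellipticity_scales}, this is precisely $\op[P]\in\call_{\mathrm{iso}}(X^{\mu+\nu,p}_\perp(G),X^{\nu,p}_\perp(G))$ for all order functions $\nu$.
\end{enumerate}

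\emph{Main obstacle: domain of definition.} Proposition \ref{prop: ellipticity_interpolation} is stated for operators $A$ on $\cals'_\perp(G)$ that are $\caln$-elliptic, i.e.\ invertible, on the whole scale. Here $\op[P]$ is defined through \eqref{eqn: def_multiplier_perp} via the symbol $\eta P$, and $\op[1/P]$ via $\eta/P\in\mathcal O^\perp$. So the first thing to check is that $\op[P]\op[1/P]=\op[1/P]\op[P]=\id$ on $\cals'_\perp(G)$. This holds because $\eta^2=\eta$ on $\widehat G$ and $\eta=1-\delta_\Z$ there.

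\emph{Main obstacle: the interpolation step.} The proof of Proposition \ref{prop: ellipticity_interpolation} interpolates the bounded maps $A\op[w^{-1}]$ and $A^{-1}\op[w]$ on $E(\pm1,0)$, respectively $E(0,\pm1)$. Both maps must be consistent on the sum space. This is automatic, since both are the restriction of a single multiplier acting on $\cals'_\perp(G)$.

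The case $p=2$ follows verbatim, using the $p=2$ version of Theorem \ref{thm: whole space} as the starting point.
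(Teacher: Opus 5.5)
Your proposal is correct and takes essentially the same route as the paper: Theorem \ref{thm: whole space} (as noted in the remark after Definition \ref{def: ellipticity_scales}) gives $\mathcal{N}$-ellipticity of $\op[P]$ in the scale $\mathcal{H}$, and Proposition \ref{prop: ellipticity_interpolation} then transfers it to the real-interpolation scales $\mathcal{HB}$ and $\mathcal{BH}$. Your two extra checks are details the paper leaves implicit: that $\op[P]$ and $\op[1/P]$ are mutually inverse on $\cals'_\perp(G)$ because $\eta^2=\eta$ on $\widehat{G}$, and that the interpolated maps agree on the sum space.
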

\begin{proof}
    As mentioned in the remark following Definition \ref{def: ellipticity_scales}, strong $\mathcal{N}$-ellipticity of $P$ in the sense of Definition \ref{def: Newton_ellipticity} implies $\mathcal{N}$-ellipticity of $\op[P]$ in the scale $\mathcal{H}$.
    Since the scales $\mathcal{HB}$ and $\mathcal{BH}$ arise as real interpolation scales of the scale $\mathcal{H}$, the result follows directly from Proposition \ref{prop: ellipticity_interpolation}.
\end{proof}

\section{The Half-Space Problem with General Boundary Conditions}\label{sec:half_space}
In this section, we develop a theory for boundary value problems on a half-space.
We first establish an abstract framework which reduces well-posedness to invertibility on the kernel of a trace operator and an appropriate complementing boundary condition.
We then show how support-preserving operators allow this framework to be applied to purely oscillatory Newton polygon spaces and derive the corresponding $L^p$-theory.

\subsection{An Abstract Operator Framework}\label{sec:abstr_frame}
As we shall see later, many boundary value problems share the following abstract structure.
After a harmless transformation, the underlying differential operator can be realized as an operator between topological vector spaces that becomes invertible only after restriction to distinguished subspaces.
The objective of this section is to identify the additional linear constraints that restore invertibility.
Throughout, we adopt notation that is tailored to the half-space problems considered in the subsequent sections.

\medskip

Let $\dot V$, $V$, and $\calu$ be topological vector spaces with $\dot{V}\hookrightarrow V$, and let $\opG\in\call(\calu,V)$.
Then $\dot{V}\times\calu$ is naturally isomorphic to a compatibility space
\begin{align}\label{js005_e1}
    \Comp_\opG&:=\{(f,b)\in V\times \calu\mid f-\opG b \in \dot{V}\}
\end{align}
endowed with the product topology of $V\times \calu$.
Indeed, it suffices to observe that the operator $\Psi_\opG\in \call(\Comp_\opG, \dot{V}\times \calu)$ given by $\Psi_\opG(f,b):=(f-\opG b,b)$ is an isomorphism with inverse $\Psi_\opG^{-1}(h,b):=(h+\opG b,b)$.
\begin{lemma}\label{js006}
    Let $U$, $V$, $\dot{U}$, $\dot{V}$, and $\calu$ be topological vector spaces such that $\dot{U}\hookrightarrow U$ and $\dot{V}\hookrightarrow V$.
    Let $\opE\in \call(\calu,U)$, and suppose that $\opT\in \call(U,V)$ is such that
    \begin{align*}
        \text{$\dot{\opT}:=\opT|_{\dot{U}}\in \call_{\mathrm{iso}}(\dot{U},\dot{V})$.}
    \end{align*}
    Then the operator $S_{\opT,\opE}:=(\opT,\id_\calu)$ belongs to  $\mathcal{L}_{\mathrm{iso}}(\Comp_\opE,\Comp_{\opT\opE})$, and its inverse is given by $S_{\opT,\opE}^{-1}(f,b)=(v+w,b)$, where $v:=\opE b$ and $w:=\dot{\opT}^{-1}(f-\opT\opE b)$.
    In particular, for any $(f,b)\in \Comp_{\opT\opE}$, there is a unique $u\in U$ such that $\opT u=f$ and $(u,b)\in \Comp_{\opE}$.
\end{lemma}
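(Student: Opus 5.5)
The plan is to verify directly that $S_{\opT,\opE}$ is well defined and continuous between the compatibility spaces, that the proposed formula gives a well-defined continuous map in the opposite direction, and that the two maps are mutually inverse. Recall from \eqref{js005_e1} that $\Comp_\opE=\{(u,b)\in U\times\calu\mid u-\opE b\in\dot U\}$ and $\Comp_{\opT\opE}=\{(f,b)\in V\times\calu\mid f-\opT\opE b\in\dot V\}$, both carrying the topologies induced by the products.

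\emph{Mapping property.} Let $(u,b)\in\Comp_\opE$. Then $u-\opE b\in\dot U$, and
\begin{align*}
\opT u-\opT\opE b=\opT(u-\opE b)=\dot\opT(u-\opE b)\in\dot V .
\end{align*}
Hence $S_{\opT,\opE}(u,b)=(\opT u,b)\in\Comp_{\opT\opE}$. Continuity is inherited from $\opT\times\id_\calu\in\call(U\times\calu,V\times\calu)$, since both compatibility spaces carry the subspace topologies.

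\emph{Inverse.} For $(f,b)\in\Comp_{\opT\opE}$, set $v:=\opE b\in U$ and $w:=\dot\opT^{-1}(f-\opT\opE b)\in\dot U$; the latter is well defined because $f-\opT\opE b\in\dot V$. Since $(v+w)-\opE b=w\in\dot U$, we have $(v+w,b)\in\Comp_\opE$. Moreover $\opT(v+w)=\opT\opE b+(f-\opT\opE b)=f$, so $S_{\opT,\opE}$ composed with the candidate inverse is the identity. Conversely, starting from $(u,b)\in\Comp_\opE$, the element $w$ built from $(\opT u,b)$ is $\dot\opT^{-1}\dot\opT(u-\opE b)=u-\opE b$, so $v+w=u$. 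The two maps are therefore mutually inverse, which also gives the existence and uniqueness claim for $u$.

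\emph{Continuity of the inverse.} This is the only point needing care. The map $(f,b)\mapsto f-\opT\opE b$ is continuous from $\Comp_{\opT\opE}$ into $V$, but we need it continuous into $\dot V$ with its own, possibly finer, topology. I would handle this through the isomorphism $\Psi_{\opT\opE}\in\call_{\mathrm{iso}}(\Comp_{\opT\opE},\dot V\times\calu)$ noted before the lemma: the map $(f,b)\mapsto f-\opT\opE b\in\dot V$ is the first component of $\Psi$, hence continuous. Composing with $\dot\opT^{-1}\in\call(\dot V,\dot U)$, the embedding $\dot U\hookrightarrow U$, and adding the continuous map $\opE b$ shows that $(f,b)\mapsto v+w$ is continuous into $U$. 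The second component is the identity. Since the topology of $\Comp_\opE$ is the product topology restricted from $U\times\calu$, the inverse is continuous.

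If the claimed continuity of $\Psi_\opG$ is to be trusted as stated, that is, if the topology on $\Comp_\opG$ is arranged so that $\Psi_\opG$ is an isomorphism, then this step is immediate. Equivalently, one can write $S_{\opT,\opE}=\Psi_{\opT\opE}^{-1}\circ(\dot\opT\times\id_\calu)\circ\Psi_\opE$, which exhibits $S_{\opT,\opE}$ as a composition of isomorphisms. This composition form is the cleanest route.
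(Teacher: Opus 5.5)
Your proof is correct and, in its final form $S_{\opT,\opE}=\Psi_{\opT\opE}^{-1}\circ(\dot\opT\times\id_\calu)\circ\Psi_\opE$, is exactly the paper's argument, which likewise imports the isomorphism property of $\Psi_\opG$ from the remark preceding the lemma and then reads off the inverse formula. Your caveat about continuity of $(f,b)\mapsto f-\opG b$ into $\dot V$ is well founded. It genuinely requires $\dot V$ to carry the topology induced by $V$: for example, $U=\dot U=\dot V=H^1(\R)$, $V=L^2(\R)$, $\calu=\{0\}$ and $\opT$ the inclusion satisfy all hypotheses, yet the inverse of $S_{\opT,\opE}$ is not continuous. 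The paper leaves this point tacit; it holds in the applications because there $\dot V$ is a closed subspace of a Banach space, so the open mapping theorem applies.
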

\begin{proof}
    By the definition of the compatibility space $\eqref{js005_e1}$, we learn that the operator
    \begin{align*}
        \Psi_{\opT\opE}^{-1}\circ
        \begin{pmatrix}
            \dot{\opT} & 0 \\
            0 & \id_\calu
        \end{pmatrix}
        \circ \Psi_{\opE}
    \end{align*}
belongs to $\call_{\mathrm{iso}}(\Comp_{\opE},\Comp_{\opT\opE})$.
Therefore, the result follows from
\begin{align*}
        S_{\opT,\opE}(u,b)=(\opT u,b)=\Psi_{\opT\opE}^{-1}(\opT(u-\opE b),b)=
        \Psi_{\opT\opE}^{-1}
        \begin{pmatrix}
            \dot{\opT} & 0 \\
            0 & \id_\calu
        \end{pmatrix}
        \Psi_{\opE}(u,b)
\end{align*}
and hence
\begin{align*}
        S_{\opT,\opE}^{-1}(f,b) &=\Psi_{\opE}^{-1} 
        \begin{pmatrix}
            \dot{\opT}^{-1} & 0 \\
            0 & \id_\calu
        \end{pmatrix}
        \Psi_{\opT\opE}(f,b)=\Psi_{\opE}^{-1}(\dot{\opT}^{-1}(f-\opT\opE b),b)=(v+w,b).
        \qedhere
\end{align*}
\end{proof}
The previous lemma shows that invertibility of $\dot{\opT}$ induces invertibility between corresponding compatibility spaces once a space $\calu$ and a map $\opE\in\call(\calu,U)$ are chosen.
To recover an invertibility statement for the original space $U$, we need a natural identification of $U$ with the compatibility space $\Comp_{\opE}$, i.e. an isomorphism $\iota\in \call_{\mathrm{iso}}(U,\Comp_{\opE})$.
Such an isomorphism should preserve the first component in the sense $\iota(u)=(u,b)$ for some $b\in \calu$, so that the equation $\opT u=f$ is unaffected.
In other words, the compatibility space $\Comp_{\opE}$ has to be realized as the graph of some operator $\Tr\in\call(U,\mathcal U)$, i.e., $\iota(u)=(u,\Tr\,u)$. In that case, we have
\begin{align*}
    \Tr\, u =0 \iff (u,0)\in\Comp_\opE \iff u\in\dot{U}.
\end{align*}
Moreover, for every $b\in\calu$, the point $(\opE b,b)$ belongs to $\Comp_{\opE}$.
Hence $(\opE b,b)=\iota(\iota^{-1}(\opE b,b))=(u,\Tr\, u)$, where $u:=\iota^{-1}(\opE b,b)$.
Comparing first coordinates gives $u=\opE b$, while comparing second coordinates yields $\Tr\,\opE b=\Tr\, u=b$.
Summarizing, we want $\Tr\circ \opE=\id_{\calu}$ and $\ker\Tr=\dot{U}$.
This is exactly the content of the following definition.
\begin{definition}\label{def: abstract_trace}
    Let $U$ and $\dot{U}$ be topological vector spaces with $\dot{U}\hookrightarrow U$.
    A topological vector space $\calu$ is called an \emph{abstract trace space} for $(U,\dot{U})$ if there exist $\Tr\in \mathcal{L}(U,\calu)$ and $\opE \in \mathcal{L}(\calu,U)$ such that
    \begin{align*}
        \Tr\circ \opE = \id_{\calu},
        \quad \text{and} \quad
        \ker \Tr= \dot{U}.
    \end{align*}
    In that case, we call $(U,\dot{U},\calu,\Tr,\opE)$ an \emph{abstract trace tuple}.
\end{definition}
\begin{example}\label{js009}
    \begin{enumerate}
        \item\label{js009i} Let $p\in (1,\infty)$ and $s>\frac1p$ be such that $s-\frac1p$ is not an integer.
        Consider the Sobolev-Slobodecki\u{\i} space $U:=W^{s,p}(\R^d_+)$ on the half-space.
        Denote by $\dot{U}:=W^{s,p}_0(\R^d_+)$ the closure of $C_c^\infty(\R^d_+)$.
        Then it is a classical result, see e.g.\ \cite[Chapter 3.6.2]{triebel1995interpolation}, that for $\calu:=\prod_{j=0}^{k} W^{s-j-\frac1p,p}(\R^{d-1})$, $k:=\lfloor s-\frac1p\rfloor$, there is a trace operator $\Tr=(\Tr_0,\ldots, \Tr_{k})$ in $\call(U,\calu)$ and a corresponding extension operator $\opE\in\call(\calu,U)$ such that $(U,\dot{U},\calu,\Tr,\opE)$ is an abstract trace tuple.
        \item\label{js009ii} If $0\le s<\frac1p$, which contains the important case of $U=L^p(\R^d_+)$ for the choice $s=0$, there does not exist a reasonable Sobolev trace for elements in $W^{s,p}(\R^d_+)$ anymore.
        However, the distinguished subspace $\dot{U}$ coincides with $U=\dot{U}$ in that range, see e.g.\ \cite[Chapter 3.6.2]{triebel1995interpolation}.
        Consequently, $(U,\dot{U},\calu,\Tr,\opE)$ forms an abstract trace tuple for the trivial trace space $\calu=\{0\}$ together with the zero operators $\Tr=0$ and $\opE=0$.
    \end{enumerate}
\end{example}
One may wonder for which pairs $(U,\dot{U})$ such an abstract trace space exists.
Observe that if $\dot{U}$ is a complemented closed subspace of $U$ with projection $P$, then for $Q:=\id_U-P$ the choice $\calu:=QU$ is an abstract trace space with $\Tr:=Q$ and $\opE$ being the natural injection of $\calu$ in $U$.
On the other hand, if $(U,\dot{U},\calu,\Tr,\opE)$ is an abstract trace tuple, then $\dot{U}$ is a complemented closed subspace of $U$, with the projection onto $\dot{U}$ given by $\id_{U}-\opE\circ\Tr$.
Therefore, an abstract trace space exists if and only if $\dot{U}$ is a complemented closed subspace of $U$.
In particular, for $(u,b)\in U\times \calu$ it holds
\begin{align*}
\Tr \, u=b
\iff
u-\opE b\in \dot{U}
=
\ker\Tr
\iff
(u,b)\in\Comp_{\opE}.
\end{align*}
Thus, $\Comp_{\opE}$ is precisely the graph of $\Tr$, i.e., $\iota\in \call(U,\Comp_{\opE})$ given by $\iota(u):=(u,\Tr \, u)$ is an isomorphism with inverse $\iota^{-1}(v,b):=v$.

\begin{proposition}\label{prop: op_preserving_upper_support_abstr}
    Let $(U,\dot{U},\calu,\Tr,\opE)$ be an abstract trace tuple, and let $V$ and $\dot{V}$ be topological vector spaces such that $\dot{V}\hookrightarrow V$.
    Suppose that $\opT\in \call(U,V)$ is such that
    \begin{align*}
        \dot{\opT}:=\opT|_{\dot{U}}\in \call_{\mathrm{iso}}(\dot{U},\dot{V}).
    \end{align*}
    Then $S:=(\opT,\Tr)$ belongs to $\call_{\mathrm{iso}}(U,\Comp_{\opT\opE})$ with inverse $S^{-1}(f,b)=v+w$, where $v:=\opE b$ and $w:=\dot{\opT}^{-1}(f-\opT\opE b)$.
    In particular, for any $(f,b)\in \Comp_{\opT\opE}$, there exists a unique $u\in U$ satisfying
    \begin{align*}
        \left\{\begin{array}{rcl}
         \opT u    &=&f  \\
          \Tr \, u   &=&b. 
        \end{array}\right.
    \end{align*}
\end{proposition}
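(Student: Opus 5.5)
The plan is to factor $S$ through the isomorphisms already established, namely $\iota$ and the operator $S_{\opT,\opE}$ from Lemma \ref{js006}. The first step is to recall from the discussion after Definition \ref{def: abstract_trace} that $\iota\in\call_{\mathrm{iso}}(U,\Comp_\opE)$, $\iota(u)=(u,\Tr\,u)$, with inverse $\iota^{-1}(v,b)=v$. This uses only $\Tr\circ\opE=\id_\calu$ and $\ker\Tr=\dot U$, so that $(u,b)\in\Comp_\opE$ exactly when $\Tr\,u=b$. Continuity of $\iota$ follows from $\Tr\in\call(U,\calu)$. Continuity of $\iota^{-1}$ holds because it is the restriction of the first coordinate projection on $U\times\calu$.

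The second step is to apply Lemma \ref{js006} with $\dot U,U,\dot V,V,\calu,\opE,\opT$ as given. Its hypotheses are exactly those of the proposition, so $S_{\opT,\opE}=(\opT,\id_\calu)\in\call_{\mathrm{iso}}(\Comp_\opE,\Comp_{\opT\opE})$, with inverse $(f,b)\mapsto(\opE b+\dot\opT^{-1}(f-\opT\opE b),b)$.

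Composing gives $S_{\opT,\opE}\circ\iota(u)=(\opT u,\Tr\,u)=S u$, so $S$ is a composition of two isomorphisms and lies in $\call_{\mathrm{iso}}(U,\Comp_{\opT\opE})$. Its inverse is $\iota^{-1}\circ S_{\opT,\opE}^{-1}$, which gives $S^{-1}(f,b)=\opE b+\dot\opT^{-1}(f-\opT\opE b)=v+w$, as claimed.

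The existence and uniqueness statement is a rephrasing of bijectivity. For $(f,b)\in\Comp_{\opT\opE}$, the element $u:=S^{-1}(f,b)$ satisfies $\opT u=f$ and $\Tr\,u=b$. Conversely, any solution $u$ satisfies $Su=(f,b)$, so uniqueness follows from injectivity of $S$.

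There is no real obstacle. The only point to check carefully is that the topologies match. $\Comp_\opE$ and $\Comp_{\opT\opE}$ both carry the subspace topologies from $U\times\calu$ and $V\times\calu$, which is the setting in which $\iota$ and Lemma \ref{js006} were formulated. One should also note that $S$ indeed maps into $\Comp_{\opT\opE}$. This can be seen directly: $\opT u-\opT\opE\Tr\,u=\dot\opT(u-\opE\Tr\,u)\in\dot V$, since $u-\opE\Tr\,u\in\ker\Tr=\dot U$.
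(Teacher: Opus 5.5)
Your proposal is correct and follows the paper's proof: the paper also writes $S=S_{\opT,\opE}\circ\iota$ and concludes from Lemma \ref{js006} together with the isomorphism $\iota\in\call_{\mathrm{iso}}(U,\Comp_\opE)$. Your additional checks (the inverse formula via $\iota^{-1}\circ S_{\opT,\opE}^{-1}$, the topologies, and the fact that $S$ maps into $\Comp_{\opT\opE}$) simply spell out what the paper leaves implicit.
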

\begin{proof}
Since $S=S_{\opT,\opE}\circ \iota$ and $\iota\in \call(U,\Comp_\opE)$, this follows immediately from Lemma \ref{js006}. 
\end{proof}
    As seen above, the constraint encoded by the space $\Comp_{\opE}$ determines the admissible boundary data.
    We now turn to the compatibility condition defining $\Comp_{\opT\opE}$, namely
    \begin{align*}
     f-\opT\opE b\in\dot V.      
    \end{align*}
    Suppose there exist a topological vector space $\calv$ and $\opV\in\call(V,\calv)$ such that $\ker\opV=\dot{V}$,\footnote{
    In many applications below, the map $\opV$ will itself arise as the trace operator $\Tr_V$ of an abstract trace tuple $(V,\dot V,\calv,\Tr_V,\Ext_V)$.
    However, the abstract theory developed in this section only requires the weaker assumption $\ker\opV=\dot V$, so we formulate the results at this level of generality.} then a necessary condition for $\opT u=f$ to hold in $V$ is $\opV\opT u=\opV f$.
    If $\Tr \, u=b$, then $u-\opE b\in \dot{U}$, so that $\opT(u-\opE b)\in \dot{V} = \ker \opV$, i.e. $\opV\opT u=\opV\opT\opE b$.
    Thus, the necessary condition presents itself as the compatibility condition
    \begin{align}\label{js005_e2}
        \opV \opT\opE b=\opV f
        \quad \Leftrightarrow \quad
        f-\opT\opE b \in \ker\opV=\dot{V}
        \quad \Leftrightarrow \quad
        (f,b)\in \Comp_{\opT\opE}.
    \end{align}
    In many applications, the space $V$ carrying the right-hand side $f$ actually satisfies $V=\dot V$, so that there is no nontrivial constraint on $f$.
    As seen in Example \ref{js009}, this is for instance the case for $V=L^p(\R^d_+)$.
    In this case, \eqref{js005_e2} is always fulfilled, that is, every pair $(f,b)\in V\times\calu$ is admissible in Proposition \ref{prop: op_preserving_upper_support_abstr}.
    At first sight, this may suggest that keeping track of the compatibility condition serves little practical purpose.
    However, the compatibility condition becomes indispensable when treating general boundary value problems, as we discuss next.

    \medskip
    
    Applying \eqref{js005_e2} to $(f,b)=(\opT u,\Tr\, u)$, which belongs to $\Comp_{\opT\opE}$ by Proposition \ref{prop: op_preserving_upper_support_abstr}, and writing $\opC:=\opV\opT\opE$, we obtain the fundamental intertwining relation
    \begin{equation}\label{js005_e3}
        \opC\Tr=\opV\opT,
        \quad \text{i.e., the diagram} \quad
        \hbox{
        \begin{tikzcd}
        U \arrow[r, "\opT"] \arrow[d, "\Tr"] & V\arrow[d, "\opV"] \\
        \calu \arrow[r, "\opC"] & \calv
        \end{tikzcd}
        }
        \quad \text{commutes.}
    \end{equation}
    We now consider a topological vector space $\calg$, viewed as a boundary space, together with a boundary operator $\opB\in \call(\calu,\calg)$, and study the solvability of
    \begin{align}\label{js007_e1}
        \left\{\begin{array}{rcl}
         \opT u    &=&f  \\
          \opB\Tr \,u   &=&g 
        \end{array}\right.
    \end{align}
    for $f\in V$, $g\in \calg$.

\medskip

Since the operator $\opB$ is generally not invertible, the condition $\opB \Tr\, u=g$ does not determine the trace $\Tr\, u$, which is precisely the information needed to invert $\opT$ in light of Proposition \ref{prop: op_preserving_upper_support_abstr}.
The missing boundary information must therefore be recovered from the equation $\opT u=f$.
Since $\opC\Tr\,u=\opV\opT u =\opV f$ by \eqref{js005_e3}, this additional information is encoded by the operator $\opC$. 
This motivates the following abstract complementing boundary condition. 
\begin{definition}\label{def: abstract_complementing_condition_upper}
    Let $(U,\dot{U},\calu,\Tr,\opE)$ be an abstract trace tuple, and let $V$, $\dot{V}$, $\calv$ be topological vector spaces such that $\dot{V}\hookrightarrow V$.
    Suppose that $\opV\in \call(V,\calv)$ is such that $\ker\opV=\dot{V}$.
    Let $\calg$ be a topological vector space.
    We say that the boundary operator $\opB\in \mathcal{L}(\calu, \calg)$ satisfies the \emph{abstract complementing boundary condition} with respect to an operator $\opT\in \call(U,V)$ if for the operator $\opC \in \mathcal{L}( \calu, \calv)$ defined by $\opC:=\opV\circ \opT \circ \opE$, the complemented operator $(\opB, \opC)\in \mathcal{L}(\calu, \calg\times \calv)$ is injective.
\end{definition}
With this definition in place, the general boundary value problem \eqref{js007_e1} can now be solved abstractly.
\begin{theorem}\label{js007}
    Let $(U,\dot{U},\calu,\Tr,\opE)$ be an abstract trace tuple, and let $V$, $\dot{V}$, $\calv$, $\calg$ be topological vector spaces such that $\dot{V}\hookrightarrow V$.
    Suppose that $\opV\in \call(V,\calv)$ is such that $\ker\opV=\dot{V}$.
    Assume that $\opT\in \call(U,V)$ is such that $\dot{\opT}:=\opT|_{\dot{U}}\in \call_{\mathrm{iso}}(\dot{U},\dot{V})$, and let $\opB\in\call(\calu,\calg)$ satisfy the abstract complementing boundary condition with respect to $\opT$.
    Endow
    \begin{align*}
        \Comp_{\opB,\opC}:=\{(f,g)\in V\times \calg\mid (g,\opV f)\in \ran(\opB,\opC)\}
    \end{align*}
    with the initial topology of the map
    \begin{align*}
        \Comp_{\opB,\opC}\ni(f,g)\mapsto (f,b(f,g))\in V\times\calu,
    \end{align*}
    where $b(f,g):=(\opB,\opC)^{-1}(g,\opV f)$.
    Then $S_\opB:=(\opT,\opB\Tr)$ belongs to $\call_{\mathrm{iso}}(U,\Comp_{\opB,\opC})$ with inverse
    \begin{align*}
        S_\opB^{-1}(f,g)=\opE b(f,g) + \dot{\opT}^{-1}(f-\opT\opE b(f,g)).
    \end{align*}
    In particular, for all $(f,g)\in \Comp_{\opB,\opC}$ there is a unique solution $u\in U$ to \eqref{js007_e1}.
\end{theorem}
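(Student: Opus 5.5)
The plan is to reduce Theorem \ref{js007} to Proposition \ref{prop: op_preserving_upper_support_abstr}. We compose the isomorphism $S=(\opT,\Tr)\in\call_{\mathrm{iso}}(U,\Comp_{\opT\opE})$ with a second isomorphism $\Phi:\Comp_{\opT\opE}\to\Comp_{\opB,\opC}$ given by $\Phi(f,b):=(f,\opB b)$. Then $S_\opB=\Phi\circ S$, and it suffices to show that $\Phi$ is a well-defined topological isomorphism with inverse $\Phi^{-1}(f,g)=(f,b(f,g))$. The stated formula for $S_\opB^{-1}$ then follows by inserting $b=b(f,g)$ into the formula $S^{-1}(f,b)=\opE b+\dot{\opT}^{-1}(f-\opT\opE b)$.

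First I would check that $b(f,g)$ is well defined. The map $(\opB,\opC)$ is injective by the abstract complementing boundary condition, so $(\opB,\opC)^{-1}$ is defined on $\ran(\opB,\opC)$. By definition of $\Comp_{\opB,\opC}$, the element $(g,\opV f)$ lies in this range. Hence $b(f,g)$ is the unique $b\in\calu$ with $\opB b=g$ and $\opC b=\opV f$.

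Next, $\Phi$ maps into $\Comp_{\opB,\opC}$. Let $(f,b)\in\Comp_{\opT\opE}$. Then $f-\opT\opE b\in\dot V=\ker\opV$, so $\opV f=\opV\opT\opE b=\opC b$ by \eqref{js005_e2}. Therefore $(\opB b,\opV f)=(\opB,\opC)b$ lies in the range, and injectivity gives $b=b(f,\opB b)$.

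Conversely, let $(f,g)\in\Comp_{\opB,\opC}$ and set $b:=b(f,g)$. Then $\opV(f-\opT\opE b)=\opV f-\opC b=0$, so $(f,b)\in\Comp_{\opT\opE}$ and $\Phi(f,b)=(f,g)$. Together these show that $\Phi$ is bijective with the claimed inverse.

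For continuity, $\Phi$ is continuous into $\Comp_{\opB,\opC}$ by the universal property of the initial topology. Indeed, composing $\Phi$ with the defining map gives $(f,b)\mapsto(f,b(f,\opB b))=(f,b)$, which is just the inclusion $\Comp_{\opT\opE}\hookrightarrow V\times\calu$ and hence continuous. The inverse $\Phi^{-1}$ is exactly the defining map $(f,g)\mapsto(f,b(f,g))$, now viewed as a map into the subspace $\Comp_{\opT\opE}$ with the subspace topology. It is therefore continuous by definition of the initial topology.

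It remains to note that the initial topology on $\Comp_{\opB,\opC}$ is Hausdorff and finer than the topology induced from $V\times\calg$. This holds because $g=\opB b(f,g)$ and $\opB$ is continuous.

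The main obstacle is purely bookkeeping: keeping track of the topologies. The topology on $\Comp_{\opB,\opC}$ is defined through $(\opB,\opC)^{-1}$, which need not be continuous on $\ran(\opB,\opC)$ in any ambient topology. The argument above sidesteps this by never using continuity of $(\opB,\opC)^{-1}$ itself; it only uses the universal property of the initial topology. Uniqueness of solutions to \eqref{js007_e1} is then just the injectivity of $S_\opB$.
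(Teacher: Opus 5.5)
Your proposal is correct and follows the paper's proof essentially verbatim. The paper likewise introduces $J(f,b)=(f,\opB b)$ and $K(f,g)=(f,b(f,g))$ as mutually inverse maps between $\Comp_{\opT\opE}$ and $\Comp_{\opB,\opC}$, checks their continuity via the universal property of the initial topology, and concludes from $S_\opB=J\circ S$ together with Proposition \ref{prop: op_preserving_upper_support_abstr}. Your closing remark that the initial topology is Hausdorff and finer than the induced topology from $V\times\calg$ is true but not needed for the statement.
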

\begin{proof}
Define $J:\Comp_{\opT\opE}\to\Comp_{\opB,\opC}$ and $K:\Comp_{\opB,\opC}\to\Comp_{\opT\opE}$ via
\begin{align*}
    J(f,b):=(f,\opB b), \qquad K(f,g):=(f,b(f,g)),
\end{align*}
where we recall $b(f,g)=(\opB,\opC)^{-1}(g,\opV f)$, i.e.,
\begin{align}\label{js007_e3}
    \opB b(f,g)=g,
    \qquad
    \opC b(f,g)=\opV f.
\end{align}
We first show that $J$ and $K$ are well-defined, continuous, and mutually inverse.
Indeed, if \((f,b)\in\Comp_{\opT\opE}\), then $\opV f=\opV\opT\opE b=\opC b$ by \eqref{js005_e2}, and thus
\begin{align}\label{js007e_2}
    (\opB b,\opV f)=(\opB,\opC)b\in\ran(\opB,\opC),
\end{align}
so \(J(f,b)\in\Comp_{\opB,\opC}\).
On the other hand, by \eqref{js007_e3} we have $\opV(f-\opT\opE b(f,g))=\opV f-\opC b(f,g)=0$ for $(f,g)\in \Comp_{\opB,\opC}$.
Thus $f-\opT\opE b(f,g)\in \ker\opV=\dot{V}$, so that $K(f,g)\in \Comp_{\opT\opE}$.
Moreover, for $(f,b)\in \Comp_{\opT\opE}$, the relation \eqref{js007e_2} shows
\begin{align*}
    KJ(f,b)
    = K(f,\opB b)
    =(f,(\opB,\opC)^{-1}(\opB b,\opV f))
    =(f,b).
\end{align*}
Conversely, for \((f,g)\in\Comp_{\opB,\opC}\), the relation \eqref{js007_e3} shows
\begin{align*}
    JK(f,g)=J(f,b(f,g))=(f,\opB b(f,g))=(f,g).
\end{align*}
By the definition of the initial topology on $\Comp_{\opB,\opC}$, the map $K$ is continuous.
To prove continuity of $J$, it suffices by the same definition to verify continuity of $(f,b)\mapsto (f,b(J(f,b)))=(f,b)$ as a map from $\Comp_{\opT\opE}\to V\times\calu$, which is immediate because the topologies on $\Comp_{\opT\opE}$ and $V\times\calu$ are identical.
Therefore, $J=K^{-1}\in \call_{\mathrm{iso}}(\Comp_{\opT\opE},\Comp_{\opB,\opC})$.

Since $S:=(\opT,\Tr)\in \call_{\mathrm{iso}}(U,\Comp_{\opT\opE})$ by Proposition \ref{prop: op_preserving_upper_support_abstr}, we obtain $S_\opB=J\circ S\in \call_{\mathrm{iso}}(U,\Comp_{\opB,\opC})$.
By the inverse formula from Proposition \ref{prop: op_preserving_upper_support_abstr}, this gives
\begin{align*}
    S_\opB^{-1}(f,g) = S^{-1}K(f,g) = \opE b(f,g) + \dot{\opT}^{-1}\bigl(f-\opT\opE b(f,g)\bigr),
\end{align*}
as claimed.
\end{proof}
We stress that, in applications of Theorem \ref{js007}, the compatibility condition typically requires that a certain difference belongs to a space of higher regularity than either of the individual terms. Thus, rather than prescribing equality of two quantities, one only requires that their discrepancy is sufficiently regular.
The corresponding regularity space is encoded in the topology of $\Comp_{\opB,\opC}$.
In Section \ref{sec:app} we compute this topology explicitly for several concrete examples.
A particularly simple situation occurs when $(\opB,\opC)$ is an isomorphism onto $\calg\times\calv$.
In this case, there are no compatibility conditions relating the interior and boundary data, and the compatibility space is simply the product $V\times\calg$, both algebraically and topologically.
The following lemma makes this precise.
\begin{lemma}\label{js012}
If in the situation of Theorem \ref{js007} it holds $(\opB,\opC)\in \call_{\mathrm{iso}}(\calu,\calg\times\calv)$, then $\Comp_{\opB,\opC}=V\times\calg$, and the initial topology on $\Comp_{\opB,\opC}$ is the product topology of $V\times\calg$.
\end{lemma}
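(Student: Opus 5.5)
The plan is to verify the two claims in turn: first the algebraic equality of sets, then the coincidence of the two topologies.

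\emph{Algebraic equality.} By definition, $\Comp_{\opB,\opC}\subseteq V\times\calg$. Conversely, let $(f,g)\in V\times\calg$. Then $\opV f\in\calv$, so $(g,\opV f)\in\calg\times\calv$. Since $(\opB,\opC)$ is surjective onto $\calg\times\calv$, we get $(g,\opV f)\in\ran(\opB,\opC)$, and hence $(f,g)\in\Comp_{\opB,\opC}$. This proves $\Comp_{\opB,\opC}=V\times\calg$ as sets.

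\emph{Initial topology is finer than the product topology.} Recall that the initial topology is induced by $\Phi(f,g):=(f,b(f,g))\in V\times\calu$, where $b(f,g)=(\opB,\opC)^{-1}(g,\opV f)$. The identity $\Comp_{\opB,\opC}\to V\times\calg$ can be written as $(f,g)\mapsto (f,\opB b(f,g))$, because $\opB b(f,g)=g$ by \eqref{js007_e3}. This map is the composition of $\Phi$ with the continuous map $\id_V\times\opB\colon V\times\calu\to V\times\calg$. Hence it is continuous, and the initial topology is finer than the product topology.

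\emph{Product topology is finer than the initial topology.} By the universal property of the initial topology, it suffices to show that $\Phi$ is continuous from $V\times\calg$, equipped with the product topology, into $V\times\calu$. The first component is the projection $(f,g)\mapsto f$, which is continuous. The second component factors as
\begin{align*}
(f,g)\mapsto (g,\opV f)\mapsto (\opB,\opC)^{-1}(g,\opV f).
\end{align*}
The first arrow is continuous because $\opV\in\call(V,\calv)$. The second arrow is continuous because $(\opB,\opC)\in\call_{\mathrm{iso}}(\calu,\calg\times\calv)$, so its inverse is continuous by assumption.

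Combining the two inclusions of topologies shows that the initial topology on $\Comp_{\opB,\opC}$ coincides with the product topology of $V\times\calg$. There is no genuine obstacle in this argument. The only point to watch is that continuity of the inverse must come from the assumption that $(\opB,\opC)$ lies in $\call_{\mathrm{iso}}$, rather than from an open mapping theorem, since the spaces involved are general topological vector spaces.
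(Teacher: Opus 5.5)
Your proposal is correct and follows essentially the same route as the paper. It obtains the set equality from surjectivity of $(\opB,\opC)$, and it shows continuity of $(f,g)\mapsto (f,b(f,g))$ from the product topology using continuity of $\opV$ and $(\opB,\opC)^{-1}$. For the converse direction it composes with $(f,b)\mapsto (f,\opB b)$ and uses $\opB b(f,g)=g$.
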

\begin{proof}
Since $\ran(\opB,\opC)=\calg\times\calv$, and since $(g,\opV f)\in \calg\times\calv$ for all $(f,g)\in V\times\calg$ by $\opV\in \call(V,\calv)$, the identity $\Comp_{\opB,\opC}=V\times\calg$ as sets is immediate.

\medskip

It remains to identify the topology.
To prove continuity of
\[
    \id:V\times\calg \longrightarrow \Comp_{\opB,\opC},
\]
it is enough by the defining property of the initial topology to verify continuity of
\[
    i: V\times\calg \longrightarrow V\times\calu,
    \qquad i(f,g):=(f,b(f,g)),
\]
where we recall $b(f,g)=(\opB,\opC)^{-1}(g,\opV f)$.
Since $\opV$, $(\opB,\opC)^{-1}$ and the projections $\pi_1:V\times\calg\to V$, $\pi_2:V\times\calg\to \calg$ are continuous, so is $b = (\opB,\opC)^{-1} \circ (\id_{\calb}\pi_2,\opV\pi_1)$, and therefore $i$ is continuous.
Conversely, define
\begin{align*}
    i\in \call(\Comp_{\opB,\opC},& V\times\calu),
    \qquad
    i(f,g):=(f,b(f,g)), \\
    j\in \call(V\times\calu,& V\times\calg),
    \qquad
    j(f,b):=(f,\opB b).
\end{align*}
Then $j\circ i=\id$  because $\opB b(f,g)=g$.
In particular $\id:\Comp_{\opB,\opC}\to V\times \calg$ is continuous as well.
Thus the initial topology on $\Comp_{\opB,\opC}$ coincides
with the product topology on $V\times\calg$.
\end{proof}
\subsection{Restricted and Supported Spaces}\label{sec:restr_spc}
We want to apply the abstract results of the previous section to boundary problems on domains of half-space type.
To this end, we consider a smooth product manifold of the form $M=M'\times \mathbb{R}$, where $M'$ is a smooth (second-countable Hausdorff) manifold. We denote by $\mathcal{D}(M):=C^\infty_0 (M)$ the space of smooth, compactly supported functions equipped with the canonical LF-topology, and by $\mathcal{D}'(M)$ its strong topological dual, the space of distributions on $M$. 

We then define the half-manifolds $M_\pm := M'\times \mathbb{R}_\pm$, and again the space of smooth, compactly supported functions $\mathcal{D}(M_\pm)$ with the canonical LF-topology and its strong topological dual $\mathcal{D}'(M_\pm)$.

We also define the natural zero-extensions $e_0^\pm: \mathcal{D}(M_\pm)\to \mathcal{D}(M)$ via
\begin{align}\label{def:ext_op}
e_0^+\varphi(x',x_n):=
\begin{cases}
\varphi(x',x_n) & \text{if } x_n>0,\\
0 & \text{else,}
\end{cases}
\quad \text{and} \quad
e_0^-\varphi(x',x_n):=
\begin{cases}
\varphi(x',x_n) & \text{if } x_n<0,\\
0 & \text{else,}
\end{cases}
\end{align}
for all $x'\in M'$. 
Then $e_0^\pm$ is continuous, and thus we may introduce the continuous restrictions $r^\pm:={}^t e_0^\pm:\mathcal{D}'(M)\to \mathcal{D}'(M_\pm)$. In particular, $\ker r^\pm$ is closed and so is its restriction $\ker r^\pm_{\vert E(M)}$ to any topological vector space $E(M)\hookrightarrow \mathcal{D}'(M)$ continuously embedded in $\mathcal{D}'(M)$.
Function spaces on $M_\pm$ will be defined as follows, where by abuse of language we write $\overline{M_\pm}$ instead of $M'\times \overline{\R_{\pm}}$.
\begin{definition}\label{def:restr_and_supp_space}
Let $E(M)\hookrightarrow\mathcal{D}'(M)$ be a topological vector space.
Then we introduce the \emph{restricted spaces} $\overline{E}(M_\pm)$ and the \emph{supported spaces} $\dot E(\overline{M_\pm})$ via
\begin{align*}
\overline{E}(M_\pm):=r^\pm E(M) \quad \text{and} \quad \dot E(\overline{M_\pm}):=\{f\in E(M)\mid \supp f\subseteq \overline{M_\pm}\}.
\end{align*}
If there is no danger of confusion, we simply write $E:=E(M)$, $\overline{E}_\pm:=\overline{E}(M_\pm)$, and $\dot{E}_\pm:=\dot{E}(\overline{M_\pm})$.
\end{definition}
We recall that by our convention, the topology on $\overline{E}_\pm$ is the final topology of $r^\pm$.
If $E$ is normed, we choose the quotient norm $\|u\|_{\overline{E}_\pm}:=\inf\{\|U\|_E:U\in E, r^\pm U=u\}$ to realize this topology.
The following lemma records the expected behavior for the restriction operator on the supported spaces.
\begin{lemma}\label{lem:restr_supp_space}
Let $E\hookrightarrow\mathcal{D}'(M)$ be a topological vector space.
Then $\ker(r^{\pm}|_{E})=\dot E_\mp$.
In particular it holds $r^\pm\dot{E}_\mp=\{0\}$ and $\overline{E}_\pm\cong E/\dot{E}_\mp$.
\end{lemma}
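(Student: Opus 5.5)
The plan is to reduce everything to the basic support property of distributions: for $f\in\mathcal{D}'(M)$, we have $r^\pm f=0$ exactly when $f$ vanishes on the open set $M_\pm$, that is, when $\supp f\subseteq M\setminus M_\pm=\overline{M_\mp}$. I treat the $+$ case; the $-$ case is symmetric.

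First I unwind the definition of $r^+={}^te_0^+$. For $\varphi\in\mathcal{D}(M_+)$ we have $\langle r^+f,\varphi\rangle=\langle f,e_0^+\varphi\rangle$. The zero extension $e_0^+\varphi$ is exactly $\varphi$ viewed as a test function on $M$ supported in the open set $M_+$: since $\supp\varphi$ is compact in $M_+$, the extension is smooth. So $r^+f=0$ if and only if $\langle f,\psi\rangle=0$ for all $\psi\in\mathcal{D}(M)$ with $\supp\psi\subseteq M_+$. By definition of the support of a distribution, this says precisely that $M_+$ lies in the complement of $\supp f$. Since $M\setminus M_+=M'\times(-\infty,0]=\overline{M_-}$, this is equivalent to $\supp f\subseteq\overline{M_-}$.

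Intersecting with $E$ gives $\ker(r^+|_E)=\{f\in E\mid \supp f\subseteq\overline{M_-}\}=\dot E_-$. This immediately yields $r^+\dot E_-=\{0\}$.

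For the isomorphism, the map $r^+|_E\colon E\to\overline{E}_+$ is surjective by definition of $\overline{E}_+=r^+E$. It therefore induces a linear bijection $E/\dot E_-\to\overline{E}_+$. Topologically, $\overline{E}_+$ carries the final topology of $r^+|_E$, and $E/\dot E_-$ carries the final topology of the quotient map; the kernel $\dot E_-$ is closed, as noted before the definition. Both spaces are thus quotients of $E$ by the same kernel with final topologies, so the universal property of final topologies makes the induced bijection and its inverse continuous.

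The only point needing care is the identification of $e_0^+\mathcal{D}(M_+)$ with the test functions on $M$ supported in $M_+$, together with the use of the support characterization. Both are standard, so I do not expect a genuine obstacle here.
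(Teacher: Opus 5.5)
Your proof is correct and follows the paper's argument. In both, the kernel is characterized by testing $r^\pm f$ against $e_0^\pm\varphi$ and identifying $e_0^\pm\mathcal{D}(M_\pm)$ with the test functions on $M$ supported in $M_\pm$, so that $\ker(r^\pm|_E)$ is exactly the set of $f\in E$ with $\supp f\subseteq \overline{M_\mp}$. Your further remark fills in the ``in particular'' part, which the paper leaves implicit: the induced bijection $E/\dot E_\mp\to\overline{E}_\pm$ is a homeomorphism because both spaces carry the final topology of a surjection with the same kernel.
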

\begin{proof}
    Let $f\in \dot{E}_\mp$, i.e. $\supp f\subseteq \overline{M_\mp}$.
    Then for $\varphi\in \cald(M_\pm)$ we have $\supp e_0^\pm\varphi\subseteq M_\pm$ and thus
    \begin{align*}
        \langle r^\pm f,\varphi\rangle&=\langle f,e_0^\pm\varphi\rangle=0.
    \end{align*}
    Therefore $r^\pm f=0$, and thus $\dot{E}_\mp\subseteq \ker(r^\pm|_{E})$.
    Let conversely $f\in \ker(r^\pm|_{E})$.
    Observe that for $\varphi\in \mathcal D(M)$ with $\supp\varphi\subset M_\pm$ it holds $\varphi=e_0^\pm(\varphi|_{M_\pm})$, and thus
    \begin{align*}
        \langle f,\varphi\rangle = \langle r^\pm f,\varphi|_{M_\pm}\rangle = 0.    
    \end{align*}
    In other words, $\supp f\subseteq M\setminus M_\pm=\overline{M_\mp}$, i.e., $f\in \dot E_\mp$.
\end{proof}

The interplay between embeddings and the restricted and supported spaces is also straightforward from their definition.
\begin{lemma}\label{lemma:embeddings_restricted_spaces}
    Let $E, F \hookrightarrow \cald'(M)$ be topological vector spaces.
    If $E\hookrightarrow F$, then also $\overline{E}_\pm\hookrightarrow \overline{F}_\pm$ and $\dot E_\pm\hookrightarrow\dot F_\pm$.
\end{lemma}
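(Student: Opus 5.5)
The plan is to argue directly from the definitions, handling the restricted spaces and the supported spaces separately. Throughout, let $\iota\in\call(E,F)$ denote the continuous inclusion $E\hookrightarrow F$.

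\emph{Restricted spaces.} Since $E,F\hookrightarrow\cald'(M)$ and $\iota$ is the identity on distributions, we have $r^\pm\iota=r^\pm$ on $E$. In particular, $\overline{E}_\pm=r^\pm E\subseteq r^\pm F=\overline{F}_\pm$ as sets. For continuity of the inclusion $\overline{E}_\pm\to\overline{F}_\pm$, I will use the universal property of the final topology of $r^\pm|_E$. A linear map $\overline{E}_\pm\to X$ is continuous as soon as its composition with $r^\pm|_E$ is continuous. Here that composition equals $r^\pm|_F\circ\iota$, which is continuous because $r^\pm|_F\in\call(F,\overline{F}_\pm)$ by the definition of the final topology on $\overline{F}_\pm$. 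In the normed case this step is just the elementary estimate
\[
\|u\|_{\overline{F}_\pm}\le \inf\{\|U\|_F : U\in E,\ r^\pm U=u\}\lesssim \inf\{\|U\|_E : U\in E,\ r^\pm U=u\}=\|u\|_{\overline{E}_\pm}.
\]

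\emph{Supported spaces.} The support condition $\supp f\subseteq\overline{M_\pm}$ is a property of $f$ as a distribution, so it does not depend on whether $f$ is viewed in $E$ or in $F$. Hence $\dot E_\pm=\{f\in E\mid \supp f\subseteq\overline{M_\pm}\}\subseteq\dot F_\pm$. Both spaces carry the subspace topologies inherited from $E$ and $F$ respectively, and $\iota$ restricts to a continuous map between them. This gives $\dot E_\pm\hookrightarrow\dot F_\pm$.

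The only point requiring care is the first part, namely that the final topology is characterized by this universal property for linear maps. This holds because $r^\pm$ has a closed kernel, as recalled in the preliminaries on image spaces. I do not expect any genuine obstacle in this argument.
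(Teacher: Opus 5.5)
Your argument is correct and is the direct verification from the definitions that the paper intends; the paper states this lemma without proof, calling it straightforward. You use the final topology of $r^\pm|_E$ applied to $r^\pm|_F\circ\iota$ for the restricted spaces, and the subspace topologies for the supported spaces. One small correction: the universal property of a final topology holds for arbitrary maps and does not require the kernel of $r^\pm$ to be closed. Closedness of the kernel only ensures that $\overline{E}_\pm$ is a Hausdorff topological vector space.
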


\subsection{Operators Preserving Lower Support}
Given a topological vector space $E\hookrightarrow\mathcal{D}'(M)$ and a (possibly non-local) linear operator $\opA: E\to \mathcal{D}'(M)$, it is not clear whether $\opA$ induces a well defined operator on the restricted spaces. To address this question, we introduce the notion of operators preserving half-support. 
\begin{definition}[half-support preserving operators]\label{def: supp_pres_op}
    Let $E,F\hookrightarrow \cald'(M)$ be topological vector spaces and $\opA\in \call(E,F)$.
    \begin{enumerate}
    \item $\opA$ \emph{preserves support in $\overline{M_\pm}$} (or \emph{preserves upper/lower support}) if for every $f\in \dot E_\pm$, one has $\opA f\in \dot F_\pm$.
    \item If $\opA$ preserves support in $\overline{M_\pm}$, the induced operator $\opA_\mp : \overline{E}_\mp\to \overline{F}_\mp$ is defined via
    \begin{align*}
     \opA_\mp u:=r^\mp \opA U,
    \end{align*}
    where $U\in E$ is any extension of $u$, that is, $r^\mp U=u$.
    \item If in addition $r^\pm$ is injective on $\dot{E}_\pm$ and hence $r^\pm\in \call_{\mathrm{iso}}(\dot{E}_\pm,r^\pm\dot{E}_\pm)$, we call its inverse $e_0^\pm$ and define the operator $\dot{\opA}_{\pm}:r^\pm\dot E_\pm\to r^\pm\dot F_\pm$ via
    \begin{align*}
     \dot{\opA}_\pm u:=r^\pm \opA e_0^\pm u.
    \end{align*}
    \end{enumerate}
\end{definition}
The operator $\opA_\mp$ is well-defined.
Indeed, if $U_1, U_2\in E$ are two such extensions of $u$, then $\supp(U_1-U_2)\subseteq \overline{M_\pm}$. Since $\opA$ preserves support in $\overline{M_\pm}$, we have $\supp(\opA(U_1-U_2))\subseteq \overline{M_\pm}$, which in turn implies $r^\mp\opA U_1=r^\mp  \opA U_2$.
Indeed, by Lemma \ref{lem:restr_supp_space} and the universal property of the kernel, we may characterize $\opA_\mp$ as the unique operator $\opB\in \call(\overline{E}_\mp,\overline{F}_\mp)$ such that $r^\mp\opA=\opB r^\mp$.
Hence, if in addition $r^\mp$ is injective on $\dot{E}_\mp$ and $\opA$ preserves both upper and lower support, then $\dot{\opA}_\mp u = \opA_\mp u$ for all $u\in r^\mp\dot{E}_\mp$.
The same argument shows that if $\opA$ is defined on a larger space $H$, i.e., $E\hookrightarrow H\hookrightarrow \cald'(M)$, then for any $U\in H$ with $r^\mp U=u\in \overline{E}_\mp$ one has $\opA_{\mp}u=r^\mp\opA U$.

\medskip

The following proposition collects the basic properties of operators preserving lower support.
Analogous statements for operators preserving upper support can be obtained by symmetry.
\begin{proposition}\label{prop: boundedness_lower_support}
    Let $E, F,H\hookrightarrow \cald'(M)$ be topological vector spaces, and let $\opA\in \call(E,F)$ and $\opB\in \call(F,H)$ preserve lower support.
    \begin{enumerate}[leftmargin=*]
        \item\label{prop: boundedness_lower_supporti} It holds $\opA_+\in \call(\overline{E}_+,\overline{F}_+)$.
        If $E$ and $F$ are normed, the operator norm of $\opA_+$ does not exceed the one of $\opA$.
        \item\label{prop: boundedness_lower_supportii} $\opB\opA\in \call(E,H)$ preserves lower support, and $(\opB\opA)_+=\opB_+\opA_+$.
        \item\label{prop: boundedness_lower_supportiii} If $\opA\in \call_{\mathrm{iso}}(E,F)$ and $\opA^{-1}$ preserves lower support, then $\opA_+\in \call_{\mathrm{iso}}(\overline{E}_+,\overline{F}_+)$ with $(\opA_+)^{-1}=(\opA^{-1})_+=:\opA^{-1}_+$.
        \item\label{prop: boundedness_lower_supportiv} If $r^-$ is injective on $\dot{E}_-$, then $\dot\opA_-\in \call(r^-\dot{E}_-,r^-\dot{F}_-)$.
        If $r^-$ is also injective on $\dot{F}_-$, then $(\opB\opA)_-^{\boldsymbol{\cdot}}=\dot\opB_-\dot\opA_-$.
        If $\opA\in \call_{\mathrm{iso}}(E,F)$ and $\opA^{-1}$ preserves lower support as well, then $\dot\opA_-\in \call_{\mathrm{iso}}(r^-\dot{E}_-,r^-\dot{F}_-)$ with $(\dot\opA_-)^{-1}=(\opA^{-1})_-^{\boldsymbol{\cdot}}=:\dot{\opA}_-^{-1}$.
    \end{enumerate}
\end{proposition}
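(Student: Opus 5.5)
The plan is to work entirely with the characterization established just before the statement: $\opA_+$ is the unique map with $r^+\opA=\opA_+r^+$. Throughout I use that $\overline{E}_+=r^+E$ carries the final topology of $r^+$, and that $\ker(r^+|_E)=\dot E_-$ by Lemma \ref{lem:restr_supp_space}.

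For part \ref{prop: boundedness_lower_supporti}:
1. Linearity of $\opA_+$ is immediate from well-definedness.
2. For continuity, the universal property of the final topology means it suffices that $\opA_+\circ r^+=r^+\circ\opA$ is continuous $E\to\overline{F}_+$. It is, as a composition of continuous maps.
3. In the normed case, fix $u\in\overline{E}_+$ and any extension $U$. Then $\|\opA_+u\|_{\overline{F}_+}\le\|\opA U\|_F\le\|\opA\|\,\|U\|_E$. Taking the infimum over $U$ gives the norm bound.

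For part \ref{prop: boundedness_lower_supportii}:
1. If $\supp f\subseteq\overline{M_-}$, then $\opA f\in\dot F_-$, and hence $\opB\opA f\in\dot H_-$. So $\opB\opA$ preserves lower support.
2. From $r^+\opB\opA=\opB_+r^+\opA=\opB_+\opA_+r^+$ and uniqueness of the induced operator, $(\opB\opA)_+=\opB_+\opA_+$.

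For part \ref{prop: boundedness_lower_supportiii}, apply \ref{prop: boundedness_lower_supportii} to $\opA^{-1}\opA=\id_E$ and $\opA\opA^{-1}=\id_F$. Since $(\id)_+=\id$, this gives $(\opA^{-1})_+\opA_+=\id_{\overline{E}_+}$ and $\opA_+(\opA^{-1})_+=\id_{\overline{F}_+}$. Both factors are continuous by \ref{prop: boundedness_lower_supporti}.

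Part \ref{prop: boundedness_lower_supportiv} concerns $\dot\opA_-=r^-\opA e_0^-$.
1. Well-definedness: for $u=r^-f$ with $f\in\dot E_-$ we have $e_0^-u=f$, so $\opA e_0^-u\in\dot F_-$. Hence $\dot\opA_-u\in r^-\dot F_-$.
2. Continuity: $e_0^-$ is continuous by the assumed isomorphism $r^-\in\call_{\mathrm{iso}}(\dot E_-,r^-\dot E_-)$. The map $\opA|_{\dot E_-}:\dot E_-\to\dot F_-$ is continuous because $\dot F_-$ carries the subspace topology of $F$. Then $r^-$ maps $\dot F_-$ continuously into $r^-\dot F_-$.
3. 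Composition: if $r^-$ is also injective on $\dot F_-$, then for $g\in\dot F_-$ we have $e_0^-r^-g=g$. Applying this with $g=\opA e_0^-u$ gives
\[
\dot\opB_-\dot\opA_-u=r^-\opB e_0^-r^-\opA e_0^-u=r^-\opB\opA e_0^-u.
\]
4. Inverse: the inverse statement follows from 3 applied to $\opA^{-1}\opA$ and $\opA\opA^{-1}$, exactly as in \ref{prop: boundedness_lower_supportiii}. This requires injectivity of $r^-$ on both $\dot E_-$ and $\dot F_-$, which is implicit in the definition of both $\dot\opA_-$ and $(\opA^{-1})^{\boldsymbol{\cdot}}_-$.

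The main obstacle is topological rather than algebraic. Part \ref{prop: boundedness_lower_supportiv} uses two different topologies on $r^-\dot E_-$: the final topology of $r^-$ as a restricted space, and the one transported via the isomorphism $r^-|_{\dot E_-}$. These agree because that map is a bijection onto its image carrying the final topology. Once this is checked, everything else reduces to the identity $r^\pm\opA=\opA_\pm r^\pm$.
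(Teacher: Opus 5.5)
Your proposal is correct and follows the paper's proof essentially step for step. The shared steps are: continuity of $\opA_+$ from the final topology together with $\opA_+r^+=r^+\opA$; the norm bound by taking the infimum over extensions $U$; the composition rule from the intertwining relation and surjectivity of $r^+$; part (iii) by applying (ii) to $\opA^{-1}\opA$ and $\opA\opA^{-1}$; and part (iv) via continuity of $e_0^-$, $\opA|_{\dot E_-}$, $r^-|_{\dot F_-}$ and the identity $e_0^-r^-g=g$ for $g\in\dot F_-$.

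The closing topological point is not a real obstacle. By the paper's convention, $r^-\dot E_-$ carries the final topology of the bijection $r^-|_{\dot E_-}$, which is exactly the transported topology, so continuity of $e_0^-$ holds by definition.
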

\begin{proof}
 \begin{enumerate}
    \item Since $\opA_+r^+=r^+\opA\in \call(E,\overline{F}_+)$, the continuity of $\opA_+$ follows from the definition of the final topology on $\overline{E}_+$.
    Let now $E$ and $F$ be normed and $u\in \overline{E}_+$.
    For any $U\in E$ such that $r^+U=u$, it holds $\opA_+u=r^+ \opA U$, and thus
    \begin{align*}
        \|\opA_+ u\|_{\overline{F}_+}&=\|r^+\opA U\|_{\overline{F}_+}
        \leq \|\opA U\|_{F}\leq \|\opA\| _{\mathcal{L}(E, F)} \|U\|_{E}.
    \end{align*}
    Since $\|u\|_{\overline{E}_+}=\inf\{ \|U\|_{E} \mid U\in E, r^+U=u\}$, this yields the claimed estimate.
    \item By definition, it is clear that $\opB\opA\in \call(E,H)$ preserves lower support.
    Thus, the previous point yields $(\opB\opA)_+\in \call(\overline{E}_+,\overline{H}_+)$, $\opA_+\in \call(\overline{E}_+,\overline{F}_+)$, and $\opB_+\in \call(\overline{F}_+,\overline{H}_+)$.
    Moreover, for $u\in \overline{E}_+$ and $U\in E$ with $r^+U=u$ we have
    \begin{align*}
        \opB_+\opA_+u=\opB_+r^+\opA U=r^+\opB\opA U=(\opB\opA)_+u,
    \end{align*}
    where $V:=\opA U\in F$ is an extension of $v:=r^+\opA U\in \overline{F}_+$.
    \item Follows directly from the previous point with $\opB:=\opA^{-1}$.
    \item Observe that by assumption, we have $e_0^-\in \call(r^-\dot{E}_-,\dot{E}_-)$, $\opA\in \call(\dot{E}_-,\dot{F}_-)$, and $r^-\in \call(\dot{F}_-,r^-\dot{F}_-)$.
    Therefore, $\dot{\opA}_-\in \call(r^-\dot{E}_-, r^-\dot{F}_-)$ is immediate.
    Moreover
    \begin{align*}
        (\opB\opA)_-^{\boldsymbol{\cdot}}=r^-\opB\opA e_0^-=r^-\opB e_0^-r^-\opA e_0^-=\dot\opB_-\dot\opA_-.
    \end{align*}
    The final assertion follows as in point \ref{prop: boundedness_lower_supportiii} with $\opB:=\opA^{-1}$.
    \qedhere
\end{enumerate}
\end{proof}
\begin{remark}
    A trivial but important consequence of Proposition \ref{prop: boundedness_lower_support} is that the unique solvability of the equation $\opA u=f$ on $M$ with $u\in E$ and $f\in F$ implies that of $\opA_+ u=f$ on $M_+$ with $u\in \overline{E}_+$ and $f\in \overline{F}_+$.
    More precisely, if $\opA\in \call(E,F)$ preserves lower support, then $\opA_+\in \mathcal{L}(\overline{E}_+, \overline{F}_+)$ is well-defined and one can consider the equation $\opA_+u=f$. If furthermore $\opA$ is an isomorphism and its inverse $\opA^{-1}$ preserves lower support as well, then the unique solution is given by $u=\opA^{-1}_+f$.
\end{remark}
Proposition \ref{prop: boundedness_lower_support} also implies that $\overline{(E\cap F)}_+=\overline{E}_+\cap \overline{F}_+$ under suitable conditions, once we endow intersection spaces with the canonical initial topology.
Observe that $\overline{(E\cap F)}_+\subseteq \overline{E}_+\cap \overline{F}_+$ holds in general, but that
equality holds if and only if  every $u\in \overline{E}_+\cap \overline{F}_+$ admits an extension $U\in E\cap F$ such that $r^+U=u$.
We give a useful sufficient condition for this to hold.
\begin{lemma}\label{lemma: intersection_restricted_spaces}
For topological vector spaces $E,F\hookrightarrow \cald'(M)$ it holds
\begin{align*}
	\overline{(E\cap F)}_+=\overline{E}_+\cap \overline{F}_+,
\end{align*}
if there is a topological vector space $J\hookrightarrow \cald'(M)$ along with operators $\opL^E\in \call(E,J)$, $\opL^F\in \call(F,J)$ that preserve lower support and are such that
\begin{align*}
 \opL:=\opL^E+\opL^F\in \call_{\mathrm{iso}}(E\cap F,J)
 \quad \text{and} \quad
 \opL^{-1} \text{ preserves lower support.}
\end{align*}
\end{lemma}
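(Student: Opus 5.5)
The inclusion $\overline{(E\cap F)}_+\subseteq \overline{E}_+\cap\overline{F}_+$ holds in general, so the plan is to construct, for each $u\in\overline{E}_+\cap\overline{F}_+$, an extension in $E\cap F$, and then to identify the topologies. Fix such a $u$ and choose extensions $U_E\in E$ and $U_F\in F$ with $r^+U_E=r^+U_F=u$. Set
\begin{align*}
    h:=\opL^E U_E+\opL^F U_F\in J, \qquad W:=\opL^{-1}h\in E\cap F.
\end{align*}
The goal is to show $r^+W=u$.

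Since $\opL^E$ preserves lower support, Proposition \ref{prop: boundedness_lower_support} gives a well-defined induced operator $\opL^E_+\in\call(\overline{E}_+,\overline{J}_+)$, and similarly $\opL^F_+\in\call(\overline{F}_+,\overline{J}_+)$. Hence $r^+h=\opL^E_+u+\opL^F_+u$. On the other hand, $W\in E\cap F$ satisfies $\opL W=h$, so $r^+h=r^+\opL^E W+r^+\opL^F W=\opL^E_+ r^+W+\opL^F_+r^+W$. Here $r^+W$ is taken in $\overline{E}_+$ and in $\overline{F}_+$ respectively; these agree as distributions on $M_+$. Subtracting, $v:=u-r^+W$ lies in $\overline{E}_+\cap\overline{F}_+$ and satisfies $\opL^E_+v+\opL^F_+v=0$.

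The main obstacle is to conclude $v=0$, since $v$ is not yet known to extend to $E\cap F$. I would avoid arguing on $v$ directly and work instead at the level of extensions. Consider $D:=U_E-W\in E$ and $D':=U_F-W\in F$. Both satisfy $r^+D=r^+D'=v$, and $\opL^E D+\opL^F D'=h-\opL W=0$. Since $D-D'$ has support in $\overline{M_-}$, I would try to show that the data can be shifted so as to produce an element of $E\cap F$ with support in $\overline{M_-}$, and then apply $\opL^{-1}$, which preserves lower support. Concretely, I would redefine the construction: put $\widetilde h:=\opL^E(U_E-U_F)$ and observe that, because $U_E-U_F$ has support in $\overline{M_-}$ modulo domain issues, this term is supported in $\overline{M_-}$. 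The issue is that $U_E-U_F$ lies in $E+F$, not in $E$. I expect the fix to be a better choice of $h$: take $h:=\opL^EU_E+\opL^FU_F$ as above and note that $h-\opL Z$ is supported in $\overline{M_-}$ for any $Z\in E\cap F$ extending $u$. Then use the fact that $\opL^{-1}$ preserves lower support to deduce that $W-Z$ is supported in $\overline{M_-}$, so that $r^+W=r^+Z$. Making this rigorous without already having $Z$ is exactly the delicate point. I would try to resolve it by proving $v=0$ through the identity $\opL^E D+\opL^F D'=0$, writing $D'=D-(D-D')$ and exploiting lower-support preservation on the $\overline{M_-}$-supported part.

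Finally, for the topology: the map $u\mapsto r^+\opL^{-1}(\opL^E U_E+\opL^F U_F)$ is continuous into $\overline{(E\cap F)}_+$ with respect to the initial topology of $\overline{E}_+\cap\overline{F}_+$ composed with the final topologies. Conversely, the identity $\overline{(E\cap F)}_+\to\overline{E}_+\cap\overline{F}_+$ is continuous by Lemma \ref{lemma:embeddings_restricted_spaces}. In the normed case, taking infima over the extensions $U_E$ and $U_F$ yields the corresponding norm estimate.
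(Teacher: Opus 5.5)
Your proposal has a genuine gap, exactly at the step you flag yourself. After constructing $W=\opL^{-1}(\opL^EU_E+\opL^FU_F)$ you never show $v=u-r^+W=0$, i.e.\ that $\opL^E_++\opL^F_+$ is injective on $\overline{E}_+\cap\overline{F}_+$. The routes you sketch either presuppose the extension $Z\in E\cap F$ whose existence is the point, or apply operators outside their domains: $\opL^E(U_E-U_F)$, $\opL^F(D-D')$, and $\opL D$ for $D\in E$ only.

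Under the stated hypotheses the step cannot be closed at all, because the lemma is then false. Take $\phi,\chi\in C^\infty_c(M)$ with $r^+\phi\neq0$, $\chi\neq0$ and $\supp\chi\subseteq M_-$. Set $\psi:=\phi+\chi$, $E:=\mathrm{span}\{\phi\}$, $F:=\mathrm{span}\{\psi\}$, $J:=\{0\}$, and $\opL^E:=0$, $\opL^F:=0$. Then $E\cap F=\{0\}$, so all hypotheses hold trivially. Yet $\overline{E}_+\cap\overline{F}_+=\mathrm{span}\{r^+\phi\}\neq\{0\}=\overline{(E\cap F)}_+$.

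The paper's proof uses the same construction as yours and passes over the same point in its last equality $\opL_+^{-1}(\opL_+^E+\opL_+^F)u=u$. Proposition \ref{prop: boundedness_lower_support} only gives $(\opL^{-1})_+\opL_+=\id$ on $\overline{(E\cap F)}_+$, and $u$ is not yet known to lie there.

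The missing ingredient is a hypothesis that makes your $D'=D-(D-D')$ idea legitimate. First, $\opL^E$ and $\opL^F$ should be restrictions of linear operators defined on a common space $X\supseteq E+F$ that preserve lower support on $X$. Second, for $Y\in X$, the inclusion $\supp\opL Y\subseteq\overline{M_-}$ should force $\supp Y\subseteq\overline{M_-}$. Then
\begin{align*}
\opL(U_E-W)=\opL^EU_E+\opL^FU_E-\bigl(\opL^EU_E+\opL^FU_F\bigr)=\opL^F(U_E-U_F).
\end{align*}
Since $U_E-U_F$ is supported in $\overline{M_-}$ by Lemma \ref{lem:restr_supp_space}, so is $\opL(U_E-W)$, hence so is $U_E-W$, i.e.\ $r^+W=u$.

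This is the kind of condition one has to check in Step 1 of Corollary \ref{cor: equiv_norm_half_space}. There $\op[\omega_1]$, $\op[\omega_2]$ and $\op[\omega_\mu^-]$ are Fourier multipliers acting on a common ambient space containing $E+F$.

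Once $r^+W=u$ is available, your topological argument is fine. The identity $\overline{E}_+\cap\overline{F}_+\to\overline{(E\cap F)}_+$ then equals $(\opL^{-1})_+\circ(\opL^E_++\opL^F_+)$, which is a composition of continuous maps.
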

\begin{proof}
Consider $u\in \overline{E}_+\cap \overline{F}_+$ and let $U_E\in E$, $U_F\in F$ be such that $u=r^+U_E=r^+U_F$.
Then for $U:=\opL^{-1}(\opL^EU_E+\opL^FU_F)\in E\cap F$, Proposition \ref{prop: boundedness_lower_support} gives
\begin{align*}
r^+U&=\opL_+^{-1}r^+(\opL^EU_E+\opL^FU_F)=\opL_+^{-1}(\opL_+^Er^+U_E+\opL_+^Fr^+U_F)=\opL_+^{-1}(\opL_+^E+\opL_+^F)u=u. \qedhere
\end{align*}
\end{proof}
\subsection{Admissible Operators}
Most differential operators of interest do not fulfill the hypothesis of Proposition \ref{prop: boundedness_lower_support}.
However, they often admit a factorization into two operators, one verifying the hypothesis of Proposition \ref{prop: boundedness_lower_support} and the other amenable to the abstract framework developed in Section \ref{sec:abstr_frame}, in particular Proposition \ref{prop: op_preserving_upper_support_abstr}.
This motivates the following notion of admissible operators.
\begin{definition}\label{def: admissible}
    Let $E, F \hookrightarrow \mathcal{D}'(M)$ be topological vector spaces such that $r^+$ is injective on $\dot{E}_+$.
    An operator $\opA\in\call_{\textrm{iso}}(E, F)$ is admissible if there exists a topological vector space $H \hookrightarrow\mathcal{D}'(M)$ such that $r^+$ is injective on $\dot{H}_+$, and  operators $\opA^-\in\call_{\textrm{iso}}(H,F)$, $\opA^+\in\call_{\textrm{iso}}(E,H)$ such that the following conditions are verified. 
    \begin{enumerate}
        \item $\opA=\opA^-\opA^+$.
        \item Both $\opA^+$ and $\opA^-$ preserve lower support.
        \item Both $\opA^+$ and $(\opA^+)^{-1}$ preserve upper support. 
        \item $(\opA^-)^{-1}$ preserves lower support.
    \end{enumerate}
\end{definition}

By Proposition \ref{prop: boundedness_lower_support}, $\opA$ preserves lower support.
Moreover, $\opA_+=\opA^-_+\opA^+_+\in \call(\overline{E}_+,\overline{F}_+)$, while $\opA^-_+\in\call_{\mathrm{iso}}(\overline{H}_+, \overline{F}_+)$ with the inverse given by $(\opA^-)^{-1}_+$.
Observe that $\opA^+_+$ need not belong to $\call_{\mathrm{iso}}(\overline{E}_+, \overline{H}_+)$, since it generally fails to be injective.
The crucial point is that $\opA^+$ preserves \emph{both upper and lower} support, so that the restriction of $\opA^+_+$ to $r^+\dot{E}_+$ is given by the operator $\dot{\opA}_+^+$, which is an isomorphism by Proposition \ref{prop: boundedness_lower_support}, namely $\dot{\opA}_+^+\in \call_{\mathrm{iso}}(r^+\dot{E}_+,r^+\dot{H}_+)$.
As discussed in Section \ref{sec:abstr_frame}, invertibility on the whole of $\overline{E}_+$ is thus restored after suitable boundary data are prescribed.

We thus assume that abstract trace spaces $\cale$ and $\calh$ are given for $(\overline{E}_+,r^+\dot{E}_+)$ and $(\overline{H}_+,r^+\dot{H}_+)$, respectively, together with corresponding abstract trace and extension operators $(\Tr_E,\Ext_E)$ and $(\Tr_H,\Ext_H)$.
Given, in addition, a topological vector space $\mathcal{G}$ and a ``boundary'' operator $\opB\in\call(\mathcal{E},\mathcal{G})$, our strategy for solving the boundary value problem 
\begin{equation*}
    \left\{\begin{array}{l}
     \opA_+ u = \opA^-_+\opA^+_+u = f    \\
     \opB \Tr_E u=g,     
    \end{array}\right.
\end{equation*}
for $f\in \overline{F}_+$ and $g\in \mathcal{G}$ is as follows:
first invert the factor $\opA_+^-$, which is already an isomorphism on the restricted spaces and therefore requires no additional boundary analysis. The remaining problem is then to invert $\opA_+^+$, for which Proposition \ref{prop: op_preserving_upper_support_abstr} provides the appropriate framework.
Therefore, it is natural to require $\opB$ to satisfy the abstract complementing boundary condition with respect to $\opA_+^+$ as defined in Definition \ref{def: abstract_complementing_condition_upper}, that is, that $\opC=\Tr_H\circ \opA_+^+\circ \Ext_E$ is such that $(\opB,\opC)\in \call(\cale,\calg\times\calh)$ is injective.
We now present the main result of this section. 
\begin{theorem}\label{thm: abstract_complementing_condition}
 Let $\opA= \opA^-\opA^+$ be admissible with notations as in Definition \ref{def: admissible}.
 Let $\opB$ satisfy the abstract complementing boundary condition in Definition \ref{def: abstract_complementing_condition_upper} for $\opA_+^+$.
  Endow
  \begin{align*}
      \Comp:=\{(f,g)&\in \overline{F}_+\times \mathcal{G}\mid (g,\Tr_H(\opA_+^-)^{-1}f)\in \mathsf{ran}(\opB,\opC)\}
      \end{align*}
with the initial topology of the map
  \begin{align*}
      \Comp\ni (f,g)\mapsto (f,b(f,g))\in \overline{F}_+\times \mathcal{E},
  \end{align*}
  where $b(f,g):=(\opB,\opC)^{-1}(g,\Tr_H(\opA_+^-)^{-1}f)$.
  Then $S:=(\opA_+,\opB\Tr_E)$ belongs to $\call_{\mathrm{iso}}(\overline{E}_+,\Comp)$ with inverse
  \begin{align*}
      S^{-1}(f,g)=\Ext_E b(f,g) + (\dot{\opA}_+^+)^{-1}((\opA_+^-)^{-1}f - \opA_+^+\Ext_E b(f,g)).
  \end{align*}
  In particular, for all $(f,g)\in \Comp$ there is a unique $u\in \overline{E}_+$ such that
    \begin{equation}\label{eqn: general_problem}
    \left\{\begin{array}{rcl}
     \opA_+ u &=& f    \\
     \opB \Tr_E u &=&g.     
    \end{array}\right.
    \end{equation}
In case the considered spaces are normed, it holds 
\begin{equation*}
    \|u\|_{\overline{E}_+} \lesssim \|f\|_{\overline{F}_+}+ \|b(f,g)\|_{\mathcal{E}}.
\end{equation*}
\end{theorem}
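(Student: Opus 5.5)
The plan is to reduce the statement to Theorem \ref{js007} applied to the operator $\opT:=\opA^+_+$, and then to compose with the isomorphism $\opA^-_+$. We set
\[
U:=\overline{E}_+,\quad \dot U:=r^+\dot E_+,\quad V:=\overline{H}_+,\quad \dot V:=r^+\dot H_+,\quad \calu:=\cale,\quad \calv:=\calh,\quad \opV:=\Tr_H .
\]
By hypothesis $(\overline{E}_+,r^+\dot E_+,\cale,\Tr_E,\Ext_E)$ is an abstract trace tuple, and $\ker\Tr_H=r^+\dot H_+$ because $(\overline{H}_+,r^+\dot{H}_+,\calh,\Tr_H,\Ext_H)$ is one as well. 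Since $\opA^+$ preserves lower support, Proposition \ref{prop: boundedness_lower_support}\ref{prop: boundedness_lower_supporti} gives $\opT=\opA^+_+\in\call(\overline{E}_+,\overline{H}_+)$.

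Next we identify $\opT|_{\dot U}$ with $\dot{\opA}^+_+$. Since $\opA^+$ preserves both upper and lower support, $\opA^+_+u=r^+\opA^+e_0^+u=\dot{\opA}^+_+u$ for $u\in r^+\dot E_+$, using the remark after Definition \ref{def: supp_pres_op}. Since $(\opA^+)^{-1}$ also preserves upper support, the upper-support analogue of Proposition \ref{prop: boundedness_lower_support}\ref{prop: boundedness_lower_supportiv} gives $\dot{\opA}^+_+\in\call_{\mathrm{iso}}(r^+\dot E_+,r^+\dot H_+)$; injectivity of $r^+$ on $\dot E_+$ and $\dot H_+$ is assumed in Definition \ref{def: admissible}. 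One point needs care: the target $\dot V=r^+\dot H_+$ must embed continuously into $\overline{H}_+$. This holds because $r^+\in\call(H,\overline{H}_+)$ and $r^+\dot H_+$ carries the topology transported from $\dot H_+\hookrightarrow H$. The complementing condition is assumed with $\opC=\Tr_H\opA^+_+\Ext_E$, which is exactly $\opV\opT\opE$.

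Theorem \ref{js007} then yields that $S_0:=(\opA^+_+,\opB\Tr_E)\in\call_{\mathrm{iso}}(\overline{E}_+,\Comp_{\opB,\opC})$. Here $\Comp_{\opB,\opC}\subseteq\overline{H}_+\times\calg$ consists of the pairs $(h,g)$ with $(g,\Tr_H h)\in\ran(\opB,\opC)$, and the inverse is given by
\[
S_0^{-1}(h,g)=\Ext_E b_0(h,g)+(\dot{\opA}^+_+)^{-1}\bigl(h-\opA^+_+\Ext_E b_0(h,g)\bigr),\qquad b_0(h,g):=(\opB,\opC)^{-1}(g,\Tr_H h).
\]
By Proposition \ref{prop: boundedness_lower_support}\ref{prop: boundedness_lower_supportii},\ref{prop: boundedness_lower_supportiii}, $\opA^-_+\in\call_{\mathrm{iso}}(\overline{H}_+,\overline{F}_+)$ with inverse $(\opA^-)^{-1}_+$, and $\opA_+=\opA^-_+\opA^+_+$.

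Define $\Phi(f,g):=((\opA^-_+)^{-1}f,g)$. It maps $\Comp$ bijectively onto $\Comp_{\opB,\opC}$, and $b_0\circ\Phi=b$. Continuity of $\Phi$ and $\Phi^{-1}$ for the two initial topologies follows from the universal property of initial topologies: we compose with $(h,b)\mapsto((\opA^-_+)^{\pm1}h,b)$ on $\overline{H}_+\times\cale$ and $\overline{F}_+\times\cale$, respectively. Hence
\[
S=(\opA^-_+\opA^+_+,\opB\Tr_E)=\Phi^{-1}\circ S_0\in\call_{\mathrm{iso}}(\overline{E}_+,\Comp),
\]
and $S^{-1}=S_0^{-1}\Phi$ gives exactly the stated formula.

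The estimate in the normed case follows directly from this formula. We bound $\|\Ext_E b\|\lesssim\|b\|_{\cale}$, and the second term by $\|(\opA^-_+)^{-1}f\|_{\overline{H}_+}+\|\opA^+_+\Ext_E b\|_{\overline{H}_+}\lesssim\|f\|_{\overline{F}_+}+\|b\|_{\cale}$, using the boundedness of $(\dot{\opA}^+_+)^{-1}$ together with the embedding $r^+\dot E_+\hookrightarrow\overline{E}_+$. The main obstacle, though a mild one, is this bookkeeping of the topologies on $r^+\dot E_+$ and $r^+\dot H_+$, needed so that the hypotheses of Theorem \ref{js007} hold literally.
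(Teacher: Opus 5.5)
Your proposal is correct and is essentially the paper's proof: Theorem \ref{js007} is applied with the same choices $U=\overline{E}_+$, $V=\overline{H}_+$, $\opT=\opA^+_+$, $\opV=\Tr_H$. The result is then transported to $\Comp$ via $(h,g)\mapsto(\opA^-_+h,g)$ (your $\Phi^{-1}$, the paper's $R$), whose bicontinuity is read off from the initial topologies exactly as you do. One small remark: the normed estimate follows most directly from continuity of $S^{-1}$ with respect to the norm $\|f\|_{\overline{F}_+}+\|b(f,g)\|_{\cale}$ that induces the initial topology on $\Comp$. Your term-by-term bound instead tacitly uses that the norm of $r^+\dot H_+$ is equivalent to the $\overline{H}_+$-norm on this subspace, which holds for Banach spaces by the open mapping theorem since $r^+\dot H_+=\ker\Tr_H$ is closed.
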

\begin{proof}
    Choose $U:=\overline{E}_+$, $V:=\overline{H}_+$, $\dot{U}:=r^+\dot{E}_+$, $\dot{V}:=r^+\dot{H}_+$, $\calu:=\cale$, $\calv:=\calh$, $\Tr:=\Tr_E$, $\opE:=\Ext_E$, $\opV:=\Tr_H$, and $\opT:=\opA_+^+$.
    Then Theorem \ref{js007} shows that $S_\opB=(\opA_+^+,\opB\Tr_E)\in \call_{\mathrm{iso}}(U,\Comp_{\opB,\opC})$, where we recall that
    \begin{align*}
      \Comp_{\opB,\opC}:=\{(h,g)&\in \overline{H}_+\times \mathcal{G}\mid (g,\Tr_H h)\in \mathsf{ran}(\opB,\opC)\}
      \end{align*}
    is endowed with the initial topology of the map
  \begin{align*}
      \Comp_{\opB,\opC}\ni (h,g)\mapsto (h,(\opB,\opC)^{-1}(g,\Tr_H h))\in \overline{H}_+\times \mathcal{E}.
  \end{align*}
    Now consider the map $R:\Comp_{\opB,\opC}\to \overline{F}_+\times\calg$ defined by $R(h,g):=(\opA_+^-h,g)$.
    Since $\opA_+^-\in \call_{\mathrm{iso}}(\overline{H}_+,\overline{F}_+)$, it holds that $R\in \call_{\mathrm{iso}}(\Comp_{\opB,\opC},\Comp)$ by virtue of the chosen initial topologies on $\Comp_{\opB,\opC}$ and $\Comp$.
    Therefore, the result follows in light of $S=R\circ S_\opB$.
\end{proof}
Let us emphasize that in the previous theorem, the space $\overline{F}_+$ does not need to admit a trace space at all, unlike $\overline{H}_+$ and $\overline{E}_+$.
The fact that $\overline{H}_+$ has to admit an abstract trace space may seem surprising at first if one has in mind an elliptic equation of the form $\lambda u-\Delta u = f$ with $f\in \overline{F}_+:=W^{-1,p}(\R^d_+)$.
In this situation, the solution space is $\overline{E}_+=W^{1,p}(\R^d_+)$ (which admits Sobolev traces), but the intermediate space turns out to be $\overline{H}_+=L^p(\R^d_+)$, which does not admit Sobolev traces.
Nevertheless, as discussed in Example \ref{js009}, the space $L^p(\R^d_+)$ still fits perfectly into the abstract framework.
Indeed, $(\overline{H}_+,r^+\dot{H}_+,\calh,\Tr_H,\Ext_H)$ forms an abstract trace tuple for the trivial trace space $\calh=\{0\}$ together with the zero operators $\Tr_H=0$ and $\Ext_H=0$.

\medskip

Finally, in analogy to Lemma \ref{js012}, no compatibility conditions remain in the case that $(\opB,\opC)\in \call(\cale,\calg\times\calh)$ is an isomorphism.
\begin{lemma}\label{js013}
If in the situation of Theorem \ref{thm: abstract_complementing_condition} it holds $(\opB,\opC)\in \call_{\mathrm{iso}}(\cale,\calg\times\calh)$, then $\Comp=\overline{F}_+\times\calg$, and the initial topology on $\Comp$ is the product topology of $\overline{F}_+\times\calg$.
\end{lemma}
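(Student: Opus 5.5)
The plan is to reduce Lemma \ref{js013} to Lemma \ref{js012} by using the isomorphism $R$ from the proof of Theorem \ref{thm: abstract_complementing_condition}. Equivalently, one can redo the short argument of Lemma \ref{js012} with $\opV f$ replaced by $\Tr_H(\opA_+^-)^{-1}f$. I would do the direct version, since it avoids tracking how the topologies transport under $R$.

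\textbf{Step 1: equality of sets.} Since $(\opB,\opC)\in\call_{\mathrm{iso}}(\cale,\calg\times\calh)$, we have $\ran(\opB,\opC)=\calg\times\calh$. For any $(f,g)\in\overline{F}_+\times\calg$, the element $\Tr_H(\opA_+^-)^{-1}f$ lies in $\calh$, because $(\opA_+^-)^{-1}=(\opA^-)^{-1}_+\in\call(\overline{F}_+,\overline{H}_+)$ and $\Tr_H\in\call(\overline{H}_+,\calh)$. Hence the defining condition of $\Comp$ is automatic, and $\Comp=\overline{F}_+\times\calg$ as sets.

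\textbf{Step 2: the identity $\overline{F}_+\times\calg\to\Comp$ is continuous.} By the definition of the initial topology, it suffices to show that $(f,g)\mapsto(f,b(f,g))$ is continuous from $\overline{F}_+\times\calg$ into $\overline{F}_+\times\cale$. Here
\[
b=(\opB,\opC)^{-1}\circ\bigl(\pi_2,\ \Tr_H(\opA_+^-)^{-1}\pi_1\bigr),
\]
which is a composition of continuous maps. The continuity of $(\opB,\opC)^{-1}$ is exactly the isomorphism hypothesis.

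\textbf{Step 3: the identity $\Comp\to\overline{F}_+\times\calg$ is continuous.} Factor it as $j\circ i$, where $i(f,g)=(f,b(f,g))$ is continuous into $\overline{F}_+\times\cale$ by the definition of the initial topology, and $j(f,b)=(f,\opB b)$ is continuous. Then $j\circ i=\id$, because $\opB b(f,g)=g$.

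There is no real obstacle. The only point needing care is Step 2: one must check that $(\opA_+^-)^{-1}$ is continuous on $\overline{F}_+$. This follows from Proposition \ref{prop: boundedness_lower_support}\ref{prop: boundedness_lower_supportiii}, since $(\opA^-)^{-1}$ preserves lower support by admissibility.
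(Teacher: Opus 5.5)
Your proof is correct and is essentially the paper's approach. The paper states Lemma \ref{js013} without proof, ``in analogy to Lemma \ref{js012}'', and your Steps 1--3 are precisely the proof of Lemma \ref{js012} with $\opV f$ replaced by $\Tr_H(\opA_+^-)^{-1}f$. You also correctly justify the continuity of $(\opA_+^-)^{-1}$ via Proposition \ref{prop: boundedness_lower_support}\ref{prop: boundedness_lower_supportiii} together with admissibility.
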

\subsection{Quotient Spaces and Equivalent Norms}\label{sec:quot_spc}
In the following we write $\mathbb{R}^n_+:=\{ x\in \mathbb{R}^n \mid x_n>0\}$ as well as $\mathbb{R}^n_-:=\{ x\in \mathbb{R}^n \mid x_n<0\}$. As we aim at treating the half-space problem in Newton polygon spaces, we consider the quotient spaces $\overline{H}^{\mu, p}(G_+)$ as defined in Definition \ref{def:restr_and_supp_space}, where we recall 
\begin{equation*}
    \overline{H}_\perp^{\mu, p}(G_+)=\{ r^+U: U\in H^{\mu, p}_\perp(G)\}.
\end{equation*}
Here, the name quotient space is justified by the assertion $\overline{H}^{\mu, p}(G_+)\cong H^{\mu,p}(G)/\dot{H}^{\mu,p}(\overline{G_-})$ from Lemma \ref{lem:restr_supp_space}.
In view of the definition in \eqref{def: polygon_space}, it is natural to ask whether the quotient space $ \overline{H}_\perp^{\mu, p}(G_+)$ can be obtained as the image of $L^p_\perp(G_+)$ under some Fourier multiplier. This will be done using the results of the previous section concerning operators preserving half-space support. Namely, we will define a weight function $\omega_\mu^-$ that is strongly $\mathcal{N}$-elliptic and  preserves lower support.

We start by establishing a useful necessary condition on a Fourier multiplier $m$ for the associated operator $\op[m]$ to preserve half-space support. This will rely on the following Paley-Wiener theorem.
\begin{theorem}[Paley-Wiener-Stein]\label{thm_Paley_Wiener}
    Let $f\in \cals(\R)$.
    Then $\supp f\subseteq \overline{\mathbb{R}_-}$ if and only if $\mathcal{F}_\R f$ admits an extension $\widetilde{\mathcal{F}_\R f}\in C_b(\overline{\mathbb{C}_+})$ that is holomorphic on $\mathbb{C}_+$.
    In that case, such an extension is given by 
    \begin{equation*}
        \widetilde{\mathcal{F}_\R f}(z):=\int_{-\infty}^\infty f(x)\mathrm{e}^{-\ic x z}d x,\qquad z\in \overline{\mathbb{C}_+}.
    \end{equation*}
\end{theorem}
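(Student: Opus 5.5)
Both implications rest on the Fourier inversion formula together with Cauchy's theorem and Fubini. Throughout I use the convention $\mathcal{F}_\R f(\xi)=\int f(x)\mathrm{e}^{-\ic x\xi}\dd x$, as in the displayed extension formula.

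\emph{Necessity.} Assume $\supp f\subseteq\overline{\R_-}$ and define $\widetilde{\mathcal{F}_\R f}(z):=\int_{-\infty}^0 f(x)\mathrm{e}^{-\ic xz}\dd x$ for $z\in\overline{\mathbb{C}_+}$. For $x\le 0$ and $\Im z\ge 0$ we have $|\mathrm{e}^{-\ic xz}|=\mathrm{e}^{x\Im z}\le 1$. Hence $|\widetilde{\mathcal{F}_\R f}(z)|\le\|f\|_{L^1}$, so the extension is bounded. Continuity on $\overline{\mathbb{C}_+}$ follows from dominated convergence with majorant $|f|$. Holomorphy on $\mathbb{C}_+$ follows by differentiating under the integral: for $\Im z\ge\delta>0$ the integrand of the derivative is dominated by $|x f(x)|$, which is integrable since $f\in\cals(\R)$. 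Alternatively, Morera's theorem combined with Fubini gives the same conclusion. On the real axis the extension coincides with $\mathcal{F}_\R f$.

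\emph{Sufficiency.} Let $F:=\widetilde{\mathcal{F}_\R f}$ be bounded and continuous on $\overline{\mathbb{C}_+}$ and holomorphic on $\mathbb{C}_+$, with $F|_\R=\mathcal{F}_\R f\in\cals(\R)$. Fix $x_0>0$; I want to show $f(x_0)=0$. By inversion, $f(x_0)=\frac1{2\pi}\int_\R F(\xi)\mathrm{e}^{\ic x_0\xi}\dd\xi$. The plan is to shift the contour to $\Im z=t$ and let $t\to\infty$, since $|\mathrm{e}^{\ic x_0 z}|=\mathrm{e}^{-x_0\Im z}$ decays. Two difficulties arise. First, $F$ is only bounded and continuous up to the boundary. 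Second, I have no decay of $F$ in $\Re z$ away from the real axis, so the vertical sides of the rectangle in Cauchy's theorem need not vanish.

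I would handle both with a damping factor. Multiply by $(1-\ic\varepsilon z)^{-2}$ for fixed $\varepsilon>0$; this function is holomorphic on $\mathbb{C}_+$, because its pole $z=-\ic/\varepsilon$ lies in the lower half-plane. It has modulus at most $1$ there, and it is $O(|z|^{-2})$. Set $G_\varepsilon(z):=F(z)(1-\ic\varepsilon z)^{-2}\mathrm{e}^{\ic x_0 z}$. Then $G_\varepsilon$ is integrable on every horizontal line, and Cauchy's theorem on rectangles $[-R,R]\times[0,t]$ holds with the bottom edge on $\R$. One can justify this by first integrating along $\Im z=\delta$ and letting $\delta\to0$ via continuity and dominated convergence. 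The vertical sides tend to $0$ as $R\to\infty$ by the $|z|^{-2}$ decay. This gives
\[
\int_\R G_\varepsilon(\xi)\dd\xi=\int_\R G_\varepsilon(\xi+\ic t)\dd\xi.
\]
The right-hand side is bounded by $\|F\|_\infty\mathrm{e}^{-x_0t}\int_\R|1-\ic\varepsilon(\xi+\ic t)|^{-2}\dd\xi\lesssim_\varepsilon\mathrm{e}^{-x_0t}$, which tends to $0$ as $t\to\infty$. Hence $\int_\R \mathcal{F}_\R f(\xi)(1-\ic\varepsilon\xi)^{-2}\mathrm{e}^{\ic x_0\xi}\dd\xi=0$ for every $\varepsilon>0$.

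Now let $\varepsilon\to0$. Since $\mathcal{F}_\R f\in L^1$, dominated convergence yields $f(x_0)=0$. Thus $f$ vanishes on $(0,\infty)$, so $\supp f\subseteq\overline{\R_-}$.

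The main obstacle is the contour shift: justifying Cauchy's theorem with only boundary continuity, and making the vertical contributions vanish. The damping factor is the device meant to overcome both issues. It avoids needing growth estimates for $F$ inside the half-plane, where only boundedness is assumed.
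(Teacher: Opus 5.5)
Your proof is correct, and it is essentially the paper's approach: the paper gives no argument of its own and only cites Stein--Shakarchi (Theorem 3.5), and your argument (damping by $(1-\ic\varepsilon z)^{-2}$, Cauchy's theorem on rectangles, shifting the line to $\Im z=t\to\infty$, then $\varepsilon\to0$) is the standard proof of that result, carried out directly in the paper's formulation with the support condition on $f$ and the extension of $\mathcal{F}_\R f$. Since $f\in\cals(\R)$, you have $\mathcal{F}_\R f\in L^1(\R)$ and pointwise Fourier inversion, so the final limit $\varepsilon\to0$ and the conclusion $f(x_0)=0$ for $x_0>0$ are immediate.
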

\begin{proof}
See \cite[Theorem 3.5]{stein2010complex}.
\end{proof}
\begin{proposition}\label{prop_support_preserving}
    Let $P\in \mathcal{O}(\mathbb{R})$ and
    suppose that $P$ admits an extension $\widetilde{P}\in C(\overline{\mathbb{C}_+})$ that is holomorphic on $\mathbb{C}_+$, and such that there exists a constant $C>0$ and a polynomial $Q$ verifying 
    \begin{equation*}
        \vert \widetilde{P}(z)\vert \leq C\vert Q(z)\vert
    \end{equation*}
    for all $z\in \overline{\mathbb{C}_+}$.
    Then $\op[P]:\cals(\R)\to\mathcal{S}(\mathbb{R})$ preserves lower support. 
\end{proposition}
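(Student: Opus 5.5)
We have to show: if $f\in\cals(\R)$ has $\supp f\subseteq\overline{\R_-}$, then $\supp\op[P]f\subseteq\overline{\R_-}$. The tool is the Paley--Wiener--Stein theorem (Theorem \ref{thm_Paley_Wiener}). Two features of that theorem shape the plan. First, it applies only to Schwartz functions. Second, it asks for a bounded extension, whereas $\widetilde P$ may grow like the polynomial $Q$. So I will argue weakly, through duality against test functions, and divide out the polynomial growth.

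\textbf{Step 1 (product has a holomorphic extension).} Let $f\in\cals(\R)$ with $\supp f\subseteq\overline{\R_-}$. Then $g:=\op[P]f=\calf^{-1}(P\calf f)$ lies in $\cals(\R)$, because $P\in\calo(\R)$. By Theorem \ref{thm_Paley_Wiener}, $\calf f$ has the extension $F(z)=\int f(x)e^{-\ic xz}\,dx$, which is holomorphic on $\mathbb{C}_+$ and continuous on the closure. I will use more than boundedness here. Since $\supp f\subseteq\overline{\R_-}$, integrating by parts $k$ times is legitimate for every $k$: the kernel $e^{-\ic xz}$ is bounded for $x\le0$ and $\Im z\ge0$, and all boundary terms vanish. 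This gives $z^kF(z)=\int (-\ic)^{-k}f^{(k)}(x)e^{-\ic xz}\,dx$, hence $|F(z)|\lesssim_k(1+|z|)^{-k}$ on $\overline{\mathbb{C}_+}$. Combining this with $|\widetilde P|\le C|Q|$, the product $\widetilde P F$ is continuous on $\overline{\mathbb{C}_+}$, holomorphic on $\mathbb{C}_+$, and decays faster than any power there. In particular it is bounded, and it extends $\calf g=P\calf f$.

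\textbf{Step 2 (apply the converse direction).} The function $g$ belongs to $\cals(\R)$, and $\calf g$ has a bounded continuous extension to $\overline{\mathbb{C}_+}$ that is holomorphic inside. The ``if'' direction of Theorem \ref{thm_Paley_Wiener} therefore gives $\supp g\subseteq\overline{\R_-}$, i.e.\ $\op[P]$ preserves lower support on $\cals(\R)$. This is exactly the claim.

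\textbf{Main obstacle.} The delicate point is Step 1: I have to make sure the decay of $F$ really is uniform on the closed half-plane, including near the real axis. This comes from the boundedness of $e^{-\ic xz}$ for $x\le0$ and $\Im z\ge0$, together with the integrability of $f^{(k)}$. The same decay also covers growth of $\widetilde P$ in the imaginary direction, since $Q$ is a polynomial in $z$. If the sign convention of $\calf_\R$ used in the Paley--Wiener statement turns out to differ from the one for $\op$, I will check that the half-planes match. Otherwise, a reflection $x\mapsto -x$ converts the statement to the correct orientation.
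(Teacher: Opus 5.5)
Your argument is correct and is essentially the paper's proof. The paper obtains boundedness of $\widetilde{P}\,\widetilde{\calf f}$ by noting that $\op[Q]f$ is again supported in $\overline{\R_-}$, so Paley--Wiener makes its transform $Q\,\widetilde{\calf f}$ bounded; this is your integration-by-parts bound $|z^kF(z)|\le\|f^{(k)}\|_{L^1}$ in another form, and the paper then concludes with the converse direction of Theorem \ref{thm_Paley_Wiener} exactly as you do. (Minor points: the constant should be $(-\ic)^{k}$ rather than $(-\ic)^{-k}$, and the duality argument you announce is never actually needed.)
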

\begin{proof}
    Let $f\in \dot{\mathcal{S}}(\overline{\mathbb{R}_-})$.
    Since $\calf_\R f\in \cals(\R)$ and $P\in \calo(\R)$, we have $P\calf_\R f \in \cals(\R)$, so that $\op[P]f\in \cals(\R)$.
    Therefore we only need to show that $\supp \op[P]f\subseteq \overline{\R_-}$.

    \medskip
    
    By Theorem \ref{thm_Paley_Wiener}, $\mathcal{F}_\R f$ has an extension $\widetilde{\mathcal{F}_\R f}\in C_b(\overline{\mathbb{C}_+})$ that is holomorphic on $\mathbb{C}_+$.
    Write $Q(z)=\sum_{k=0}^K c_k z^k$.
    Since $\op[Q]=\sum_{k=0}^K c_k (-\ic \partial_x)^k$ is a differential operator, it holds $\op[Q]f\in \dot{\cals}(\overline{\R_-})$.
    Thus, the function $h:=\mathcal{F}_\R\op[Q]f$ admits an extension $\widetilde{h}\in C(\overline{\mathbb{C}_+})$ that is holomorphic on $\mathbb{C}_+$, and given by 
    \begin{align*}
        \widetilde{h}(z)&=\int_{-\infty}^\infty (\op[Q]f)(x)\mathrm{e}^{-\ic x z}\dd x=\sum_{k=0}^K c_k (-\ic)^k\int_{-\infty}^\infty \partial_x^k f(x)\mathrm{e}^{-\ic x z}\dd x\\
        &=\sum_{k=0}^K c_kz^k\int_{-\infty}^\infty f(x)\mathrm{e}^{-\ic x z}\dd x
        =Q(z)\widetilde{\mathcal{F}_\R f}(z).
    \end{align*}
    Consider now $P\mathcal{F}_\R f$.
    By hypothesis, this function admits an extension $\widetilde{P}\widetilde{\mathcal{F}_\R f}$.
    This extension is continuous on $\overline{\mathbb{C}_+}$ and holomorphic on $\mathbb{C}_+$.
    Furthermore, it is bounded on $\overline{\mathbb{C}_+}$ since $\widetilde{h}$ is bounded on $ \overline{\mathbb{C}_+}$ and there it holds
\begin{equation*}
    \vert \widetilde{P}\widetilde{\mathcal{F}_\R f}\vert \leq C\vert Q\widetilde{\mathcal{F}_\R f}\vert =\vert \widetilde{h}\vert.
\end{equation*}
Thus, using once more Theorem \ref{thm_Paley_Wiener}, we deduce $\supp \op[P]f \subseteq \overline{\mathbb{R}_-}$.
\end{proof}
\begin{remark}
    Let $m\in \calo^\perp(\widehat{G})$ be such that for all $k\neq 0$ and $\xi'\in \mathbb{R}^{n-1}$, the operator $\op[m(k, \xi', \cdot)]: \mathcal{S}(\mathbb{R})\to \mathcal{S}(\mathbb{R})$ preserves lower support. Then $\op[m] : \mathcal{S}_\perp(G)\to \mathcal{S}_\perp(G)$ also preserves lower support. Indeed, one has 
    \begin{equation*}
        \op[m]f=\mathcal{F}^{-1}_{G^{n-1}}\op[m(k, \xi', \cdot)]\mathcal{F}_{G^{n-1}},
    \end{equation*}
    and both $\mathcal{F}^{-1}_{G^{n-1}}:\cals(\widehat{G}^{n-1})\to\cals(G^{n-1})$ and $\mathcal{F}_{G^{n-1}}: \mathcal{S}(G^{n-1})\to \mathcal{S}(\widehat{G}^{n-1})$ preserve lower support.
\end{remark}
We now build support preserving weight functions. 
\begin{proposition}\label{prop:weight_fct_supp}
    Let $\mu$ be a strictly positive order function with associated Newton polygon $\mathcal{N}$ and its set of vertices $\mathcal{N}_V=\{(r_0, s_0),...,(r_{J+1}, s_{J+1})\}$.
    Assume that $r_j\in \mathbb{N}_0$ for all $j\in\set{0,..., J+1}$.
    Then there exist  smooth functions $\omega_\mu^\pm : \mathbb{R}\times \mathbb{R}^n\to \mathbb{C}$ such that $\omega_\mu^\pm $ is strongly $\mathcal{N}$-elliptic for the order function $\mu$, $\op[\omega_\mu^\pm]$ preserves both upper and lower support, and such that $\op[\frac{1}{\omega_\mu^+}]$ preserves upper support and $\op[\frac{1}{\omega_\mu^-}]$ preserves lower support. 
     In particular, for all $u\in \overline{H}^{\mu}_\perp(G_+)$ one has 
     \begin{equation*}
         \|u\|_{\mu, + } \simeq_\mu \|\op[\omega_\mu^-]_+u \|_{L^p(G_+)}.
     \end{equation*}
\end{proposition}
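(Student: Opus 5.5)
Following Proposition \ref{prop: elementary_decomposition_Newton_polygon}, I would write $\mu=\sum_{j=1}^J \alpha_j' o_{y_j}$ as a conical combination of elementary order functions. By Proposition \ref{prop: arithmetics_order_functions}\ref{prop: arithmetics_order_functionsii}, and because products of support-preserving multipliers preserve support, it then suffices to build $\omega^\pm$ for each summand $\alpha_j' o_{y_j}$ and multiply. The integrality hypothesis $r_j\in\N_0$ is what makes this work: each edge of the polygon has horizontal length $\alpha_j=r_j-r_{j+1}\in\N_0$. So for finite $y_j$ I would take the factor
\begin{align*}
\omega_j^\pm(\tau,\xi):=\bigl(\langle\tau\rangle^{y_j}+\langle\xi'\rangle\pm\ic\xi_n\bigr)^{\alpha_j},
\end{align*}
with the appropriate sign convention $\pm\ic\xi_n$ chosen according to Proposition \ref{prop_support_preserving}. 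I would also replace $\langle\tau\rangle^{y_j}$ by a version holomorphic in nothing, since $\tau$ is only a parameter. The summands with $y_j=\infty$ (vertical edges, order function $\beta_j o_\infty$) depend only on $\tau$, and I would use $\langle\tau\rangle^{\beta_j}$ directly. Because only $\xi_n$ is complexified, such factors preserve every support in $x_n$.

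The first step is to verify strong $\caln$-ellipticity of $\lambda(\tau,\xi)\pm\ic\xi_n$ with order function $o_{y}$, where $\lambda:=\langle\tau\rangle^{y}+\langle\xi'\rangle$. Since $\lambda$ is real and positive, $|\lambda\pm\ic\xi_n|\simeq\lambda+|\xi_n|\simeq\langle\tau\rangle^y+\langle\xi\rangle=w_{o_y}$. The Marcinkiewicz-type derivative bounds $(\tau,\xi)^{(\alpha,\beta)}\partial^{(\alpha,\beta)}$ follow from the corresponding bounds for $\langle\tau\rangle^y$ and $\langle\xi'\rangle$ by the Leibniz rule. Proposition \ref{prop: arithmetics_order_functions} then gives that the integer power $\alpha_j$, the reciprocal, and the product over $j$ are all strongly $\caln$-elliptic with the right order functions, and Lemma \ref{lemma: additivity} identifies the resulting weight with $w_\mu$.

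The second step is support preservation, which I would obtain through the Remark after Proposition \ref{prop_support_preserving}: fix $(k,\xi')$ with $k\neq 0$ and view the symbol as a function of $z=\xi_n$. The factor $(\lambda+\ic z)^{\alpha}$ is a polynomial in $z$, so its operator is a differential operator in $x_n$ and preserves both supports. The reciprocal $(\lambda+\ic z)^{-\alpha}$ has its only pole at $z=\ic\lambda$. With $\lambda>0$ this pole lies in one half-plane, so the reciprocal is holomorphic, continuous and bounded, by $\lambda^{-\alpha}$, on the closure of the other half-plane. Proposition \ref{prop_support_preserving} with $Q\equiv 1$ then gives lower support preservation, and the reflected version ($\overline{\mathbb C_-}$, upper support) handles the other sign. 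Here the sign convention must be matched to the Fourier convention $e^{-\ic xz}$ in Theorem \ref{thm_Paley_Wiener}, and I expect this bookkeeping to be the fiddliest point. Proposition \ref{prop_support_preserving} is stated for $\cals(\R)$, so I would extend the support statements to $H^{\nu,p}_\perp(G)$ by density of $\cals_\perp$ and continuity from Theorem \ref{thm: whole space}. The main obstacle is showing that supported Schwartz functions are dense in the supported spaces $\dot H_\pm$. If that fails, I would instead argue by duality, testing against $\cald(G_\mp)$ and using the transposed multiplier, whose symbol has the reflected holomorphy.

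The final step is the norm equivalence. Theorem \ref{thm: whole space} gives $\op[\omega_\mu^-]\in\call_{\mathrm{iso}}(H^{\mu,p}_\perp(G),L^p_\perp(G))$. Its inverse $\op[1/\omega_\mu^-]$ preserves lower support, and both operators preserve lower support. Proposition \ref{prop: boundedness_lower_support}\ref{prop: boundedness_lower_supportiii} therefore yields $\op[\omega_\mu^-]_+\in\call_{\mathrm{iso}}(\overline H^{\mu,p}_\perp(G_+),\overline{L^p_\perp}(G_+))$. Identifying the quotient norm of $\overline{L^p_\perp}(G_+)$ with the $L^p(G_+)$-norm (extension by zero is admissible after applying $\mathcal P_\perp$) gives $\|u\|_{\mu,+}\simeq\|\op[\omega_\mu^-]_+u\|_{L^p(G_+)}$.
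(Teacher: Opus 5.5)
Your proposal is correct and is essentially the paper's own proof. The paper also uses the integrality of the $r_j$ to write $\mu=\sigma o_\infty+\sum_{k=1}^{r_1}o_{y_k}$, sets $\omega_\mu^\pm:=\langle\tau\rangle^{\sigma}\prod_{k}\bigl(\langle\tau\rangle^{y_k}+\langle\xi'\rangle\pm\ic\xi_n\bigr)$, checks strong $\mathcal{N}$-ellipticity factor by factor via Proposition \ref{prop: arithmetics_order_functions}, and gets the support properties from Proposition \ref{prop_support_preserving}; your duality argument for passing from $\cals$ to the $H^{\nu,p}_\perp$-spaces and your derivation of the norm equivalence via Proposition \ref{prop: boundedness_lower_support}\ref{prop: boundedness_lower_supportiii} supply details the paper leaves implicit.

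The only slip is in the sign bookkeeping. The pole $z=\ic\lambda$ of $(\lambda+\ic z)^{-1}$ lies in $\mathbb{C}_+$, so this reciprocal is holomorphic on $\mathbb{C}_-$ and preserves \emph{upper} support. It is $(\lambda-\ic z)^{-1}$ that preserves lower support, which agrees with the paper's labelling of $\omega_\mu^\pm$.
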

\begin{proof}
    In view of Proposition \ref{prop: arithmetics_order_functions}, we start by showing the result for an elementary order function $o_y$ with $y\in [0,\infty]$. For $y=\infty$, one can simply take $\omega_{o_\infty}^\pm(\tau, \xi', \xi_n):=w_{o_\infty}(\tau, \xi', \xi_n)= \langle \tau\rangle$ because the symbol does not depend on the normal component $\xi_n$. 
    For $y\geq 0$ we choose
    \begin{equation*}
        \omega _{o_y}^\pm(\tau, \xi', \xi_n):=\langle\tau\rangle^y +\langle\xi'\rangle \pm \ic\xi_n.
    \end{equation*}
    Then $\omega_{o_y}^\pm$ is smooth.
    For every fixed $(\tau, \xi')\in \mathbb{R}\times  \mathbb{R}^{n-1} $, the operator $\op[\omega_{o_y}^\pm (\tau, \xi', \cdot)]$ is a differential operator and thus preserves both upper and lower support.
    Furthermore, the polynomial $z\mapsto \omega_{o_y}^\pm(\tau, \xi', z):=\langle\tau\rangle^y+\langle\xi'\rangle \pm \ic z$ has roots only in $\mathbb{C}_\pm$, so that the symbol $\frac{1}{\omega_{o_y}^\pm(\tau, \xi', \cdot)}$ admits a continuous and bounded extension $z\mapsto \frac{1}{\omega_{o_y}(\tau, \xi', z)}$ to $\overline{\mathbb{C}_\mp}$ which is holomorphic in $\C_\mp$, and thus preserves support in $G_\pm$ by Proposition \ref{prop_support_preserving}.

    We now show the strong $\mathcal{N}$-ellipticity.
    The upper estimates for $(\tau,\xi)^{\alpha,\beta}\partial_{(\tau,\xi)}^{(\alpha,\beta)}\omega_{o_y}^\pm$ for $(\alpha,\beta)\in \{0,1\}\times \{0,1\}^{n}$ are immediate.
    Moreover,
    \begin{equation*}
        \vert \omega_{o_y}^\pm(\tau, \xi', \xi_n)\vert^2= \xi_n^2 +W_{o_y}(\tau, \xi', 0)^2 \gtrsim \xi_n^2 +W_{2o_y}(\tau, \xi',0)^2=W_{2o_y}(\tau, \xi', \xi_n),
    \end{equation*}
    so that the lower estimate follows.
    Hence $\omega_{o_y}^\pm$ is strongly $\mathcal{N}$-elliptic for all $y\in [0,\infty]$ with order function $o_y$.
    By our assumption on integrality of the first components of the vertices of $\caln$, we may write $\mu= \sigma o_\infty+\sum_{k=1}^{r_1} o_{y_k}$ with $\sigma\ge 0$ and $y_k\in [0, \infty)$.
    Then by Proposition \ref{prop: arithmetics_order_functions}, we obtain that 
    \begin{equation*}
        \omega_\mu^\pm:= \omega_{o_\infty}^\sigma\prod_{k=1}^{r_1} \omega_{o_{y_k}}^\pm
    \end{equation*}
    is strongly $\mathcal{N}$-elliptic with order function $\mu$.
    Since $\op[\omega_\mu]$ is a composition of the operators $\op[\omega_{o_{y_k}}]$, $k\in\set{1,..., K}$, the support preserving properties follow.
\end{proof}
    Observe that the scale $\{\overline{H}^{(s, r),p}_\perp(G_+) \mid s\in \mathbb[0, \infty), r\in \mathbb{N}_0\}$ is not a Newton polygon scale and hence the arguments of Proposition \ref{prop: properties_Newton_scales} are not available.
    Indeed, even the equality of normed spaces
    \begin{equation*}
        \overline{H}^{\mu, p}_\perp(G_+) = \bigcap_{j=1}^{J+1} \overline{H}^{(s_j, r_j), p}_\perp(G_+)
    \end{equation*}
    is not obvious and has to be established.
As we mentioned in the discussion before Lemma \ref{lemma: intersection_restricted_spaces}, the embedding $ \overline{H}^{\mu, p}_\perp(G_+) \hookrightarrow \bigcap_{j=1}^{J+1} \overline{H}^{(s_j, r_j), p}_\perp(G_+)$ is valid by the definition of the quotient spaces. The other direction will be handled using Lemma \ref{lemma: intersection_restricted_spaces}.

\begin{corollary}\label{cor: equiv_norm_half_space}
    Let $E\subseteq [0, \infty)^2$ be a finite set, $\mathcal{N}:=\mathcal{N}(E)$ the associated Newton polygon and $ \mathcal{N}_V=\{(r_0, s_0),..., (r_{J+1}, s_{J+1})\}$ the set of its vertices.
    Suppose that all $r_j\in\N_0$ for all $j\in\{0,\ldots,J+1\}$.
    Then it holds 
    \begin{equation*}
        \overline{H}^{\mu, p}_\perp(G_+) = \bigcap_{j=0}^{J+1} \overline{H}^{(s_j, r_j), p}_\perp(G_+) = \bigcap_{(r, s)\in E}\overline{H}^{(s, r), p}_\perp(G_+) 
    \end{equation*}
    with equivalent norms.
\end{corollary}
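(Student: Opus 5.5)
We take $\mu:=\mu(\caln)$ and reduce both equalities to Lemma \ref{lemma: intersection_restricted_spaces} with $E\cap F$ replaced by a finite intersection. The inclusions "$\subseteq$" with continuous embeddings come for free. On the whole space, Proposition \ref{prop: properties_Newton_scales} gives $H^{\mu,p}_\perp(G)=\bigcap_{j}H^{(s_j,r_j),p}_\perp(G)=\bigcap_{(r,s)\in E}H^{(s,r),p}_\perp(G)$ for the Bessel-potential scale of Proposition \ref{js016}. This uses that the significant vertices lie among the vertices and among $E$, while all points of $E$ lie in $\caln$. Lemma \ref{lemma:embeddings_restricted_spaces} then yields $\overline{H}^{\mu,p}_\perp(G_+)\hookrightarrow\overline{H}^{(s,r),p}_\perp(G_+)$ for every such $(r,s)$. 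So it suffices to show that every $u\in\bigcap_{(r,s)\in E}\overline{H}^{(s,r),p}_\perp(G_+)$ has an extension in $H^{\mu,p}_\perp(G)$ with a norm bound. Since the vertex set is itself a finite set generating $\caln$, it is enough to do this for a general generating set $E$.

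For the construction we use the support-preserving weights of Proposition \ref{prop:weight_fct_supp}. Integrality is needed there, and we will need it also for the rectangular polygons $\caln_{(s,r)}$, $(r,s)\in E$. If $E$ contains non-integer first coordinates, we first replace $E$ by its set of vertices (which are integral by assumption). The comparison between the two intersections is then handled separately by convexity in the restricted setting; see the obstacle below.

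Set $J:=L^p_\perp(G)$ and, for $(r,s)\in E$,
\begin{align*}
\opL^{(r,s)}:=\op\Bigl[\frac{w_{(s,r)}}{\omega^+_\mu}\cdot\frac{\omega_\mu^-}{\omega^+_\mu}\Bigr]\quad\text{or more simply}\quad \opL^{(r,s)}:=\op[\omega^-_{\mu_{(s,r)}}]\ \text{composed with a normalising factor}.
\end{align*}
The concrete choice I would try is as follows. Write $W:=\sum_{(r,s)\in E}\omega^-_{\mu_{(s,r)}}$ and $\opL^{(r,s)}:=\op[\omega^-_{\mu_{(s,r)}}]$, each in $\call(H^{(s,r),p}_\perp(G),L^p_\perp(G))$ and preserving lower support by Proposition \ref{prop:weight_fct_supp}. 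Their sum is $\opL=\op[W]$, and we need it to be an isomorphism $\bigcap_E H^{(s,r),p}_\perp(G)\to L^p_\perp(G)$ whose inverse preserves lower support. This is the main obstacle. The sum of the $\omega^-$'s is analytic in $\xi_n$ in the appropriate half-plane, but its lower bound $|W|\gtrsim w_\mu$ is not automatic, because the individual factors are complex and cancellations could occur. Moreover, $1/W$ must extend holomorphically and polynomially bounded into $\C_-$, i.e.\ $W$ has no zeros there.

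To secure this, I would exploit that all factors $\langle\tau\rangle^y+\langle\xi'\rangle-\ic z$ take values with nonnegative real part for $\Im z\le0$ (respectively lie in a fixed sector $|\arg|<\pi/2$). Then each product $\omega^-_{\mu_{(s,r)}}$ has argument in a sector determined by its degree in $z$. If that fails, I would instead take $\opL^{(r,s)}:=\op[\omega_\mu^-/\omega^-_{\mu_{(s,r)}}\cdot c_{(r,s)}]$ acting after $\op[\omega^-_{\mu_{(s,r)}}]$ via a partition of frequency space. Alternatively, I would realise $\opL$ as $\op[\omega^-_\mu]\circ\sum_E \op[\chi_{(r,s)}]$ with $\chi_{(r,s)}$ multipliers of the form $w_{(s,r)}^{-1}$-weighted cutoffs. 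Those, however, do not preserve support. So the sector argument is the route I would pursue first.

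Once $\opL$ is established, Theorem \ref{thm: whole space} gives the isomorphism property, since $W$ is strongly $\caln$-elliptic with order function $\mu$. Proposition \ref{prop_support_preserving}, applied to $1/W$, then gives preservation of lower support for the inverse. Lemma \ref{lemma: intersection_restricted_spaces}, iterated over the finitely many spaces, yields the algebraic equality. The norm equivalence then follows from the bounded extension $U=\opL^{-1}\sum\opL^{(r,s)}U_{(r,s)}$ by taking infima over the extensions $U_{(r,s)}$.
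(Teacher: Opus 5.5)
You reduce correctly: the embeddings of $\overline{H}^{\mu,p}_\perp(G_+)$ into both intersections, and between the two intersections, follow from whole-space embeddings and Lemma \ref{lemma:embeddings_restricted_spaces}. The converse follows from a finite-family version of Lemma \ref{lemma: intersection_restricted_spaces}, provided you have pieces $\opL^{(r,s)}\in\call(H^{(s,r),p}_\perp(G),L^p_\perp(G))$ that preserve lower support, whose sum is an isomorphism with a lower-support-preserving inverse. This is also the paper's strategy.

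The gap is that you never produce such pieces, and your concrete choice $W=\sum_E\langle\tau\rangle^s(1+\langle\xi'\rangle-\ic\xi_n)^r$ fails. It is in general neither $\caln$-elliptic nor zero-free where it must be.
\begin{itemize}
\item For $E=\{(2,0),(0,1)\}$, set $a:=1+\langle\xi'\rangle$ and $\xi_n:=(a^2+\langle\tau\rangle)^{1/2}$. Then $W=-2\ic a\xi_n$, so $|W|=2a(a^2+\langle\tau\rangle)^{1/2}$, while $w_\mu\simeq\langle\tau\rangle+\langle\xi\rangle^2\gtrsim\langle\tau\rangle$. The lower bound is lost as $|\tau|\to\infty$ with $\xi'$ fixed.
\item For $E=\{(3,0),(0,s)\}$ with $s>0$, $W(\tau,\xi',z)=\langle\tau\rangle^s+(a-\ic z)^3$ vanishes where $a-\ic z=\langle\tau\rangle^{s/3}e^{\ic\pi/3}$. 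That is, it vanishes at a point with $\Im z=\tfrac12\langle\tau\rangle^{s/3}-a>0$ for large $|\tau|$. So $1/W$ is not holomorphic in $\C_+$, and $\op[1/W]$ does not preserve lower support.
\item Adding the vertex $(0,0)$, i.e.\ the constant $1$, changes neither example.
\end{itemize}
No sector argument can repair this. For $\Re\zeta>0$ the powers $\zeta^r$ fill the sector $|\arg|<r\pi/2$, so for $r\ge2$ nothing prevents cancellation. Your signs are also flipped. By Proposition \ref{prop_support_preserving}, lower support requires holomorphy and nonvanishing of $W$ on $\overline{\C_+}$, where the factors $\langle\tau\rangle^y+\langle\xi'\rangle-\ic z$ have positive real part; your $\C_-$, $\Im z\le0$ is the wrong half-plane. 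Your fallbacks are either not support preserving (the cutoffs, as you note) or not specified.

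The missing idea is to decompose the symbol $\omega_\mu^-$ of Proposition \ref{prop:weight_fct_supp} rather than sum weights. Its ellipticity and the support property of its inverse are already known, so only holomorphy and upper bounds of the pieces need checking. In the triangular case $\omega^-_\mu=\omega^r$ with $\omega=\langle\tau\rangle^{s/r}+\langle\xi'\rangle-\ic z$. The paper sets $\phi:=\langle\tau\rangle^{s/r}/\omega$, so that $|\phi|\le1$ and $1-\phi=(\langle\xi'\rangle-\ic z)/\omega$ on $\overline{\C_+}$. It then takes a polynomial $P$ with $P(t)=t^rQ(t)$ and $1-P(t)=(1-t)^rR(t)$. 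This gives $\omega^r=\langle\tau\rangle^sQ(\phi)+(\langle\xi'\rangle-\ic z)^rR(\phi)$. Both pieces are holomorphic and polynomially bounded on $\overline{\C_+}$, with strong upper order functions $so_\infty$ and $ro_0$. Your ``partition of frequency space'' would have to be exactly such a holomorphic, bounded partition. Its vanishing orders are what produce the factors $\langle\tau\rangle^s$ and $(\langle\xi'\rangle-\ic z)^r$ that give the mapping properties. General polygons then follow in two further steps: a horizontal shift by $(\langle\xi'\rangle-\ic\xi_n)^{r_2}$, and induction on $\ord\mu$, splitting $(\langle\xi'\rangle-\ic\xi_n)^N$ off $\omega_\mu^-$.
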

\begin{proof}
We write $A:=\overline{H}^{\mu, p}_\perp(G_+)$, $B:=\bigcap_{j=0}^{J+1} \overline{H}^{(s_j, r_j), p}_\perp(G_+)$ and $C:=\bigcap_{(r, s)\in E}\overline{H}^{(s, r), p}_\perp(G_+) $. Since the embeddings $A\hookrightarrow B$ and $A\hookrightarrow C$ hold by definition of the restricted spaces, and $C\hookrightarrow B$ is trivial, it remains to establish $B\hookrightarrow A$.
The proof of the latter will be divided into three steps depending on the shape of the Newton polygon.
First, we treat triangular polygons by
splitting the adapted support-preserving weight into two lower support preserving pieces and applying Lemma \ref{lemma: intersection_restricted_spaces}.
Second, we obtain simple quadrilateral polygons from the triangular case by a horizontal shift in the normal order. Finally, the general case follows by induction on the order of the Newton polygon.

\medskip 

\textsc{Step} 1.
We start by showing the result for a 
triangle Newton polygon $\mathcal{N}$ of the form
\begin{equation*}
    \mathcal{N}_V:= \{(0,0), (r, 0), (0, s)\}, \qquad r\in \mathbb{N}_0, s\in [0,\infty).
\end{equation*}
 We may assume that $r\ge 1$, since otherwise the intersection in $B$ consists of only one element.
 Observe that in this case we have $\omega_\mu^-=\omega^r$, where $\omega(\tau,\xi',z):=\langle\tau\rangle^{\frac{s}{r}} + \langle\xi'\rangle-\ic z$.
 We now construct $\omega_1,\omega_2:\R\times\R^{n-1}\times \overline{\mathbb{C}_+}\to \mathbb{C}$ such that
 \begin{enumerate}
 \item\label{js002i} $\omega_1+\omega_2=\omega^r=\omega_\mu^-$,
 \item\label{js002ii} $\omega_i(\tau,\xi',\cdot)$ is continuous on $\overline{\mathbb{C}_+}$, and holomorphic on $\mathbb{C}_+$ for all $(\tau,\xi')\in \R\times\R^{n-1}$,
 \item\label{js002iii} there is $c\in(0,\infty)$ such that $|\omega_1(\tau,\xi',z)|\le c\langle\tau\rangle^s$ and $|\omega_2(\tau,\xi',z)|\le c|\langle\xi'\rangle - \ic z|^r$ for all $(\tau, \xi')\in \mathbb{R}\times \mathbb{R}^{n-1}$ and $z\in \overline{\mathbb{C}_+}$.
 \item\label{js002iiii} $\omega_1$ has strong upper order function $so_\infty$ and $\omega_2$ has strong upper order function $ro_0$. 
 \end{enumerate}
 Before we come to the construction, we discuss the merit of it.
 Namely, $\op[\omega_i]$ will preserve lower support by Proposition \ref{prop_support_preserving}, and $\op[\omega_1]\in \call(H^{(s,0),p}_\perp(G),L^p_\perp(G))$, $\op[\omega_2]\in \call(H^{(0,r),p}_\perp(G),L^p_\perp(G))$ by Proposition \ref{prop_boundedness}. Since $\op[\omega_\mu^-]\in \call(H^{\mu,p}_\perp(G),L^p_\perp(G))$ is an isomorphism and its inverse preserves lower support, Lemma \ref{lemma: intersection_restricted_spaces} applied with
\begin{align*}
E=H^{(s,0),p}_\perp(G),\quad
F=H^{(0,r),p}_\perp(G),\quad
J=L^p_\perp(G),
\qquad \text{and} \qquad
\opL^E=\op[\omega_1],\quad
\opL^F=\op[\omega_2],
\end{align*}
shows $\overline{H}^{\mu,p}_{\perp}(G_+)=\overline{H}^{(s,0),p}_\perp(G_+)\cap \overline{H}^{(0,r),p}_\perp(G_+)$ since the corresponding whole space case is true by Proposition \ref{prop: properties_Newton_scales}.

\medskip

 We now turn to the construction of $\omega_1$ and $\omega_2$.
Let $P$ be a polynomial such that $0$ is a root of order $r$ of $P$ and $1$ is a root of order $r$ of $1-P$.\footnote{Such a polynomial is given explicitly by $P(t):=\frac1c \int_0^t s^{r-1}(1-s)^{r-1} \dd s$ with $c:=\int_0^1 s^{r-1}(1-s)^{r-1} \dd s = \frac{\Gamma(r)^2}{\Gamma(2r)}$.}
In other words, $P(t)=t^rQ(t)$ and $1-P(t)=(1-t)^rR(t)$ for polynomials $Q,R$.
Now let $\phi(\tau,\xi',z):=\frac{\langle\tau\rangle^{\frac{s}{r}}}{\omega(\tau,\xi',z)}$, and define
\begin{align*}
 \omega_1(\tau,\xi,z)&:=\omega(\tau,\xi,z)^r P(\phi(\tau,\xi,z)), \\
 \omega_2(\tau,\xi,z)&:=\omega(\tau,\xi,z)^r(1-P(\phi(\tau,\xi,z))).
\end{align*}
Then \ref{js002i} and \ref{js002ii} are immediate.
The definition of $Q$ and $R$ together with $1-\phi(\tau,\xi',z)=\frac{\langle\xi'\rangle - \ic z}{\omega(\tau,\xi',z)}$ shows
\begin{align}\label{js002_e1}
    \omega_1(\tau, \xi', z)=\langle\tau\rangle^s Q(\phi(\tau, \xi', z))
    \quad \text{and} \quad
    \omega_2(\tau, \xi', z)=(\langle\xi'\rangle-\ic z)^r R(\phi(\tau, \xi', z)),  
\end{align}
so that \ref{js002iii} follows since $|\phi(\tau,\xi',z)|\le 1$ and polynomials are bounded on the unit disc.
Since the restriction of $\phi$ to $(\R\setminus\{0\})\times \R^n$ has a strong upper order function $\nu \equiv 0$, the same is true by Proposition \ref{prop: arithmetics_order_functions} for any polynomial of $\phi$, in particular for $Q(\phi)$ and $R(\phi)$.
Therefore, \ref{js002iiii} follows by another application of Proposition \ref{prop: arithmetics_order_functions} via \eqref{js002_e1} and the obvious fact that $so_\infty$ is a strong upper order function for $\langle \tau\rangle^s$ and $ro_0$ is a strong upper order function for $(\langle\xi'\rangle-\ic\xi_n)^r$.
 \medskip
 
\textsc{Step} 2. We now extend the result to a simple Newton polygon $\mathcal{N}$ of the form
\begin{equation*}
    \mathcal{N}_V:= \{(0,0), (r_1, 0), (r_2, s_2), (0, s_2)\}
\end{equation*}
for some $r_1\geq r_2 \geq 0$ and $s_2 \geq 0$. Let $\mathcal{N}'$ be the triangular Newton polygon with vertices $\mathcal{N}'_V:=\{(0,0), (r_1-r_2, 0), (0, s_2)\}$ and $\mu':=\mu(\mathcal{N'})$ its order function. Proposition \ref{prop:weight_fct_supp} gives that $\op[(\langle \xi'\rangle-i\xi_n)^{r_2}]_+$ is contained in
\begin{align}\label{eqn: horizontal_shift_1}
    \mathcal{L}_{\mathrm{iso}}(\overline{H}^{\mu, p}_\perp(G_+), \overline{H}^{\mu', p}_\perp(G_+))
    = \mathcal{L}_{\mathrm{iso}}(\overline{H}^{\mu, p}_\perp(G_+), \overline{H}^{(0, r_1-r_2), p}_\perp(G_+)\cap \overline{H}^{(s_2,0),p}_\perp(G_+)),
\end{align}
where the equality follows from the triangular case above.
On the other hand, the operator $\op[(\langle \xi'\rangle-i\xi_n)^{r_2}]_+$ is contained in
\begin{align*}
    \mathcal{L}_{\mathrm{iso}}(\overline{H}^{(0,r_1),p}_\perp(G_+), \overline{H}^{(0, r_1-r_2),p}_\perp(G_+))\cap \mathcal{L}_{\mathrm{iso}}(\overline{H}^{(s_2,r_2),p}_\perp(G_+), \overline{H}^{(s_2, 0),p}_\perp(G_+))
\end{align*}
and thus in
\begin{equation}\label{eqn: horizontal_shift_2}
    \mathcal{L}_{\mathrm{iso}}(\overline{H}^{(0,r_1),p}_\perp(G_+)\cap \overline{H}^{(s_2,r_2),p}_\perp(G_+),  \overline{H}^{(0, r_1-r_2),p}_\perp(G_+)\cap\overline{H}^{(s_2, 0),p}_\perp(G_+)). 
\end{equation}
Since both isomorphisms in (\ref{eqn: horizontal_shift_1}) and (\ref{eqn: horizontal_shift_2}) are induced by the same injective multiplier, the two domains coincide with equivalent norms, i.e.,
\begin{equation*}
    \overline{H}^{\mu, p}_\perp(G_+)= \overline{H}^{(0,r_1),p}_\perp(G_+)\cap \overline{H}^{(s_2,r_2),p}_\perp(G_+). 
\end{equation*}

\medskip 

\textsc{Step} 3. We finally show the result for a general Newton polygon $\mathcal{N}$. Without loss of generality, we can assume that $\mathcal{N}$ is regular in time, the result for Newton polygons that are not regular in time follows by using the lift $\op[\langle \tau\rangle]^s$, which preserves lower support as well as its inverse $\op[\langle \tau\rangle]^{-s}$, in the same way as in the above case.  We proceed by induction on $N:=\ord \mu$. If $N=1$ then $\mathcal{N}$ is as in the above point and the result is already shown. Now assume that $N\geq 2$ and that the result is true for any Newton polygon $\mathcal{N}'$ with order function $\mu':= \mu(\mathcal{N}')$ with $\ord \mu'= N-1 =:N'$. 

Here, the weight function $\omega_\mu^-$ constructed in Proposition \ref{prop:weight_fct_supp} can be written as 
\begin{equation*}
     \omega_\mu^- (\tau, \xi', \xi_n) = \sum_{i=0}^N c_i\langle \tau \rangle^{\alpha_i} (\langle \xi'\rangle -\ic \xi_n)^i
\end{equation*}
for suitable $c_i \in \mathbb{N}_0$ and $\alpha_i \geq 0$, $i\in\{0,..., N\}$.
In particular $\alpha_N=0$ and $c_N=1$. We set 
\begin{equation*}
    W_1 (\tau, \xi', \xi_n) =  (\langle \xi'\rangle -\ic \xi_n)^{N}, \qquad W_2(\tau, \xi', \xi_n) = \sum_{i=0}^{N-1} c_i\langle \tau \rangle^{\alpha_i} (\langle \xi'\rangle -\ic \xi_n)^i.
\end{equation*}
Then the Newton polygon $\mathcal{N}'$ of the symbol $W_2$ is of order $\ord \mu'=N-1=N'$.
Geometrically, it emerges from $\caln$ by removing the vertex $(r_1,0)=(N,0)$ and replacing it by the two vertices $(r_1-1,0)$ and $(r_1-1,y_1)$, where we recall that $y_1$ denotes the time coordinate of the right-most edge at the spatial coordinate $r=r_1-1$.
Thus, the induction hypothesis yields 
\begin{equation*}
    \overline{H}^{\mu', p}_\perp(G_+)= \overline{H}^{(y_1, r_1-1),p}_\perp(G_+) \cap \bigcap_{j=2}^{J+1} \overline{H}^{(s_j, r_j), p}_\perp (G_+).
\end{equation*}
Furthermore, using Lemma \ref{lemma: intersection_restricted_spaces} with $\opL_1:= \op[W_1]$ and $\opL_2:= \op[W_2]$, and Proposition \ref{prop:weight_fct_supp} as above, we obtain 
\begin{equation*}
    \overline{H}^{\mu, p}_\perp(G_+) = \overline{ H}^{\mu', p}_\perp (G_+) \cap \overline{H}^{(s_1, r_1), p}_\perp(G_+)= \overline{H}^{(y_1, r_1-1),p}_\perp(G_+) \cap \bigcap_{j=1}^{J+1} \overline{H}^{(s_j, r_j), p}_\perp (G_+).
\end{equation*}
To conclude, we remark that
\begin{equation*}
   \overline{H}^{(s_1, r_1),p}_\perp(G_+) \cap \overline{H}^{(s_2, r_2),p}_\perp(G_+) =  \overline{H}^{\nu, p}_\perp(G_+) \hookrightarrow\overline{H}^{(y_1, r_1-1),p}_\perp(G_+)
\end{equation*}
with $\nu$ the order function associated with the Newton polygon $\mathcal{N}(\nu):= \mathcal{N}(\{(r_1, s_1), (r_2, s_2)\}) $. 
Here, the equality follows from the fact that $\mathcal{N}(\nu)$ is of the simple form studied above, while the embedding follows from the already shown $A\hookrightarrow C$. This completes the proof.
\end{proof}
\subsection{Trace Theory}\label{sec:trc_spc}
In this section, we identify the trace spaces associated with the restricted Newton polygon spaces studied in Section \ref{sec:quot_spc}.
As in the quasihomogeneous, anisotropic setting considered, for example, in \cite{amann2019linear}, \cite{kyed2019time}, and \cite{denk2008parabolic}, the trace spaces are described in terms of intersections of spaces belonging to the $\calb\calh$ and $\calh\calb$ scale.
For non-triangular Newton polygons, however, there is no longer a uniform description.
Instead, the precise combination of these spaces is dictated by the geometry of the underlying Newton polygon, see Theorem \ref{thm: trace_space}.

In the Hilbert space case $p=2$, the situation simplifies considerably because the Besov and Bessel scales coincide.
As a consequence, the trace space is again a Newton polygon space (in one dimension less), represented by the Newton polygon of the original space 'shifted' to the left, see Corollary \ref{cor: trace_L2}. 

For $p\neq 2$, however, the trace space is \emph{no longer} a Newton polygon space.
A principal goal of this section is to show that, despite this loss of structure, the trace space still admits a canonical description: it can be represented as a real interpolation space between two Newton polygon spaces associated with closely related polygons, see Theorem~\ref{thm: trace_space_interpolation}. 

\medskip 

Let $\mu$ be a strictly positive order function, and let $\mathcal{N}_V=\{(r_0,s_0),..., (r_{J+1}, s_{J+1})\}$ be the set of vertices of its Newton polygon $\mathcal{N}:=\mathcal{N}(\mu)$.
Throughout this section we assume $r_j \in \mathbb{N}_0$ for all $j\in\{1,..., J+1\}$.
We consider the usual trace operators of $i$-th order: for $i,j\in \mathbb{N}_0$, we set
\begin{equation*}
    \Tr_j: \CRci(\overline{G_+})\to \CRci(G^{n-1}), \qquad \Tr_j\phi(t, x'):=\partial_{n}^j\phi(t, x', 0)
\end{equation*}
and
\begin{equation*}
    \Tr^{(i)}: \CRci(\overline{G_+})\to \CRci(G^{n-1})^{i}, \qquad \Tr^{(i)}\phi:=(\Tr_0\phi, \Tr_1\phi,..., \Tr_{i-1}\phi).
\end{equation*}

The aim of this section is to identify the unique function space $T^{\mu, p}_\perp(G^{n-1})$ such that the trace operator $\Tr^{(r_1)}$ extends to a bounded and surjective operator $\Tr^{(r_1)}: \overline{H}^{\mu, p}(G_+)\to T^{\mu, p}_\perp(G^{n-1})$ whose kernel is given by $r^+\dot{H}^{\mu,p}(G_+)$.

The structure of the trace space $T^{\mu, p}_\perp(G^{n-1})$ will depend on the geometric features of the Newton polygon $\mathcal{N}(\mu)$ and in particular on whether it is regular in space. We introduce a few more notations to ease its description. For all $\delta\in[0,r_1)$, let $\iota (\delta)\in \{1, ..., J\}$ be the unique number such that
\begin{equation}\label{eqn: def_iota}
    \delta\in [r_{\iota(\delta)+1}, r_{\iota(\delta)}).
\end{equation}
For $j\in\{1,\ldots, J\}$, we introduce the notation 
\begin{equation}\label{eqn: def_gamma_perp}
    y_j:= \frac{s_{j+1}-s_{j}}{r_{j}-r_{j+1}}\ge 0. 
\end{equation}
In particular, for all $i\in \{0,..., r_1-1\}\setminus \{r_{J+1},..., r_2\}$, the graph defining the Newton polygon $\mathcal{N}$ has a slope $-y_{\iota(i)}$ above the point $(i,0)$. Finally, set
\begin{equation*}
    r_{reg}:=\left\{\begin{array}{ll}
     0    & \text{if } \mathcal{N} \text{ is regular in space}, \\
     r_J    & \text{if } \mathcal{N} \text{ is not regular in space},
    \end{array}\right.
\end{equation*}
and $s_i':=s_{\iota(i)}+y_{\iota(i)}(r_{\iota(i)}-i-\frac{1}{p})$.
Then for $i\in\set{0,\ldots, r_1-1}$ we set
\begin{equation}\label{eqn: def_trace_space_general}
    T^{\mu, p}_{i\perp}(G^{n-1}) : = \left\{\begin{array}{ll}
    \bigcap_{j=1}^{\iota(i)} HB^{(s_j, r_j-i-\frac{1}{p}), p}_\perp(G^{n-1})     & \text{ if } i< r_{reg} \\[0.3cm]
    BH^{(s_i', 0)}_\perp(G^{n-1})\cap \bigcap_{j=1}^{\iota(i)} HB^{(s_j, r_j-i-\frac{1}{p}), p}_\perp(G^{n-1})     & \text{ if } i\geq r_{reg}.
    \end{array}\right. 
\end{equation}
Finally, we set $T^{\mu, p}_\perp(G^{n-1})=\prod_{i=0}^{r_1-1} T^{\mu, p}_{i\perp}(G^{n-1})$ and aim to show that it is the trace space associated with $\overline{H}^{\mu, p}_\perp(G_+)$.
\begin{remark}\label{remark: translation_traces}
    An insightful way to understand the above construction is to observe that the coordinates of the points defining the spaces $T^{\mu, p}_{i\perp}(G^{n-1})$ are simply those of the vertices of $\mathcal{N}$, 'shifted' to the left by $i+\frac{1}{p}$ as indicated in Figure \ref{fig: trace_spc}.
    In particular, for $i\ge r_{reg}$, the point $(0,s_i')=(0,s_{\iota(i)}+y_{\iota(i)}(r_{\iota(i)}-i-\frac{1}{p}))$ is the intersection of the shifted Newton polygon with the vertical axis. 
    However, it is important to note that for $i\geq r_{reg}$, the space $T^{\mu, p}_{i\perp}(G^{n-1})$ is \emph{not} a Newton polygon space in the sense of Definition \ref{def: Newton_polygon_scale}, since the defining intersection involves both spaces of type $HB$ and spaces of type $BH$. 
\end{remark}
In view of the above remark, for a Newton polygon $\mathcal{N}$ and $\delta\in [0,r_1]$, we denote by $\mathcal{N}^{(\delta)}$ the Newton polygon $\mathcal{N}$ shifted to the left by $\delta$, that is, $\caln^{(r_1)}:=\set{(0,0)}$, while for for $\delta\in [0,r_1)$, $\caln^{(\delta)}$ is defined via
\begin{equation}\label{eqn: def_shifted_polygon}
    \mathcal{N}^ {(\delta)}_V:=\{(r_0, s_0), (r_1-\delta, s_1),(r_2-\delta, s_2), ..., (r_{\iota(\delta)}-\delta, s_{\iota(\delta)}), (0,s_{\iota(\delta)}+y_{\iota(\delta)}(r_{\iota(\delta)}-\delta))\}.
\end{equation}
We will denote by $\mu^{(\delta)}:= \mu(\mathcal{N}^{(\delta)})$ its order function.
To illustrate the spaces $T_{i\perp}^{\mu, p}$, we present a few examples before turning to the proofs.
\begin{example}[triangular case]\label{example : trace_triangular_case}
    We consider a triangular Newton polygon with vertices given by $\mathcal{N}_V=\{(0,0),  (r, 0), (0,s)\}$, where $r\in \mathbb{N}$ and $s>0$. Here the Newton polygon is regular in time and so $r_{reg}=0$. Furthermore, for all $i\in\set{0,..., r-1}$, it holds $\iota(i)= 1$ and $y_1= \frac{s}{r}$. Thus 
    \begin{equation*}
       T^{\mu, p}_{i\perp}(G^{n-1})= BH^{(s-\frac{s}{r}(i+\frac{1}{p}), 0), p}_\perp(G^{n-1})\cap HB^{(0, r-(i+\frac{1}{p})), p}_\perp(G^{n-1})
    \end{equation*}
    In view of Lemma \ref{lemma: interpolation_same_time_param} and \ref{lemma: interpolation_intersection}, one can write this space as the real interpolation space 
    \begin{equation*}
        T^{\mu, p}_{i\perp}(G^{n-1})=(L^p_\perp(G^{n-1}), H^{\mu, p}_\perp(G^{n-1}))_{\theta, p}
    \end{equation*}
    with $r\theta:= r-i-\frac{1}{p}$ and $\mu=\mu(r\caln_{y_1})$, so that $H^{\mu, p}_\perp(G^{n-1})= H^{(0, r), p}_\perp(G^{n-1})\cap H^{(s, 0),p}_\perp(G^{n-1})$. 
    This coincides with the usual trace theory for quasihomogeneous spaces, see for example \cite{amann2019linear} Chapter VIII 1.3  and \cite{kyed2019time}. 
\end{example}
\begin{example}[Cahn-Hilliard-Gurtin Determinant]
    Consider the Newton polygon associated with the CHG determinant, that is $\mathcal{N}_V=\{(0,0), (4, 0), (2, 1), (0, 1)\}$.
    Here, the polygon is non-regular in space with $r_{reg}=2$.
    Furthermore, $\iota(0)=\iota(1)=2$ and $\iota(2)=\iota(3)= 1$ while $y_2=0, y_1= \frac{1}{2}$. Thus 
    \begin{equation*}
        T^{\mu, p}_{i\perp}(G^{n-1})=\left\{ \begin{array}{ll}
         HB^{(0, 4-i-\frac{1}{p}),p}_\perp(G^{n-1})\cap HB^{(1, 2-i-\frac{1}{p}), p}_\perp(G^{n-1})    & i\in\{0,1\}, \\
        BH^{(1-\frac{1}{2}(i-2+\frac{1}{p}), 0), p}_\perp(G^{n-1})\cap   HB^{(0, 4-i-\frac{1}{p}),p}_\perp(G^{n-1})    & i\in\{2,3\}.
        \end{array}\right.
    \end{equation*}
    Trace spaces of Newton polygons with a similar shape have been treated in \cite{NeS25} in the special case $p=2$. 
    \begin{figure}[h]
   
   \begin{subfigure}[b]{.49\textwidth}
       \begin{tikzpicture}
 \draw[->] (-.75,0) -- (5,0) node[below , font=\tiny] {$\vert \xi\vert$};
  \draw[->] (0,-.5) -- (0,2) node[left, font=\tiny] {$\tau$};

  \coordinate (A) at (0,0);
  \coordinate (B) at (3.75,0);
  \coordinate (C) at (1.75,1);
  \coordinate (D) at (0,1);

  \filldraw[color=Sepia, fill=Sepia!30,thick] (A)--(B)--(C)--(D)--cycle;

    \fill[color=Sepia] (3.75,0) circle (1.5pt);
  \node[ below, color=Sepia, font=\scriptsize] at ( 3.75,0) {$HB(0,4-\frac{1}{p})$};

 \fill[color=Sepia] (1.75,1) circle (1.5pt);
  \node[ above , color=Sepia, font=\scriptsize] at ( 1.75,1) {$HB(1,2-\frac{1}{p})$};
     
      \node[xshift=4pt, font=\small] at (7/2, 1) {$T^{\mu_D,p}_{0\perp}$};
\end{tikzpicture}
   \end{subfigure}
   \hfill
   \begin{subfigure}[b]{.49\textwidth}
       \begin{tikzpicture}
 \draw[->] (-.75,0) -- (5,0) node[below , font=\tiny] {$\vert \xi\vert$};
  \draw[->] (0,-.5) -- (0,2) node[left, font=\tiny] {$\tau$};

  \coordinate (A) at (0,0);
  \coordinate (B) at (2.75,0);
  \coordinate (C) at (0.75,1);
  \coordinate (D) at (0,1);

  \filldraw[color=Sepia, fill=Sepia!30,thick] (A)--(B)--(C)--(D)--cycle;

    \fill[color=Sepia] (2.75,0) circle (1.5pt);
  \node[ below, color=Sepia, font=\scriptsize] at ( 2.75,0) {$HB(0,3-\frac{1}{p})$};

 \fill[color=Sepia] (0.75,1) circle (1.5pt);
  \node[ above right, color=Sepia, font=\scriptsize] at ( 0.75,1) {$HB(1,1-\frac{1}{p})$};
     
      \node[xshift=4pt, font=\small] at (7/2, 1) {$T^{\mu_D,p}_{1\perp}$};
\end{tikzpicture}
\end{subfigure}

   \begin{subfigure}{.49\textwidth}
      
      \begin{tikzpicture}
 \draw[->] (-.75,0) -- (5,0) node[below , font=\tiny] {$\vert \xi\vert$};
  \draw[->] (0,-.5) -- (0,2) node[left, font=\tiny] {$\tau$};

  \coordinate (A) at (0,0);
  \coordinate (B) at (1.75,0);
  \coordinate (C) at (0, 0.875);

  \filldraw[color=Sepia, fill=Sepia!30,thick] (A)--(B)--(C)--cycle;

\fill[color=Sepia] (1.75,0) circle (1.5pt);
  \node[ below, color=Sepia, font=\scriptsize] at ( 1.75,0) {$HB(0,2-\frac{1}{p})$};

 \fill[color=Sepia] (0,0.875) circle (1.5pt);
  \node[ above right , color=Sepia, font=\scriptsize] at (0,0.875) {$BH(1-\frac{1}{2p},0)$};
     
      \node[xshift=4pt, font=\small] at (7/2, 1) {$T^{\mu_D,p}_{2\perp}$};
\end{tikzpicture}
   \end{subfigure}\hfill
   \begin{subfigure}{.49\textwidth}
       \begin{tikzpicture}
 \draw[->] (-.75,0) -- (5,0) node[below , font=\tiny] {$\vert \xi\vert$};
  \draw[->] (0,-.5) -- (0,2) node[left, font=\tiny] {$\tau$}; 
  
  \coordinate (A) at (0,0);
  \coordinate (B) at (0.75,0);
  \coordinate (C) at (0, 0.375);

  \filldraw[color=Sepia, fill=Sepia!30,thick] (A)--(B)--(C)--cycle;

    \fill[color=Sepia] (0.75,0) circle (1.5pt);
  \node[ below right, color=Sepia, font=\scriptsize,] at ( 0.75,0) {$HB(0,1-\frac{1}{p})$};

 \fill[color=Sepia] (0,0.375) circle (1.5pt);
  \node[ above right , color=Sepia, font=\scriptsize] at (0,0.375) {$BH(\frac12-\frac{1}{2p}, 0)$};
     
      \node[xshift=4pt, font=\small] at (7/2, 1) {$T^{\mu_D,p}_{3\perp}$};
\end{tikzpicture}
   \end{subfigure}
   \caption{The Newton polygons associated with the trace spaces of $\mu_D$.}
   \label{fig: trace_spc}
\end{figure}
\end{example}
\begin{theorem}\label{thm: trace_space}
    Let $\mu$ be a strictly positive order function, $\mathcal{N}=\mathcal{N}(\mu)$ its Newton polygon and $\mathcal{N}_V=\{(r_0,s_0),..., (r_{J+1}, s_{J+1})\}$ the set of its vertices.
    Assume that $r_j\in \N_0$ for all $j\in\set{0,\ldots,J+1}$, and define $T^{\mu, p}_{\perp}(G^{n-1})$ as in (\ref{eqn: def_trace_space_general}).
    Then the trace operator $\Tr^{(r_1)}$ extends to a bounded linear operator $ \Tr^{(r_1)}\in \mathcal{L}( \overline{H}^{\mu, p}_\perp (G_+) ,T^{\mu, p}_{\perp}(G^{n-1}))$ with $\ker \Tr^{(r_1)}=r^+\dot{H}^{\mu,p}_\perp(G_+)$.
    Furthermore, there is a corresponding extension operator $E^\mu \in \mathcal{L}(T^{\mu, p}_{\perp}(G^{n-1}), \overline{H}^{\mu, p}_\perp (G_+))$ such that $\Tr^{(r_1)}\circ E^\mu= \id_{T^{\mu, p}_{\perp}(G^{n-1})}$.
    In particular,
    \begin{align*}
        (\overline{H}^{\mu, p}_\perp (G_+), r^+\dot{H}^{\mu, p}_\perp (G_+), T^{\mu, p}_{\perp}(G^{n-1}), \Tr^{(r_1)}, E^\mu)   
    \end{align*}
    is an abstract trace tuple in the sense of Definition \ref{def: abstract_trace}.
\end{theorem}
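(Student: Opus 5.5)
The strategy is to reduce the trace theorem to the known quasihomogeneous (triangular) case via the support-preserving weights of Proposition \ref{prop:weight_fct_supp}, the intersection identity of Corollary \ref{cor: equiv_norm_half_space}, and the horizontal shift argument used in its proof. First, Corollary \ref{cor: equiv_norm_half_space} gives
\begin{align*}
\overline{H}^{\mu,p}_\perp(G_+)=\bigcap_{j=1}^{J+1}\overline{H}^{(s_j,r_j),p}_\perp(G_+),
\end{align*}
so the $i$-th trace can be analysed vertex by vertex. For a vertex $(r_j,s_j)$ with $r_j>i$, write $\overline{H}^{(s_j,r_j),p}_\perp(G_+)=H^{s_j,p}_\perp(\mathbb{T};H^{r_j,p}(\R^n_+))$. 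The classical trace theorem for $H^{r_j,p}(\R^n_+)$ (Example \ref{js009}, \cite[Chapter 3.6]{triebel1995interpolation}) applies in the vector-valued form via the lift $\op[\langle\tau\rangle^{s_j}]$, which commutes with $\Tr_i$. It gives boundedness of $\Tr_i$ into $HB^{(s_j,r_j-i-\frac1p),p}_\perp(G^{n-1})$. This yields all the $HB$ components of $T^{\mu,p}_{i\perp}$.

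For the $BH$ component when $i\ge r_{reg}$, I would use the shifted polygon $\caln^{(i)}$ of \eqref{eqn: def_shifted_polygon}. Restricting to the edge of slope $-y_{\iota(i)}$ through $(r_{\iota(i)},s_{\iota(i)})$, the space $\overline{H}^{\mu,p}_\perp(G_+)$ embeds into the restriction of the triangular-type space
\begin{align*}
H^{(s_{\iota(i)}+y_{\iota(i)}r_{\iota(i)},0),p}_\perp\cap H^{(0,\,r_{\iota(i)}+s_{\iota(i)}/y_{\iota(i)}),p}_\perp,
\end{align*}
or its quadrilateral variant, composed with a shift by $\op[(\langle\xi'\rangle-\ic\xi_n)^{k}]$ as in Step 2 of Corollary \ref{cor: equiv_norm_half_space}. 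For that space the parabolic trace theory of Example \ref{example : trace_triangular_case} (\cite{amann2019linear,kyed2019time}) gives $\Tr_i$ into $BH^{(s_i',0),p}_\perp$, which is exactly the left-shifted intersection point. Together these prove $\Tr^{(r_1)}\in\call(\overline{H}^{\mu,p}_\perp(G_+),T^{\mu,p}_\perp(G^{n-1}))$. The inclusion $r^+\dot H^{\mu,p}_\perp\subseteq\ker\Tr^{(r_1)}$ follows by density of $\CRci(G_+)$ in the supported space.

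For the extension operator I would use the standard Fourier construction
\begin{align*}
E_i g:=\calf^{-1}_{G^{n-1}}\Bigl[\tfrac{x_n^i}{i!}\,\varphi\bigl(x_n\,\omega(k,\xi')\bigr)\,\widehat g(k,\xi')\Bigr],
\qquad \omega:=w_{\mu^{(0)}}^{\,1/r_1}\text{-type weight},
\end{align*}
where $\varphi\in\CRci(\R)$ equals $1$ near $0$ and $\omega$ is an $o_y$-adapted weight built from the tangential weights. I would apply the Newton polygon scale of Proposition \ref{prop: properties_Newton_scales} on $G^{n-1}$ together with Lemma \ref{lemma: interpolation_intersection}, and prove the bound vertex by vertex using $\int_0^\infty$ estimates in $x_n$ with the Mikhlin/Marcinkiewicz–transfer machinery of Theorem \ref{thm: Marcinkiewicz} and Theorem \ref{thm: transfer}. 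A triangular correction, replacing $E_i$ by $E_i-\sum_{l<i}E_l\Tr_lE_i$, then gives $\Tr^{(r_1)}E^\mu=\id$.

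The reverse inclusion $\ker\Tr^{(r_1)}\subseteq r^+\dot H^{\mu,p}_\perp$ I would derive by applying $\op[\omega^-_\mu]$: it maps the kernel into the trace-free part. Alternatively I would use the factorization $\omega_\mu^-=\sum_i c_i\langle\tau\rangle^{\alpha_i}(\langle\xi'\rangle-\ic\xi_n)^i$ and the known Hardy-type characterization of $W^{r,p}_0$ for each vertex space, then glue via Lemma \ref{lemma: intersection_restricted_spaces}.

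The main obstacle is the extension operator for non-triangular polygons with $p\neq2$. A single dilation weight $\omega$ cannot simultaneously realise every vertex, and the mixture of $HB$ and $BH$ norms prevents a direct Plancherel argument. If this fails, I would proceed by induction on $r_1$ as in Step 3 of Corollary \ref{cor: equiv_norm_half_space}, splitting off one normal derivative via $(\langle\xi'\rangle-\ic\xi_n)$, which preserves both supports, and reducing to the triangular and simple quadrilateral cases, where the extension is classical.
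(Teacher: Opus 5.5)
\textbf{Gap 1: the right inverse is never constructed.} The gap sits exactly where you place the main obstacle: the right inverse $E^\mu$ is never built for non-triangular polygons, and the fallback does not close this.

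First, splitting off $\op[\langle\xi'\rangle-\ic\xi_n]$ requires $\mu-o_0$ to be strictly positive, i.e.\ $\caln$ must have a horizontal top edge. The induction can therefore at most remove that edge. It leaves a polygon regular in space, which in general has several slanted edges (e.g.\ $\caln(3o_{\frac14}+2o_{\frac12})$, needed for $\mathbb{H}_0$ in Theorem \ref{thm:CH}).

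Second, for any first-order reduction $v$ of $u$, $\Tr_m v$ involves both $\Tr_m u$ and $\Tr_{m+1}u$. Prescribing $\Tr^{(r_1-1)}v$ therefore never fixes $\Tr_0u$. So $\sigma_0$ must still be lifted by a Poisson-type term $e^{-x_nA}\sigma_0$ with a bound in $\overline{H}^{\mu,p}_\perp(G_+)$. For such polygons this is exactly the non-classical estimate.

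The missing idea is to give each trace index $i$ its own semigroup, generated by
\[
A_{\iota(i)}=(1-\Delta')^{\frac12}+(1+\partial_t)^{y_{\iota(i)}},
\]
which is adapted only to the edge $\iota(i)$ of $\caln$ above $i$. The paper combines $e^{-kx_nA_{\iota(i)}}A_{\iota(i)}^{-i}\sigma_i$, $k=1,\dots,r_1$, through a Vandermonde system so that $\partial_n^m\eta_i(0)=\delta_{mi}\sigma_i$. Maximal $L^p$-regularity of $A_{\iota(i)}$ then places $\eta_i$ in a space $\mathbb{Y}_i$. Its polygon has the vertices $(r_j,s_j)$, $j\le\iota(i)$, together with the vertical intercept of the supporting line of edge $\iota(i)$. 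That polygon contains $\caln$, so $\mathbb{Y}_i\hookrightarrow\overline{H}^{\mu,p}_\perp(G_+)$. Hence the vertices beyond edge $\iota(i)$ never have to be realised by the dilation; the supporting line you misuse in the trace step is the right tool here.

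The maximal-regularity step needs $T_i(\mu)\hookrightarrow(X,D(A_{\iota(i)}))_{1-\frac1p,p}$ for several choices of $X$, including mixed points $BH(s,r)$ with $r>0$ on the edges. This is the content of Lemmas \ref{lemma: geometric_embeddings} and \ref{lemma: decomposition_triangles}, and nothing in your dilation sketch replaces it.

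\textbf{Gap 2: the $BH$-component of the trace estimate.} Your vertexwise treatment of the $HB$-components is fine and matches the paper, but the $BH$-component is wrong as written. The triangle spanned by $(0,s_{\iota(i)}+y_{\iota(i)}r_{\iota(i)})$ and $(r_{\iota(i)}+s_{\iota(i)}/y_{\iota(i)},0)$ lies under a supporting line of $\caln$. It therefore \emph{contains} $\caln$, so the triangular space embeds into $H^{\mu,p}_\perp(G)$ and not conversely. For $\mu_D$ and $i=2$ you would be asserting $\overline{H}^{\mu_D,p}_\perp(G_+)\hookrightarrow\overline{H}^{(2,0),p}_\perp(G_+)$, which is false.

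The quadrilateral through the endpoints of edge $\iota(i)$ does embed, but shifting with $\op[(\langle\xi'\rangle-\ic\xi_n)^k]_+$ mixes traces. $\Tr_{i-k}$ of the image contains $\langle D'\rangle^{k-l}\Tr_{i-k+l}u$ for every $l\le k$, so $\Tr_iu$ cannot be isolated without mixed regularity of the lower traces. The paper instead uses $\Tr_i=\Tr_0\circ\partial_n^i$ with $\partial_n^i\in\call(\overline H(\mu),\overline H(\mu^{(i)}))$. It then applies the trace method to $\overline H(S,0)\cap\overline H(s_{\iota(i)},r_{\iota(i)}-i)$, where $(0,S)$ is the vertical vertex of $\caln^{(i)}$, together with Lemma \ref{lemma: interpolation_intersection}.

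\textbf{Kernel inclusion.} ``Applying $\op[\omega_\mu^-]$'' does not prove $\ker\Tr^{(r_1)}\subseteq r^+\dot H^{\mu,p}_\perp(G_+)$. Your alternative via vanishing traces vertex by vertex is what the paper does, and no gluing is needed. The zero extension $e_0^+u$ lies in every $H^{(s_j,r_j),p}_\perp(G)$, hence in $H^{\mu,p}_\perp(G)$ by Proposition \ref{prop: properties_Newton_scales}.
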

 For the sake of readability, we will write $\overline{X}(\mu):=\overline{X}^{\mu,p}_\perp(G_+)$, $\overline{X}(s,r) :=\overline{X}^{(s,r),p}_\perp(G_+)$, $X(s, r):= X_\perp^{(s, r),p} (G^{n-1})$  for $X\in \{H, BH, HB\} $ and $s, r\geq 0$ as well as $T(\mu):= T^{\mu, p}_{\perp}(G^{n-1})$, $T_i(\mu):= T^{\mu, p}_{i\perp}(G^{n-1})$, $i\in\{0,\ldots, r_1-1\}$. 
Before turning to the proof, we will need the following lemma to study embeddings of the spaces $T_i(\mu)$ when $i>r_{reg}$.
The idea is to first study the property of simpler intersections of spaces in the scales $\mathcal{BH}$ and $\mathcal{HB}$, and then to show that our general space can be decomposed as an intersection of those.

\begin{lemma}\label{lemma: complex_interpolation_Newton_polygons}
    Let $\mu$ be a strictly positive order function with $\mathcal{N}_V(\mu):=\{(r_0, s_0),..., (r_{J+1}, s_{J+1})\}$ such that $r_j \in \mathbb{N}_0$ for all $j\in\set{1,..., J+1}$.
    Then for $\delta_0, \delta_1\in [0, r_J]$ and $\theta\in(0,1)$, it holds 
    \begin{equation*}
        [H^{\mu^{(\delta_0)},p}_\perp(G), H^{\mu^{(\delta_1)}, p}_\perp(G)]_\theta= H^{\mu^{(\delta_\theta)}, p}_\perp(G)
    \end{equation*}
    for $\delta_\theta:=(1-\theta)\delta_0+\theta\delta_1$.
    In particular, $H^{\mu^{(\delta_\theta)}, p}_\perp(G)$ is a $\theta$-intermediate space for the spaces $H^{\mu^{(\delta_0)},p}_\perp(G)$ and $H^{\mu^{(\delta_1)}, p}_\perp(G)$.
\end{lemma}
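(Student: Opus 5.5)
The approach is to show that, for $\delta\in[0,r_J]$, the shifted order function is an affine function of $\delta$, namely
\begin{align*}
\mu^{(\delta)}=\mu-\delta\, o_{y_J}.
\end{align*}
Granting this, the family $\{H^{\mu^{(\delta)},p}_\perp(G)\}_\delta$ is a scale of domains of complex powers of one multiplier operator with bounded imaginary powers. The complex interpolation identity then follows from the standard result for such scales.

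\emph{Step 1 (geometry).} If $r_J=0$, then $\delta_0=\delta_1=0$ and there is nothing to prove, so assume $r_J\ge 1$. Since $r_{J+1}=0$, every $\delta\in[0,r_J)$ satisfies $\iota(\delta)=J$. The last edge of $\mathcal N$ joins $(r_J,s_J)$ to $(0,s_{J+1})$, and its slope is $-y_J$ with $y_J\in[0,\infty)$ finite. By the edge decomposition in the proof of Proposition~\ref{prop: elementary_decomposition_Newton_polygon},
\begin{align*}
\mathcal N=\mathcal N(\nu)+r_J\,\mathcal N_{y_J},
\end{align*}
where $\mathcal N(\nu)$ collects the remaining edges. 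I would compare $\mathcal N(\nu)+(r_J-\delta)\mathcal N_{y_J}$ with the vertex list in \eqref{eqn: def_shifted_polygon}, using the same horizontal-section argument as in that proof. This should give $\mathcal N^{(\delta)}=\mathcal N(\nu)+(r_J-\delta)\mathcal N_{y_J}$, and hence
\begin{align*}
\mu^{(\delta)}=\nu+(r_J-\delta)o_{y_J}=\mu-\delta o_{y_J}.
\end{align*}
The endpoint $\delta=r_J$ then follows by continuity of this formula.

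\emph{Step 2 (reduction to a power scale).} Set $a:=w_{o_{y_J}}=\langle\tau\rangle^{y_J}+\langle\xi\rangle$, which is positive and strongly $\mathcal N$-elliptic with order function $o_{y_J}$. By Corollary~\ref{cor: product_weight_functions} and Lemma~\ref{lemma: additivity}, $w_\mu a^{-\delta}$ is equivalent to a weight function for $\mu^{(\delta)}$. Therefore
\begin{align*}
H^{\mu^{(\delta)},p}_\perp(G)=\op[w_\mu^{-1}]\op[a^{\delta}]L^p_\perp(G).
\end{align*}
Since $\op[w_\mu]$ is an isomorphism simultaneously on every space of the scale (Remark~\ref{remark: lift_whole_space}), it commutes with complex interpolation. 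After also factoring out $\op[a^{\delta_0}]$, the claim reduces to
\begin{align*}
[L^p_\perp(G),\op[a^{\gamma}]L^p_\perp(G)]_\theta=\op[a^{\theta\gamma}]L^p_\perp(G),\qquad \gamma:=\delta_1-\delta_0.
\end{align*}
By symmetry we may take $\gamma>0$. The right-hand space is then the domain $D(A^{-\theta\gamma})$ of the operator $A^{-1}:=\op[a^{-1}]$, which is bounded and injective on $L^p_\perp(G)$ with dense range.

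\emph{Step 3 (bounded imaginary powers).} I would show that $A^{\ic t}=\op[a^{\ic t}]$ is bounded on $L^p_\perp(G)$ with norm $\lesssim (1+|t|)^{n+1}$. By Proposition~\ref{prop: arithmetics_order_functions}\ref{prop: arithmetics_order_functionsiv}, $a^{\ic t}$ has strong upper order function $0$. The Faà di Bruno expansion shows that each Marcinkiewicz derivative term carries at most $n+1$ factors of $|t|$. Theorem~\ref{thm: Marcinkiewicz} together with the transfer principle (Theorem~\ref{thm: transfer}) then gives the bound. The operator $A$ is sectorial, being a multiplier with positive symbol bounded away from $0$. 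The identity $[X,D(A^{\gamma})]_\theta=D(A^{\theta\gamma})$ for operators with bounded imaginary powers (Triebel, Theorem 1.15.3; Haase, Theorem 6.6.9, exactly as in the proof of Lemma~\ref{lemma: interpolation_intersection}) then yields the result. The final claim, that $H^{\mu^{(\delta_\theta)},p}_\perp(G)$ is a $\theta$-intermediate space, follows because complex interpolation spaces are of class $J_\theta\cap K_\theta$.

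The main obstacle is Step 1, the convex-geometric identification $\mu^{(\delta)}=\mu-\delta o_{y_J}$. This is precisely where the restriction $\delta\le r_J$ enters: beyond $r_J$ the shift removes whole edges and is no longer affine in $\delta$. A careful vertex check is needed, including the case where $\mathcal N$ is not regular in space, so that $y_J=0$ and $\mathcal N_{0}$ is a horizontal segment. The Marcinkiewicz estimate in Step 3 is routine by comparison.
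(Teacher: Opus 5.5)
Your argument is correct in substance. It rests on the same key observation as the paper, $\mu^{(\delta)}=\nu+(r_J-\delta)o_{y_J}$ for $\delta\in[0,r_J]$, and differs only in the last step.

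\begin{itemize}
\item \emph{The paper} lifts by a weight function for $\nu$ onto the elementary scale $H^{(r_J-\delta)o_{y_J},p}_\perp(G)$. This is an anisotropic Bessel-potential scale, and the paper simply cites Amann's complex interpolation theorem for it.
\item \emph{You} lift all the way down to $L^p_\perp(G)$ and prove that interpolation identity yourself, via bounded imaginary powers of $\op[a]$ with $a=\langle\tau\rangle^{y_J}+\langle\xi\rangle$. This is a self-contained replacement for the citation, in the same spirit as the paper's proof of Lemma~\ref{lemma: interpolation_intersection}.
\end{itemize}

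One slip in Step~2 must be fixed. For $\gamma>0$, the space $\op[a^{\theta\gamma}]L^p_\perp(G)$ is an extrapolation space strictly larger than $L^p_\perp(G)$. It is not $D(A^{-\theta\gamma})$, which equals all of $L^p_\perp(G)$ because $A^{-1}$ is bounded. Instead, normalize with $\op[w_\mu a^{-\max\{\delta_0,\delta_1\}}]$. Then all three spaces become $D(A^{s})$ with $s\ge 0$, where $A=\op[a]$ has domain $\op[a^{-1}]L^p_\perp(G)$, and Triebel's Theorem~1.15.3 applies as you intend (after swapping the endpoints if needed).

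Two smaller points:
\begin{itemize}
\item Proposition~\ref{prop: arithmetics_order_functions}\ref{prop: arithmetics_order_functionsiv} is stated only for real exponents. For $a^{\ic t}$ you therefore rely on your direct Fa\`a di Bruno computation, which is fine.
\item Positivity of $A$ is not a consequence of positivity of the symbol alone. It needs the routine Marcinkiewicz bound for $(1+\lambda)/(a+\lambda)$, uniformly in $\lambda\ge 0$.
\end{itemize}
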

\begin{proof}
    Write $\mu= n_J o_{y_J}+\nu$ with $\nu:= \sum_{j=1}^{J-1}n_j o_{y_j}$ for some $n_j\geq 0, j\in\{1,\ldots, J\}$.
    Then we have $\mu^{(\delta)}=(n_J-\delta)o_{y_J}+\nu$ for every $\delta\in [0,r_J]$.
    Let $w$ be a weight function for $\nu$.
    We have
    \begin{equation}\label{lemma: complex_interpolation_Newton_polygons_e1}
        \op[w] \in \call_{\mathrm{iso}}(H^{\mu^{(\delta)},p}_\perp(G), H^{(n_J-\delta)o_{y_J}, p}_\perp(G)), \qquad \delta\in\{\delta_0,\delta_\theta,\delta_1\}. 
    \end{equation}
    Thus, in particular 
    \begin{equation*}
         \op[w] \in \call_{\mathrm{iso}} ([H^{\mu^{(\delta_0)},p}_\perp(G)), H^{\mu^{(\delta_1)},p}_\perp(G))]_\theta , [H^{(n_J-\delta_0)o_{y_J}, p}_\perp(G), H^{(n_J-\delta_1)o_{y_J}, p}_\perp(G)]_{\theta}).
    \end{equation*}
    Since the spaces associated with the elementary order functions are just anisotropic Bessel potential spaces, we know by \cite[Theorem 4.5.1]{amann2019linear} that 
    \begin{equation*}
         [H^{(n_J-\delta_0)o_{y_J}, p}_\perp(G), H^{(n_J-\delta_1)o_{y_J}, p}_\perp(G)]_{\theta}= H^{(n_J-\delta_\theta)o_{y_J}, p}_\perp(G), 
    \end{equation*}
    which is the same target space as in \eqref{lemma: complex_interpolation_Newton_polygons_e1} for $\delta_\theta$.
    Thus the two spaces $[H^{\mu^{(\delta_0)},p}_\perp(G), H^{\mu^{(\delta_1)},p}_\perp(G)]_\theta$ and $H^{\mu^{(\delta_\theta)},p}_\perp(G)$ are mapped isomorphically by the same operator $\op[w]$ onto $H^{(n_J-\delta_\theta)o_{y_J},p}_\perp(G)$.
    Therefore, they coincide.
\end{proof}

\begin{lemma}\label{lemma: geometric_embeddings}
Consider two points  $(r_1, s_1), (0, s_2)$ with $r_1\in\N$, $r_1>0$, and $s_2> s_1\geq0$ and the Newton polygon  $\mathcal{N}':=\mathcal{N}\{(r_1, s_1), (0, s_2)\}$. 
\begin{enumerate}
    \item\label{lemma: geometric_embeddingsi} For any $(r,s)\in \mathcal{N}'$ with $r\in (0, r_1)$, it holds 
    \begin{equation*}
          BH(s_2, 0)\cap HB(s_1, r_1)\hookrightarrow HB(s, r)\cap BH(s, r)
    \end{equation*}
    \item\label{lemma: geometric_embeddingsii} Let $y= \frac{s_2-s_1}{r_1}$. If $P\in \mathcal{O}^\perp(\R\times\R^n)$ is strongly $\mathcal{N}$-elliptic with order function $o_{y}$ then 
    \begin{equation*}
        \op[P] \in \mathcal{L}_{\mathrm{iso}}( BH(s_2+y, 0) \cap HB(s_1, r_1+1), BH(s_2, 0) \cap HB(s_1, r_1))
    \end{equation*}
    If $p=2$, ``strongly $\mathcal{N}$-elliptic'' can be replaced by ``$\mathcal{N}$-elliptic''.
\end{enumerate}
\end{lemma}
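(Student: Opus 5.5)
Throughout, $X(s,r)$ denotes $X^{(s,r),p}_\perp(G^{n-1})$. Both parts reduce to statements about the triangle with vertices $(0,0)$, $(r_1,0)$, $(0,s_2-s_1)$. We factor out the lift $\op[\langle\tau\rangle^{s_1}]$, which is an isomorphism in every scale $\mathcal{BH}$, $\mathcal{HB}$ by the lifting property. The scales $\mathcal{BH}$ and $\mathcal{HB}$ are Newton polygon scales (Propositions \ref{js016} and \ref{prop: ellipticity_interpolation}), so $\op[\langle\tau\rangle^{s_1}]$ maps $BH(s_2,0)\cap HB(s_1,r_1)$ isomorphically onto $BH(s_2-s_1,0)\cap HB(0,r_1)$, and similarly for the target spaces. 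Hence we may assume $s_1=0$ and write $s_2=y r_1$.

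\textbf{Part (i).} With $s_1=0$, the space $BH(yr_1,0)\cap HB(0,r_1)$ is, by Lemma \ref{lemma: interpolation_intersection} (applied on $G^{n-1}$ and after a harmless renormalisation of the endpoints), the real interpolation space $(H(0,0),H^{\lambda,p}_\perp)_{\theta,p}$ with $H^{\lambda,p}_\perp=H(0,R)\cap H(yR,0)$ for a suitable larger triangle of slope $y$ and $\theta R=r_1$. The plan is to interpolate one more time instead of embedding directly.
\begin{itemize}
\item For $(r,s)\in\mathcal{N}'$ with $0<r<r_1$, we have $s\le y(r_1-r)$. By Proposition \ref{prop: properties_Newton_scales}, $H^{\lambda,p}_\perp\hookrightarrow H(y(R-r'),r')$ for every $r'\in[0,R]$.
\item By reiteration, the space equals $(H(0,0),H^{\lambda,p}_\perp)_{\theta,p}$ and embeds into $(H(0,0),H(y(R-r'),r'))_{\theta,p}$.
\item Choose $r'$ with $\theta r'=r$. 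This gives a rectangle point $(r,\theta y(R-r'))=(r,y(r_1-r))$.
\item For a mixed space $H(a,b)=H^{a,p}_\perp(\mathbb{T};H^{b,p})$, the interpolation $(L^p,H(a,b))_{\theta,p}$ embeds into both $HB(\theta a,\theta b)$ and $BH(\theta a,\theta b)$. For $HB$, use $H(a,b)\hookrightarrow H(a,0)$ together with $H(a,b)\hookrightarrow H(0,b)$ and reiteration. For $BH$, argue symmetrically, using the retraction to $\mathbb{R}\times\mathbb{R}^n$ (Corollary \ref{cor: retraction_perp}) and the standard fact that $(L^p,H^{a}(H^{b}))_{\theta,p}\hookrightarrow B^{\theta a}(H^{\theta b})$ when $\theta b<b$ slack is available. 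To be safe, use instead a small shift to a point $(r,s)$ with strict inequality, and then a mixed-derivative embedding.
\end{itemize}
Finally, move down from $s=y(r_1-r)$ to a general $s$ via the convexity property. The main obstacle is the step from the interpolated intersection to the single mixed point $HB(s,r)\cap BH(s,r)$. I would handle it by the mixed-derivative estimate: compare multipliers $\frac{\langle\tau\rangle^{s}\langle\xi\rangle^{r}}{\langle\tau\rangle^{y r_1}+\langle\xi\rangle^{r_1}}$ dyadically in $\tau$ (Littlewood--Paley in time for $BH$) and in $\xi$ (for $HB$). These are bounded Marcinkiewicz multipliers with decay away from the edge, which allows summing the Besov pieces.

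\textbf{Part (ii).} $P$ commutes with the lifts. The operator $\op[P]$ is $\mathcal N$-elliptic for $o_y$ in both the $\mathcal{BH}$ and $\mathcal{HB}$ scales (Corollary \ref{cor: ellipticity_HB_BH}). Taking $\nu=\mu_{(s_2,0)}$ gives $\op[P]\in\call_{\mathrm{iso}}(BH^{o_y+\nu},BH(s_2,0))$, and taking $\nu=\mu_{(s_1,r_1)}$ gives $\op[P]\in\call_{\mathrm{iso}}(HB^{o_y+\nu},HB(s_1,r_1))$. By Proposition \ref{prop: properties_Newton_scales}, $BH^{o_y+\mu_{(s_2,0)}}=BH(s_2+y,0)\cap BH(s_2,1)$ and $HB^{o_y+\mu_{(s_1,r_1)}}=HB(s_1+y,r_1)\cap HB(s_1,r_1+1)$. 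The superfluous pieces $BH(s_2,1)$ and $HB(s_1+y,r_1)$ are absorbed by part (i), since both points lie in the polygon generated by $(r_1+1,s_1)$ and $(0,s_2+y)$, which has the same slope $y$. The inverse is $\op[1/P]$, which acts on both scales simultaneously, so the intersections match. For $p=2$ the $\mathcal{BH}$ and $\mathcal{HB}$ scales coincide, and Theorem \ref{thm: whole space} only needs $\mathcal N$-ellipticity.
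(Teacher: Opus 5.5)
Your reduction to $s_1=0$ via the lift $\op[w_{(s_1,0)}]$ matches the paper, and so does your part (ii). There you identify $BH(s_2+y,0)\cap BH(s_2,1)$ and $HB(s_1+y,r_1)\cap HB(s_1,r_1+1)$, and absorb the extra pieces through part (i), using that $(1,s_2)$ and $(r_1,s_1+y)$ lie on the edge from $(0,s_2+y)$ to $(r_1+1,s_1)$.

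The gap is in part (i). You interpolate $H(0,0)=L^p$ against a single point $H(a,b)$ of a larger triangle, i.e.\ along a ray through the origin. You then need $(L^p,H(a,b))_{\theta,p}\hookrightarrow HB(\theta a,\theta b)\cap BH(\theta a,\theta b)$, and this is false for $p>2$.

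To see this, fix $k_0\in\frac{2\pi}{T}\mathbb{Z}\setminus\{0\}$. The map $g\mapsto e^{\ic k_0 t}g$ is bounded from $L^p(\R^{n-1})$ to $L^p_\perp(G^{n-1})$ and from $H^{b,p}(\R^{n-1})$ to $H(a,b)$. Hence it is bounded from $B^{\theta b}_{pp}(\R^{n-1})$ into $(L^p,H(a,b))_{\theta,p}$. On the other hand, $e^{\ic k_0 t}g\in BH(\theta a,\theta b)$ forces $g\in H^{\theta b,p}(\R^{n-1})$. Since $B^{\theta b}_{pp}$ is not contained in $H^{\theta b,p}$ for $p>2$, the $BH$ embedding fails. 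Testing with $f(t)g(x)$ for a fixed Schwartz $g$ defeats the $HB$ embedding in the same way, with $B^{\theta a}_{pp}(\mathbb{T})$ not contained in $H^{\theta a,p}(\mathbb{T})$.

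Your justification does not deliver the embedding either. From $H(a,b)\hookrightarrow H(a,0)\cap H(0,b)$, interpolation only gives $BH(\theta a,0)\cap HB(0,\theta b)$. That is a triangle which does not contain the corner $(\theta b,\theta a)$.

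The ``small shift'' is not available. The points you need, in particular the two points used in part (ii), lie on the edge, where there is no slack.

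The dyadic-multiplier remark is only a sketch. The naive bound $2^{ks}\langle\xi\rangle^{r}\lesssim 2^{kyr_1}+\langle\xi\rangle^{r_1}$ on temporal dyadic blocks $\Delta^\tau_k$ leads to $\sum_k\|\Delta^\tau_k u\|^p_{L^p(H^{r_1})}$. That is a $B^0_{pp}(\mathbb{T};H^{r_1})$-type quantity which $HB(0,r_1)$ does not control.

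The missing idea is to interpolate between two spaces whose relevant points differ in only one coordinate. Then Lemma \ref{lemma: interpolation_same_time_param} returns exactly $BH(s,r)$ or $HB(s,r)$.

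The paper first uses Lemma \ref{lemma: complex_interpolation_Newton_polygons}: the shrunken triangle $H(s_2-\varepsilon y,0)\cap H(0,r_1-\varepsilon)$ is $\theta_0$-intermediate between $H(0,0)$ and the enlarged triangle $H(s_2+\varepsilon y,0)\cap H(0,r_1+\varepsilon)$. Reiteration then gives
\[
BH(s_2,0)\cap HB(0,r_1)=\bigl(H(s_2-\varepsilon y,0)\cap H(0,r_1-\varepsilon),\,H(s_2+\varepsilon y,0)\cap H(0,r_1+\varepsilon)\bigr)_{\frac12,p}.
\]
Now take an edge point $(r,s)$ and small $\varepsilon$.
\begin{itemize}
\item The two endpoints embed into $H(s-\varepsilon y,r)$ and $H(s+\varepsilon y,r)$, which share the spatial index. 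Their $(\cdot,\cdot)_{\frac12,p}$-interpolant is $BH(s,r)$.
\item The two endpoints also embed into $H(s,r-\varepsilon)$ and $H(s,r+\varepsilon)$, which share the temporal index. Their interpolant is $HB(s,r)$.
\end{itemize}
Points below the edge then follow by monotonicity in $s$.
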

\begin{proof}
    \begin{enumerate}
        \item Since $s'>s$ implies $H(s', r)\hookrightarrow H(s, r)$, one can assume without loss of generality that the point $(s, r)$ lies on the edge of the Newton polygon, that is $s= s_1+(r_1-r)y$ with $y:=\frac{s_2-s_1}{r_1}$. Consider first the case $s_1=0$ (the triangular case).  As in Example  \ref{example : trace_triangular_case}, we can write the space of interest as an interpolation space. More precisely, for fixed $\varepsilon\in (0, 1)$, it holds
        \begin{equation*}
            BH(s_2, 0)\cap HB(0, r_1) =(H(0,0), H(s_2+\varepsilon y , 0)\cap H(0, r_1+\varepsilon))_{\theta, p}
        \end{equation*}
        with $\theta:=\frac{r_1}{r_1+\varepsilon}$.
        Now we claim that it also holds 
        \begin{equation}\label{eqn: claim_real_interpolation_triangle}
             BH(s_2, 0)\cap HB(0, r_1) =(H(s_2-\varepsilon y,0) \cap H(0, r_1-\varepsilon), H(s_2+\varepsilon y , 0)\cap H(0, r_1+\varepsilon))_{\frac12, p}.
        \end{equation}
        Indeed, Lemma  \ref{lemma: complex_interpolation_Newton_polygons} gives
        \begin{equation}\label{eqn: intersection_complex_interpolation}
            H(s_2-\varepsilon y,0) \cap H(0, r_1-\varepsilon)= [ H(0,0), H(s_2+\varepsilon y, 0)\cap H(0, r_1+\varepsilon)]_{\theta_0}
        \end{equation}
        for $\theta_0:=\frac{r_1-\varepsilon}{r_1+\varepsilon}$. In other words, the left hand side of (\ref{eqn: intersection_complex_interpolation}) is a  $\theta_0$-intermediate space between $H(0,0)$ and $H(s_2+\varepsilon  y , 0)\cap H(0, r_1+\varepsilon)$ and reiteration gives the claim (\ref{eqn: claim_real_interpolation_triangle}).
        Now on one hand $H(s_2-\varepsilon y,0) \cap H(0, r_1-\varepsilon) \hookrightarrow H(s-\varepsilon y, r)$ and on the other hand $H(s_2+\varepsilon y, 0)\cap H(0, r_1+\varepsilon) \hookrightarrow H(s+\varepsilon y, r)$.
        Summarizing, we have 
        \begin{align*}
            BH(s_2, 0)\cap HB(0, r_1) &=(H(s_2-\varepsilon y,0) \cap H(0, r_1-\varepsilon), H(s_2+\varepsilon y , 0)\cap H(0, r_1+\varepsilon))_{\frac{1}{2}, p}\\
            &\hookrightarrow (H(s-\varepsilon y,r), H(s+\varepsilon y,r))_{\frac{1}{2}, p}
            =BH(s,r). 
        \end{align*}
        Similarly, the embeddings $H(s_2-\varepsilon y,0) \cap H(0, r_1-\varepsilon) \hookrightarrow H(s, r-\varepsilon)$ and $H(s_2+\varepsilon y,0) \cap H(0, r_1+\varepsilon) \hookrightarrow H(s, r+\varepsilon)$ imply 
        \begin{equation*}
             BH(s_2, 0)\cap HB(0, r_1) \hookrightarrow HB(s, r). 
        \end{equation*}
        For the case $s_1>0$ we have by the lifting property that
        \begin{align*}
            \op[w_{(s_1,0)}]&\in \mathcal{L}_{\mathrm{iso}}( HB(s_1,r_1), HB(0,r_1)) \cap \mathcal{L}_{\mathrm{iso}}(BH(s_2, 0), BH(s_2-s_1, 0))\\
            & \qquad\cap\call_\mathrm{iso}(HB(s,r),HB(s-s_1,r))\cap \call_\mathrm{iso}(BH(s,r),BH(s-s_1,r)),
        \end{align*}
        and the result follows from the previous case.
        \item 
        By \ref{lemma: geometric_embeddingsi}, we have 
        \begin{equation*}
            BH(s_2+y, 0) \cap HB(s_1, r_1+1) \hookrightarrow BH(s_2+y, 0)\cap BH(s_2, 1)
        \end{equation*}
        since the point $(1, s_2)$ belongs to the edge joining $(0, s_2+y)$ and $( r_1+1, s_1)$, see Figure \ref{figure003}. By Corollary \ref{cor: ellipticity_HB_BH}, it holds $\op[P]\in \mathcal{L}( BH(s_2+y, 0)\cap BH(s_2, 1), BH(s_2, 0))$. Similarly,  one also has 
        \begin{equation*}
            BH(s_2+y, 0) \cap HB(s_1, r_1+1) \hookrightarrow HB(s_1+y, r_1) \cap HB(s_1, r_1+1)
        \end{equation*}
        and Corollary \ref{cor: ellipticity_HB_BH} gives $\op[P]\in \mathcal{L}(HB(s_1+y, r_1) \cap HB(s_1, r_1+1) , HB(s_1, r_1))$.
        This shows $\op[P] \in  \mathcal{L}( BH(s_2+y, 0) \cap HB(s_1, r_1+1), BH(s_2, 0) \cap HB(s_1, r_1))$.
        The boundedness of the inverse follows in a similar way. 
    \end{enumerate}
\end{proof}
\begin{figure}[ht]
\centering

\begin{subfigure}[t]{0.48\textwidth}
\centering
\begin{tikzpicture}
  \draw[->] (-0.5,0) -- (4,0) node[below] {$\vert \xi\vert$};
  \draw[->] (0,-0.5) -- (0,3.5) node[left] {$ \tau $};

  \filldraw[draw=Sepia,fill=Sepia!30,thick] (0,0) -- (3,0) -- (3,1) -- (0,2.5) -- cycle;

    \draw[dashed, color=Sepia!80] (3,0) -- (0, 1.5);
  \fill[Sepia] (0,0) circle (1.5pt);

  \fill[Sepia] (3,0) circle (1.5pt);

  \fill[Sepia] (3,1) circle (1.5pt);
  \node[ right, font=\small] at (3,1) {$HB(s_1, r_1)$};

  \fill[Sepia] (0,2.5) circle (1.5pt);
  \node[ left, font=\small] at (0,2.5) {$BH(s_2, 0)$};

    \fill[color=Sepia!80] (0,1.5) circle (1.5pt);
    \node[ left, font=\small, color=Sepia!80] at (0, 1.5) {$BH(s_2-s_1, 0)$};
  \fill[Sepia] (1.5,1.75) circle (2pt);
  \node[above right, font= \small, xshift=-5pt] at (1.5,1.75) {$HB(s,r)\cap BH(s,r)$};
\end{tikzpicture}
\end{subfigure}
\hfill
\begin{subfigure}[t]{0.48\textwidth}
\centering
\begin{tikzpicture}
  \draw[->] (-0.5,0) -- (5,0) node[below] {$\vert \xi\vert$};
  \draw[->] (0,-0.5) -- (0,4) node[left] {$\tau$};

  \filldraw[draw=Sepia!80,fill=Sepia!20,thick, dotted]
    (0,0)--(4,0) -- (4,1) -- (0,3) -- cycle;

  \filldraw[draw=Sepia,fill=Sepia!30, thick]
  (0,0)--(3,0)--(3, 1)-- (0,2.5)--cycle;

  \fill[Sepia!40] (0,3) -- (0,2.5) -- (1,2.5) -- cycle;
  \fill[Sepia!40] (3,1.5) -- (3,1) -- (4,1) -- cycle;


  \fill[color= Sepia!80] (4,0) circle (1.5pt); 

  \fill[color=Sepia!80] (4, 1) circle (1.5pt);
  \node[above right, font=\small] at (4,1) {$HB(s_1,r_1+1)$};

  \fill[color=Sepia!80] (0, 3) circle (1.5pt);
  \node[above right, font=\small] at (0, 3) {$BH(s_2+y, 0)$};

  \fill[color=Sepia!80] (1, 2.5) circle (1.5pt);
  \node[right, font=\small, xshift=2pt, yshift=2pt] at (1, 2.5) {$BH(s_2, 1)$};

  \fill[color=Sepia!80] (3, 1.5) circle (1.5pt);
  \node[above right, font=\small] at (3, 1.5) {$HB(s_1+y, r_1)$};
  \fill[Sepia] (0,0) circle (1.5pt);
  
  \fill[Sepia] (3,1) circle (1.5pt);
  \node[ left, font=\small, xshift=-4pt, yshift=-3pt] at (3, 1) {$HB(s_1, r_1)$};

  \fill[Sepia] (0,2.5) circle (1.5pt);
  \node[ below left, font=\small,  yshift=5pt] at (0, 2.5) {$BH(s_2, 0)$};

  \fill[Sepia] (3, 0) circle (1.5pt);
\end{tikzpicture}
\end{subfigure}
\caption{The situation of Lemma \ref{lemma: geometric_embeddings} \ref{lemma: geometric_embeddingsi} (left) and Lemma \ref{lemma: geometric_embeddings} \ref{lemma: geometric_embeddingsii} (right).}
\label{figure003}
\end{figure}
\begin{lemma}\label{lemma: decomposition_triangles}
    Let $\mathcal{N}$ be a regular Newton polygon with vertices $\mathcal{N}_V =\{(r_0, s_0),..., (r_{J+1}, s_{J+1})\}$.
    Assume that $r_j\in \N_0$ for all $j\in\set{0,\ldots,J+1}$.
    Then 
    \begin{equation*}
        BH(s_{J+1}, 0) \cap \bigcap_{j=1}^J HB(s_j, r_j) = \bigcap_{j=1}^J \left( BH(s_j+ y_J r_j, 0) \cap HB(s_j, r_j)\right)
    \end{equation*}
\end{lemma}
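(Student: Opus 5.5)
The identity should follow from a purely geometric observation combined with monotonicity of the $\mathcal{BH}$ scale in the time variable, with no interpolation argument needed. Regularity in space means $s_{J+1}\neq s_J$. Hence the last edge of $\mathcal N$ joins $(r_J,s_J)$ to $(0,s_{J+1})$ with $r_J>0$, and by \eqref{eqn: def_gamma_perp} its slope is $-y_J$, i.e.
\begin{align*}
s_{J+1}=s_J+y_Jr_J .
\end{align*}
So the term with $j=J$ on the right-hand side is exactly $BH(s_{J+1},0)\cap HB(s_J,r_J)$.

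The key geometric step is that the line through this last edge is a supporting line of $\mathcal N$. Concretely, $\langle (r,s),(y_J,1)\rangle\le s_{J+1}$ for all $(r,s)\in\mathcal N$, since $(0,s_{J+1})$ maximizes the linear functional $(r,s)\mapsto y_Jr+s$ over $\mathcal N$. I would verify this from convexity: as in the proof of Proposition \ref{prop: elementary_decomposition_Newton_polygon}, the slopes $y_j$ are nonincreasing in $j$, so $y_J\le y_j$ for all $j$. Summing the edge increments from $(r_j,s_j)$ to $(0,s_{J+1})$ then gives
\begin{align*}
s_{J+1}-s_j=\sum_{k=j}^J y_k(r_k-r_{k+1})\ge y_J r_j .
\end{align*}
Consequently $s_j+y_Jr_j\le s_{J+1}$ for every $j\in\{1,\ldots,J\}$, with equality for $j=J$.

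Next I would record the embedding $BH(\sigma,0)\hookrightarrow BH(\sigma',0)$ for $\sigma'\le\sigma$. Since $\mathcal{BH}$ is a Newton polygon scale (it is the real interpolation scale $\mathcal H^{(0)}_{\mathcal I}$), this follows from Proposition \ref{prop: properties_Newton_scales}\ref{prop: properties_Newton_scalesii} applied to the rectangular polygons $\mathcal N_{(\sigma',0)}\subseteq\mathcal N_{(\sigma,0)}$. Alternatively, it follows directly from the convexity property with the bounded multiplier $\langle\tau\rangle^{\sigma'-\sigma}$. The same scale argument, applied on $G^{n-1}$, is what the notation in this section refers to.

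With these two facts the two inclusions are immediate, and the norms are equivalent because both sides carry initial topologies.
\begin{enumerate}
\item ``$\subseteq$'': every factor $BH(s_j+y_Jr_j,0)$ on the right contains $BH(s_{J+1},0)$ continuously, and the $HB$ factors on both sides coincide.
\item ``$\supseteq$'': the right-hand side contains the factor $BH(s_J+y_Jr_j,0)$ with $j=J$, which equals $BH(s_{J+1},0)$, together with all the factors $HB(s_j,r_j)$.
\end{enumerate}
The only step needing any care is the supporting-line inequality $s_j+y_Jr_j\le s_{J+1}$, i.e.\ the monotonicity of the slopes $y_j$. I expect no genuine obstacle there; I would simply double-check the ordering convention against the Cahn--Hilliard--Gurtin example, where $y_1=\tfrac12\ge y_2=0$.
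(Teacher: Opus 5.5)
Your proof is correct and matches the paper's argument. The $j=J$ term on the right-hand side equals $BH(s_{J+1},0)\cap HB(s_J,r_J)$ because $s_J+y_Jr_J=s_{J+1}$, which gives one inclusion; the nonincreasing slopes give $s_j+y_Jr_j\le s_{J+1}$ and hence $BH(s_{J+1},0)\hookrightarrow BH(s_j+y_Jr_j,0)$, which gives the other, and you additionally spell out the telescoping-sum check of that inequality and the time-monotonicity of the $\mathcal{BH}$ scale via Proposition \ref{prop: properties_Newton_scales}, both of which the paper leaves implicit.
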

\begin{proof}
The embedding ``$\hookleftarrow$'' is trivial since in $j=J$ we have $s_J+y_J r_J= s_{J+1}$. For the embedding ``$\hookrightarrow$'', remark that since the slopes of $\mathcal{N}$ are increasing with $J$, $s_j+ y_J r_j \leq s_{J+1}$  and so $BH(s_{J+1}, 0)\hookrightarrow BH(s_j+y_J r_j , 0)$ for all $j\in\{1,\ldots, J\}$.
\end{proof}
\begin{figure}[h]
    \centering
    \begin{tikzpicture}
  \draw[->] (-0.5,0) -- (7,0) node[below] {$\vert \xi\vert$};
  \draw[->] (0,-0.5) -- (0,3) node[left] {$\tau$};

  \filldraw[draw=Sepia,fill=Sepia!20, thick]
  (0,0)--(6,0)--(5,1)--(3,2) --(0,2.5) --cycle;

  \fill[Sepia!30] (5,1) -- (0, 1.833) -- (0,0) -- (5,0) -- cycle;
  \draw[Sepia, thick]
  (0,0)--(6,0)--(5,1)--(3,2) --(0,2.5) --cycle;
  \draw[Sepia!80, dashed] (5, 0)--(5,1) -- (0,1.833);

    \fill[Sepia] (0,0) circle (1.5pt);
  
    \fill[Sepia] (6,0) circle (1.5pt);

    \fill[Sepia] (5,1) circle (1.5pt);
    \node[right, font=\small] at (5,1) {$HB(s_j,r_j)$};
    
    \fill[Sepia] (3,2) circle (1.5pt);
    
    \fill[Sepia] (0,2.5) circle (1.5pt);


  \fill[Sepia] (0, 1.833) circle (1.5pt);
  \node[left, font=\small] at (0,1.833) {$BH(s_j+y_J r_j, 0)$};
  \end{tikzpicture}
    \caption{The situation of Lemma \ref{lemma: decomposition_triangles}.}
\end{figure}
\begin{proof}[Proof of Theorem \ref{thm: trace_space}]
        Let us comment on the structure of the proof: In Step 1, we show the boundedness of the trace operator, while in Step 2 we construct a bounded right-inverse.
        Finally, in Step 3, we establish $\ker\Tr^{(r_1)}=r^+\dot{H}^{\mu,p}_\perp(G_+)$.
        Steps 1 and 2 are divided into two substeps: We treat the case $i=0$ in the first substep and we lift the argument from $i=0$ to general $i\in\{0,\ldots,r_1-1\}$ in the second substep.
        
        \medskip
        
        \textsc{Step} 1.
        We want to show that $\Tr^{(r_1)}$ extends to a well-defined and bounded operator $\overline{H}(\mu)\to T(\mu)$.
        Obviously, it is sufficient to prove that $\Tr_i$ extends to a bounded operator $\overline{H}(\mu)\to T_i(\mu)$ for all $i\in\{0,..., r_1-1\}$.

         \medskip
        
        \textsc{Step} 1.1.
        We start by showing that $\Tr_0$ extends to a well-defined and bounded operator $\overline{H}(\mu)\to T_0(\mu)$. Here we have two cases to distinguish depending on the regularity $r_{reg}$ of $\mathcal{N}(\mu)$. Remark that we always have $\iota(0)= J$ with $\iota$ defined as in (\ref{eqn: def_iota}).
        \medskip
        Consider first the non-regular case $r_{reg}>0$.
        Since an equivalent norm on $\overline{H}(s_j, r_j)$ is given by 
        \begin{equation*}
            \| f\|_{\overline{H}(s_j, r_j)} \simeq \| \partial_n^{r_j}f\|_{L^p(\mathbb{R}_+; H(s_j, 0))} + \| f\|_{L^p(\mathbb{R}_+;H(s_j, r_j))}, 
        \end{equation*}
        the trace method (see e.g. Section 1.8 of \cite{triebel1995interpolation}) gives the boundedness of 
        \begin{equation*}
            \Tr_0 : \overline{H}(s_j, r_j) \to (H(s_j, r_j), H(s_j, 0))_{\frac{1}{pr_j}, p}= HB(s_j, r_j-\frac{1}{p}),
        \end{equation*}
        where we used Lemma \ref{lemma: interpolation_same_time_param}.
       Since in the non-regular case $T_0(\mu)= \bigcap_{j=1}^J HB (s_j, r_j-\frac{1}{p})$, and since $\overline{H}(\mu)=\bigcap_{j=1}^{J+1} \overline{H}(s_j,r_j)=\bigcap_{j=1}^J \overline{H}(s_j,r_j)$ by Corollary \ref{cor: equiv_norm_half_space} and $\overline{H}(s_J,r_J)\subseteq \overline{H}(s_{J+1},r_{J+1})$, we conclude $\Tr_0 \in \mathcal{L}(\overline{H}(\mu), T_0(\mu))$.
       
       \medskip 
       
       Consider now the regular case $r_{reg}=0$. For all $j\in\set{1,..., J}$, the same argument as above yields $\Tr_0 \in \mathcal{L}(\overline{H}(\mu),HB(s_j,r_j-\frac1p)$.
       An equivalent norm on $\overline{H}(s_{J+1}, 0) \cap \overline{H}(s_J, r_J)$ is
       \begin{equation*}
           \| f\|_{\overline{H}(s_{J+1}, 0) \cap \overline{H}(s_J, r_J)}\simeq \| \partial_n^{r_J}f\|_{L^p(\mathbb{R}_+; H(s_J, 0))}+ \| f\|_{L^p(\mathbb{R}_+; H(s_{J+1}, 0)\cap H(s_J, r_J))}. 
       \end{equation*}
       Thus, the trace method gives the boundedness of 
       \begin{equation*}
           \Tr_0: \overline{H}(s_{J+1}, 0) \cap \overline{H}(s_J, r_J) \to (H(s_{J+1}, 0)\cap H(s_J, r_J), H(s_J, 0))_{\frac{1}{pr_J}, p}. 
       \end{equation*}
       Now using Lemma \ref{lemma: interpolation_intersection} and \ref{lemma: interpolation_same_time_param}, we get 
       \begin{align*}
            (H(s_{J+1}, 0)\cap H(s_J, r_J), H(s_J, 0))_{\frac{1}{pr_J}, p}&=  (H(s_{J+1}, 0), H(s_J, 0))_{\frac{1}{pr_J}, p}\cap  (H(s_J, r_J), H(s_J, 0))_{\frac{1}{pr_J}, p}\\
            &= BH((1-\frac{1}{pr_J})s_{J+1}+\frac{1}{pr_J} s_j, 0)\cap HB(s_J, r_J-\frac{1}{p})\\
            &= BH (s_{J+1} -y_J\frac{1}{p}, 0) \cap HB(s_J, r_J-\frac{1}{p})\\
            &=BH (s_{J+1} -y_{J}\frac{1}{p}, 0) \cap HB(s_J, r_J-\frac{1}{p})
       \end{align*}
       and in particular $\Tr_0\in \mathcal{L}(\overline{H}(\mu), BH (s_{J+1} -y_J\frac{1}{p}, 0))$.
       Since $J=\iota(0)$, this concludes \textsc{Step} 1.1. 
       
       \medskip
       
       \textsc{Step} 1.2.
         For all $i\in\{0,\ldots, r_1-1\}$, we observe that $\Tr_i =\Tr_0 \circ \partial_n^i$.
         This motivates to consider 
         \begin{equation*}
             \overline{H}(\mu^{(i)})=\left\{\begin{array}{ll}
             \bigcap_{j=1}^J \overline{H}(s_j, r_j-i)    & \text{if }i< r_{reg}  \\
             \overline{H}(s_{\iota(i)}+y_{\iota(i)}(r_{\iota(i)}-i), 0)\cap \bigcap_{j=1}^{\iota(i)} \overline{H}(s_j, r_j-i)    & \text{if } i\geq r_{reg}
             \end{array}\right.
         \end{equation*}
         where $\mu^{(i)}$ is the shifted order function defined in (\ref{eqn: def_shifted_polygon}).
         The Newton polygon associated with $\overline{H}(\mu^{(i)})$ is the Newton polygon of the space $\overline{H}(\mu)$ translated to the left by $i$ (see also Remark \ref{remark: translation_traces}). 
         Therefore, $\partial_n^i:\overline{H}(\mu)\to \overline{H}(\mu^{(i)})$ is bounded.
         Furthermore, replacing $\overline{H}(\mu), T_0(\mu)$ by $\overline{H}(\mu^{(i)}), T_i(\mu)$ in \textsc{Step} 1.1, we see that $\Tr_0: \overline{H}(\mu^{(i)})\to T_i(\mu)$ is bounded, and the conclusion follows.

          \medskip
          
          \textsc{Step} 2.
          We now construct a bounded right-inverse. This part of the proof is adapted from \cite{denk2008inhomogeneous}. Pick $\sigma=(\sigma_0, ..., \sigma_{r_1-1}) \in T(\mu)$. For $j\in\set{1,..., J}$, we define the operator 
          \begin{equation*}
              A_j :=
              (1-\Delta')^{\frac{1}{2}}+(1+\partial_t)^{y_j}     
          \end{equation*}
          where the Laplace operator $\Delta'$ acts in the variables $x'\in \R^{n-1}$.
          These operators generate analytic semigroups in all considered Newton polygon spaces.
          In fact, they admit a bounded $H^\infty$-calculus by a Fourier multiplier argument as in \cite[Example 10.2]{KuW04}, and in particular they admit maximal $L^p$ regularity.
          Define
        \begin{equation*}
            \eta_i(x_n):=\sum_{k=1}^{r_1} c_{ki}\mathrm{e}^{-kx_nA_{\iota(i)}}(A^{-i}_{\iota(i)}\sigma_i),\qquad i\in\{0,..., r_1-1\},
        \end{equation*}
        where we can choose the coefficients $c_{ki}$ in such a way that for $i,m\in\{0,...,r_1-1\}$ it holds
        \begin{equation}\label{eqn: choice_coeff}
            \partial_{n}^{m}\eta_i(0)=\delta_{m, i}\sigma_i.
        \end{equation}
        Indeed, we have 
        \begin{align*}
             \partial_{n}^{m}\eta_i(0)
             &=\sum_{k=1}^{r_1}(-k)^{m}c_{ki}A^{m-i}_{\iota(i)}\sigma_i.
        \end{align*}
        Therefore, (\ref{eqn: choice_coeff}) is verified if $VC_i=e_{i+1}$ for $i\in\{0,...,r_1-1\}$, where $C_i=(c_{1i},...,c_{r_1i})$ and $V$ is the Vandermonde matrix. Since $V$ is invertible, we can set $C_i=V^{-1}e_{i+1}$ to achieve this.
        We now set 
        \begin{equation*}
            \eta=\sum_{j=0}^{r_1-1}\eta_j.
        \end{equation*}
        By (\ref{eqn: choice_coeff}), we have $\partial_{n}^{m}\eta=\sigma_m$ for $m\in\{0,...,r_1-1\}$ and thus $\Tr^{(r_1)}\eta=\sigma$.
        It is left to show that $\eta_i\in \overline{H}(\mu)$ for all $i\in \{0,...,r_1-1\}$.
        Once again, we will proceed in two substeps.
        
        \medskip
        
        \textsc{Step} 2.1.
          We show that $\eta_0 \in \overline{H}(\mu)$  and  $\|\eta_0\|_{\overline{H}(\mu)}\lesssim \|\sigma_0\|_{T_0(\mu)}$. If $r_{reg}>0$, we have
          \begin{equation*}
              \| \eta_0\| _{\overline{H}(\mu)} \simeq \sum_{j=1}^J(\| \eta_0\|_{L^p(\mathbb{R}_+; H(s_j, r_j))}+\| \partial_n \eta_0\|_{L^p(\mathbb{R}_+; H(s_j, 0))})
          \end{equation*}
          and so this amounts to show separately 
          \begin{equation}\label{eqn: contributions_non_reg_case}
              \| \eta_0\|_{L^p(\mathbb{R}_+; H(s_j, r_j))} \lesssim  \|\sigma_0\|_{T_0(\mu)}, \qquad \| \partial_n \eta_0\|_{L^p(\mathbb{R}_+; H(s_j, 0))}\lesssim  \|\sigma_0\|_{T_0(\mu)}
          \end{equation}
        for all $j=1,..., J$. In the present case we have $A_0=(1-\Delta')^{\frac{1}{2}}$. For $j=1,..., J$, consider the operator $A_0$ acting in $X:= H(s_j, r_j-1) $ with domain $\mathcal{D}(A_0)= H(s_j, r_j)$. By Lemma \ref{lemma: interpolation_same_time_param}, it holds 
        \begin{equation*}
            (X, \mathcal{D}(A_0))_{1-\frac{1}{p}, p}= HB(s_j, r_j-\frac{1}{p})
        \end{equation*}
        and so
        \begin{equation*}
            T_0(\mu) = \bigcap_{j=1}^J HB(s_j, r_j-\frac{1}{p}) \hookrightarrow (X, \mathcal{D}(A_0))_{1-\frac{1}{p}, p}. 
        \end{equation*}
        Now the maximal regularity of $A_0$ yields 
        \begin{equation*}
            \eta_0 \in \overline{H}^{1, p}(\mathbb{R}_+ ; H(s_j, r_j-1)) \cap L^p(\mathbb{R}_+; H(s_j, r_j))
        \end{equation*}
        with in particular $\| \eta_0\|_{L^p(\mathbb{R}_+; H(s_j, r_j))}\lesssim \| \sigma_0\|_{T_0(\mu)}$, which shows the first estimate of (\ref{eqn: contributions_non_reg_case}). 
        \medskip 

        Now for the second estimate, we write 
        \begin{equation}\label{eqn: derivatives_semigroup}
            \partial_n^{r_j-1} \eta_0=\sum_{k=1}^{r_1} (-k)^{r_j-1}c_{k0}\mathrm{e}^{-kx_nA_0}(A_0^{r_j-1}\sigma_0), \qquad j\in\{1,..., J\}.
        \end{equation}
        Set $X:=H(s_j,0)$ and $\mathcal{D}(A_0):=H(s_j,1)$. Here one has 
        \begin{equation*}
            (X, \mathcal{D}(A_0))_{1-\frac{1}{p}, p}= HB(s_j, 1-\frac{1}{p}). 
        \end{equation*}
        Since $A_0$ is $\mathcal{N}$-elliptic with order function $\mu(A_0)= o_0=1$, Corollary \ref{cor: ellipticity_HB_BH} shows 
        \begin{equation*}
            A_0^{r_j-1}\in \mathcal{L}_{\mathrm{iso}}( HB(s_j, r_j-\frac{1}{p}), HB(s_j, 1-\frac{1}{p})). 
        \end{equation*}
        Since  $T_0(\mu) \hookrightarrow HB(s_j, r_j-\frac{1}{p})$ by definition one concludes $A_0^{r_j-1}\sigma_0\in (X, \mathcal{D}(A_0))_{1-\frac{1}{p}, p}$. Maximal regularity of $A_0$ thus gives 
        \begin{equation*}
            \partial_n^{r_j-1}\eta_0 \in \overline{H}^{1, p}(\mathbb{R}_+ ; H(s_j,0)) \cap L^p(\mathbb{R}_+; H(s_j, 1))
        \end{equation*}
        and in particular $ \| \partial_n^{r_j}\eta_0\| _{L^p(\mathbb{R}_+, H(s_j, 0))} \lesssim \| \sigma_0\|_{T_0(\mu)} $. 
        This completes \textsc{Step} 2.1 in the non-regular case $r_{reg}>0$. 
        \medskip 

        Assume now $r_{reg}=0$. In that case, it holds
          \begin{equation*}
              \| \eta_0\| _{\overline{H}(\mu)} \simeq \| \eta_0\|_{L^p(\mathbb{R}_+; H(s_{J+1}-y_J \frac{1}{p}, 0))}+\sum_{j=1}^J(\| \eta_0\|_{L^p(\mathbb{R}_+; H(s_j, r_j))}+\| \partial_n \eta_0\|_{L^p(\mathbb{R}_+; H(s_j, 0))})
          \end{equation*}
          and so this amounts to showing separately 
          \begin{equation}\label{eqn: contributions_reg_case}
               \begin{aligned}
              &\| \eta_0\|_{L^p(\mathbb{R}_+; H(s_{J+1}-y_J \frac{1}{p}, 0))}\lesssim  \|\sigma_0\|_{T_0(\mu)},\\ &\| \eta_0\|_{L^p(\mathbb{R}_+; H(s_j, r_j))} \lesssim  \|\sigma_0\|_{T_0(\mu)},\\
              &\| \partial_n \eta_0\|_{L^p(\mathbb{R}_+; H(s_j, 0))}\lesssim  \|\sigma_0\|_{T_0(\mu)}
          \end{aligned}
        \end{equation}
        for all $j\in\set{1,\ldots, J}$.
        Since $r_{reg}=0$, we have $A_0=(1-\Delta')^{\frac{1}{2}}+(1+\partial_t)^{y_J}$.
        Set $X= H(s_{J+1}-y_J, 0)$ and $\mathcal{D}(A_0)=H(s_{J+1}, 0)\cap H(s_{J+1}-y_J, 1)$.
        Here, Lemma \ref{lemma: interpolation_intersection} and \ref{lemma: interpolation_same_time_param} give 
        \begin{equation*}
            (X, \mathcal{D}(A_0))_{1-\frac{1}{p}, p} = BH(s_{J+1}-\frac{y_J}{p}, 0)\cap HB(s_{J+1} -y_J, 1). 
        \end{equation*}
        As above, we claim $T_0(\mu) \hookrightarrow (X, \mathcal{D}(A_0))_{1-\frac{1}{p}, p}$. Indeed, the embedding $T_0(\mu)\hookrightarrow BH(s_{J+1}-y_J\frac{1}{p}, 0)$ is immediate by definition of $T_0(\mu)$, while the embedding $T_0(\mu)\hookrightarrow BH(s_{J+1}-y_J\frac{1}{p}, 0)\cap HB(s_J, r_J-\frac{1}{p})\hookrightarrow HB(s_{J+1} -y_J, 1)$ follows from the first point of Lemma \ref{lemma: geometric_embeddings} . Thus the maximal regularity of $A_0$ gives 
        \begin{equation*}
            \eta_0\in \overline{H}^{1, p}(\mathbb{R}_+; H(s_{J+1}-y_J, 0)) \cap L^p(\mathbb{R}_+;H(s_{J+1}, 0)\cap H(s_{J+1}-y_J, 1))
        \end{equation*}
        and in particular $\| \eta_0\|_{L^p(\mathbb{R}_+; H(s_{J+1}-y_J \frac{1}{p}, 0))}\lesssim \| \sigma_0\|_{T_0(\mu)}$, which is the first estimate of (\ref{eqn: contributions_reg_case}).
        
        \medskip 
        
        For the second contribution of (\ref{eqn: contributions_reg_case}), we fix $j\in \{1,..., J\}$ and set $X:= H(s_j, r_j-1), \mathcal{D}(A_0):= H(s_j, r_j)\cap H(s_j+y_J, r_j-1)$. Then 
        \begin{equation*}
            (X, \mathcal{D}(A_0))_{1-\frac{1}{p}, p}= BH(s_j+(1-\frac{1}{p})y_J, r_j-1) \cap HB(s_j, r_j-\frac{1}{p}). 
        \end{equation*}
        Once again, $T_0(\mu)\hookrightarrow HB(s_j, r_j-\frac{1}{p})$ holds by definition of $\mathbb{G}_0$, while the embeddings 
        \begin{align*}
            T_0(\mu)&\hookrightarrow BH(s_j-(r_j+\frac{1}{p})y_J, 0)\cap HB (s_j, r_j-\frac{1}{p})\\
            &\hookrightarrow BH(s_j-(1+\frac{1}{p})y_J, r_j-1)
        \end{align*} follow from Lemma \ref{lemma: decomposition_triangles} then Lemma \ref{lemma: geometric_embeddings}, where we use the fact that the point $(r_j-1, s_j+(1-\frac{1}{p})y_J)$ belongs to the edge of the Newton polygon $\mathcal{N}'=\mathcal{N}(\{(r_j-\frac{1}{p}, s_j), (0,s_j+(r_j-\frac{1}{p})y_J\})$.
        
        Maximal regularity implies that 
        \begin{equation*}
            \eta_0\in \overline{H}^{1, p}(\mathbb{R}_+; H(s_j, r_j-1)) \cap L^p(\mathbb{R}_+;H(s_{J+1}, 0)\cap H(s_j, r_j)\cap H(s_j+y_J, r_j-1))
        \end{equation*}
        and so $\| \eta_0\|_{L^p(\mathbb{R}_+; H(s_j, r_j))} \lesssim  \|\sigma_0\|_{T_0(\mu)}$. 
        
        \medskip 

        For the third contribution, we recall (\ref{eqn: derivatives_semigroup}) from the non-regular case and run a similar argument with $X:= H(s_j, 0)$ and $\mathcal{D}(A_0)=H(s_j, 1)\cap H(s_j+y_j, 0)$, i.e.
        \begin{equation*}
            (X, \mathcal{D}(A_0))_{1-\frac{1}{p}, p}= BH(s_j+y_J \frac{1}{p}, 0) \cap HB(s_j, 1-\frac{1}{p}). 
        \end{equation*}
        Consider the space 
        \begin{equation*}
            \mathbb{H}_j:= BH(s_j+y_J(r_j-1+\frac{1}{p}), 0) \cap HB(s_j, r_j-\frac{1}{p}). 
        \end{equation*}
       By Lemma \ref{lemma: decomposition_triangles},  $\mathbb{G}_0 \hookrightarrow \mathbb{H}_j$. 

        Furthermore, since $A_0$ is $\mathcal{N}$-elliptic with order function $o_{y_J}$, the second point of Lemma \ref{lemma: geometric_embeddings}   shows that $A_0^{r_j-1} \in \mathcal{L}_{\mathrm{iso}}(\mathbb{H}_j, (X, \mathcal{D}(A_0))_{1-\frac{1}{p}, p})$.
        Together with the maximal regularity of $A_0$, this leads to
        \begin{equation*}
            \partial_n^{r_j-1}\eta_0 \in \overline{H}^{1, p}(\mathbb{R}_+; H(s_j, 0)) \cap L^p(\mathbb{R}_+; H(s_j, 1)\cap H(s_j+y_j, 0))
        \end{equation*}
        and the last estimate $\| \partial_n \eta_0\|_{L^p(\mathbb{R}_+; H(s_j, 0))}\lesssim  \|\sigma_0\|_{T_0(\mu)}$.
        
        \medskip

        \textsc{Step} 2.2. In this step, we show that for all $i\in\{0,\ldots, r_1-1\}$, it holds $\eta_i\in \overline{H}(\mu)$ together with the estimate $\| \eta_i\|_{\overline{H}(\mu)} \leq  \| \sigma_i\|_{T_i(\mu)}$. We set 
        \begin{equation*}
            \mathbb{X}_i :=\left\{ \begin{array}{ll}
            \bigcap _{j=1}^J HB(s_j, r_{j}-\frac{1}{p})     & \text{ if } i<r_{reg} \\
            BH(s_{\iota(i)}+y_{\iota(i)} (r_{\iota(i) }-\frac{1}{p}), 0) \cap \bigcap _{j=1}^{\iota(i)}HB(s_j, r_j-\frac{1}{p})   &  \text{ if } i\geq r_{reg}.
            \end{array}\right.
        \end{equation*}
        We claim that $A_{\iota(i)}\in \mathcal{L}_{\mathrm{iso}} ( \mathbb{X}_i, T_i(\mu))$.
        In the non-regular case $i<r_{reg}$ this follows easily from Corollary \ref{cor: ellipticity_HB_BH} since all the spaces considered are in the $\mathcal{HB}$ scale and $A_{\iota(i)}=A_0$ is strongly $\mathcal{N}$-elliptic with order function $o_{0}$.
        If $i\geq r_{reg}$ we have to make use of Lemma \ref{lemma: decomposition_triangles} and write 
        \begin{align*}
            &T_i(\mu) = \bigcap_{j=1}^{\iota(i)} \left( BH(s_j+y_{\iota(i)}(r_j-i-\frac{1}{p}), 0) \cap HB(s_j, r_j-i-\frac{1}{p}) \right)\\
            &\mathbb{X}_i = \bigcap_{j=1}^{\iota(i)}\left( BH(s_j+y_{\iota(i)} (r_j+\frac{1}{p}), 0) \cap HB(s_j, r_j-\frac{1}{p})\right)
        \end{align*}
        Now since the operator $A_{\iota(i)}$ is $\mathcal{N}$-elliptic with order function $o_{y_{\iota(i)}}$, the second point of Lemma \ref{lemma: geometric_embeddings}  gives 
        \begin{equation*}
            A_{\iota(i)}^i \in \mathcal{L}_{\mathrm{iso}}(BH(s_j+y_{\iota(i)} (r_j+\frac{1}{p}), 0) \cap HB(s_j, r_j-\frac{1}{p}), BH(s_j+y_{\iota(i)}(r_j-i-\frac{1}{p}), 0) \cap HB(s_j, r_j-i-\frac{1}{p})).
        \end{equation*}
        for every $j\in\set{1,..., J}$, and the claim follows. 

        \medskip

        Now we  consider the maximal regularity space $\mathbb{Y}_i$ associated with the initial data $A_{\iota(i)}^{-i} \sigma_i \in \mathbb{X}_i$, that is, 
        \begin{equation*}
            \mathbb{Y}_i :=\left\{\begin{array}{ll}
            \bigcap _{j=1}^J \overline{H}(s_j, r_j) = \overline{H}(\mu)      & \text{ if } i< r_{reg} \\
             \overline{H}(s_{\iota(i)}+y_{\iota(i)} r_{\iota(i) }, 0) \cap \bigcap _{j=1}^{\iota(i)}\overline{H}(s_j, r_j)     & \text{ if } i\geq r_{reg}. 
            \end{array}\right.
        \end{equation*}
        In particular, replacing $(\overline{H}(\mu), T_0, \eta_0, \sigma_0 )$ by $(\mathbb{Y}_i, \mathbb{X}_i, \eta_i, A_{\iota(i)}^{-i} \sigma_i)$ in the \textsc{Step} 2.1, we get that $\eta_i\in \mathbb{Y}_i$ with $\| \eta_i \|_{\mathbb{Y}_i} \lesssim \| A_{\iota(i)}^{-i} \sigma_i \|_{\mathbb{X}_i }\lesssim \| \sigma_i\|_{T_i(\mu)}$. To conclude, we remark that the Newton polygon of the space $\overline{H}(\mu)$ is included in the Newton polygon $\mathbb{Y}_i$ and so $\mathbb{Y}_i \hookrightarrow \overline{H}(\mu)$.

        \medskip
        
        \textsc{Step} 3. In this step, we establish $\ker\Tr^{(r_1)}=r^+\dot{H}^{\mu,p}_\perp(G_+)$.
        Consider first $u\in \ker\Tr^{(r_1)}$, i.e. $u\in  \overline{H}^{\mu,p}_\perp(G_+)$ with $\Tr^{(r_1)} u=0$.
        Then $u\in \overline{H}^{r_j,p}(\mathbb{R}^n_+; H^{s_j,p}_\perp(\mathbb{T}))$ with $\Tr^{(r_j)}u=0$ for all $j\in\set{0,\ldots,J}$.
        By \cite[Theorem VIII.1.6.8.]{amann2019linear}, there is $\seqkN{u}\subseteq\CRci(\R^n_+;H^{s_j,p}_\perp(\mathbb{T}))$ such that $u_k\to u$ in $\overline{H}^{r_j,p}(\mathbb{R}^n_+; H^{s_j,p}_\perp(\mathbb{T}))$.
        In particular, $e_0^+u_k$ converges in $H^{r_j,p}(\mathbb{R}^n; H^{s_j,p}_\perp(\mathbb{T}))=H^{(s_j,r_j),p}_\perp(G)$, and the limit is $U:=e_0^+u$.
        Therefore, $U\in \bigcap_{j=0}^{J+1} H^{(s_j,r_j),p}_\perp(G)=H^{\mu,p}_\perp(G)$ and $\supp U\subseteq \overline{G_+}$, that is $U\in \dot H^{\mu,p}_\perp(\overline{G_+})$.
        Thus $u=r^+U\in r^+\dot{H}^{\mu,p}_\perp(\overline{G_+})$, and thus  $\ker\Tr^{(r_1)}\subseteq r^+\dot{H}^{\mu,p}_\perp(G_+)$

        \medskip

        Conversely, let $U\in \dot{H}^{\mu,p}_\perp(G_+)$ and $u:=r^+U$.
        Then $u\in \overline{H}^{\mu,p}_\perp(G_+)$ by definition.
        Moreover $\Tr^{(r_1)}u=\Tr^{(r_1)}v= 0$, where $v:=r^+ V=0$ and $V(t,x',x_n):=U(t,x',-x_n)$.
        Therefore $r^+\dot{H}^{\mu,p}_\perp(G_+)\subseteq \ker\Tr^{(r_1)}$.
        This completes the proof.
\end{proof}
 The case p=2 admits a particularly transparent formulation since the scales $\mathcal{H}$ and $\mathcal{B}$ coincide.
 We therefore obtain the following corollary.
 See also \cite{NeS25} for a proof in a simpler setting. 
\begin{corollary}\label{cor: trace_L2}
    Let $\mu$ be a strictly positive order function, $\mathcal{N}=\mathcal{N}(\mu)$ its Newton polygon and $\mathcal{N}_V=\{(r_0,s_0),..., (r_{J+1}, s_{J+1})\}$ the set of its vertices.
    Assume that $r_j\in \N_0$ for all $j\in\set{0,\ldots,J+1}$.
    Then
    \begin{align*}
     \Tr^{(r_1)}\in \mathcal{L}(\overline{H}^{\mu, 2}_\perp(G_+), T^{\mu, 2}_\perp(G^{n-1}))
     \qquad \text{for} \qquad
     T^{\mu, 2}_\perp(G^{n-1}):=\prod_{i=0}^{r_1-1} H^{\mu^{(i+\frac{1}{2})}, 2}_\perp(G^{n-1}).
    \end{align*}
    Furthermore, there exists $E^\mu\in \mathcal{L}(T^{\mu, 2}_\perp(G^{n-1}),\overline{H}^{\mu, 2}_\perp(G_+)) $ such that $\Tr^{(r_1)} \circ E^\mu= \id_{T^{\mu, 2}_\perp(G^{n-1})}$. 
\end{corollary}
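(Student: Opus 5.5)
The plan is to derive the corollary directly from Theorem \ref{thm: trace_space} by showing that, for $p=2$, each factor $T^{\mu,2}_{i\perp}(G^{n-1})$ coincides with $H^{\mu^{(i+\frac12)},2}_\perp(G^{n-1})$ with equivalent norms. Once this identification is established, boundedness of $\Tr^{(r_1)}$ and the existence of the right inverse $E^\mu$ are exactly the statements of Theorem \ref{thm: trace_space}.

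The first step is to check that for $p=2$ all three scales agree:
\[
HB^{(s,r),2}_\perp = BH^{(s,r),2}_\perp = H^{(s,r),2}_\perp .
\]
Two facts give this. First, $B^{r,2}(\R^n)=H^{r,2}(\R^n)$, since real interpolation of Hilbert spaces with $q=2$ reproduces the complex method, or equivalently by a Plancherel computation. Second, for the purely oscillatory spaces, $B^{s,2}_\perp(\mathbb{T};X)=H^{s,2}_\perp(\mathbb{T};X)$ whenever $X$ is a Hilbert space. This follows from Definition \ref{def: pure_osc_besov} together with the same Hilbert-space interpolation identity, or directly via Parseval in $k$. Substituting into \eqref{eqn: def_trace_space_general} shows that $T^{\mu,2}_{i\perp}(G^{n-1})$ is a finite intersection of spaces $H^{(\sigma,\rho),2}_\perp(G^{n-1})$.

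The second step is to identify the indexing points. By Remark \ref{remark: translation_traces} and \eqref{eqn: def_shifted_polygon} with $\delta=i+\frac12$, these points are precisely the vertices of the shifted polygon $\mathcal{N}^{(i+\frac12)}$ other than those not needed. In the case $i<r_{reg}$, the vertex $(0,s_{J+1})$ of the shifted polygon is dominated by $(r_J-i-\frac12,s_J)$, because $s_J=s_{J+1}$ in the non-regular-in-space case. Proposition \ref{prop: properties_Newton_scales}\ref{prop: properties_Newton_scalesi}, applied in the Newton polygon scale $\mathcal H$ on $G^{n-1}$ (Proposition \ref{js016}), then gives
\[
\bigcap H^{(\sigma,\rho),2}_\perp = H^{\mu^{(i+\frac12)},2}_\perp,
\]
since the collection of points contains all significant vertices and lies inside the shifted polygon. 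One must check that $\iota(i+\frac12)=\iota(i)$, which holds because the $r_j$ are integers. One must also check that $s_i'$ equals the vertical-axis intercept $s_{\iota(i)}+y_{\iota(i)}(r_{\iota(i)}-i-\frac12)$, which is immediate from the definitions.

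The main obstacle is bookkeeping. It must be verified carefully that the set of points in \eqref{eqn: def_trace_space_general} satisfies $\mathcal N^{(i+\frac12)}_{SV}\subseteq\{\text{points}\}\subseteq\mathcal N^{(i+\frac12)}$ in both cases $i<r_{reg}$ and $i\ge r_{reg}$. The degenerate situation where a shifted vertex has zero first coordinate also needs attention. The equivalence of norms then follows, and the corollary follows by taking the product over $i$.
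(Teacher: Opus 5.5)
Your proposal is correct and follows the paper's own brief justification. The paper derives the corollary from Theorem~\ref{thm: trace_space} by noting that for $p=2$ the scales $\mathcal{HB}$, $\mathcal{BH}$ and $\mathcal H$ coincide, so that, by Remark~\ref{remark: translation_traces} and Proposition~\ref{prop: properties_Newton_scales}, each $T^{\mu,2}_{i\perp}$ is the Newton polygon space of $\mathcal N^{(i+\frac12)}$. The degenerate case you flag does not occur: integrality of the $r_j$ gives $r_j-i-\frac12\ge r_{\iota(i)}-i-\frac12\ge\frac12$ for all $j\le\iota(i)$.
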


As mentioned above, such a simple characterization is no longer available when $p\ne 2$.
Nevertheless, in Example \ref{example : trace_triangular_case} we saw that, for triangular Newton polygons, the trace space can still be described as a real interpolation space between two (triangular) spaces from the scale $\mathcal{H}$.
We now show that this interpolation structure persists for arbitrary Newton polygons.
\begin{theorem}\label{thm: trace_space_interpolation}
    Let $\mu$ be a strictly positive order function, $\mathcal{N}=\mathcal{N}(\mu)$ its Newton polygon and $\mathcal{N}_V=\{(r_0,s_0),..., (r_{J+1}, s_{J+1})\}$ the set of its vertices.
    Assume that $r_j\in \N_0$ for all $j\in\set{0,\ldots,J+1}$.
    Then
    \begin{equation*}
        T^{\mu, p}_{i\perp}(G^{n-1})= (H^{\mu^{(i+1)}, p}_\perp(G^{n-1}), H^{\mu^{(i)}, p}_\perp(G^{n-1}))_{1-\frac{1}{p}, p} , \qquad i\in \{0,..., r_1-1\},
    \end{equation*}
    where $T^{\mu, p}_{i\perp}(G^{n-1})$ is defined as in (\ref{eqn: def_trace_space_general}). 
\end{theorem}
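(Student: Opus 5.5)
Since $\Tr_i=\Tr_0\circ\partial_n^i$ and both $T_i(\mu)$ and the two endpoint spaces only depend on the shifted polygons $\caln^{(i)}$ and $\caln^{(i+1)}$, I first reduce to $i=0$ by replacing $\mu$ with $\mu^{(i)}$. The key observation is that $(\mu^{(i)})^{(1)}=\mu^{(i+1)}$ and $(\mu^{(i)})^{(\frac1p)}=\mu^{(i+\frac1p)}$, and that the vertices of $\caln^{(i)}$ together with the intercept $s_i'$ are exactly the data entering $T_i(\mu)$ in \eqref{eqn: def_trace_space_general}. It therefore suffices to show
\begin{align*}
T_0(\mu)=(H(\mu^{(1)}),H(\mu))_{1-\frac1p,p}
\end{align*}
for every strictly positive $\mu$ with integer spatial vertex coordinates.

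\textbf{Step 1 (no intermediate vertex).} I treat first the case where no vertex of $\caln$ has spatial coordinate in $(0,1)$, i.e.\ $r_J\ge1$ in the non-regular case, or the edge through $(0,s_{J+1})$ reaches at least $r=1$ in the regular case. Then $\caln^{(1)}$ has vertices $(r_j-1,s_j)$, $j\le J$, plus possibly the intercept $(0,s_{J+1}-y_J)$.

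I write both endpoints as intersections via Proposition \ref{prop: properties_Newton_scales}. Exactly as in the proof of Lemma \ref{lemma: complex_interpolation_Newton_polygons}, I factor out a weight function $w$ for the order function $\nu:=\sum_{j<J}n_jo_{y_j}$ of all but the last elementary summand. By the lifting property, Proposition \ref{js014}, this reduces both endpoints to the elementary triangular spaces $H^{(n_J-1)o_{y_J},p}_\perp$ and $H^{n_Jo_{y_J},p}_\perp$, at least when the shift only affects the last edge. For these triangular spaces, real interpolation with parameter $1-\frac1p$ is known from Example \ref{example : trace_triangular_case}, using Lemmas \ref{lemma: interpolation_same_time_param} and \ref{lemma: interpolation_intersection}. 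The result is $BH\cap HB$ at the point shifted by $\frac1p$, and lifting back by $\op[w^{-1}]$ on the $\mathcal{BH}$ and $\mathcal{HB}$ scales (Corollary \ref{cor: ellipticity_HB_BH}) gives the candidate space.

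The problem is that $\op[w^{-1}]$ maps $BH(s,0)\cap HB(0,r)$ to an intersection of $BH$ and $HB$ spaces indexed by Minkowski sums. I then have to identify this intersection with $T_0(\mu)$. This uses Lemma \ref{lemma: decomposition_triangles}, to rewrite $T_0(\mu)$ as an intersection of triangles, together with the embeddings of Lemma \ref{lemma: geometric_embeddings}\ref{lemma: geometric_embeddingsi}, which discard the superfluous interior points.

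\textbf{Step 2 (general case).} In general, shifting by $\delta\in[0,1]$ moves only the edges meeting the strip $[0,1]$. I will use Lemma \ref{lemma: complex_interpolation_Newton_polygons}: for $\delta\in[0,r_J]$, the family $H(\mu^{(\delta)})$ is a complex interpolation scale. This makes $H(\mu^{(1/p)})$, or more usefully $H(\mu^{(1-\epsilon)})$ and $H(\mu^{(\epsilon)})$, intermediate spaces of the right class.

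Reiteration then gives
\begin{align*}
(H(\mu^{(1)}),H(\mu))_{1-\frac1p,p}=(H(\mu^{(\frac1p+\epsilon)}),H(\mu^{(\frac1p-\epsilon)}))_{\frac12,p}.
\end{align*}
Both spaces on the right are sandwiched between intersections of pure $H$-spaces whose vertices sit $\epsilon$-close to the shifted vertices. This is the same trick as in the proof of Lemma \ref{lemma: geometric_embeddings}. Interpolating vertexwise with Lemmas \ref{lemma: interpolation_same_time_param} and \ref{lemma: interpolation_intersection} yields the embedding $\hookrightarrow T_0(\mu)$: horizontal perturbations produce the $HB$ factors and vertical ones the $BH$ intercept.

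\textbf{Step 3 (reverse inclusion, the expected main obstacle).} For $T_0(\mu)\hookrightarrow(H(\mu^{(1)}),H(\mu))_{1-\frac1p,p}$ I plan an operator-theoretic argument rather than direct interpolation of intersections, because real interpolation does not commute with intersections in general. The idea is to reuse the semigroup construction of Step 2 in the proof of Theorem \ref{thm: trace_space}.

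By that theorem, every $\sigma\in T_0(\mu)$ is $\Tr_0\eta$ for some $\eta\in\overline{H}(\mu)$, and, equivalently by Step 1 there, $\partial_n$ maps $\overline{H}(\mu)\to\overline{H}(\mu^{(1)})$. Hence $\eta\in\overline H^{1,p}(\R_+;H(\mu^{(1)}))\cap L^p(\R_+;H(\mu))$ once we know $\overline H(\mu)\hookrightarrow L^p(\R_+;H(\mu))$ in the tangential sense. The classical trace method (Triebel 1.8) then places $\Tr_0\eta$ in $(H(\mu^{(1)}),H(\mu))_{1-\frac1p,p}$, with the norm bounded by $\|E^\mu\sigma\|$.

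The delicate point is that $H(\mu)$ and $H(\mu^{(1)})$ here must be the tangential, $(n-1)$-dimensional spaces. I would justify the needed mixed-norm identification from Corollary \ref{cor: equiv_norm_half_space}, writing $\overline H(s_j,r_j)$ as $\bigcap_{k\le r_j}\overline H^{k,p}(\R_+;H(s_j,r_j-k))$ and checking that each such factor lies in the tangential Newton polygon.
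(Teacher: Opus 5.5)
Your proposal is correct, but its case distinction is vacuous. Once you notice this, your Step~1 alone is a complete proof by a route different from the paper's.

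Since the $r_j$ and $i$ are integers, the last edge of $\caln^{(i)}$ has horizontal length $r_{\iota(i)}-i\ge 1$. So after your reduction to $i=0$ you always have $r_J\ge 1$ and $\mu^{(\delta)}=(r_J-\delta)o_{y_J}+\nu$ for $\delta\in\{0,1\}$, and the ``general case'' of Step~2 never occurs. In Step~1, $\op[w]$ is then an isomorphism of the couple $(H(\mu^{(1)}),H(\mu))$ onto $(H((r_J-1)o_{y_J}),H(r_Jo_{y_J}))$, which yields both inclusions at once:
\begin{itemize}
\item Reiteration through the complex scale (Lemma~\ref{lemma: complex_interpolation_Newton_polygons}) turns the triangular side into $(L^p,H(r_Jo_{y_J}))_{(r_J-1/p)/r_J,p}=BH(y_J(r_J-\frac1p),0)\cap HB(0,r_J-\frac1p)$. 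If $y_J=0$, only the $HB$ factor appears.
\item Lifting back with $\op[w^{-1}]$ in the $\mathcal{BH}$ and $\mathcal{HB}$ scales gives $T_0(\mu)$. In the regular case it is intersected with additional $BH$ factors at the points $(r_j-r_J,\,s_j+y_J(r_J-\frac1p))$, $j<J$.
\item These points lie on the hypotenuse of the triangle spanned by $(r_j-\frac1p,s_j)$ and $(0,s_j+y_J(r_j-\frac1p))$, at abscissa in $(0,r_j-\frac1p)$ because $r_J\ge1>\frac1p$. So Lemmas~\ref{lemma: decomposition_triangles} and~\ref{lemma: geometric_embeddings}(i) show they are implied by $T_0(\mu)$; the proofs of those lemmas do not use integrality of the vertices.
\end{itemize}

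The paper instead proves the two inclusions separately.
\begin{itemize}
\item For $(H(\mu^{(i+1)}),H(\mu^{(i)}))_{1-\frac1p,p}\hookrightarrow T_i(\mu)$ it interpolates vertex by vertex directly between the two endpoints. The $HB$ factors come from $H(s_j,r_j-i-1)$ against $H(s_j,r_j-i)$, and the $BH$ factor from the two intercepts on the $\tau$-axis. So the $\epsilon$-perturbation and reiteration in your Step~2 are not needed.
\item For the reverse inclusion it argues exactly as in your Step~3. It uses the space $\{u\in L^p(\R_+;H(\mu^{(i)}))\mid \partial_nu\in L^p(\R_+;H(\mu^{(i+1)}))\}$, Triebel's trace method, and the extension operator $E^\mu$ of Theorem~\ref{thm: trace_space}. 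The tangential mixed-norm embedding you flag is simply asserted there as clear.
\end{itemize}

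The paper's route is shorter once Theorem~\ref{thm: trace_space} is available and needs no bookkeeping of interior points. Your Step~1 route is purely interpolation-theoretic and independent of the semigroup construction behind $E^\mu$.
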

\begin{proof}
    Once again, for all $r,s\in \mathbb{R}$ we write $H(s,r):=H^{(s, r),p}_\perp(G^{n-1})$ for the sake of readability and with the understanding that the notation is limited to this proof.
    We also set  $\mathbb{A}_i=(H^{\mu^{(i+1)}, p}_\perp(G^{n-1}), H^{\mu^{(i)}, p}_\perp(G^{n-1}))_{1-\frac{1}{p}, p}$ for $i\in\set{0,..., r_1-1}$.
    
    \medskip 

    We first prove $\mathbb{A}_i \hookrightarrow T_i(\mu)$.
    Observe that $H(\mu^{(i)})\hookrightarrow H(s_j, r_j-i)$ for all $j\in\set{1,..., \iota(i)}$ and $ H(\mu^{(i+1)})\hookrightarrow H(s_j, r_j-i-1)$ for all $j\in\set{1,..., \iota(i+1)}$.
    Furthermore, we also claim $H(\mu^{(i+1)})\hookrightarrow H(s_{\iota(i)}, r_{\iota(i)}-i-1)$.
    Indeed, if $\iota(i+1)=\iota(i)$ this is trivial.
    If not, we remark that since we assumed that all $r_j$ are integers, $\iota(i+1)<\iota(i)$ must imply $\iota(i+1)=\iota(i)-1$ and  $i+1= r_{\iota(i)}$ and so 
    \begin{equation*}
        H(\mu^{(i+1)})\hookrightarrow H( s_{\iota(i+1)+1}-y_{\iota(i+1)}(i+1-r_{\iota(i+1)+1}), 0)=H(s_{\iota(i)}, r_{\iota(i)}-i-1).
    \end{equation*}
    Here, the first embedding follows from the observation that the point $(0, s_{\iota(i+1)+1}-y_{\iota(i+1)}(i+1-r_{\iota(i+1)+1}))$ is the intersection of the shifted Newton polygon $\mathcal{N}(\mu^{(i+1})$ with the $y$-axis and so in particular belongs to said Newton polygon. 
    Together we obtain
    \begin{equation*}
        \mathbb{A}_i \hookrightarrow (H(s_j, r_j-i-1), H(s_j, r_j-i))_{1-\frac{1}{p}, p}= HB(s_j, r_j-i-\frac{1}{p}).
    \end{equation*}
    This concludes the case $i<r_{reg}$. For the case $i\geq r_{reg}$ we observe that furthermore $H(\mu^{(i)})\hookrightarrow H(s_{\iota(i)+1}-y_{\iota(i)}(i-r_{\iota(i)+1}), 0)$ and $H(\mu^{(i+1)})\hookrightarrow H(s_{\iota(i+1)+1}-y_{\iota(i+1)}(i+1-r_{\iota(i+1)+1}), 0)$.
    This in turn implies 
    \begin{align*}
        \mathbb{A}_i  &\hookrightarrow (H(s_{\iota(i+1)+1}-y_{\iota(i+1)}(i+1-r_{\iota(i+1)+1}), 0), H(s_{\iota(i)+1}-y_{\iota(i)}(i-r_{\iota(i)+1}), 0))_{1-\frac{1}{p}, p}\\
        & =BH(s_{\iota(i)+1}-y_{\iota(i)}(i-r_{\iota(i)+1}+\frac{1}{p}), 0). 
    \end{align*}
    Here, the last equality follows by Lemma \ref{lemma: interpolation_same_time_param}, distinguishing the trivial case $\iota(i)=\iota(i+1)$ and the case $\iota(i+1)=\iota(i)-1$ as above. This proves $\mathbb{A}_i \hookrightarrow T_i(\mu)$. 
    \medskip 

    Now we prove $T_i(\mu)\hookrightarrow \mathbb{A}_i$. 
    For $i\in\{0,\ldots,r_1-1\}$, we introduce the space
        \begin{align*}
            \overline{H}^{(\mu^{(i)},1),p}_\perp(G_+)&:=\{u\in L^p(\mathbb{R}_+,H(\mu^{(i)})) \mid \partial_nu\in L^p(\mathbb{R}_+, H(\mu^{(i+1)}))\}, \\
            \|u\|_{\overline{H}^{(\mu^{(i)},1),p}_\perp(G_+)}&:=\|u\|_{L^p(\mathbb{R}_+,H(\mu^{(i)}))}+ \|\partial_nu\|_{L^p(\mathbb{R}_+, H(\mu^{(i+1)}))}.
        \end{align*}
        It is clear that $\overline{H}(\mu^{(i)})\hookrightarrow \overline{H}^{(\mu^{(i)},1),p}_\perp(G_+)$ for all $i\in\{0,..., r_1-1\}$. Furthermore, Triebel's trace method directly gives
        \begin{equation*}
            \Tr_0 \in \call( H^{(\mu^{(i)},1),p}_\perp(G_+), (H(\mu^{(i+1)}), H(\mu^{(i)}))_{1-\frac{1}{p}, p})= \call (H^{(\mu^{(i)},1),p}_\perp(G_+), \mathbb{A}_i).
        \end{equation*}
        Therefore, for all $\sigma_i\in T_i(\mu)$, we get 
        \begin{equation*}
            \| \sigma_i\|_{\mathbb{A}_i} = \| \Tr_i E^\mu\sigma\|_{\mathbb{A}_i}\lesssim \| \partial_n^i E^\mu \sigma\|_{ \overline{H}^{(\mu^{(i)},1),p}_\perp(G_+)}\lesssim \| \partial_n^i E^\mu \sigma\|_{ \overline{H}(\mu^{(i)})}\lesssim \| E^\mu \sigma\|_{\overline{H}(\mu)}\lesssim \| \sigma_i\|_{T_i(\mu)},
        \end{equation*}
        where $\sigma=(0,\ldots,0,\sigma_i,0,\ldots,0)\in T(\mu)$, $E^\mu$ is the right-inverse operator to $\Tr^{(r_1)}$ from Theorem \ref{thm: trace_space} and we recall that $\Tr_i= \Tr_0\circ\partial_n^i$ and $\partial_n^i \in \call ( \overline{H}(\mu), \overline{H}(\mu^{(i)}))$. This completes the proof.
\end{proof}
\begin{example}[Cahn-Hilliard-Gurtin determinant]
    We decompose the order function of the CHG determinant as $\mu_D=2o_0+2o_{\frac{1}{2}}$ (see Example \ref{example: CHG_system}), so that $\mu_D^{(1)}= o_0+2o_{\frac{1}{2}}, \mu_D^{(2)}= 2o_{\frac{1}{2}}, \mu^{(3)}_D=o_{\frac{1}{2}}$ and $\mu_D^{(4)}= 0$.
    Thus, Theorem \ref{thm: trace_space_interpolation} yields the representation of the trace spaces as 
    \begin{align*}
        &T_0(\mu_D)=(H^{o_0+2o_{\frac{1}{2}}, p}_\perp (G^{n-1}), H^{2o_0+2o_{\frac{1}{2}}, p}_\perp(G^{n-1}) )_{1-\frac{1}{p}, p}\\
        &T_1(\mu_D)= (H^{2o_{\frac{1}{2}}, p}_\perp (G^{n-1}), H^{o_0+2o_{\frac{1}{2}}, p}_\perp (G^{n-1}))_{1-\frac{1}{p}, p}\\
        &T_2(\mu_D)= (H^{o_{\frac{1}{2}}, p}_\perp (G^{n-1}), H^{2o_{\frac{1}{2}}, p}_\perp (G^{n-1}))_{1-\frac{1}{p}, p}\\
        &T_3(\mu_D)= (L^p_\perp (G^{n-1}), H^{o_{\frac{1}{2}}, p}_\perp (G^{n-1}))_{1-\frac{1}{p}, p}.
    \end{align*}
\end{example}
Since the trace spaces generally do not belong to the Newton polygon scales $\mathcal{HB}, \mathcal{BH}$, it is unclear whether embeddings of type $T_i(\mu) \hookrightarrow HB(s,r)$ or $T_i(\mu) \hookrightarrow BH(s, r)$ hold whenever $(s, r)\in \mathcal{N}^{(i+\frac{1}{p})}$. This is the aim of the last section of this chapter. 
Using a reiteration argument, we can first give a refinement of Theorem \ref{thm: trace_space_interpolation} .

\begin{lemma}\label{lemma: trace_space_fine_interpolation}
     Let $\varepsilon_0, \varepsilon_1$ with $\frac{1}{p}<\varepsilon_0\leq 1$ and $0\leq \varepsilon_1< \frac{1}{p}$  and set 
     \begin{equation*}
         \eta:= \frac{\varepsilon_0-\frac{1}{p}}{\varepsilon_0-\varepsilon_1}
     \end{equation*}
     With the above notations, we have 
    \begin{equation*}
        T_i^{\mu, p}(G^{n-1})= (H^{\mu^{(i+\varepsilon_0)}, p}_\perp(G^{n-1}), H^{\mu^{(i+\varepsilon_1)}, p}_\perp(G^{n-1}))_{\eta, p} , \qquad i\in \{0,..., r_1-1\}. 
    \end{equation*}
\end{lemma}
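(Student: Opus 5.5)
The plan is to apply the reiteration theorem to the representation from Theorem \ref{thm: trace_space_interpolation}. There we have $T_i(\mu)=(H(\mu^{(i+1)}),H(\mu^{(i)}))_{1-\frac1p,p}$. Put $X_0:=H(\mu^{(i+1)})$ and $X_1:=H(\mu^{(i)})$.

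The first step is to check that, for every $\delta\in[0,1]$, the space $H(\mu^{(i+\delta)})$ equals the complex interpolation space $[X_0,X_1]_{1-\delta}$. In particular it is of class $J$ and $K$ with parameter $1-\delta$. For this we would use Lemma \ref{lemma: complex_interpolation_Newton_polygons}, applied with $\delta_0:=i+1$, $\delta_1:=i$ and $\theta:=1-\delta$, so that $\delta_\theta=i+\delta$.

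That lemma, however, is stated only for $\delta_0,\delta_1\in[0,r_J]$, and more precisely it assumes that the shift moves only along the last edge. For general $i$, the segment $[i,i+1]$ lies in a single interval $[r_{\iota(i)+1},r_{\iota(i)}]$, because the $r_j$ are integers. This follows from $\iota(i+1)=\iota(i)$ or $i+1=r_{\iota(i)}$. On this range $\mu^{(\delta)}$ is affine in $\delta$. Concretely, $\mu^{(\delta)}=(n_{\iota(i)}-(\delta-r_{\iota(i)+1}))o_{y_{\iota(i)}}+\nu_i$, where the edges $j<\iota(i)$ make up a fixed $\nu_i$ and the edges $j>\iota(i)$ are already consumed.

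With this description, the proof of Lemma \ref{lemma: complex_interpolation_Newton_polygons} carries over verbatim. We lift by a weight function for $\nu_i$ and apply the complex interpolation of anisotropic Bessel potential spaces for a single elementary order function. The main obstacle is stating this extension precisely, in particular at the endpoint $\delta=r_{\iota(i)}$, where a vertex disappears. I would handle it by noting that the representation $\mu^{(\delta)}=\ldots$ remains valid at the endpoints, since order functions add.

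The second step applies the reiteration theorem for the real method. Set $\theta_0:=1-\varepsilon_0\in[0,1)$ and $\theta_1:=1-\varepsilon_1\in(1-\frac1p,1]$. By the first step, $H(\mu^{(i+\varepsilon_k)})=[X_0,X_1]_{\theta_k}$. When $\theta_k\in\{0,1\}$ the space is an endpoint itself, and the reiteration theorem still applies in that case. We then have
\begin{align*}
(H(\mu^{(i+\varepsilon_0)}),H(\mu^{(i+\varepsilon_1)}))_{\eta,p}=(X_0,X_1)_{(1-\eta)\theta_0+\eta\theta_1,p}.
\end{align*}

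Finally, we compute the resulting parameter:
\begin{align*}
(1-\eta)\theta_0+\eta\theta_1=1-\varepsilon_0+\eta(\varepsilon_0-\varepsilon_1)=1-\tfrac1p.
\end{align*}
Since $\theta_0<1-\frac1p<\theta_1$, we have $\eta\in(0,1)$. Combining this with Theorem \ref{thm: trace_space_interpolation} gives the claim.
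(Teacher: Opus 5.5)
Your proposal is correct and follows the paper's proof. Both arguments identify $H(\mu^{(i+\varepsilon_k)})$ as a $(1-\varepsilon_k)$-intermediate space of the couple $(H(\mu^{(i+1)}),H(\mu^{(i)}))$ via Lemma~\ref{lemma: complex_interpolation_Newton_polygons}, apply reiteration, check that the parameter is $1-\frac1p$, and conclude with Theorem~\ref{thm: trace_space_interpolation}; your extra care about using that lemma beyond $\delta\in[0,r_J]$ (equivalently, applying it to the shifted order function $\mu^{(i)}$, since shifts compose) and about the endpoint cases $\varepsilon_0=1$, $\varepsilon_1=0$ covers points the paper's short proof passes over silently.
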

\begin{proof}
    By Lemma \ref{lemma: complex_interpolation_Newton_polygons}, we know that $H(\mu^{(i+\varepsilon_i)})$ is a $\theta_i$-intermediate space between the interpolation couple $(H(\mu^{(i+1)}), H(\mu^{(i)}))$, with $\theta_i:=1-\varepsilon_i$, $i\in\{0,1\}$.
    Thus reiteration gives 
    \begin{align*}
        (H(\mu^{(i+\varepsilon_0)}),H(\mu^{(i+\varepsilon_1)}))_{\eta, p}&= (H(\mu^{(i+1)}), H(\mu^{(i)}))_{(1-\eta)\theta_0+\eta\theta_1, p}\\&= (H(\mu^{(i+1)}), H(\mu^{(i)}))_{1-\frac{1}{p}, p},
    \end{align*}
    and the conclusion follows from Theorem \ref{thm: trace_space_interpolation}.
\end{proof}
\begin{proposition}\label{prop: embeddings_trace_space}
    Let $\mu$ be a strictly positive order function, $\mathcal{N}=\mathcal{N}(\mu)$ its Newton polygon and $\mathcal{N}_V=\{(r_0,s_0),..., (r_{J+1}, s_{J+1})\}$ the set of its vertices.
    Assume that $r_j\in \N_0$ for all $j\in\set{0,\ldots,J+1}$.
    \begin{itemize}[leftmargin=*]
        \item If $r_{reg}>0$ then 
        \begin{enumerate}
            \item\label{prop: embeddings_trace_spacei} For all $(r, s)\in \mathcal{N}^{(\frac{1}{p})}$, it holds
            \begin{equation*}
                T_{0\perp}^{\mu, p}(G^{n-1}) \hookrightarrow HB^{(s, r), p}_\perp(G^{n-1})
            \end{equation*}
            \item\label{prop: embeddings_trace_spaceii} For all $(r, s) \in \mathcal{N}^{(\frac{1}{p})}$ such that $(r,s)$ is neither a vertex of $\mathcal{N}^{(\frac{1}{p})}$, nor belongs to the horizontal edge $[(0, s_{J+1}), (r_J, s_J)] $, it holds 
            \begin{equation*}
                T_{0\perp}^{\mu, p}(G^{n-1})\hookrightarrow BH^{(s, r), p}_\perp(G^{n-1})
            \end{equation*}
        \end{enumerate}
        \item If $r_{reg}=0$ then 
        \begin{enumerate}
            \item\label{prop: embeddings_trace_spaceiii} For all $(r, s)\in \mathcal{N}^{(\frac{1}{p})}$ such that $(r,s)\neq (0, s_{J+1})$, it holds 
            \begin{equation*}
                 T_{0\perp}^{\mu, p}(G^{n-1}) \hookrightarrow HB^{(s, r), p}_\perp(G^{n-1}).
            \end{equation*}
            \item\label{prop: embeddings_trace_spaceiv} For all $(r, s)\in \mathcal{N}^{(\frac{1}{p})}$ such that $(r, s) \notin \{(r_1, s_1),..., (r_J, s_J)\}$, it holds 
             \begin{equation*}
                 T_{0\perp}^{\mu, p}(G^{n-1}) \hookrightarrow BH^{(s, r), p}_\perp(G^{n-1}).
            \end{equation*}
        \end{enumerate}
    \end{itemize}
\end{proposition}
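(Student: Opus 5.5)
The plan is to deduce all four embeddings from the interpolation representation of Lemma \ref{lemma: trace_space_fine_interpolation} with $i=0$:
\begin{align*}
T_{0\perp}^{\mu,p}(G^{n-1})=(H^{\mu^{(\varepsilon_0)},p}_\perp(G^{n-1}),H^{\mu^{(\varepsilon_1)},p}_\perp(G^{n-1}))_{\eta,p},\qquad \eta=\tfrac{\varepsilon_0-1/p}{\varepsilon_0-\varepsilon_1},
\end{align*}
where $\varepsilon_0>\frac1p>\varepsilon_1$ are taken close to $\frac1p$. Fix a point $(r,s)\in\mathcal N^{(1/p)}$. The aim is to find two points $(r^{(0)},s^{(0)})\in\mathcal N^{(\varepsilon_0)}$ and $(r^{(1)},s^{(1)})\in\mathcal N^{(\varepsilon_1)}$ such that $(r,s)=(1-\eta)(r^{(0)},s^{(0)})+\eta(r^{(1)},s^{(1)})$, and such that the segment joining them is either horizontal ($s^{(0)}=s^{(1)}=s$) or vertical ($r^{(0)}=r^{(1)}=r$).

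By Proposition \ref{prop: properties_Newton_scales}\ref{prop: properties_Newton_scalesii} applied in $\mathcal H$, we have $H(\mu^{(\varepsilon_k)})\hookrightarrow H(s^{(k)},r^{(k)})$. Interpolating with Lemma \ref{lemma: interpolation_same_time_param} then gives the target space directly. A horizontal segment yields $(H(s,r^{(0)}),H(s,r^{(1)}))_{\eta,p}=HB(s,r)$, and a vertical segment yields $(H(s^{(0)},r),H(s^{(1)},r))_{\eta,p}=BH(s,r)$. The proof thus reduces to convex geometry of the shifted polygons.

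\emph{HB embeddings.} Take the horizontal displacement $(r^{(0)},r^{(1)})=(r-(\varepsilon_0-\frac1p),\,r+(\frac1p-\varepsilon_1))$. This requires $(r\mp\cdot,s)\in\mathcal N^{(\varepsilon_{0/1})}$, which amounts to shifting the horizontal section at height $s$.
\begin{itemize}
\item The point $(r^{(1)},s)$ lies in $\mathcal N^{(\varepsilon_1)}$ because $\mathcal N^{(\delta)}$ is the left shift of $\mathcal N$ by $\delta$, cut off at the vertical axis.
\item The point $(r^{(0)},s)$ lies in $\mathcal N^{(\varepsilon_0)}$ as long as $r^{(0)}\ge0$ and height $s$ is still attained.
\end{itemize}
The only failure is at $r=0$. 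Near the axis one instead uses a slightly lowered point of $\mathcal N^{(\varepsilon_0)}$ and monotonicity $H(s',r)\hookrightarrow H(s,r)$ for $s'\ge s$. This is fine unless the vertical-axis point sits at the top height $s_{J+1}$ of the shifted polygon and is not reached by $\mathcal N^{(\varepsilon_0)}$.

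In the non-regular case the top edge is horizontal, so $\mathcal N^{(\varepsilon_0)}$ still reaches height $s_{J+1}$ at $r=0$. Hence every point of $\mathcal N^{(1/p)}$ works, which gives \ref{prop: embeddings_trace_spacei}. In the regular case the slant top edge moves down under the left shift, so $(0,s_{J+1}-y_J\frac1p)$ has no horizontal partner in $\mathcal N^{(\varepsilon_0)}$. This explains the excluded point in \ref{prop: embeddings_trace_spaceiii}.

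\emph{BH embeddings.} Use vertical segments: pick $(r,s^{(0)})\in\mathcal N^{(\varepsilon_0)}$ below and $(r,s^{(1)})\in\mathcal N^{(\varepsilon_1)}$ above, with $s$ the $\eta$-weighted average. Since $\mathcal N^{(\varepsilon_1)}\supsetneq\mathcal N^{(\varepsilon_0)}$, at abscissa $r$ the upper boundary of $\mathcal N^{(\varepsilon_1)}$ lies strictly above that of $\mathcal N^{(1/p)}$, which in turn lies above that of $\mathcal N^{(\varepsilon_0)}$. A vertical averaging therefore exists whenever the upper boundary of the shifted polygon varies strictly under horizontal shifts at $r$, that is, whenever the boundary is not locally horizontal or vertical at $(r,s)$.

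For interior points nothing is needed, since slack can be taken in both directions. The delicate points are boundary points.
\begin{itemize}
\item On a slanted edge of slope $-y$, shifting by $\delta$ changes the height by $y\delta$, which is linear in $\delta$. With $\varepsilon_0,\varepsilon_1$ chosen so that $(1-\eta)\varepsilon_0+\eta\varepsilon_1=\frac1p$, the averaged heights match exactly.
\item At vertices, and along horizontal edges, the height does not change under the shift, so no strict sandwich is available. These are exactly the points excluded in \ref{prop: embeddings_trace_spaceii} and \ref{prop: embeddings_trace_spaceiv}.
\end{itemize}
In the regular case, the axis point $(0,s_{J+1}-y_J/p)$ lies on a slanted edge and is therefore admissible.

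\emph{Main obstacle.} The hard part is the careful case bookkeeping at boundary points adjacent to vertices. There one must check that the points chosen for $\varepsilon_0$ and $\varepsilon_1$ lie on the same edge for $\varepsilon_k$ close to $\frac1p$. The plan is to choose $\varepsilon_0,\varepsilon_1$ depending on $(r,s)$ and small relative to the distance to the nearest vertex. Lemma \ref{lemma: trace_space_fine_interpolation} allows such a choice freely.
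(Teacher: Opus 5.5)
Your plan is sound, but two points need fixing (below).

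\textbf{Comparison with the paper.} For the $BH$ embeddings your argument is the paper's. On a slanted edge of slope $-y_j$ the paper also takes the points $(r,s-(\varepsilon_k-\frac1p)y_j)\in\mathcal N^{(\varepsilon_k)}$ above the same abscissa and interpolates with Lemma~\ref{lemma: trace_space_fine_interpolation}. The heights average to $s$ automatically, by the definition of $\eta$. The paper then reaches all remaining points by monotonicity: $BH(s',r)\hookrightarrow BH(s,r)$ for $s'\ge s$, and $BH(s,r')\hookrightarrow BH(s,r)$ for $r'\ge r$. This is tidier than your ``slack in both directions'', which at $s=0$ needs negative time regularity.

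For the $HB$ embeddings your route is genuinely different. The paper proves (i) without any interpolation: for $r_{reg}>0$, $T_{0\perp}^{\mu,p}=\bigcap_{j=1}^J HB(s_j,r_j-\frac1p)$ is itself the $\mathcal{HB}$-space of $\mu^{(1/p)}$, so Proposition~\ref{prop: properties_Newton_scales}\ref{prop: properties_Newton_scalesii} applies directly. For (iii) the paper interpolates horizontally only at the points $(\delta,s_{J+1}-(\delta+\frac1p)y_J)$ of the last edge, using the far-apart choice $\varepsilon_0=\delta+\frac1p$, $\varepsilon_1=0$. It then fills in all other points by convexity of the $\mathcal{HB}$ scale applied to auxiliary polygons $\mathcal N_\delta$. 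Your uniform shift by $\varepsilon_0-\frac1p$ to the left and $\frac1p-\varepsilon_1$ to the right uses only $\mathcal N^{(\delta)}=\{(\rho,\sigma)\in[0,\infty)^2:(\rho+\delta,\sigma)\in\mathcal N\}$. It treats every point with $r>0$ in both cases at once and shows directly why the axis intercept is the only obstruction. You also read the exclusion in (iii) correctly: the intended point is $(0,s_{J+1}-\frac{y_J}{p})$, since the literal $(0,s_{J+1})$ is not in $\mathcal N^{(1/p)}$ when $r_{reg}=0$. The paper's proof does not reach that point either, because it lies in no $\mathcal N_\delta$.

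\textbf{First correction: the fix at $r=0$.} The difficulty at $r=0$ is that $r^{(0)}<0$. Lowering $s$ cannot repair this, and it would destroy the horizontality that Lemma~\ref{lemma: interpolation_same_time_param} needs. What you need is monotonicity in the spatial index. If $(0,s)\in\mathcal N^{(\varepsilon_0)}$, interpolating $H(s,0)$ with $H(s,\frac1p-\varepsilon_1)$ gives $HB(s,\eta(\frac1p-\varepsilon_1))\hookrightarrow HB(s,0)$. With this fix, your case distinction at the axis is right.

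\textbf{Second correction: vertical edges.} Your claim that the bad points are exactly the excluded ones holds only if $\mathcal N$ is regular in time. If $r_2=r_1$, the right edge $\{r_1-\frac1p\}\times(0,s_2)$ of $\mathcal N^{(1/p)}$ is vertical and is not excluded in (ii) or (iv). By your own criterion there is no vertical sandwich there, since $\mathcal N^{(\varepsilon_0)}$ does not reach the abscissa $r_1-\frac1p$. The paper's proof has the same hole: its step $(r,s)\mapsto(r+\delta,s)$ leaves $\mathcal N^{(1/p)}$ there. In fact the embedding fails for $p>2$. Take $\rho:=r_1-\frac1p$ and $f(t,x'):=e^{2\pi\ic t/T}g(x')$ with $g\in B^{\rho,p}(\R^{n-1})\setminus H^{\rho,p}(\R^{n-1})$. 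Then $f$ belongs to $T_{0\perp}^{\mu,p}(G^{n-1})$, because every factor norm is $\lesssim\|g\|_{B^{\rho,p}}$. But $f$ is not in $BH^{(s,\rho),p}_\perp(G^{n-1})$, whose norm of $f$ is $\simeq\|g\|_{H^{\rho,p}}$. So (ii) and (iv) must either assume regularity in time or also exclude that edge; with that amendment your proof goes through.
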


\begin{proof}
    We first consider the case $r_{reg}>0$.
    Observe that $ T_{0\perp}^{\mu, p}(G^{n-1})$ is a Newton polygon space in the scale $\mathcal{HB}$ and so part \ref{prop: embeddings_trace_spacei} follows immediately from Proposition \ref{prop: properties_Newton_scales}\ref{prop: properties_Newton_scalesii}. 
    
    \medskip 

    For part \ref{prop: embeddings_trace_spaceii}, assume first that $(r, s) \in \mathcal{N}^{(\frac{1}{p})} \setminus {\mathcal{N}^{(\frac{1}{p})}}_V$ belongs to one of the non-horizontal edges of $ \mathcal{N}^{(\frac{1}{p})}$, that is, one can write 
        \begin{equation*}
            (r, s):=(r_j-\frac{1}{p}-\delta, s_j+\delta y_j)
        \end{equation*}
        for some $j\in\set{1,..., J-1}$ and $\delta \in (0, r_j-r_{j+1})$.

        Now pick $\varepsilon_0\in (\frac{1}{p}, 1]$ and $\varepsilon_1\in [0, \frac{1}{p})$ such that $r_j-\frac{1}{p}-\delta\in [r_{j+1}-\varepsilon_i, r_j-\varepsilon_i]$ for $i=0,1$.
        This is always possible since the above conditions are equivalent to $\varepsilon_0 \in (\frac{1}{p}, \frac{1}{p}+\delta]$ and $\varepsilon_1\in [r_{j+1}-r_j+\frac{1}{p}+\delta, \frac{1}{p})$ so one can choose $\varepsilon_0>\frac{1}{p}$, $\varepsilon_1< \frac{1}{p}$ sufficiently close to $\frac{1}{p}$. 

        Then for $i\in\set{0,1}$, the point $(r_j-\delta -\frac{1}{p}, s_j+(\delta +\frac{1}{p}-\varepsilon_i)y_j)$ belongs to $\mathcal{N}(\mu^{(\varepsilon_i)})$. Choosing $\eta:= \eta(\varepsilon_0, \varepsilon_1)$ according to Lemma  \ref{lemma: trace_space_fine_interpolation} yields 
         \begin{align*}
             T_0(\mu) &=(H(\mu^{(\varepsilon_0)}), H(\mu^{(\varepsilon_1)}))_{\eta, p}\\
            &\hookrightarrow (H(s_j+(\delta +\frac{1}{p}-\varepsilon_0)y_j, r_j-\delta -\frac{1}{p}),H(s_j+(\delta +\frac{1}{p}-\varepsilon_1)y_j,r_j-\delta -\frac{1}{p}))_{\eta, p}\\
            &=BH(s_j+(\delta+\frac{1}{p})y_j -((1-\eta)\varepsilon_0+\eta\varepsilon_1)y_j,r_j-\delta -\frac{1}{p})\\
            &=BH(s_j+\delta y_j ,r_j-\delta -\frac{1}{p})
            = BH(s, r).
        \end{align*}
        Secondly, assume that $(r, s) \in  \mathcal{N}^{(\frac{1}{p})}$ is such that $r> r_J$ and $r\notin \{r_1,..., r_J\}$. In other words, $r\in (r_{j+1}, r_j)$ for some $j\in\set{1,..., J-1}$.
        Then the point $(r, s_j+y_j(r_j-r))$ belongs to a non-horizontal edge of $\mathcal{N}^{(\frac{1}{p})}$.
        Since $(r, s)\in \mathcal{N}^{(\frac{1}{p})}$, we deduce that $s\leq s_j+y_j(r_j-r)$ and so the above point yields 
        \begin{equation*}
             T_0(\mu) \hookrightarrow BH( s_j+y_j(r_j-r),r) \hookrightarrow BH(s, r). 
        \end{equation*}
        Thirdly, if $(r, s) \in  \mathcal{N}^{(\frac{1}{p})}\setminus {\mathcal{N}^{(\frac{1}{p})}}_V $ is such that $r> r_J$ and $r\in \{r_1,..., r_J\}$ then for any $\delta>0$ sufficiently small the point $(r+\delta, s)$ verifies the assumptions of the second case.
        Thus, 
        \begin{equation*}
             T_{0\perp}^{\mu, p}(G^{n-1}) \hookrightarrow BH(s, r+\delta) \hookrightarrow BH(s, r).
        \end{equation*}
        Finally, if $(r,s) \in \mathcal{N}^{(\frac{1}{p})}\setminus {\mathcal{N}^{(\frac{1}{p})}}_V $ with $r\leq r_J, s\neq s_J$ then for $\delta>0$ sufficiently small the point $(r_J+\delta, s)$ verifies the assumptions of the second case, and so 
        \begin{equation*}
             T_0(\mu) \hookrightarrow BH(s, r_J+\delta) \hookrightarrow BH(s, r). 
        \end{equation*}

        We now turn to the case $r_{reg}=0$.
        We first consider part \ref{prop: embeddings_trace_spaceiii}.
        For vertices $(r, s)=(r_j-\frac{1}{p}, s_j)$ with $j\in\set{1,..., J}$, the assertion follows directly from the definition of $T_0(\mu)$.
        Now assume that  $(r, s)$ is not a vertex and belongs to the edge of $\mathcal{N}^{(\frac{1}{p})}$ joining $(r_J-\frac{1}{p}, s_J)$ to $(0,  s_{J+1}-\frac{1}{p}y_J)$.
        Such a point can be written $(r,s)=(\delta, s_{J+1}-(\delta+\frac{1}{p})y_J) $.
        Assume furthermore $\delta\in (0, 1-\frac{1}{p})$.  
        
        The point  $(\delta+\frac{1}{p}, s_{J+1}-(\delta+\frac{1}{p})y_J)$ belongs to $\mathcal{N}^{(\frac{1}{p})}$, while $(0, s_{J+1}-(\delta+\frac{1}{p})y_J) $ belongs to $\mathcal{N}=\mathcal{N}^{(0)}$.
        Thus, setting $\varepsilon_0:= \delta+\frac{1}{p}$, $\varepsilon_1:=0$, and choosing $\eta:= \eta(\varepsilon_0, \varepsilon_1)$ as in Lemma \ref{lemma: trace_space_fine_interpolation}, we obtain 
         \begin{align*}
            T_0(\mu) =(H(\mu^{(\delta+\frac{1}{p})}), H(\mu))_{\eta, p}
            &\hookrightarrow (H( s_{J+1}-(\delta+\frac{1}{p})y_J,0),H( s_{J+1}-(\delta+\frac{1}{p})y_J,\delta+\frac{1}{p}))_{\eta, p}\\&= HB( s_{J+1}-(\delta+\frac{1}{p})y_J,\delta)
            = HB(s, r),
        \end{align*}
        where we used Lemma \ref{lemma: interpolation_same_time_param}.

        For the general case $(r, s)\in  \mathcal{N}(\mu^{(\frac{1}{p})})\setminus \{(r_{J+1}, s_{J+1})\}$, we define the Newton polygon 
        \begin{equation*}
             \mathcal{N}_\delta= \mathcal{N}(\{(r_j-\frac{1}{p}, s_j)\mid j=1,..., J\}\cup\{(\delta, s_{J+1}-(\delta+\frac{1}{p})y_J)\}) .
        \end{equation*}
        and denote by $\mu_\delta$ its order function.
        The preceding points then imply $T_0(\mu) \hookrightarrow HB(\mu_\delta)$, and there exists some $\delta_0:=\delta_0(r, s)$ such that the point $(r,s)$ belongs to $\mathcal{N}_{\delta_0}$.
        Since $\mathcal{HB}$ is a Newton polygon scale, we have 
        \begin{equation*}
           T_0(\mu) \hookrightarrow HB(\mu_\delta) \hookrightarrow HB(s, r). 
        \end{equation*}
        This settles the part \ref{prop: embeddings_trace_spaceiii} in the case $r_{reg}=0$.
        Finally, the proof for part \ref{prop: embeddings_trace_spaceiv} in the case $r_{reg}=0$ is the same as in the case $r_{reg}>0$.
\end{proof}
    We note that one can obtain a similar result for the spaces $T_i(\mu)$ with $i\in\set{0,..., r_1-1}$ from the preceding proposition by recalling that $T_i(\mu)= T_0(\mu^{(i)})$. 
\subsection{$L^p$-Theory for Mixed-Order Equations under General Boundary Conditions}
We are now ready to apply our abstract Theorem \ref{thm: abstract_complementing_condition} to the setting of purely oscillatory Newton polygon spaces.
The resulting Theorem \ref{thm: main_result} below characterizes well-posedness in terms of the complemented boundary symbol and identifies the corresponding compatibility space.
We start by fixing the notations and definitions in that context. 
\subsubsection*{Admissible symbols}
We consider a symbol $P\in \mathcal{O}^\perp(\R\times\R^n)$ of the form 
\begin{equation}\label{eqn: notations_P}
   P(\tau, \xi', \xi_n)= \sum_{j=0}^N c_j(\tau, \xi') (\ic\xi_n)^j,
\end{equation}
where $N\in\N_0$, $c_j\in \mathcal{O}^\perp(\R\times \R^{n-1})$ for $j\in\{0,\ldots, N\}$, and where we recall the notations of Section \ref{sec:time_per} for multipliers in purely oscillatory spaces.
We want to consider $\op[P]$ as an admissible operator in the sense of Definition \ref{def: admissible}.
With Proposition \ref{prop_support_preserving} in mind, this will be done by studying the roots of the complex polynomial $z\mapsto P(\tau, \xi', z)$. 
\begin{definition}\label{def:admissible_symb}
   Let $P$ be as in (\ref{eqn: notations_P}) and assume that $P$ is strongly $\mathcal{N}$-elliptic with a strictly positive order function $\mu$. We say that the symbol $P$ is \emph{admissible} if there exist strictly positive order functions $\mu_+, \mu_-$ and symbols $P^+, P^-\in \mathcal{O}^\perp(\R\times\R^n)$ with $P^\pm(\tau, \xi', \xi_n):= \sum_{j=0}^{N_\pm} c_j^\pm(\tau, \xi') (\ic\xi_n)^j$ where $c_j^\pm\in \mathcal{O}^\perp(\R\times\R^{n-1})$ such that: 
   \begin{itemize}
       \item $\mu= \mu_+ +\mu_-$, $P= P_+P_-$
       \item $P_\pm$ is strongly $\mathcal{N}$-elliptic with order function $\mu_\pm$
   
   \item For all fixed $(\tau, \xi')\in \mathbb{R}\times \mathbb{R}^{n-1}$ with $\tau\neq 0$, the roots $\rho_1^\pm(\tau, \xi'), ..., \rho_{N_\pm}^\pm(\tau, \xi')$ of the complex polynomial $P(\tau, \xi', \cdot)$ lie in $\mathbb{C}_\mp$, that is, $\Im (\rho_j^\pm(\tau, \xi'))\lessgtr 0$ for $j\in\set{1,..., N_\pm}$. 
   \end{itemize} 
\end{definition}
Remark that in that case we have $N=N_+ + N_-$, $N=\ord \mu$, and $N_\pm= \ord \mu_\pm$.
If $P$ is an admissible symbol, then one can consider the operator $\opA:= \op[P]$ as an admissible operator in the sense of Definition \ref{def: admissible} with $\opA^\pm = \op[P^\pm]$:
Let $\nu$ be a strictly positive order function whose Newton polygon has integer vertices and set
\begin{align*}
 F:= H^{\nu, p}_\perp(G),
 \qquad
 H:= H^{\nu+\mu_-, p}_\perp(G),
 \qquad
 E:=H^{\mu+\nu, p}_\perp(G).   
\end{align*}
Then Theorem \ref{thm: whole space} and the $\mathcal{N}$-ellipticity conditions yield $\op[P] \in \mathcal{L}_{\mathrm{iso}}(E,F)$, as well as $\op[P^+] \in \mathcal{L}_{\mathrm{iso}}(E,H)$ and $\op[P^-] \in \mathcal{L}_{\mathrm{iso}}(H,F)$.
We will write $M:=\ord(\nu)\in\N$ throughout.

The required support-preserving properties follow from Proposition \ref{prop_support_preserving}.
On the one hand, both $\op[P^+]$ and $\op[P^-]$ preserve both upper and lower support because they are differential operators in the variable $x_n$.
On the other hand, for any $(k, \xi')\in \mathbb{Z}\times \mathbb{R}^{n-1}$ fixed with $k\neq 0$, the condition on the roots implies that $\frac{1}{P^\pm(k, \xi', \xi_n)}$ admits a well-defined extension $z\mapsto \frac{1}{P^\pm(k, \xi', z)}$ to $\overline{\mathbb{C}_\pm}$ verifying the hypothesis of Proposition \ref{prop_support_preserving}.

\subsubsection*{Trace spaces} 
We set the trace spaces $\mathcal{E}:=T^{\mu+\nu,p}_\perp(G^{n-1})$ and $\mathcal{H}:= T^{\mu_-+\nu, p}_\perp(G^{n-1})$.
Then Theorem \ref{thm: trace_space} yields operators $\Tr_E:= \Tr^{(N+M)}$ and $\Tr_H:=\Tr^{(N_-+M)}$ together with extension operators $\Ext_E:=E^{\mu+\nu}$ and $\Ext_H:=E^{\mu_-+\nu}$ such that $(\overline{E}_+,r^+\dot{E}_+,\mathcal{E},\Tr_E,\Ext_E)$ and $(\overline{H}_+,r^+\dot{H}_+,\mathcal{H},\Tr_H,\Ext_H)$ are abstract trace tuples in the sense of Definition \ref{def: abstract_trace}. 

\subsubsection*{Complementing boundary matrix}
In order to define our boundary operator $\opB$, we consider $N_+$ operators $B_1,..., B_{N_+}$ of the form
\begin{equation*}
    B_i= \sum_{j=0}^{N-1} \op[b_{ij}]\Tr_j
\end{equation*}
with $b_{ij}\in \mathcal{O}^\perp(\R\times\R^{n-1})$. 
Setting $b_{ij}:=0$ for $j\in \set{N,\ldots,N+M-1}$, we define the $N_+\times (N+M)$ symbol matrix $\mathfrak{B}(\tau, \xi')$ by 
\begin{equation*}
    \mathfrak{B}_{ij}(\tau, \xi'):= b_{ij-1}(\tau, \xi')
\end{equation*}
and set $\opB:= \op[\mathfrak{B}]$.
Then for any $u\in \overline{E}_+$ and $g=(g_1,..., g_{N_+})\in \mathcal{G}$, it holds $B_i u=g_i$ for all $i\in\set{1,..., N_+}$ if and only if  $\opB \Tr_Eu= g$.

\medskip 

In a similar fashion, the complementary boundary operator $\opC= \Tr_H \op[P^+]_+ \Ext_E$ can be represented by the $(N_-+M)\times (N+M)$ symbol matrix $\mathfrak{C}$ with
\begin{equation*}
    \mathfrak{C}_{ij}=c^+_{j-i},
\end{equation*}
where one understands $c_k^+=0$ if $k<0$ or $k>N_+$.
Indeed, by $\Tr_j (\ic\xi_n)^k=\Tr_{j+k}$ we have
\begin{equation*}
    \opC= \Tr_H \op[P^+]_+ \Ext_E= \op[\mathfrak{C}]\Tr_E\Ext_E=\op[\mathfrak{C}]. 
\end{equation*}
Since we are interested in injectivity and invertibility properties of the complemented operator $(\opB, \opC)$, one finally defines the $(N+M)\times(N+M)$ square matrix of symbols $\mathfrak{BC}$ where 
\begin{equation*}
    \mathfrak{BC}_{ij}= \left\{\begin{array}{ll}
    \mathfrak{B}_{ij}     &\text{if } i\in\set{1,..., N_+},  \\
     \mathfrak{C}_{i-N_+,j}    & \text{if } i\in\set{N_++1,..., N+M}. 
    \end{array}\right.
\end{equation*}
The matrix $\mathfrak{BC}$ will be called \emph{complemented boundary matrix}, and we have $(\opB, \opC)= \op[\mathfrak{BC}]$. 
\subsubsection*{Mixed-order system setting in the trace spaces}
To apply our Theorem \ref{thm: abstract_complementing_condition}, we must study the invertibility of complemented boundary operator $(\opB, \opC)=\op[\mathfrak{BC}]: \mathcal{E}\to \mathcal{G}\times \mathcal{H}$.
To do so, we wish to make use of the notion of mixed-order system as defined in Definition \ref{def: mixed_order_systems}. However, a complication arises from the fact that the trace spaces $\mathcal{E}, \mathcal{H}$ are not Newton polygon spaces but rather interpolation of two of such spaces, as we stated in Theorem \ref{thm: trace_space_interpolation}.
Thus, we will need two ``sets'' of order functions for their description, one for each argument of the real interpolation. 
For the order functions associated with the trace space $\mathcal{E}$ i.e. the columns of the mixed-order systems, one has
\begin{equation*}
    \mathcal{E}=T^{\mu+\nu, p}_\perp(G^{n-1})=  \prod_{j=0}^{N+M-1} (H^{t_{j,1}, p}_\perp (G^{n-1}), H^{t_{j, 0}, p}_\perp(G^{n-1}))_{1-\frac{1}{p}, p},
\end{equation*}
where we set $t_{j+1,\mathfrak{k}}:= (\mu+\nu)^{(j+\mathfrak{k})}$ for $j\in\set{0,..., N+M-1}$, $\mathfrak{k}\in\set{0,1}$ in line with Theorem \ref{thm: trace_space_interpolation}. 
\medskip
In order to retain sufficient flexibility, we will assume that the target space  $\mathcal{G}$ of $\opB$ can be written as an interpolation space of any two Newton polygon spaces, i.e. we set
\begin{equation*}
     \mathcal{G}:=\prod_{i=1}^{N_+} ( H^{s_{i,1},p}_\perp(G^{n-1}), H^{s_{i,0}, p}_\perp(G^{n-1}))_{1-\frac{1}{p}, p}
\end{equation*}
for some order functions $s_{i, \mathfrak{k}}$ $i\in\set{1,\ldots, N_+}$, $\mathfrak{k}\in\set{0,1}$. Those order functions will correspond to the rows $1,..., N_+$ of the mixed-order systems. 

\medskip

Finally, the order functions associated with the target space $\mathcal{H}$ of $\opC$, i.e. to the rows $N_++1,..., N+M$
we have
\begin{equation*}
    \mathcal{H}=T^{\mu_-+\nu, p}_\perp(G^{n-1})=:\prod_{i=0}^{N_-+M-1} (H^{s_{N_++1+i,1}, p}_\perp (G^{n-1}), H^{s_{N_++1+i,0}, p}_\perp(G^{n-1}))_{1-\frac{1}{p}, p}
\end{equation*}
by setting $s_{N_++1+i,\mathfrak{k}}:= (\mu_-+\nu)^{(i+\mathfrak{k})}$ in line with Theorem \ref{thm: trace_space_interpolation} for $i\in\set{0,\ldots, N_-+M-1}$, $\mathfrak{k}\in\set{0,1}$.
We emphasize that the order functions $t_{1, \mathfrak{k}}, ..., t_{N+M, \mathfrak{k}}$ and $s_{N_++1, \mathfrak{k}}, ..., s_{N+M, \mathfrak{k}}$ are entirely determined by the admissible structure of the operator $\op[P]$, while we maintain flexibility for the choice of $s_{1, \mathfrak{k}}, ..., s_{N_+, \mathfrak{k}}$.
This allows us to choose the regularity of the boundary data $g\in \mathcal{G}$ and will prove to be essential for certain applications, see Section \ref{sec:app}.

\subsubsection*{Compatibility space for the data}
Assume that $\det(\mathfrak{BC}(\tau, \xi'))\neq 0$ for all $(\tau, \xi')\in \mathbb{R} \times \mathbb{R}^{n-1}$ with $\tau\neq 0$, that is, $(\opB,\opC)= \op[\mathfrak{BC}]$ is injective. 
Then the set $\Comp$ defined in Theorem \ref{thm: abstract_complementing_condition} can be written as
\begin{align*}
   \Comp&= \{ (f, g)\in \overline{F}_+ \times \mathcal{G} \mid (g, \Tr_H(\opA^-)_+^{-1}f)\in \ran(\opB, \opC)\} \\
   &=\{(f, g)\in \overline{H}^{\nu, p}_\perp(G_+) \times \mathcal{G}\mid (g, \Tr^{(N_-+M)}(\op[P^-]^{-1}_+f) \in \ran \op[\mathfrak{BC}]\}\\
   &= \{(f, g)\in \overline{H}^{\nu, p}_\perp(G_+) \times \mathcal{G}\mid b(f,g)\in T^{\mu+\nu,p}_\perp(G^{n-1}) \}
\end{align*}
with $b(f,g):=\op[\mathfrak{BC}^{-1}](g,\Tr^{(N_-+M)}(\op[P^-]_+^{-1} f))$. This will be the set of \emph{compatible} right-hand sides $(f, g)$ associated with $(P, B_1,..., B_{N_+}, \nu)$.
We endow it with the norm
\begin{align*}
    \|f,g\|_{\Comp}:=\|f\|_{\overline{H}^{\nu,p}_\perp(G_+)} + \|b(f,g)\|_{T^{\mu+\nu,p}_\perp(G^{n-1})}.
\end{align*}
We are now ready to formulate our main result. 
\begin{theorem}\label{thm: main_result}
    With the notations settled above, assume that for $\mathfrak{k}\in\{0,1\}$ and $i,j \in\{ 1,..., N+M\}$, the order function $t_{j, \mathfrak{k}}-s_{i, \mathfrak{k}}$ is a strong upper order function for the symbol $\mathfrak{BC}_{ij}$.
    Suppose that $\det(\mathfrak{BC}(\tau, \xi'))\neq 0$ for all $(\tau, \xi')\in \mathbb{R} \times \mathbb{R}^{n-1}$ with $\tau\neq 0$.
   
    \begin{enumerate}
        \item For $S:=(\op[P]_+,\opB \Tr^{(N+M)})$ we have $S\in \call_{\mathrm{iso}}(\overline{H}^{\mu+\nu,p}_\perp(G_+),\Comp)$.
        In particular,
        for all $(f, g) \in \Comp$, the problem 
        \begin{align}\label{thm: main_result_e1}
            \left\{ \begin{array}{rll}
            \op[P]_+u     &=&f  \\
             B_i u&=&g_i,    \qquad  i\in\{1,..., N_+\} 
            \end{array}\right.
        \end{align}
        admits a unique solution $u\in \overline{H}^{\mu+\nu,p}_\perp(G_+)$, and it holds
        \begin{equation*}
            \|u\|_{\mu+\nu, p, +} \lesssim \|f\|_{\nu, p, +} + \|b(f,g)\|_{T^{\mu+\nu,p}_\perp(G^{n-1})}
        \end{equation*}
        with $b(f,g):=\op[\mathfrak{BC}^{-1}](g,\Tr^{(N_-+M)}(\op[P^-]_+^{-1} f))$.
        \item If furthermore $ \sum_{i=1}^{N+M} (t_{i,0}-s_{i,0})=  \sum_{i=1}^{N+M} (t_{i,1}-s_{i,1})=:\delta$ and $\det\mathfrak{BC}$ is strongly $\mathcal{N}$-elliptic with order function $\delta$, then $\Comp=  \overline{H}^{\nu, p}_\perp(G_+) \times \mathcal{G}$ topologically.
        In particular, for all $(f, g) \in \overline{H}^{\nu, p}_\perp(G_+) \times \mathcal{G}$, the problem \eqref{thm: main_result_e1} admits a unique solution $u\in \overline{H}^{\mu+\nu,p}_\perp(G_+)$, and it holds
        \begin{equation*}
            \|u\|_{\mu+\nu, p, +} \lesssim \|f\|_{\nu, p, +} + \|g\|_{\mathcal{G}}. 
        \end{equation*} 
    \end{enumerate}
\end{theorem}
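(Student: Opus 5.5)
Part (1) follows by feeding the concrete objects set up above into Theorem \ref{thm: abstract_complementing_condition}. Put $\opA:=\op[P]$, $\opA^\pm:=\op[P^\pm]$, $E=H^{\mu+\nu,p}_\perp(G)$, $H=H^{\mu_-+\nu,p}_\perp(G)$, $F=H^{\nu,p}_\perp(G)$. Admissibility in the sense of Definition \ref{def: admissible} has already been checked. Theorem \ref{thm: whole space} supplies the isomorphisms. The support properties come from Proposition \ref{prop_support_preserving} together with the root condition in Definition \ref{def:admissible_symb}: $\op[P^\pm]$ are differential operators in $x_n$, $1/P^+$ extends holomorphically to $\overline{\C_-}$, and $1/P^-$ extends holomorphically to $\overline{\C_+}$. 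Theorem \ref{thm: trace_space} gives the abstract trace tuples with $\calu=\cale=T^{\mu+\nu,p}_\perp$ and $\calh=T^{\mu_-+\nu,p}_\perp$.

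The remaining hypothesis is injectivity of $(\opB,\opC)=\op[\mathfrak{BC}]$ on $\cale$. Since everything is a Fourier multiplier in $(t,x')$ and $\det\mathfrak{BC}(\tau,\xi')\neq0$ for $\tau\ne0$, injectivity holds: if $\op[\mathfrak{BC}]b=0$, then $\mathfrak{BC}\,\calf b=0$ on $\widehat{G}^{n-1}$ off $k=0$, and purely oscillatory distributions vanish at $k=0$, so $b=0$. Theorem \ref{thm: abstract_complementing_condition} then yields $S\in\call_{\mathrm{iso}}(\overline{E}_+,\Comp)$ and the stated estimate.

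One point needs care. The theorem defines $\Comp$ through $\ran(\opB,\opC)$ with the initial topology. The norm used here instead is $\|f\|+\|b(f,g)\|_{\cale}$, with $b(f,g)=\op[\mathfrak{BC}^{-1}](g,\Tr_H\op[P^-]^{-1}_+f)$ computed in $\mathcal S'_\perp$. These agree because $\op[\mathfrak{BC}^{-1}]$ inverts $\op[\mathfrak{BC}]$ on its range. I also need to confirm that the complementary operator equals $\opC=\op[\mathfrak{C}]$; this follows from $\Tr_j\op[(\ic\xi_n)^k]_+=\Tr_{j+k}$ on smooth functions, extended by density.

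For part (2) I want to apply Lemma \ref{js013}, which requires $\op[\mathfrak{BC}]\in\call_{\mathrm{iso}}(\cale,\calg\times\calh)$. For each $\mathfrak k\in\{0,1\}$, the hypotheses say that $\mathfrak{BC}$ is a strong mixed-order system (Definition \ref{def: mixed_order_systems}) in $n-1$ space dimensions. The column order functions are $t_{j,\mathfrak k}$, the row order functions are $s_{i,\mathfrak k}$, and the determinant is strongly $\caln$-elliptic with the common order function $\delta$. Theorem \ref{thm: whole_space_systems} with $\nu=0$ gives
\begin{align*}
\op[\mathfrak{BC}]\in\call_{\mathrm{iso}}\Bigl(\prod_j H^{t_{j,\mathfrak k},p}_\perp(G^{n-1}),\prod_i H^{s_{i,\mathfrak k},p}_\perp(G^{n-1})\Bigr),\qquad \mathfrak k\in\{0,1\}.
\end{align*}
In both cases the inverse is the same operator $\op[\ad\mathfrak{BC}]\op[(\det\mathfrak{BC})^{-1}]$. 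Real interpolation $(\cdot,\cdot)_{1-\frac1p,p}$ applied componentwise then transfers the isomorphism to the product of interpolation spaces. By Theorem \ref{thm: trace_space_interpolation}, that product is $\cale$ on the domain side. On the target side it is $\calg\times\calh$, by the definition of $\calg$ and by Theorem \ref{thm: trace_space_interpolation} applied to $\mu_-+\nu$. Interpolation of a product of couples is the product of the interpolations. Lemma \ref{js013} then gives $\Comp=\overline{H}^{\nu,p}_\perp(G_+)\times\calg$ topologically, and the estimate follows from $\|b(f,g)\|_\cale\lesssim\|g\|_\calg+\|\Tr_H\op[P^-]_+^{-1}f\|_\calh\lesssim\|g\|_{\calg}+\|f\|_{\nu,p,+}$.

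I expect the main obstacle to be a bookkeeping point. Theorem \ref{thm: whole_space_systems} is stated for order functions and Newton polygon spaces on $G$, whereas here it is used on $G^{n-1}$ with differences of shifted order functions such as $(\mu+\nu)^{(j+\mathfrak k)}-s_{i,\mathfrak k}$, which need not be strictly positive. The results of Section \ref{sec:whole_space} allow arbitrary order functions, so this should go through. One must also check that the interpolation couples on the two sides are compatible, that is, that the same symbol acts on both endpoints. That holds because all spaces sit inside $\mathcal S'_\perp(G^{n-1})$ and the multiplier is defined there uniformly.
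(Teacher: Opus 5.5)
Your route is the paper's. Part (1) goes through Theorem \ref{thm: abstract_complementing_condition}. Part (2) combines Theorem \ref{thm: whole_space_systems} at $\mathfrak k=0,1$ with real interpolation and Lemma \ref{js013}, and you rightly note that the inverse $\op[\ad\mathfrak{BC}]\op[(\det\mathfrak{BC})^{-1}]$ is the same at both endpoints. Part (2) is fine as written.

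There is a missing step in part (1). You say injectivity of $(\opB,\opC)$ is the only remaining hypothesis. That is not so: Definition \ref{def: abstract_complementing_condition_upper}, and hence Theorem \ref{thm: abstract_complementing_condition}, presupposes $\opB\in\call(\cale,\calg)$. This is not automatic, because $\calg$ is built from the freely chosen order functions $s_{1,\mathfrak k},\dots,s_{N_+,\mathfrak k}$. Without it, $\opB\Tr^{(N+M)}u$ need not lie in $\calg$, so $S$ need not map $\overline{H}^{\mu+\nu,p}_\perp(G_+)$ into $\Comp\subseteq\overline{H}^{\nu,p}_\perp(G_+)\times\calg$ at all. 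Your part (1) never uses the hypothesis that $t_{j,\mathfrak k}-s_{i,\mathfrak k}$ is a strong upper order function for $\mathfrak{BC}_{ij}$, and this is exactly where that hypothesis enters.

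The fix is the boundedness half of your part (2) argument, which is what the paper does. Apply Proposition \ref{prop_boundedness} entrywise on $G^{n-1}$ to get $\op[\mathfrak{BC}]\in\call\bigl(\prod_j H^{t_{j,\mathfrak k},p}_\perp,\prod_i H^{s_{i,\mathfrak k},p}_\perp\bigr)$ for $\mathfrak k\in\{0,1\}$. Then interpolate, using Theorem \ref{thm: trace_space_interpolation} for $\cale$ and $\calh$ and the definition of $\calg$. This gives $\op[\mathfrak{BC}]\in\call(\cale,\calg\times\calh)$, and in particular $\opB=\op[\mathfrak B]\in\call(\cale,\calg)$. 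No such check is needed for $\opC=\Tr_H\op[P^+]_+\Ext_E$, which is a composition of bounded maps.

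A smaller point concerns your injectivity argument. Part (1) assumes only $\det\mathfrak{BC}\neq0$, with no lower bound, so $\mathfrak{BC}^{-1}$ need not be a polynomially bounded multiplier and you cannot simply divide in $\mathcal S'_\perp$. Argue slice by slice instead. For each $k\neq0$, the restriction of $\calf b$ to $\{k\}\times\R^{n-1}$ is a distribution in $\cald'(\R^{n-1})$. It may be multiplied by the smooth matrix $\mathfrak{BC}(k,\cdot)^{-1}$, which shows it vanishes.
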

\begin{proof}
\begin{enumerate}
    \item We want to apply Theorem \ref{thm: abstract_complementing_condition} with $\opA=\opA^-\opA^+:= \op[P]$ and $E$, $F$, $H$, $\mathcal{E}$, $\mathcal{H}$, $\mathcal{G}$ defined as above and where  $(\opB, \opC)=\op[\mathfrak{BC}]$.
    Since for fixed $\mathfrak{k}\in\{0,1\}$, the order function $t_{j,\mathfrak{k}}-s_{i,\mathfrak{k}}$ is a strong upper order function for the component $(\mathfrak{BC})_{ij}$, Proposition \ref{prop_boundedness} yields
    \begin{equation*}
        \op[\mathfrak{BC}] \in \mathcal{L}\big(\prod_{j=1}^{N+M} H^{t_{j, \mathfrak{k}}, p}_\perp(G^{n-1}) , \prod_{i=1}^{N+M} H^{s_{i, \mathfrak{k}}, p}_\perp(G^{n-1})\big). 
    \end{equation*}
    Since we have by definition 
    \begin{equation*}
        \mathcal{E}=\big(\prod_{j=1}^{N+M} H^{t_{j, 1}, p}_\perp(G^{n-1}), \prod_{j=1}^{N+M} H^{t_{j, 0}, p}_\perp(G^{n-1})\big)_{1-\frac{1}{p}, p}
    \end{equation*}
     and 
     \begin{equation*}
         \mathcal{G} \times \mathcal{H}=\big(\prod_{i=1}^{N+M} H^{s_{i, 1}, p}_\perp(G^{n-1}), \prod_{i=1}^{N+M} H^{s_{i,0}, p}_\perp(G^{n-1})\big)_{1-\frac{1}{p}, p},
     \end{equation*}
     interpolation gives
   \begin{equation*}
       \op[\mathfrak{BC}] \in \mathcal{L}(\mathcal{E}, \mathcal{G} \times \mathcal{H}),
   \end{equation*}
   which in particular implies $\opB=\op[\mathfrak{B}]\in \mathcal{L}(\mathcal{E}, \mathcal{G})$. 
   As we mentioned above, if $\det \mathfrak{BC}\neq 0$, the operator  $(\opB, \opC)=\op[\mathfrak{BC}]$ is injective and fulfills the abstract complementing boundary condition in Definition \ref{def: abstract_complementing_condition_upper} for $\op[P^+]_+$.
   Thus, the first point follows immediately by Theorem \ref{thm: abstract_complementing_condition}.
   \item Since $\mathfrak{BC}$ is a mixed-order system with the weights $(t_{1, \mathfrak{k}},..., t_{N+m, \mathfrak{k}}), (s_{1,\mathfrak{k}},..., s_{N+m,\mathfrak{k})})$ for both $\mathfrak{k}=0$ and $\mathfrak{k}=1$, we obtain
   \begin{equation*}
        \op[\mathfrak{BC}] \in \mathcal{L}_{\mathrm{iso}}(\prod_{j=1}^{N+M} H^{t_{j,\mathfrak{k}}, p}_\perp(G^{n-1}) , \prod_{i=1}^{N+M} H^{s_{i,\mathfrak{k}}, p}_\perp(G^{n-1}))
    \end{equation*}
    for $\mathfrak{k}\in\{0,1\}$ by Theorem \ref{thm: whole_space_systems}, and the same interpolation argument as above gives 
    \begin{equation*}
       \op[\mathfrak{BC}] \in \mathcal{L}_{\mathrm{iso}}(\mathcal{E}, \mathcal{G} \times \mathcal{H}). 
   \end{equation*}
   This completes the proof by Lemma \ref{js013}.
\end{enumerate}

\end{proof}
In view of the lower estimates defining $\mathcal{N}$-ellipticity, it is clear that if $P\in \mathcal{O}^\perp(\mathbb{R}\times \mathbb{R}^{n-1})$ is $\mathcal{N}$-elliptic, then for all fixed $(\tau, \xi')\in \mathbb{R}\times \mathbb{R}^{n-1}$ with $\tau\neq 0$, the complex polynomial $P(\tau, \xi', \cdot)$ has no real roots.
Thus, by selecting the roots according to the sign of their imaginary parts,  one can always factorise  $P=P^-P^+$  in such a way that the roots $\rho_1^\pm(\tau, \xi'), ..., \rho_{N_\pm}^\pm(\tau, \xi')$ of the complex polynomial $P(\tau, \xi', \cdot)$ lie in $\mathbb{C}_\mp$. 
However, knowing whether both $P^+$ and $P^-$ are (strongly) $\mathcal{N}$-elliptic is non-trivial in the general case.
We conclude this section by showing that at least for a parabolic symbol this is the case. 

\begin{proposition}\label{prop: admissible_parabolic}
    Let $P\in \mathcal{O}^\perp(\mathbb{R}\times \mathbb{R}^n)$ be such that $P(\tau, \xi)\neq 0$ for all $(\tau, \xi)\in (\mathbb{R}\times \mathbb{R}^n)\setminus\set{(0,0)}$.
    If $P$ is $\rho$-homogeneous of degree $N$ for some $\rho, N\in \mathbb{N}_0$, then $P$ admits an admissible factorisation $P=P^-P^+$ in the sense of Definition \ref{def:admissible_symb}, and $P^\pm$ is $\rho$-homogeneous of degree $N_\pm$, $N_++N_-=N$.
\end{proposition}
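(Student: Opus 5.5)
The plan is to factor by roots in the normal variable and then use quasihomogeneity to move every estimate onto a compact set. Since $P$ is $\rho$-homogeneous of degree $N$ and a differential symbol, I read it as a polynomial in $\xi_n$, $P(\tau,\xi',\xi_n)=\sum_{j=0}^{N'} c_j(\tau,\xi')(\ic\xi_n)^j$. For $(\tau,\xi')\neq(0,0)$ it has no real roots, because $P\neq 0$ away from the origin.

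\emph{Step 1: the leading coefficient.} First I show $c_{N'}$ is a nonzero constant with $N'=N$. Homogeneity gives $c_j(\eta^\rho\tau,\eta\xi')=\eta^{N-j}c_j(\tau,\xi')$. Consider $(\tau,\xi')=(0,0)$: then $P(0,0,\xi_n)\neq0$ for $\xi_n\neq0$ and is homogeneous of degree $N$ in $\xi_n$, so $P(0,0,\xi_n)=c\,(\ic\xi_n)^N$ with $c\neq0$. It follows that the top degree in $\xi_n$ is $N$ with coefficient $c_N\equiv c$; higher coefficients would be homogeneous of negative degree and smooth, hence zero. This requires $c_j$ to be polynomial or continuous at the origin, which I will assume, since $P$ is a genuine symbol of a differential operator.

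\emph{Step 2: counting and grouping roots.} For $(\tau,\xi')\neq(0,0)$ the $N$ roots lie off $\R$. The set $\{(\tau,\xi')\neq 0\}$ is connected because $n\ge2$, and the roots depend continuously on the point. Hence the number $N_+$ of roots in $\C_-$ (and $N_-$ in $\C_+$) is constant. Define
\[
P^+:=c\prod_{\Im\rho<0}(\ic\xi_n-\ic\rho)\quad\text{and}\quad P^-:=\prod_{\Im\rho>0}(\ic\xi_n-\ic\rho),
\]
suitably normalised. Their coefficients are elementary symmetric functions of one root group, so they are smooth in $(\tau,\xi')\neq0$; one sees this via Cauchy integrals $\frac{1}{2\pi\ic}\oint \zeta^k P'/P\,d\zeta$ over a contour enclosing the lower half-plane roots. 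Scaling maps roots to roots via $\rho(\eta^\rho\tau,\eta\xi')=\eta\rho(\tau,\xi')$ and preserves the sign of the imaginary part, so $P^\pm$ is $\rho$-homogeneous of degree $N_\pm$, with $N_++N_-=N$.

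\emph{Step 3: ellipticity.} $P^\pm$ is smooth on $(\R\setminus\{0\})\times\R^n$. It vanishes nowhere away from the origin, because its roots are non-real, and at $(\tau,\xi')=0$ it equals a constant times $(\ic\xi_n)^{N_\pm}$. Example~\ref{js004} then gives strong $\mathcal N$-ellipticity with order functions $\mu_\pm=N_\pm o_{1/\rho}$, so $\mu_++\mu_-=\mu$. The root location condition of Definition~\ref{def:admissible_symb} holds by construction.

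The main obstacle is smoothness of $P^\pm$ near $\tau=0$ in the larger class $\mathcal O^\perp$, where the symbol may be non-smooth at $\tau=0$. This is harmless because only $\tau\neq0$ matters, and Example~\ref{js004} only needs smoothness away from the origin. The remaining check is polynomial boundedness of derivatives of $\eta P^\pm$, which follows from homogeneity.
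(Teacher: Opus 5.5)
Your proposal is correct and is essentially the paper's proof: you split the roots of $P(\tau,\xi',\cdot)$ by the sign of their imaginary part, use connectedness of $(\R\times\R^{n-1})\setminus\{(0,0)\}$ to fix $N_\pm$, get smoothness and $\rho$-homogeneity of $P^\pm$ (via Cauchy integrals, where the paper cites Tanabe), and conclude with Example~\ref{js004}; your Step~1 also justifies the constant leading coefficient, which the paper simply takes as given. Two minor remarks: first, your labeling is the reverse of the paper's proof, which lets $P^\pm$ collect the roots in $\C_\pm$ (this is what the later use requires, since $\op[P^-]^{-1}$ must preserve lower support, i.e.\ $1/P^-$ must be holomorphic on $\C_+$), although Definition~\ref{def:admissible_symb} literally says $\C_\mp$, so this is only a swap of names; second, exactly as in the paper, $P^\pm$ is not smooth at the points $(0,0,\xi_n)$ (e.g.\ the factors $\xi_n\pm\ic\sqrt{\ic\tau+|\xi'|^2}$ of the heat symbol), so Example~\ref{js004} does not literally apply, but your observation that $P^\pm$ extends continuously there with nonzero value, together with the homogeneity of its coefficients, gives the required estimates by the same compactness argument.
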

\begin{proof}
    In view of Example \ref{js004}, the $\rho$-homogeneity of $P$ implies that $P$ is strongly $\mathcal{N}$-elliptic with order function $\mu=No_{\frac{1}{\rho}}$.
    By assumption, it holds that $P(\tau,\xi',\xi_n)=\sum_{j=0}^N c_j(\tau,\xi')(\ic \xi_n)^j$ with $c_j\in \mathcal{O}^\perp(\R\times\R^{n-1})$, and $c_N(\tau,\xi')=c_N\ne 0$ is a non-zero constant.
    We assume without loss of generality $c_N=1$.
    Let us for each $(\tau,\xi')\in (\R\times \R^{n-1})\setminus\set{(0,0)}$ denote by $R(\tau,\xi')$ the multiset\footnote{A \emph{multiset} allows for multiple instances of each of its elements. Here, this is relevant since multiple roots may occur.} of roots of the complex polynomial $z\mapsto P(\tau,\xi',z)$.
    In particular, $|R(\tau,\xi')|=N$.
    Since there are no real roots by assumption, we may decompose $R(\tau,\xi')$ into the disjoint multisets $R_+(\tau,\xi')$ and $R_-(\tau,\xi')$ of roots with positive and negative imaginary part, respectively.
    The coefficients $c_j(\tau,\xi')$ of $P$ being smooth, the roots of $z\mapsto P(\tau,\xi,z)$ depend locally continuously on $\tau$ and $\xi$, see for instance \cite{Bri66,Coo08}.
    Thus, since $(\R\times\R^{n-1})\setminus\{(0,0)\}$ is connected, the number $N_\pm$ of roots in $\C_\pm$ is independent of $\tau$ and $\xi$, i.e. $|R_\pm(\tau,\xi')|=N_\pm$ for all $(\tau,\xi')\in (\R\times\R^{n-1})\setminus\set{0}$, and $N_++N_-=N$.
    For $(\tau,\xi')\in (\R\times\R^{n-1})\setminus\set{0}$ we define
    \begin{align*}
        P^{\pm}(\tau,\xi',\cdot)&:\C\to\C,
        \qquad
        P^\pm(\tau,\xi',z):=\prod_{\rho\in R_\pm(\tau,\xi')} (z-\rho).
    \end{align*}
    Then $P^\pm: [(\R\times\R^{n-1})\setminus\{(0,0)\}]\times\C\to\C$ is smooth and $\rho$-homogeneous of order $N_\pm$ by a classical argument, cf. Chapter 4.4 in \cite{tanabe2017functional}.
    In particular, $P^\pm$ belongs to $\mathcal{O}^\perp(\R\times\R^n)$.
    Since $P(\tau, \xi)\neq 0$ implies $P^\pm(\tau, \xi)\neq 0$ for all $(\tau, \xi)\in (\mathbb{R}\times \mathbb{R}^n)\setminus\set{(0,0)}$, we may use again Example \ref{js004} to conclude that $P^\pm$ is strongly $\mathcal{N}$-elliptic with order function $\mu_\pm= N_\pm o_{\frac{1}{\rho}}$.
\end{proof}
\section{Applications}\label{sec:app}
We conclude by illustrating the preceding theory in three applications.
First, we revisit the heat equation with Dirichlet boundary conditions to make explicit the compatibility conditions arising at higher regularity.
We then extend our previous $L^2$-result for the Cahn--Hilliard--Gurtin system to general $p\in(1,\infty)$.
Finally, we consider parabolic equations with dynamic boundary conditions.
Together, these examples illustrate how the present framework applies beyond the scope of the classical Lopatinski\u{\i}--Shapiro theory.
\subsection{Higher Order Regularity for the Heat Equation with Dirichlet Boundary Conditions}\label{sec:heat}
As an illustration of the compatibility conditions appearing in
Theorem \ref{thm: main_result}, we consider the heat equation with Dirichlet boundary condition
    \begin{align}\label{eqn: heat_equation}
        \left\{\begin{array}{rlll}
        \partial_t u - \Delta u&=&f     & \text{on } G_+ \\
         \Tr_0u&=&g    & \text{on } G^{n-1}
        \end{array}\right.
    \end{align}
Theorem \ref{thm: main_result} allows us to choose a right-hand side $f$ in any function space of type $\overline{H}^{\nu, p}_\perp(G_+)$ and to determine the corresponding compatibility condition for $g$ which allows for the maximal regularity gain of the heat operator $\partial_t-\Delta$, that is a gain of two derivatives in space and one derivative in time.
We demonstrate this phenomenon with $f\in \overline{H}^{(0,1), p}$. The result reads as follows.
\begin{figure}[t]
   \begin{subfigure}[b]{.49\textwidth}
       \begin{tikzpicture}
 \draw[->] (-.75,0) -- (4,0) node[below , font=\tiny] {$\vert \xi\vert$};
  \draw[->] (0,-.5) -- (0,1.5) node[left, font=\tiny] {$\tau$}; 
  
   \draw (1,0.1) -- (1,-0.1);
   \draw (2,0.1) -- (2,-0.1);
   \draw (3,0.1) -- (3,-0.1);
    \node[below, yshift= -2pt, font=\tiny] at (1,0) {$1$};
    \node[below, yshift=-2pt, font=\tiny] at (2, 0) {$2$};
    \node[below, yshift=-2pt, font=\tiny] at (3, 0) {$3$};

    \draw (-0.1, .5) -- (0.1, .5);
    \draw (-0.1, 1) -- (0.1, 1);
    \node[left, xshift=-2pt, font= \tiny] at (0,.5) {$1/2$};
    \node[left, xshift=-2pt, font= \tiny] at (0,1) {$1$};
  \coordinate (A) at (0,0);
  \coordinate (B) at (3,0);
  \coordinate (C) at (1,1);
  \coordinate (D) at (0,1);

  \filldraw[color=Sepia, fill=Sepia!30,thick] (A)--(B)--(C)--(D)--cycle;

     
      \node[font=\small] at (3, 1) {$\mathbb{E}$};
\end{tikzpicture}
\end{subfigure}
\hfill
   \begin{subfigure}[b]{.49\textwidth}
      
      \begin{tikzpicture}
 \draw[->] (-.75,0) -- (4,0) node[below , font=\tiny] {$\vert \xi\vert$};
  \draw[->] (0,-.5) -- (0,1.5) node[left, font=\tiny] {$\tau$}; 
  
   \draw (.75,0.1) -- (.75,-0.1);
  
    \node[below, yshift= -2pt, font=\tiny] at (.75,0) {$3/4$};

    \draw (-0.1, .375) -- (0.1, .375);
    \node[left, xshift=-2pt, font= \tiny] at (0,0.375) {$3/8$};
  \coordinate (A) at (0,0);
  \coordinate (B) at (.75,0);
  \coordinate (C) at (0, 0.375);

  \filldraw[color=Sepia, fill=Sepia!30,thick] (A)--(B)--(C)--cycle;

     
      \node[font=\small] at (5/2, 1) {$\Tr_2\mathbb{E}$};
\end{tikzpicture}
   \end{subfigure}\hfill
   \hfill
   \caption{The Newton polygons associated with the solution space $\mathbb{E}$ and with $\Tr_0\Delta u=\partial_t g-\Tr_0 f$ in Theorem \ref{js011} for $p=4$.
   The individual contributions $\partial_t g$ and $\Tr_0 f$ are contained only in the $HB$-space corresponding to the lower right corner $(1-\frac1p,0)=(\frac34,0)$.
   The compatibility condition in the theorem asserts that their difference is additionally contained in the $BH$-space corresponding to the top left corner $(0,\frac12-\frac1{2p})=(0,\frac38)$.}
   \label{fig: trace_spc_2}
\end{figure}
\begin{theorem}\label{js011}
     It holds $S:=(\partial_t-\Delta,\Tr_0)\in \call_{\mathrm{iso}}(\mathbb{E},\mathbb{D})$, where
    \begin{align*}
    \begin{aligned}
    \mathbb{E}&:= \overline{H}^{(0,3), p}_\perp(G_+)\cap \overline{H}^{(1, 2), p}_\perp(G_+), 
    &
    \mathbb{G}&:= HB_\perp^{(0,3-\frac1p), p}(G^{n-1})\cap HB^{(1,1-\frac1p), p}_\perp(G^{n-1}), \\
    \mathbb{F}&:=\overline{H}^{(0,1),p}_\perp(G_+),
    &
    \mathbb{D}&:=\{(f,g)\in \mathbb{F}\times\mathbb{G}\mid \partial_t g - \Tr_0 f \in BH_\perp^{(\frac12-\frac1{2p},0), p}(G^{n-1})\},
    \end{aligned}
    \end{align*}
    where the norm on $\mathbb{D}$ is given by $\|f, g\|_{\mathbb{D}}:=\| f\|_{\mathbb{F}} +\|g\|_{\mathbb{G}}+\| \partial_tg-\Tr_0f\|_{BH_\perp^{(\frac12-\frac1{2p},0), p}} $.

    In particular, for all $f\in \mathbb{F}$ and $g\in \mathbb{G}$ with $\partial_t g -\Tr_0 f \in BH_\perp^{(\frac12-\frac1{2p},0),p}(G^{n-1})$ there is a unique $u\in \mathbb{E}$ solving \eqref{eqn: heat_equation},
    and it holds the estimate
    \begin{equation*}
        \|u\|_{\mathbb{E}}\lesssim \|f\|_{\mathbb{F}} + \|g\|_{\mathbb{G}}+\|\partial_tg-\Tr_0f\|_{BH_\perp^{(\frac12-\frac1{2p},0), p}}. 
    \end{equation*}
\end{theorem}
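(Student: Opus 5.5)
We apply Theorem \ref{thm: main_result} with $P(\tau,\xi)=\ic\tau+|\xi|^2$, $\mu=\mu_H=2o_{\frac12}$ and $\nu=o_0=\mu_{(0,1)}$, so that $\mu+\nu$ has Newton polygon with vertices $(0,0),(3,0),(1,1),(0,1)$. By Corollary \ref{cor: equiv_norm_half_space}, $\overline{H}^{\mu+\nu,p}_\perp(G_+)=\mathbb{E}$ and $\overline{H}^{\nu,p}_\perp(G_+)=\mathbb{F}$. The admissible factorisation is explicit: $P=P^-P^+$ with $P^\pm(\tau,\xi',\xi_n)=\sqrt{\ic\tau+|\xi'|^2}\pm\ic\xi_n$, using the principal root. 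Each factor is $2$-homogeneous of degree $1$ and nonvanishing away from the origin, so Proposition \ref{prop: admissible_parabolic} (or Example \ref{js004} directly) gives $\mu_\pm=o_{\frac12}$ and $N_+=N_-=1$. Hence $N=2$, $M=1$, and the trace tuples are $\Tr_E=\Tr^{(3)}=(\Tr_0,\Tr_1,\Tr_2)$ and $\Tr_H=\Tr^{(2)}$. The boundary row is $\mathfrak{B}=(1,0,0)$. With $\omega:=\sqrt{\ic\tau+|\xi'|^2}$, the complementing rows are $\mathfrak{C}=\begin{pmatrix}\omega&1&0\\0&\omega&1\end{pmatrix}$. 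Thus $\det\mathfrak{BC}=1\ne0$, and part (1) of Theorem \ref{thm: main_result} applies once the order-function hypotheses are checked. Part (2) cannot apply, since the compatibility condition is genuine.

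The main step is to compute $\Comp$ and its norm explicitly. Write $h=(h_0,h_1):=\Tr^{(2)}\op[P^-]^{-1}_+f$, which lies in $T^{\mu_-+\nu,p}_\perp$. Solving $\mathfrak{BC}\,b=(g,h_0,h_1)$ gives $b_0=g$, $b_1=h_0-\omega g$, and $b_2=h_1-\omega h_0+\omega^2 g$. The condition $(f,g)\in\Comp$ is $b\in T^{\mu+\nu,p}_\perp=T_0\times T_1\times T_2$, where by \eqref{eqn: def_trace_space_general} (with $r_{reg}=1$, since the polygon is not regular in space):
\begin{align*}
T_0&=HB(0,3-\tfrac1p)\cap HB(1,1-\tfrac1p)=\mathbb{G},\\
T_1&=HB(0,2-\tfrac1p)\cap BH(1-\tfrac1{2p},0),\\
T_2&=HB(0,1-\tfrac1p)\cap BH(\tfrac12-\tfrac1{2p},0).
\end{align*}
Here $b_0\in T_0$ is the condition $g\in\mathbb{G}$. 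The conditions $b_1\in T_1$ and $b_2\in T_2$ are equivalent to $\Tr_1 u,\Tr_2 u$ lying in the right spaces for the solution.

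To make this concrete, I would identify $b_1,b_2$ with the traces $\Tr_1u,\Tr_2u$ of the candidate solution and use the equation. Since $\Tr_2 u=\Tr_0\partial_n^2u=\partial_t g-\Delta' g-\Tr_0 f$, we have $b_2=\partial_tg-\Delta'g-\Tr_0f$. The term $\Delta'g$ lies in $T_2$ by Corollary \ref{cor: ellipticity_HB_BH}, applied to the $HB$ part together with Proposition \ref{prop: embeddings_trace_space} and Lemma \ref{lemma: geometric_embeddings}. Since $\partial_tg-\Tr_0f$ already lies in $HB(0,1-\tfrac1p)$ whenever $f\in\mathbb{F}$ and $g\in\mathbb{G}$, the condition $b_2\in T_2$ reduces exactly to $\partial_tg-\Tr_0f\in BH(\tfrac12-\tfrac1{2p},0)$.

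It remains to show that $b_1\in T_1$ holds automatically and is bounded by $\|f\|+\|g\|+\|b_2\|$. I would obtain this by writing $b_1=\omega^{-1}(h_1+\omega^2 g-b_2)$, or equivalently $b_1=\omega^{-1}(\Tr_1\op[P^-]^{-1}_+f-b_2)+\omega g$, and using that $\op[\omega^{\pm1}]$ shifts the shifted polygons by one step in the $o_{\frac12}$ direction (Lemma \ref{lemma: geometric_embeddings}\ref{lemma: geometric_embeddingsii}). I expect this bookkeeping of mixed $HB/BH$ spaces, especially the $BH$ corner of $T_1$, to be the main obstacle. If it fails, I would fall back on the interpolation description of Theorem \ref{thm: trace_space_interpolation}.

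The equivalence of $\|\cdot\|_{\Comp}$ with $\|\cdot\|_{\mathbb{D}}$ then follows from these explicit formulas and their inverses. The strong upper order conditions for $\mathfrak{BC}$ follow because $\omega$ has strong upper order function $o_{\frac12}$ (Proposition \ref{prop: arithmetics_order_functions}\ref{prop: arithmetics_order_functionsiv}), matching the differences between consecutive shifted order functions $t_{j,\mathfrak{k}}$ and $s_{i,\mathfrak{k}}$. Here I would choose $s_{1,\mathfrak{k}}:=t_{1,\mathfrak{k}}$.
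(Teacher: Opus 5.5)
Your overall route is the paper's. You apply part (1) of Theorem \ref{thm: main_result} with $\mu=2o_{\frac12}$ and $\nu=o_0$, invert the triangular matrix $\mathfrak{BC}$ explicitly, and observe that $b_2\in T^{\mu+\nu,p}_{2\perp}$ is exactly the compatibility condition.

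The gap is in the step for $b_1$, which you flag as the main obstacle, and the formula you give there is wrong. From $b_1=h_0-\omega g$ and $b_2=h_1-\omega h_0+\omega^2g$ one gets $\omega^{-1}(h_1+\omega^2g-b_2)=h_0$. So both of your displayed expressions equal $h_0=b_1+\omega g$, not $b_1$. The surplus term $\omega g$ is genuinely not in $T^{\mu+\nu,p}_{1\perp}$ for general data in $\mathbb{D}$, for two reasons:
\begin{itemize}
\item $T^{\mu+\nu,p}_{1\perp}\hookrightarrow BH^{(1-\frac1{2p},0),p}_\perp(G^{n-1})$, and $\op[\omega]$ is $\caln$-elliptic of order $o_{\frac12}$ in the $\mathcal{BH}$ scale (Corollary \ref{cor: ellipticity_HB_BH}). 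Hence $\omega g\in T^{\mu+\nu,p}_{1\perp}$ would force $g\in BH^{(\frac32-\frac1{2p},0),p}_\perp(G^{n-1})$, i.e.\ more than one time derivative.
\item $\mathbb{G}$ does not provide this, since its polygon has height $1$. For any $g\in\mathbb{G}$ without that extra time regularity, choose $f$ with $\Tr_0 f=\partial_t g$. Then $(f,g)\in\mathbb{D}$, but your expression for $b_1$ is not in $T^{\mu+\nu,p}_{1\perp}$.
\end{itemize}
So the difficulty you anticipate at the $BH$ corner of $T_1$ comes from this slip. No bookkeeping with Lemma \ref{lemma: geometric_embeddings} or Theorem \ref{thm: trace_space_interpolation} will remove it.

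The correct relation is the third row of $\mathfrak{BC}\,b=(g,h_0,h_1)$, namely $\omega b_1+b_2=h_1$, i.e.\ $b_1=\omega^{-1}(h_1-b_2)$. Two further facts, which you do not state, are then needed; they are what the paper uses (it applies $c_0^+$ to $b_1$):
\begin{itemize}
\item $h_1=\Tr_1\op[P^-]_+^{-1}f$ lies in $T^{\mu_-+\nu,p}_{1\perp}=T^{\mu+\nu,p}_{2\perp}$ with norm $\lesssim\|f\|_{\mathbb{F}}$, because $(\mu_-+\nu)^{(1+k)}=(\mu+\nu)^{(2+k)}$ for $k\in\{0,1\}$.
\item $\op[\omega]\in\call_{\mathrm{iso}}(T^{\mu+\nu,p}_{1\perp},T^{\mu+\nu,p}_{2\perp})$. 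This follows from Theorem \ref{thm: trace_space_interpolation} together with Theorem \ref{thm: whole space} applied on both endpoint spaces, since $(\mu+\nu)^{(1+k)}=o_{\frac12}+(\mu+\nu)^{(2+k)}$.
\end{itemize}
Together these give $\|b_1\|_{T^{\mu+\nu,p}_{1\perp}}\lesssim\|f\|_{\mathbb{F}}+\|b_2\|_{T^{\mu+\nu,p}_{2\perp}}$, which closes $\mathbb{D}\hookrightarrow\Comp$.

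A smaller point: do not justify $b_2=\partial_tg-\Delta'g-\Tr_0f$ through $\Tr_2u$ of a solution. For $(f,g)\in\mathbb{D}$ no solution is available before you know $(f,g)\in\Comp$. The identity follows directly from $\Tr_0f=\Tr_0\op[P^-]_+v=\omega h_0-h_1$ with $v:=\op[P^-]_+^{-1}f$. With that, your converse estimate via $\partial_tg-\Tr_0f=b_2+\Delta'g$ is correct, and slightly more direct than the paper's detour through $\Tr_0\Delta u$.
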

\begin{proof}
Let us fix the appropriate notations for the application of Theorem \ref{thm: main_result}.
    The symbol $H(\tau, \xi):= \ic \tau +\vert\xi\vert^2$ is $2$-homogeneous of degree 2. In view of Example \ref{js004} and Proposition \ref{prop: admissible_parabolic}, it is thus strongly $\mathcal{N}$-elliptic with order function $2o_{\frac{1}{2}}$ and  admissible. 
     Here, for all $(\tau, \xi')\in \mathbb{R}\times \mathbb{R}^{n-1}$ fixed with $\tau\neq 0$, the roots of the polynomial $z\mapsto H(\tau, \xi', z)$ are
    \begin{equation*}
        \rho^{(\epsilon)} (\tau, \xi') := \epsilon \sqrt{\ic \tau + \vert \xi'\vert^2}, \qquad \epsilon =\pm 1, 
    \end{equation*} 
    where for any complex number $a\in \mathbb{C}\setminus (-\infty, 0)$, $\sqrt{a}$ denotes the principal square root of $a$.
    Since $\sgn(\Im(\rho^{(\epsilon)}(\tau, \xi)))=\epsilon\sgn(\tau)$, we set $\rho_1^\pm(\tau, \xi'):= \rho^{\pm\sgn(\tau)}(\tau, \xi')$. With this choice, we have $\sgn(\Im(\rho_1^\pm))=\pm1$. 
    
    Thus, the admissible factorisation is $H^\pm (\tau, \xi', \xi_n):= \xi_n-\rho_1^\pm(\tau, \xi')$ and we write in particular $H^+(\tau, \xi', \xi_n):=  c_1(\tau,\xi') (\ic\xi_n)+ c_0^+(\tau, \xi')$ with $c_1^+(\tau,\xi'):=-\ic$ and $c_0^+(\tau, \xi'): = -\rho_1^+(\tau, \xi')= -\sgn(\tau)\sqrt{\ic \tau + \vert\xi'\vert^2}$.
    
    \medskip 

    The space for the right-hand side $f$ is described by $\overline{H}^{(0,1),p}_\perp(G_+)= \overline{H}^{\nu, p}_\perp(G_+)$ with the order function $\nu:= o_0$, while the solution space is described by $\mathbb{E}=\overline{H}^{\mu+\nu}_\perp(G_+)$ with the order function $\mu:=2o\frac12$.
    Thus, the extended boundary matrix $\mathfrak{BC}$ is of dimension $N+M=2+1= 3$ and given by 
    \begin{equation*}
        \mathfrak{BC}= \begin{pmatrix}
            1 & 0 & 0 \\
            c_0^+ & -\ic & 0 \\
            0 & c_0^+ & -\ic
        \end{pmatrix}, 
    \end{equation*}
    where the first row corresponds to the boundary condition $\Tr_0$ and the two other rows are the complementing conditions $\Tr_0\op[H^+]_+= \op[c_0^+]\Tr_0 -\ic \Tr_1$ and $\Tr_1\op[H^+]_+= \op[c_0^+]\Tr_1-\ic\Tr_2 $. Here $\det(\mathcal{BC)}=-1 $ is non-vanishing.
    \medskip 
    
    Now we fix the order functions associated with this matrix. Here $\mu+\nu= o_0+ 2o_{\frac{1}{2}}$ and so we get successively 
    \begin{align*}
    \begin{array}{@{}l@{\qquad\qquad}l@{}}
        t_{1, 0}:= (\mu+\nu)^{(0)}= o_0 + 2o_{\frac12} & t_{1, 1}:= (\mu+\nu)^{(1)}= 2o_{\frac12}, \\[0.3em]
        t_{2, 0}:= (\mu+\nu)^{(1)}= 2o_{\frac12} & t_{2, 1}:= (\mu+\nu)^{(2)}= o_{\frac12}, \\[0.3em]
        t_{3, 0}:= (\mu+\nu)^{(2)}= o_{\frac12} & t_{3, 1}:= (\mu+\nu)^{(3)}= 0.
    \end{array}
    \end{align*}
The order functions $s_{1,\mathfrak{k}}$ are the ones representing the boundary data $g\in \mathbb{G}$.
Since 
\begin{equation*}
    \mathbb{G}= T^{\mu+\nu, p}_{0\perp}(G^{n-1})= (H^{(\mu+\nu)^{(0)}, p}_\perp(G^{n-1}), H^{(\mu+\nu)^{(1)}, p}_\perp(G^{n-1}))_{1-\frac{1}{p}, p}
\end{equation*}
by Theorem \ref{thm: trace_space_interpolation}, a natural choice is
\begin{equation*}
    s_{1, 0}:= (\mu+\nu)^{(0)}= o_0+ 2o_{\frac{1}{2}}, \qquad s_{1, 1}:= (\mu+\nu)^{(1)}= 2o_{\frac{1}{2}}.
\end{equation*} 
Finally, the order functions associated with the trace space of $\overline{H}^{\mu_-+\nu, p}_\perp(G_+)$ are
\begin{align*}
    \begin{array}{@{}l@{\qquad\qquad}l@{}}
        s_{2, 0}:= (\mu_-+\nu)^{(0)}= o_0+o_{\frac{1}{2}} & s_{2,1}:= (\mu_-+\nu)^{(1)}= o_{\frac{1}{2}}\\
        s_{3, 0}:= (\mu_-+\nu)^{(1)}=o_{\frac{1}{2}} &s_{3, 1}:= (\mu_-+\nu)^{(2)}= 0.
    \end{array}
\end{align*}
Then $t_{j, \mathfrak{k}}-s_{i, \mathfrak{k}}$ is a strong upper order function for the symbol $\mathfrak{BC}_{ij}$ for all $i,j\in\set{1,2,3}$ and $\mathfrak{k}\in\set{0,1}$.
In particular the hypotheses of the first part of Theorem \ref{thm: main_result} are verified, and thus $S\in \mathcal{L}_{\mathrm{iso}}(\mathbb{E}, \Comp)$.
It thus remains to show that $\mathbb{D}$ equals the compatibility space $\Comp$ with equivalent norms. 
Observe that 
\begin{equation*}
    \mathfrak{BC}^{-1} =  \begin{pmatrix}
            1 & 0 & 0 \\
            -\ic c_0^+ & \ic & 0 \\
            -{c_0^+}^2 & c_0^+ & \ic
        \end{pmatrix}
\end{equation*}
and so the norm on $\Comp$ is given by 
\begin{align*}
    \|f, g\|_{\Comp}= &\| f\|_\mathbb{F}+ \| g\|_{\mathbb{G}}\\&+ \| \op[c_0^+]g -\Tr_0(\op[H^-]_+^{-1}f) \|_{T_{1\perp}^{\mu+\nu, p}}\\&+ \| \op[{c_0^+}]^2 g -\op[c_0^+] \Tr_0(\op[H^-]_+^{-1}f) -\ic\Tr_1 (\op[H^-]_+^{-1}f)\|_{T_{2\perp}^{\mu+\nu, p}}
\end{align*}
 We estimate the two last terms separately.
 On the one hand, the relations $\op[c_0^+]^2=\partial_t-\Delta_{x'}$ and $-\op[c_0^+]\Tr_0 - \ic\Tr_1 = \op[c_0^-]\Tr_0 - \ic\Tr_1=\Tr_0 \op[H^-]_+$ yield
\begin{align*}
    \| \op[{c_0^+}]^2 g &- \op[c_0^+] \Tr_0(\op[H^-]_+^{-1}f) -\ic\Tr_1 (\op[H^-]_+^{-1}f)\|_{T_{2\perp}^{\mu+\nu, p}} \\
    &= \|  \partial_t g - \Delta_{x'} g - \Tr_0 f\|_{T_{2\perp}^{\mu+\nu, p}}
    \le  \|  \partial_t g - \Tr_0 f\|_{T_{2\perp}^{\mu+\nu, p}}+\|   \Delta_{x'} g\|_{T_{2\perp}^{\mu+\nu, p}}.
\end{align*}
Since
    \begin{equation*}
        \| \Delta_{x'}g\|_{T_{2\perp}^{\mu+\nu, p}}
        \lesssim \| g\|_{T_{0\perp}^{\mu+\nu, p}}
        = \| g\|_{\mathbb{G}}
    \end{equation*}
by the mapping properties of $\Delta_{x'}$, and since
\begin{align*}
    \|\partial_t g - \Tr_0 f\|_{T_{2\perp}^{\mu+\nu, p}}
    &= \|\partial_t g - \Tr_0 f\|_{HB_\perp^{(0, 1-\frac{1}{p}), p}} + \|\partial_t g - \Tr_0 f\|_{BH_\perp^{(\frac{1}{2}-\frac{1}{2p}, 0), p}} \\
    &\lesssim \|f\|_{\mathbb{F}} + \|g\|_{\mathbb{G}}+ \|\partial_t g - \Tr_0 f\|_{BH_\perp^{(\frac{1}{2}-\frac{1}{2p}, 0), p}},
\end{align*}
it follows
\begin{align}\label{eqn: estimates_second_term_heat}
\begin{split}
     \| \op[{c_0^+}^2] g - \op[c_0^+] &\Tr_0(\op[H^-]_+^{-1}f) -\ic\Tr_1 (\op[H^-]_+^{-1}f)\|_{T_{2\perp}^{\mu+\nu, p}} \\
     &\lesssim \| f\|_{\mathbb{F}} + \| g\|_{\mathbb{G}}+ \|  \partial_t g - \Tr_0 f\|_{BH_\perp^{(\frac{1}{2}-\frac{1}{2p}, 0), p}} = \| f, g\|_{\mathbb{D}}. 
\end{split}
\end{align}
On the other hand, writing the trace spaces as interpolation spaces as in Theorem \ref{thm: trace_space_interpolation} and using $(\mu+\nu)^{(k)}=(3-k)o_{\frac12}$ for $k\in\set{1,2,3}$, we see that $\op[c_0^+]\in \mathcal{L}_{\mathrm{iso}}(T^{\mu+\nu,p}_{1\perp}(G^{n-1}),T^{\mu+\nu,p}_{2\perp}(G^{n-1}))$, and so 
\begin{align*}
    \| &\op[c_0^+]g - \Tr_0(\op[H^-]_+^{-1}f) \|_{T_{1\perp}^{\mu+\nu, p}} \simeq \| \op[{c_0^+}]^2 g - \op[c_0^+] \Tr_0(\op[H^-]_+^{-1}f) \|_{T_{2\perp}^{\mu+\nu, p}}\\
    &\leq\|\op[c_0^+]^2 g - \op[c_0^+] \Tr_0(\op[H^-])_+^{-1}f -\ic \Tr_1(\op[H^-])_+^{-1}f\|_{T_{2\perp}^{\mu+\nu, p}}
    +\| \Tr_1 (\op[H^-]_+^{-1}f)\|_{T_{2\perp}^{\mu+\nu, p}}.
\end{align*}
Remarking that $T_{2\perp}^{\mu+\nu, p}= T_{1\perp}^{\mu_-+\nu, p}$ due to $(\mu+\nu)^{(2+k)}=(\mu_-+\nu)^{1+k}=(1-k)o_\frac12$ for $k\in\set{0,1}$, we obtain 
\begin{equation*}
    \| \Tr_1 (\op[H^-]_+^{-1}f)\|_{T_{2\perp}^{\mu+\nu, p}}=\| \Tr_1 (\op[H^-]_+^{-1}f)\|_{T_{1\perp}^{\mu_-+\nu, p}}\lesssim \| f\|_{\mathbb{F}}
\end{equation*}
and thus recalling (\ref{eqn: estimates_second_term_heat}), we have 
\begin{equation*}
     \| \op[c_0^+]g - \Tr_0(\op[H^-]_+^{-1}f) \|_{T_{1\perp}^{\mu+\nu, p}} \lesssim \|f, g\|_{\mathbb{D}}. 
\end{equation*}
We have proved $\mathbb{D}\hookrightarrow\Comp$. 

For the converse embedding, let $(f, g)\in \Comp$.
Then there exists a unique $u\in \mathbb{E}$ verifying  (\ref{eqn: heat_equation}). Applying $\Tr_0$ to $\op[H]_+ u= f$, we obtain $\partial_t g -\Tr_0f= -\Tr_0 \Delta u$, and so
\begin{align*}
    \|  \partial_t g - \Tr_0 f\|_{BH_\perp^{(\frac{1}{2}-\frac{1}{2p}, 0), p}} &\leq\|  \partial_t g - \Tr_0 f\|_ {T_{2\perp}^{\mu+\nu, p}}
    =\| \Tr_0 \Delta u\|_{T_{2\perp}^{\mu+\nu, p}} \\&\lesssim \| \Delta u\|_{H^{(\frac{1}{2}, 0), p}_\perp \cap H^{(0, 1), p}_\perp}
    \lesssim \| u\| _{H^{(\frac{1}{2}, 2), p}_\perp \cap H^{(0, 3), p}_\perp}
    \lesssim \| u\|_{\mathbb{E}}\simeq \|f, g\|_{\Comp}.
\end{align*}
Thus $\Comp \hookrightarrow \mathbb{D}$.
This completes the proof.
   
\end{proof}
\subsection{The Cahn-Hilliard-Gurtin System in $L^p$}
In \cite{NeS25} we showed maximal regularity for the Cahn-Hilliard-Gurtin system in the case $p=2$, for which the complexity of the trace theory is significantly reduced.
The trace theory developed in Section \ref{sec:trc_spc} now allows us to treat the general case $p\in(1,\infty)$.
The Cahn-Hilliard-Gurtin system is given by
    \begin{align}\label{eqn: CHG_system_1}
        \left\{\begin{array}{rcll}
        \partial_tu_1 -\Delta u_2 &=&f_1 & \text{in } G_+,      \\
        \Delta u_1-\partial_tu_1+u_2 &=&f_2  & \text{in } G_+,\\
        \partial_n u_1|_{\set{x_n=0}}&=&g_1  & \text{on } \mathbb{T}\times \mathbb{R}^{n-1},\\
        \partial_n u_2|_{\set{x_n=0}}&=&g_2 &\text{on } \mathbb{T}\times \mathbb{R}^{n-1},
    \end{array}\right.
    \end{align}
As we already mentioned in Example \ref{example: CHG_system}, the associated symbol matrix  
\begin{align*}
        L(\tau, \xi)=
        \begin{pmatrix}
        \ic\tau & \vert \xi\vert^2\\
        -\vert \xi\vert^2-\ic\tau & 1
        \end{pmatrix}, \qquad (\tau, \xi)\in \mathbb{R}\times \mathbb{R}^n.
    \end{align*}
    is a mixed-order system.
    To avoid confusion between the order functions associated with $L$ and those later associated with the complemented boundary matrix, let us rename the order functions associated with the columns $q_1:= 2o_\frac{1}{2}+o_0, q_2:= 2o_0$ and the order functions associated with the rows $r_1:= 0, r_2:= o_0$.
    We showed in Example \ref{example: CHG_ellipticity} that the determinant of the system 
    \begin{equation}\label{eqn: determinant}
        D(\tau, \xi)= i\tau+\vert \xi\vert^2(i\tau+\vert \xi\vert^2)
    \end{equation}
    is strongly $\mathcal{N}$-elliptic for the order function $\mu_D:= 2o_{\frac{1}{2}}+2o_0= q_1+q_2-r_1-r_2$. Our first step will be to show that $D$ is an admissible symbol. We start with some technical results about the roots by extending Lemma 4.17 from \cite{NeS25}.
    \begin{lemma}\label{lemma: roots}
    There exist functions $\rho_j^+: (\mathbb{R}\setminus\{0\})\times  \mathbb{R}^{n-1} \to \mathbb{C}$ and $\rho_j^-: (\mathbb{R}\setminus\{0\})\times  \mathbb{R}^{n-1} \to \mathbb{C}$, $j\in\set{1,2}$, with the following properties. 
    \begin{enumerate}
        \item\label{lemma: rootsi} For all $(\tau, \xi')\in (\mathbb{R}\setminus\set{0})\times \mathbb{R}^{n-1}$ it holds $D(\tau, \xi', \rho_j^\pm(\tau, \xi'))=0$, i.e., the $\rho_j^\pm(\tau, \xi')$ are the roots of the polynomial $D(\tau, \xi', \cdot)$.
        \item\label{lemma: rootsii} For all $(\tau, \xi')\in (\mathbb{R}\setminus\set{0})\times \mathbb{R}^{n-1}$, the $\rho_j^ \pm(\tau, \xi')$ are pairwise disjoint.
        \item\label{lemma: rootsiii} The functions $\rho_j^\pm(\tau, \xi')$ are smooth on $(\mathbb{R}\setminus\{0\})\times  \mathbb{R}^{n-1}$.
        \item\label{lemma: rootsiv} For all $(\tau, \xi')\in (\mathbb{R}\setminus\set{0})\times \mathbb{R}^{n-1}$ it holds 
        \begin{equation*}
            \Im(\rho_j^+(\tau, \xi'))>0, \qquad \Im (\rho_j^-(\tau, \xi')<0, \qquad j=1,2.
        \end{equation*}
        \item\label{lemma: rootsv} The symbol $\rho_1^\pm$ is strongly $\mathcal{N}$-elliptic for the order function $ o_{\frac{1}{2}}$ while $\rho_2^\pm$ is strongly $\mathcal{N}$-elliptic for the order function $ o_{0}$.
        Moreover, $|\arg\rho_2^\pm(\tau,\xi')|\in(\frac\pi 4,\frac{3\pi}{4})$.
    \end{enumerate}
\end{lemma}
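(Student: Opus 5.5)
The plan is to solve the quartic explicitly. Write $z=\xi_n$ and $a:=\ic\tau+|\xi'|^2$. Since $|\xi|^2=|\xi'|^2+z^2$, the equation $D(\tau,\xi',z)=0$ reads
\begin{equation*}
 \ic\tau+(|\xi'|^2+z^2)(\ic\tau+|\xi'|^2+z^2)=0 .
\end{equation*}
With $w:=|\xi'|^2+z^2$ this is the quadratic $w^2+\ic\tau w+\ic\tau=0$. Its roots are
\begin{equation*}
 w_\pm=\tfrac12\bigl(-\ic\tau\pm\sqrt{-\tau^2-4\ic\tau}\bigr).
\end{equation*}
For $\tau\neq0$ these are distinct, because the discriminant $-\tau^2-4\ic\tau$ vanishes only at $\tau=0$. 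Then
\begin{equation*}
 z^2=w_\pm-|\xi'|^2 ,
\end{equation*}
and the roots are $\pm\sqrt{w_\pm-|\xi'|^2}$.

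First I check that $w_\pm-|\xi'|^2\notin[0,\infty)$ for $\tau\neq0$. This is the statement that there are no real roots. It follows from the lower bound $|D|\gtrsim_\lambda w_{\mu_D}>0$ of Example \ref{example: CHG_ellipticity}. Hence the principal square root is smooth in $(\tau,\xi')$ away from the cut, and I define $\rho_j^\pm:=\pm\sqrt{\cdot}$ with sign chosen so that $\Im\rho_j^+>0$.

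I take $\rho_1$ to come from the root $w$ that behaves like $-\ic\tau$ for large $\tau$. This gives $\rho_1^2\approx-(\ic\tau+|\xi'|^2)$ up to lower order, i.e.\ $o_{1/2}$ scaling. I take $\rho_2$ to come from the root $w$ that tends to $-1$ as $|\tau|\to\infty$. This gives $\rho_2^2\approx-(1+|\xi'|^2)$, i.e.\ $o_0$ scaling. Because $w_+\neq w_-$ and $\pm$ square roots of nonzero numbers differ, the four roots are pairwise distinct, which gives (ii). Smoothness (iii) then follows, either from the implicit function theorem for simple roots or directly from the formulas, once the branch cuts are avoided. To keep the signs fixed, I would use the symmetry $\tau\mapsto-\tau$ (complex conjugation) and treat $\tau>0$ only.

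For (v), I first estimate the two $w$-roots. Factor
\begin{equation*}
 \sqrt{-\tau^2-4\ic\tau}=\ic\tau\sqrt{1+4\ic/\tau},
\end{equation*}
with the sign fixed for $\tau>0$. This gives two-sided bounds on $|\tau|\ge\lambda$:
\begin{equation*}
 |w_1|\simeq\langle\tau\rangle, \qquad |w_2|\simeq 1 .
\end{equation*}
The $w_1$ bound uses $w_1w_2=\ic\tau$, and $w_2$ stays in a compact set avoiding $0$ with $\Re w_2<0$. I expect that $\Re w_2\le -c<0$ and that $w_2$ has argument near $\pi$ (at least $|\arg w_2|>\pi/2$) uniformly. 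Then
\begin{equation*}
 |w_2-|\xi'|^2|\simeq 1+|\xi'|^2 ,
\end{equation*}
so $|\rho_2|\simeq\langle\xi'\rangle$, which is the weight for $o_0$ in the variables $(\tau,\xi')$. Since $\arg(w_2-|\xi'|^2)\in(\pi/2,3\pi/2)$, halving the angle gives $|\arg\rho_2^\pm|\in(\pi/4,3\pi/4)$. Similarly, $w_1-|\xi'|^2$ has argument bounded away from $0$ and modulus $\simeq\langle\tau\rangle+|\xi'|^2$, so $|\rho_1|\simeq\langle\tau\rangle^{1/2}+\langle\xi'\rangle$.

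The derivative bounds $(\tau,\xi')^{(\alpha,\beta)}\partial^{(\alpha,\beta)}\rho_j$ will then follow from Proposition \ref{prop: arithmetics_order_functions}. Concretely, $w_j$ is strongly $\mathcal{N}$-elliptic with order $2o_\infty$ resp.\ $0$, checked from the explicit formula. Then $w_j-|\xi'|^2$ is strongly elliptic with order $2o_{1/2}$ resp.\ $2o_0$, since it is a sum with the upper bounds already available and the lower bounds shown above. Taking the power $\frac12$ follows the proof of part (iv) of that proposition. That part requires positivity, but the Faà di Bruno argument only needs $|m^{1/2}|\simeq|m|^{1/2}$ along a smooth branch, which holds here.

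The main obstacle is the uniform sign and argument control: showing $\Re w_2<0$ with $|\arg w_2|>\pi/2$ uniformly in $\tau\ne0$, which is also where small $|\tau|$ must be handled. I would attack it by writing $w_2$ explicitly via $\sqrt{1+4\ic/\tau}$ and checking the real part directly, and I would also cite Lemma 4.17 of \cite{NeS25}, which treats the same roots.
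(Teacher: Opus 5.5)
Your proposal is correct and follows essentially the paper's route. The roots are the explicit square roots $\pm\sqrt{w_\pm-|\xi'|^2}$; the paper takes these, together with (i)--(iv) and the two-sided bounds, from \cite[Lemma 4.17]{NeS25}, while you sketch them directly. The derivative bounds are proved for the squares $\rho_j^2$ and transferred to $\rho_j$ by the Fa\`a di Bruno argument of Proposition \ref{prop: arithmetics_order_functions}.

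Two small remarks. First, $w_1$ has order function $o_\infty$, not $2o_\infty$, consistent with your own bound $|w_1|\simeq\langle\tau\rangle$. Second, the obstacle you flag is immediate: $w_2=\tfrac12\bigl(-\ic\tau-\sqrt{-\tau^2-4\ic\tau}\bigr)$ for all $\tau\neq0$, so $\Re w_2=-\tfrac12\Re\sqrt{-\tau^2-4\ic\tau}<0$. The corresponding uniform bound holds only on each strip $|\tau|\ge\lambda$, which is all that is needed.
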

\begin{proof}
     In \cite[Lemma 4.17]{NeS25} we showed that the four roots of $D(\tau, \xi', \cdot)$ are given by 
    \begin{align}\label{def:roots}
        \rho_\epsilon(\tau,\xi'):=\frac{\epsilon_1}{2} \sqrt{ -2\vert \xi'\vert^2-\ic\tau+ \epsilon_2\sqrt{-\tau^2-4\ic\tau}}, \quad \epsilon\in \set{-1,1}^2.
    \end{align}
    where for any complex number $a\in \mathbb{C}\setminus (-\infty, 0)$, $\sqrt{a}$ denotes the principal square root of $a$, and that one could take
    \begin{align*}
        \rho_1^-(\tau,\xi')&:=\rho_{(\sgn(\tau),1)}(\tau,\xi'), \quad
        \rho_1^+(\tau,\xi'):=\rho_{(-\sgn(\tau),1)}(\tau,\xi'), \\
        \rho_2^-(\tau,\xi')&:=\rho_{(-\sgn(\tau),-1)}(\tau,\xi'), \quad
        \rho_2^+(\tau,\xi'):=\rho_{(\sgn(\tau),-1)}(\tau,\xi').
    \end{align*}
    The claims \ref{lemma: rootsi} to \ref{lemma: rootsiv} were already shown in \cite[Lemma 4.17]{NeS25}.
    For \ref{lemma: rootsv}, it was also shown that the roots are $\mathcal{N}$-elliptic and that $|\arg\rho_2^\pm(\tau,\xi')|\in(\frac\pi 4,\frac{3\pi}{4})$. Thus, it remains to show the upper estimates on the derivatives of the roots.  
    We show that for $\alpha\in \{0,1\}$ and $\beta\in \{0,1\}^{n-1}$, it holds
        \begin{equation*}
            \vert (\tau, \xi')^{(\alpha, \beta)}\partial_{(\tau, \xi')}^{(\alpha, \beta)}\rho_2^\pm \vert \lesssim w_{o_0}(\tau, \xi'). 
        \end{equation*}
        By Proposition \ref{prop: arithmetics_order_functions}, it is equivalent to show that 
        \begin{equation*}
            \vert (\tau, \xi')^{(\alpha, \beta)}\partial_{(\tau, \xi')}^{(\alpha, \beta)}(\rho_2^\pm)^2 \vert \lesssim w_{2o_0}(\tau, \xi'). 
        \end{equation*}
         We write that $\rho_2^\pm(\tau, \xi')^2=\frac{1}{4}(Y(\xi')+Z(\tau))$ with 
    \begin{equation*}
        Y(\xi'): =-2\vert \xi'\vert^2, \qquad Z(\tau):=-\ic\tau-\sqrt{-\tau^2-4\ic\tau}.
    \end{equation*}
        Then $|\beta|\ge 2$ or if $\alpha=1$ and $\vert \beta\vert= 1$, one immediately has $\partial_{(\tau, \xi')}^{(\alpha, \beta)}({\rho_2^{\pm}}^2)=\frac14\partial_{(\tau, \xi')}^{(\alpha, \beta)}(Y(\xi)+Z(\tau))=0$.
        
        For $\alpha=0, \beta=e_j$, remark that $Y(\xi')$ is homogeneous of degree 2 and so is ${\xi'}^\beta\partial_{\xi'}^\beta Y(\xi')$.
        This shows 
    \begin{equation*}
        \vert {\xi'}^\beta\partial_{\xi'}^\beta \rho_2^\pm(\tau, \xi')^2\vert\lesssim \vert {\xi'}^\beta\partial_{\xi'}^\beta Y(\xi')\vert \lesssim \vert \xi'\vert^2 \lesssim w_{2o_0}(\tau, \xi')
    \end{equation*}
    for all $(\tau, \xi')\in (\mathbb{R}\setminus\set{0})\times \mathbb{R}^{n-1}$. 

    \medskip
    
    For $\beta=0$ and $\alpha=1$, a direct computation shows that 
    \begin{equation*}
        \tau \partial_\tau Z(\tau)=\tau\left(-\ic-\frac{-\tau-2\ic}{\sqrt{-\tau^2-4\ic\tau}}\right)
    \end{equation*}
    is bounded.
   The computations for the other roots are similar. 
\end{proof}
\begin{lemma}\label{lemma: DHG_admissible}
     The symbol $D$ is admissible. Moreover, one can find an admissible decomposition $D= D^+D^-$ where  $D^\pm$ is strongly $\mathcal{N}$-elliptic with order function $\mu_\pm$, where $\mu_+ :=\mu_- :=o_{\frac{1}{2}}+o_0= \frac{\mu_D}{2}$. 
\end{lemma}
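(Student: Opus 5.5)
The natural plan is to build the factors directly from the roots provided by Lemma \ref{lemma: roots}. We set
\begin{equation*}
    D^\pm(\tau,\xi',\xi_n):=c^\pm\,\bigl(\xi_n-\rho_1^\pm(\tau,\xi')\bigr)\bigl(\xi_n-\rho_2^\pm(\tau,\xi')\bigr),
\end{equation*}
with constants $c^\pm$ chosen so that $c^+c^-$ equals the leading coefficient of $D$ in $\xi_n$, which is $1$ since $D=\ic\tau+|\xi|^2\ic\tau+|\xi|^4$. By Lemma \ref{lemma: roots}\ref{lemma: rootsi}--\ref{lemma: rootsii}, the four roots are distinct and exhaust the zeros of $D(\tau,\xi',\cdot)$, so $D=D^+D^-$ as polynomials in $\xi_n$. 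Expanding in powers of $\ic\xi_n$ writes $D^\pm$ in the form required by Definition \ref{def:admissible_symb}. The coefficients are $c^\pm$, $-c^\pm(\rho_1^\pm+\rho_2^\pm)$ and $c^\pm\rho_1^\pm\rho_2^\pm$, up to powers of $\ic$. These are smooth on $(\R\setminus\{0\})\times\R^{n-1}$ by \ref{lemma: rootsiii}, and they lie in $\mathcal{O}^\perp$ once polynomial bounds on their derivatives are checked (this follows from the upper estimates below). The root-location condition is exactly \ref{lemma: rootsiv}, and the identity $\mu_++\mu_-=\mu_D$ is trivial.

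The remaining work is strong $\mathcal{N}$-ellipticity of each linear factor $\xi_n-\rho_j^\pm$. By Proposition \ref{prop: arithmetics_order_functions}\ref{prop: arithmetics_order_functionsii}, the product is then strongly $\mathcal{N}$-elliptic with order function $o_{\frac12}+o_0$.

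For the upper bounds on $\xi_n-\rho_1^\pm$, we use that $\xi_n$ has strong upper order function $o_0\le o_{\frac12}$ in the Newton polygon sense, since $\caln_0\subseteq\caln_{\frac12}$ and Lemma \ref{lemma: point_inside} applies. Combined with \ref{lemma: rootsv} and Proposition \ref{prop: arithmetics_order_functions}\ref{prop: arithmetics_order_functionsi}, this gives the upper estimates. For the factor with $\rho_2^\pm$, both summands have strong upper order function $o_0$.

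The lower bounds are the main point. For the $\rho_2$ factor, the sector condition $|\arg\rho_2^\pm|\in(\frac\pi4,\frac{3\pi}4)$ gives $|\Im\rho_2^\pm|\ge \frac{1}{\sqrt2}|\rho_2^\pm|$. Since $\xi_n$ is real, this yields
\begin{equation*}
    |\xi_n-\rho_2^\pm|^2=(\xi_n-\Re\rho_2^\pm)^2+(\Im\rho_2^\pm)^2\gtrsim \xi_n^2+|\rho_2^\pm|^2,
\end{equation*}
where we distinguish $|\xi_n|\le 2|\rho_2^\pm|$ from the complementary case. Ellipticity of $\rho_2^\pm$ with order $o_0$ then gives a lower bound by $\langle\xi\rangle$ on each strip $|\tau|\ge\lambda$.

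For the $\rho_1$ factor, Lemma \ref{lemma: roots} gives no sector information, so we avoid estimating it directly. Instead we use the already established ellipticity of $D$. For $|\tau|\ge\lambda$,
\begin{equation*}
    |D|\simeq w_{\mu_D}=w_{o_{\frac12}}w_{o_0}\cdot w_{o_{\frac12}}w_{o_0}
\end{equation*}
up to equivalence, by Lemma \ref{lemma: additivity}. Each of the other three linear factors is bounded above by its weight: $w_{o_0}$ for the $\rho_2^\mp$ factor and $w_{o_{\frac12}}$ for the $\rho_1^\mp$ factor. Moreover, the $\rho_2$ factors are bounded below by $w_{o_0}$. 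Dividing $|D|$ by the other three factors gives
\begin{equation*}
    |\xi_n-\rho_1^+|\,|\xi_n-\rho_1^-|\gtrsim w_{o_{\frac12}}^2.
\end{equation*}
Combined with the upper bound $|\xi_n-\rho_1^\mp|\lesssim w_{o_{\frac12}}$, this yields $|\xi_n-\rho_1^\pm|\gtrsim w_{o_{\frac12}}$.

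Two points need care. First, this division argument must be run with constants uniform on each strip $|\tau|\ge\lambda$. Second, the derivative bounds for the $\rho_j$ are inherited from \ref{lemma: rootsv}. The lower bound for the $\rho_1$ factor is the main obstacle, and I expect this division trick to resolve it without any further explicit root analysis.
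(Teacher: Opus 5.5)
Your proposal is correct and follows essentially the paper's proof. $D^\pm$ is the product of the linear factors $\xi_n-\rho_j^\pm$ from Lemma \ref{lemma: roots}, the strong upper order functions of these factors are inherited from the roots, and the lower bound for a factor is obtained by dividing $|D|\gtrsim w_{\mu_D}$ by the upper bounds of the remaining factors.

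Your deviations are minor and, if anything, cleaner. You handle the $\rho_2^\pm$ factors directly through the sector condition. You also make explicit that the division needs only upper bounds of the other factors, whereas the paper's appeal to Proposition \ref{prop: arithmetics_order_functions} for the quotient tacitly presupposes their ellipticity. One small caveat: membership of the coefficients in $\mathcal{O}^\perp$ requires polynomial bounds on derivatives of all orders. These follow from the explicit root formula, not from the first-order Marcinkiewicz-type estimates you cite.
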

\begin{proof}
    Denote by $\rho_j^\pm(\tau, \xi')$, $j=1,2$, the roots of $D(\tau, \xi', \cdot)$ from Lemma \ref{lemma: roots} and define 
    \begin{equation*}
        D^-(\tau, \xi', \xi_n):=(\xi_n-\rho_1^-(\tau, \xi'))(\xi_n-\rho_2^-(\tau, \xi')), \qquad D^+(\tau, \xi', \xi_n):=(\xi_n-\rho_1^+(\tau, \xi'))(\xi_n-\rho_2^+(\tau, \xi')).
    \end{equation*}
   Clearly it holds $D= D^-D^+$ and by  Lemma \ref{lemma: roots}, we have $\rho_j^\pm\in \mathbb{C}^\mp$.
   Furthermore, the roots $\rho_1^\pm$ (resp. $\rho_2^\pm)$ are strongly $\mathcal{N}$-elliptic with order function $o_{\frac{1}{2}}$ (resp. $o_0$).
   It follows that each of the factors $\xi_n-\rho_1^\pm$ (resp. $\xi_n-  \rho_2^\pm$ ) also has strong upper order function $o_{\frac{1}{2}}$ (resp. $o_0$). Since furthermore 
    \begin{equation*}
        D(\tau, \xi', \xi_n)=(\xi_n-\rho_1^-(\tau, \xi'))(\xi_n-\rho_2^-(\tau, \xi'))(\xi_n-\rho_1^+(\tau, \xi'))(\xi_n-\rho_2^+(\tau, \xi'))
    \end{equation*}
    and we know that $D$ is strongly $\mathcal{N}$-elliptic for the order function $ \mu_D=2o_{\frac{1}{2}}+2o_0$, Proposition \ref{prop: arithmetics_order_functions} shows that
    \begin{equation*}
        (\xi_n-\rho_1^-(\tau, \xi')) =  \frac{D(\tau, \xi', \xi_n)}{(\xi_n-\rho_2^-(\tau, \xi'))(\xi_n-\rho_1^+(\tau, \xi'))(\xi_n-\rho_2^+(\tau, \xi')} 
    \end{equation*}
    is strongly $\mathcal{N}$-elliptic with order function $o_{\frac{1}{2}}$.
    A similar argument for the other factors shows that $(\xi_n-\rho_i^\pm)$ is strongly $\mathcal{N}$-elliptic for $i\in\set{1,2}$.
    In particular, both $D^-, D^+$ are strongly $\mathcal{N}$-elliptic by a further application of Proposition \ref{prop: arithmetics_order_functions}.
    \end{proof}
    The key to solving system \eqref{eqn: CHG_system_1} is the study of its determinant. 
    \begin{proposition}\label{prop: CHG_det}
        Let $D$ be the CHG determinant in \eqref{eqn: determinant},  $\mu_D$ the associated order function, and consider the problem 
    \begin{equation}\label{eqn: corollary_trace_1_3}
        \left\{\begin{array}{rcl}
         \op[D] u&=&f,      \\
        \Tr_1 u&=&g_1,\\
        \Tr_3 u&=&g_2.
        \end{array}\right.
    \end{equation}
    \begin{enumerate}
        \item\label{cor: corollary_trace_1_3i} For all $f\in L^p_\perp(G_+)$, $g_1\in T_{1\perp}^{\mu_D, p}(G^{n-1})$, and $g_2\in T_{3\perp}^{\mu_D, p}(G^{n-1})$, problem \eqref{eqn: corollary_trace_1_3} admits a unique solution $u\in \overline{H}^{\mu_D, p}_\perp(G_+)$, and it holds
        \begin{equation*}
            \|u\|_{\mu_D, +}\lesssim \|f\|_{L^p(G_+)}+\|g_1\|_{T_{1\perp}^{\mu_D, p}}+ \|g_2\|_{T_{3\perp}^{\mu_D, p}(G^{n-1})}.
        \end{equation*}
        \item\label{cor: corollary_trace_1_3ii} Consider the order function $\nu=o_0\equiv 1$.
        For all $f\in \overline{H}^{\nu, p}_\perp(G_+)$, $g_1\in T_{1\perp}^{\mu_D+\nu, p}(G^{n-1})$, and $g_2\in T_{3\perp}^{\mu_D+\nu, p}(G^{n-1})$, problem \eqref{eqn: corollary_trace_1_3} admits a unique solution $u\in \overline{H}^{\mu_D+\nu, p}_\perp(G_+)$, and it holds
        \begin{equation*}
            \|u\|_{\mu_D+\nu,p}\lesssim \|f\|_{\nu}+\|g_1\|_{T_{1\perp}^{\mu_D+\nu, p}(G^{n-1})}+ \|g_2\|_{T_{3\perp}^{\mu_D+\nu, p}}.
        \end{equation*}
    \end{enumerate}

    \end{proposition}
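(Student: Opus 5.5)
The plan is to apply Theorem \ref{thm: main_result} with $P=D$, using the admissible factorisation $D=D^-D^+$ from Lemma \ref{lemma: DHG_admissible}, so $N=4$, $N_+=N_-=2$, and $\mu_\pm=o_{\frac12}+o_0$. For part \ref{cor: corollary_trace_1_3i} we take $\nu=0$ (so $M=0$), and for part \ref{cor: corollary_trace_1_3ii} we take $\nu=o_0$ (so $M=1$). The boundary operators are $B_1=\Tr_1$ and $B_2=\Tr_3$. The data space $\mathcal{G}$ is chosen as the product of the trace spaces $T^{\mu_D+\nu,p}_{1\perp}\times T^{\mu_D+\nu,p}_{3\perp}$. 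By Theorem \ref{thm: trace_space_interpolation}, this means setting $s_{1,\mathfrak{k}}:=(\mu_D+\nu)^{(1+\mathfrak{k})}$ and $s_{2,\mathfrak{k}}:=(\mu_D+\nu)^{(3+\mathfrak{k})}$. The remaining order functions are fixed by the theorem: $t_{j+1,\mathfrak{k}}=(\mu_D+\nu)^{(j+\mathfrak{k})}$ and $s_{3+i,\mathfrak{k}}=(\mu_-+\nu)^{(i+\mathfrak{k})}$. Since we want no compatibility conditions, we aim for part (2) of Theorem \ref{thm: main_result}.

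Next we write the complemented boundary matrix. Expanding $D^+(\tau,\xi',\xi_n)=(\xi_n-\rho_1^+)(\xi_n-\rho_2^+)$ in powers of $\ic\xi_n$ gives $c_2^+=-1$, $c_1^+=\ic(\rho_1^++\rho_2^+)$, and $c_0^+=\rho_1^+\rho_2^+$. For $\nu=0$ the $4\times 4$ matrix has rows $e_2^T$, $e_4^T$, $(c_0^+,c_1^+,c_2^+,0)$, and $(0,c_0^+,c_1^+,c_2^+)$. Expanding along the first two rows, $\det\mathfrak{BC}=\pm\det\begin{pmatrix} c_0^+ & c_2^+\\ 0 & c_1^+\end{pmatrix}=\pm c_0^+c_1^+=\pm\ic\,\rho_1^+\rho_2^+(\rho_1^++\rho_2^+)$.

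The $\nu=o_0$ case adds a column for $\Tr_4$ and one more row, and the determinant gains a factor $c_2^+=-1$, so up to sign it is the same. Nonvanishing of the determinant for $\tau\ne 0$ requires $\rho_j^+\neq0$, which follows from $\mathcal N$-ellipticity, and $\rho_1^++\rho_2^+\ne0$, which follows since both have positive imaginary part by Lemma \ref{lemma: roots}\ref{lemma: rootsiv}.

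The main obstacle is strong $\mathcal N$-ellipticity of $\rho_1^++\rho_2^+$ with order function $o_0$. I would handle it using the sector information in Lemma \ref{lemma: roots}\ref{lemma: rootsv}: $\arg\rho_2^+\in(\frac\pi4,\frac{3\pi}4)$, and $\rho_1^+$ lies in the upper half plane. Then $\Im(\rho_1^++\rho_2^+)\ge\Im\rho_2^+\gtrsim|\rho_2^+|\simeq w_{o_0}$ gives the lower bound. The upper bound follows from $|\rho_1^+|\lesssim w_{o_{\frac12}}\lesssim w_{o_0}$ on $|\tau|\ge\lambda$, which holds because $\caln(o_{\frac12})\subseteq\caln(o_0)$ up to time lifts. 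I need to double-check this inclusion: $\caln_{1/2}$ contains the point $(0,\tfrac12)$, which is not in $\caln_0$. If the inclusion fails, I would instead bound $|\rho_1^+|\lesssim |\rho_1^+|+|\rho_2^+|$ and use the lower bound on $\Im$ of each summand separately, since both imaginary parts are positive. This gives $|\rho_1^++\rho_2^+|\simeq w_{o_{\frac12}}+w_{o_0}$, a weight for the order function $\max$-type sum. So $\det\mathfrak{BC}$ would then have order $o_{\frac12}+o_0+(o_{\frac12}\vee o_0)$, and I would adjust the bookkeeping accordingly.

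The final step is routine verification. Check that each entry $\mathfrak{BC}_{ij}$ has strong upper order function $t_{j,\mathfrak{k}}-s_{i,\mathfrak{k}}$, using Proposition \ref{prop: arithmetics_order_functions} and the fact that the shifts of $\mu_D+\nu$ decrease by $o_0$ or $o_{\frac12}$ per step, following Figure \ref{fig: trace_spc}. Then compare $\sum_i(t_{i,\mathfrak{k}}-s_{i,\mathfrak{k}})$ with the order of $\det\mathfrak{BC}$ for both $\mathfrak{k}$. Once these match, part (2) of Theorem \ref{thm: main_result} gives $\Comp=\overline{H}^{\nu,p}_\perp(G_+)\times\mathcal G$ together with the stated estimates.
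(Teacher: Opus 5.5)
Your overall route is the paper's: part (2) of Theorem \ref{thm: main_result} with $D=D^-D^+$, the same $4\times4$ (resp.\ $5\times5$) complemented boundary matrix, $\det\mathfrak{BC}=\pm c_0^+c_1^+$, and the same order functions. The gap is exactly at the step you flagged, and as written it does not close.

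Your primary claim, that $\rho_1^++\rho_2^+$ is strongly $\mathcal N$-elliptic with order function $o_0$, is false. $\rho_1^+$ is the heat-like root, with $|\rho_1^+|\simeq_\lambda\langle\tau\rangle^{1/2}+\langle\xi'\rangle$. The inclusion you doubted does fail: what holds is $\mathcal N_0\subseteq\mathcal N_{1/2}$, so $w_{o_0}\lesssim w_{o_{1/2}}$, not the reverse. Consequently your ``$o_{\frac12}\vee o_0$'' is simply $o_{\frac12}$. The determinant then has order $(o_{\frac12}+o_0)+o_{\frac12}=2o_{\frac12}+o_0$, which already equals $\sum_i(t_{i,\mathfrak k}-s_{i,\mathfrak k})$ for both $\mathfrak k$. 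There is no bookkeeping to adjust: the $s_{i,\mathfrak k}$ are forced by the prescribed data spaces $T^{\mu_D+\nu,p}_{1\perp}$ and $T^{\mu_D+\nu,p}_{3\perp}$. A mismatch would simply mean part (2) is unavailable. Nonvanishing of $\det\mathfrak{BC}$ alone only yields part (1), with a nontrivial compatibility space.

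The real missing piece is the lower bound. $\Im\rho_2^+\gtrsim|\rho_2^+|$ only gives $|\rho_1^++\rho_2^+|\gtrsim w_{o_0}$, which is too weak for order $o_{\frac12}$. Bounding ``each imaginary part separately'' would need a quantitative sector estimate $\Im\rho_1^+\gtrsim_\lambda|\rho_1^+|$. Lemma \ref{lemma: roots} does not provide this; it only gives $\Im\rho_1^+>0$. The paper takes the lower bounds for $c_0^+$ and $c_1^+$ from the authors' earlier $L^2$ paper (Lemma \ref{lemma: ellipticity_coeff}).

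You can close it with your own ingredients by a two-case argument:
\begin{itemize}
\item If $|\rho_1^+|\ge2|\rho_2^+|$, then $|\rho_1^++\rho_2^+|\ge|\rho_1^+|-|\rho_2^+|\ge\frac12|\rho_1^+|$.
\item Otherwise $|\rho_1^++\rho_2^+|\ge\Im\rho_2^+\ge\frac1{\sqrt2}|\rho_2^+|>\frac1{2\sqrt2}|\rho_1^+|$.
\end{itemize}
Either way $|\rho_1^++\rho_2^+|\gtrsim|\rho_1^+|\gtrsim_\lambda w_{o_{1/2}}$ on $|\tau|\ge\lambda$.

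The derivative upper bounds follow from Lemma \ref{lemma: roots}\ref{lemma: rootsv} and Proposition \ref{prop: arithmetics_order_functions}. A strong upper order function $o_0$ for $\rho_2^+$ is also one of order $o_{\frac12}$. Also, $c_0^+=\rho_1^+\rho_2^+$ is strongly $\mathcal N$-elliptic with order $o_{\frac12}+o_0$ directly by the product rule. Hence $\det\mathfrak{BC}$ is strongly $\mathcal N$-elliptic with order $2o_{\frac12}+o_0=\delta$, and your remaining entrywise verification goes through.
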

    \begin{proof}
    \begin{enumerate}
    \item 
        We consider the admissible structure of $D= D^-D^+$ with notations as in Lemma \ref{lemma: DHG_admissible}. Writing $d^+_j$, $j\in\set{0,1,2}$, for the coefficients of $D^+$, i.e.
            \begin{equation*}
            D^+(\tau, \xi', \xi_n)=:d^+_2(\tau, \xi')(\ic\xi_n)^2+ d^+_1(\tau, \xi')(\ic\xi_n) + d_0^+(\tau, \xi'), 
            \end{equation*}
            the complemented boundary matrix is 
            \begin{equation*}
        \mathfrak{BC}=\begin{pmatrix}
            0 &1&0&0\\
            0&0&0&1\\
            d^+_0& d^+_1 & -1 & 0\\
            0&d^+_0 & d^+_1 & -1
        \end{pmatrix}. 
     \end{equation*}
     and $\det\mathfrak{BC}=d_0^+d_1^+$ is strongly $\mathcal{N}$-elliptic with order function $2o_{\frac{1}{2}}+o_0$ by Lemma \ref{lemma: ellipticity_coeff} below.
     
    \medskip

    In line with Theorem \ref{thm: main_result}, we thus give $\mathfrak{BC}$ a mixed-order system structure. For the columns, we set
    \begin{equation*}
    \begin{array}{ll}
       t_{1,0}:= \mu_D^{(0)}= 2o_{\frac{1}{2}}+2o_0 & t_{1,1} : = \mu_D^{(1)}= 2o_{\frac{1}{2}}+o_0\\
        t_{2,0}:= \mu_D^{(1)}= 2o_{\frac{1}{2}}+o_0 & t_{2,1} : = \mu_D^{(2)}= 2o_{\frac{1}{2}}\\
         t_{3,0}:= \mu_D^{(2)}= 2o_{\frac{1}{2}} & t_{3,1} : = \mu_D^{(3)}= o_{\frac{1}{2}}\\
         t_{4,0}:= \mu_D^{(3)}= o_{\frac{1}{2}} & t_{4,1} : = \mu_D^{(4)}= 0.
    \end{array}
    \end{equation*}
    Natural choices for the target spaces of the boundary operators $B_1=\Tr_1$ and $B_2 =\Tr_3$ are $T_{1\perp}^{\mu_D,p}(G^{n-1})$ and $T_{3\perp}^{\mu_D,p}(G^{n-1})$ respectively. Thus we choose 
    \begin{equation*}
    \begin{array}{ll}
     s_{1,0}:= \mu_D^{(1)} =   2o_{\frac{1}{2}}+ o_0 &  s_{1,1}:= \mu_D^{(2)}=2o_{\frac{1}{2}}\\
    s_{2,0}:= \mu_D^{(3)} =   o_{\frac{1}{2}} &  s_{2,1}:= \mu_D^{(4)}=0 
    \end{array} 
    \end{equation*}
    in view of Theorem \ref{thm: trace_space_interpolation}. For the rows associated with the complementary boundary operator $\op[\mathfrak{C}]$ we have 
     \begin{equation*}
    \begin{array}{ll}
    s_{3,0}:= \mu_-^{(0)} =   o_{\frac{1}{2}} +o_0&  s_{3,1}:= \mu_-^{(1)}=o_{\frac{1}{2}}\\
     s_{4,0}:= \mu_-^{(1)} =   o_{\frac{1}{2}} &  s_{4,1}:= \mu_-^{(2)}=0
    \end{array} 
    \end{equation*}
    Then $\sum_{j=1}^4 t_{j,0}-s_{j,0}=\sum_{j=1}^4 t_{j,1}-s_{j,1}=2o_{\frac{1}{2}}+o_0$.
    Since $\det\mathfrak{BC}$ is strongly $\mathcal{N}$-elliptic with order function $2o_{\frac{1}{2}}+o_0$, the hypotheses of the second part of Theorem \ref{thm: main_result} are fulfilled, and the conclusion follows.

     \item 
     Here, the boundary matrix is 
     \begin{equation*}
       \mathfrak{BC}=\begin{pmatrix}
           0 &1& 0& 0&0\\
           0&0&0&1&0\\
           d_0^+& d_1^+& -1 &0&0\\
           0&d_0^+ & d_1^+&-1 &0\\
           0&0&d_0^+&d_1^+&-1 
       \end{pmatrix},
       \end{equation*}
     \end{enumerate}
     and as above $\det(\mathfrak{BC})= d_0^+d_1^+$.
     Thus, if we set for the columns
     \begin{equation*}
    \begin{array}{ll}
       t_{1,0}:= (\mu_D+\nu)^{(0)}= 2o_{\frac{1}{2}}+3o_0 & t_{1,1} : = (\mu_D+\nu)^{(1)}= 2o_{\frac{1}{2}}+2o_0\\
        t_{2,0}:=(\mu_D+\nu)^{(1)}= 2o_{\frac{1}{2}}+2o_0 & t_{2,1} : = (\mu_D+\nu)^{(2)}= 2o_{\frac{1}{2}}+o_0\\
         t_{3,0}:= (\mu_D+\nu)^{(2)}= 2o_{\frac{1}{2}}+o_0 & t_{3,1} : =(\mu_D+\nu)^{(3)}= 2o_{\frac{1}{2}}\\
         t_{4,0}:= (\mu_D+\nu)^{(3)}= 2o_{\frac{1}{2}} & t_{4,1} : =(\mu_D+\nu)^{(4)}= o_{\frac{1}{2}}\\
         t_{5,0}:= (\mu_D+\nu)^{(4)}= o_{\frac{1}{2}} & t_{5,1} : =(\mu_D+\nu)^{(5)}=0,
    \end{array}
    \end{equation*}
    and for the rows
    \begin{equation*}
    \begin{array}{ll}
     s_{1,0}:= (\mu_D+\nu)^{(1)} =   2o_{\frac{1}{2}}+ 2o_0 &  s_{1,1}:= (\mu_D+\nu)^{(2)}=2o_{\frac{1}{2}}+o_0\\
    s_{2,0}:= (\mu_D+\nu)^{(3)} =   2o_{\frac{1}{2}} &  s_{2,1}:= (\mu_D+\nu)^{(4)}=o_{\frac{1}{2}}\\  
    s_{3,0}:= (\mu_-+\nu)^{(0)} =   o_{\frac{1}{2}} +2o_0&  s_{3,1}:=(\mu_-+\nu)^{(1)}=o_{\frac{1}{2}}+o_0\\
     s_{4,0}:= (\mu_-+\nu)^{(1)} =   o_{\frac{1}{2}}+o_0 &  s_{4,1}:= (\mu_-+\nu)^{(2)}=o_{\frac{1}{2}}\\
      s_{5, 0}:= (\mu_-+\nu)^{(2)} =   o_{\frac{1}{2}} &  s_{5,1}:= (\mu_-+\nu)^{(3)}=0,
    \end{array} 
    \end{equation*}
    the conclusion follows as in the first point.
    \end{proof}
    \begin{lemma}\label{lemma: ellipticity_coeff}
    The symbol $d^+_1$ is strongly $\mathcal{N}$-elliptic for the order function $o_{\frac{1}{2}}$, and the symbol $d^+_0$ is strongly $\mathcal{N}$-elliptic for the order function $\mu_+=o_{\frac12}+o_0$.
    In particular, $d^+_0d^+_1$ is strongly $\mathcal{N}$-elliptic for the order function $2o_\frac12+o_0$.
\end{lemma}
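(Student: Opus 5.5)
Since $D^+(\tau,\xi',\xi_n)=(\xi_n-\rho_1^+)(\xi_n-\rho_2^+)$, I will first read off the coefficients with the convention $D^+=d_2^+(\ic\xi_n)^2+d_1^+(\ic\xi_n)+d_0^+$. Expanding gives $d_2^+=-1$, $d_1^+=\ic(\rho_1^++\rho_2^+)$ and $d_0^+=\rho_1^+\rho_2^+$.

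For $d_0^+$ the claim is immediate. Lemma \ref{lemma: roots}\ref{lemma: rootsv} gives that $\rho_1^+$ is strongly $\mathcal{N}$-elliptic with order function $o_{\frac12}$ and $\rho_2^+$ with order function $o_0$. Proposition \ref{prop: arithmetics_order_functions}\ref{prop: arithmetics_order_functionsii} then makes the product $d_0^+$ strongly $\mathcal{N}$-elliptic with order function $o_{\frac12}+o_0=\mu_+$. The final assertion about $d_0^+d_1^+$ follows by the same proposition once $d_1^+$ is settled.

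For $d_1^+$ the upper estimates are easy. Since $\mathcal{N}(o_0)\subseteq\mathcal{N}(o_{\frac12})$, Lemma \ref{lemma: point_inside} shows that $o_{\frac12}$ is a strong upper order function for both $\rho_1^+$ and $\rho_2^+$. Proposition \ref{prop: arithmetics_order_functions}\ref{prop: arithmetics_order_functionsi} then transfers this to the sum.

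The main obstacle is the lower bound $|\rho_1^++\rho_2^+|\gtrsim_\lambda \langle\tau\rangle^{1/2}+\langle\xi'\rangle$ on $|\tau|\ge\lambda$, because a sum of roots could cancel. I would rule out cancellation using the sector information on the roots. Both roots lie in $\mathbb{C}_+$ by Lemma \ref{lemma: roots}\ref{lemma: rootsiv}, and $\arg\rho_2^+\in(\frac\pi4,\frac{3\pi}4)$ by Lemma \ref{lemma: roots}\ref{lemma: rootsv}, so $\Im\rho_2^+\ge \frac1{\sqrt2}|\rho_2^+|$. Together with $\Im\rho_1^+>0$ this yields
\begin{equation*}
\Im(\rho_1^++\rho_2^+)\ge \Im\rho_2^+\gtrsim |\rho_2^+|\gtrsim_\lambda \langle\xi'\rangle .
\end{equation*}
This controls the $\langle\xi'\rangle$ part of the weight.

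It remains to control the $\langle\tau\rangle^{1/2}$ part, and I would split into two regimes. In the regime $|\rho_1^+|\ge 2|\rho_2^+|$, the triangle inequality gives $|\rho_1^++\rho_2^+|\ge\frac12|\rho_1^+|\gtrsim\langle\tau\rangle^{1/2}+\langle\xi'\rangle$. In the regime $|\rho_1^+|<2|\rho_2^+|$, we have $\langle\tau\rangle^{1/2}\lesssim|\rho_1^+|\lesssim|\rho_2^+|$, so the imaginary-part bound above already dominates the full weight. If the sector bound on $\arg\rho_2^+$ turned out insufficient, my fallback is the explicit formula \eqref{def:roots}. There I would compute $(\rho_1^++\rho_2^+)^2=(\rho_1^+)^2+(\rho_2^+)^2+2\rho_1^+\rho_2^+$, which equals $-\frac12(2|\xi'|^2+\ic\tau)+2\rho_1^+\rho_2^+$ up to sign conventions. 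I would then estimate its modulus from below directly.
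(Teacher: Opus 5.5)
Your proof is correct. For the upper estimates it coincides with the paper's argument: you read off $d_1^+=\ic(\rho_1^++\rho_2^+)$ and $d_0^+=\rho_1^+\rho_2^+$ and combine Lemma~\ref{lemma: roots} with Proposition~\ref{prop: arithmetics_order_functions}.

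The routes differ for the lower bounds. The paper does not prove these here but imports them from \cite[Lemma 4.22]{NeS25}. You obtain them from Lemma~\ref{lemma: roots} alone, in two ways.
\begin{itemize}
\item For $d_0^+$, you note that Proposition~\ref{prop: arithmetics_order_functions}\ref{prop: arithmetics_order_functionsii}, applied to the two strongly $\mathcal{N}$-elliptic roots, already gives full strong $\mathcal{N}$-ellipticity of the product. No separate lower estimate is needed.
\item For $d_1^+$, you rule out cancellation in $\rho_1^++\rho_2^+$ using $\Im\rho_1^+>0$ together with the sector condition $\arg\rho_2^+\in(\frac\pi4,\frac{3\pi}4)$. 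Lemma~\ref{lemma: roots}\ref{lemma: rootsv} records this condition, but the paper does not invoke it in this proof.
\end{itemize}
Your version makes the lemma self-contained relative to the present paper. The paper's version is shorter but rests on the external reference.

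Your two regimes can be merged into one line. Combine $|\rho_1^++\rho_2^+|\ge\Im\rho_2^+\ge|\rho_2^+|/\sqrt2$ with $|\rho_1^++\rho_2^+|\ge|\rho_1^+|-|\rho_2^+|$. This gives $(1+\sqrt2)\,|\rho_1^++\rho_2^+|\ge|\rho_1^+|\gtrsim_\lambda\langle\tau\rangle^{1/2}+\langle\xi'\rangle$ on $|\tau|\ge\lambda$.

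The fallback through the explicit root formula is therefore unnecessary. If you ever use it, recheck the normalization from $D=X^2+\ic\tau X+\ic\tau$ with $X=|\xi'|^2+\xi_n^2$. This yields $(\rho_1^+)^2+(\rho_2^+)^2=-2|\xi'|^2-\ic\tau$, which is twice the value you wrote.
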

\begin{proof}
    Since $d^+_1= \ic(\rho_1^++  \rho_2^+)$ and $d^+_0= \rho_1^+\rho_2^+$, where $\rho_1^+$ and $\rho_2^+$ are the roots chosen in Lemma \ref{lemma: roots}, we see that $o_\frac{1}{2}$ is a strong upper order function for $d^+_1$, and $o_{\frac{1}{2}}+o_0$ is a strong upper order function for $d_0^+$ by the same lemma along with Proposition  \ref{prop: arithmetics_order_functions}.
    The lower estimates are shown in \cite[Lemma 4.22]{NeS25}.
    Hence, the conclusion follows.
\end{proof}
With these preparations, we obtain the following well-posedness result for the Cahn-Hilliard-Gurtin system \eqref{eqn: CHG_system_1}.
\begin{theorem}\label{thm: main}
    Consider the solution space $\mathbb{E}:=\mathbb{E}_1\times \mathbb{E}_2$, the data space $\mathbb{F}:=\mathbb{F}_1\times\mathbb{F}_2$, and the trace space $\mathbb{G}:=\mathbb{G}_1\times\mathbb{G}_2$, given by
    \begin{align*}
    \begin{array}{rclrcl}
        \mathbb{E}_1&:=&\overline{H}^{(1,1), p}_\perp(G_+)\cap \overline{H}^{(0,3), p}_\perp(G_+),
        &\mathbb{E}_2&:=&\overline{H}^{(0,2), p}_\perp(G_+), \\
        \mathbb{F}_1&:=&L^p_\perp(G_+),
        &\mathbb{F}_2&:=&\overline{H}^{(0,1),p}_\perp(G_+),\\
        \mathbb{G}_1&:=&BH_\perp^{(1-\frac{1}{2p}, 0), p}(G^{n-1})\cap HB^{(0, 3-\frac{1}{p}),p}_\perp(G^{n-1}), &\mathbb{G}_2&:=&HB^{(0, 1-\frac{1}{p}),p}_\perp(G^{n-1}).
    \end{array}
    \end{align*}
        For all $f=(f_1, f_2)\in \mathbb{F}$ and $g=(g_1,g_2)\in \mathbb{G}$, system \eqref{eqn: CHG_system_1} admits a unique solution $u=(u_1, u_2)\in\mathbb{E}$, and it holds 
\begin{equation*}
    \|u\|_{\mathbb{E}}\lesssim \|f\|_{\mathbb{F}}+\|g\|_{\mathbb{G}}.
\end{equation*}

\end{theorem}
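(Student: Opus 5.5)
The plan is to reduce the system \eqref{eqn: CHG_system_1} to the scalar problem \eqref{eqn: corollary_trace_1_3} for the determinant $D$, and then invoke Proposition \ref{prop: CHG_det}. The starting point is the cofactor representation $L^{-1}=D^{-1}\ad L$. Since $\ad L$ has entries $1$, $-|\xi|^2$, $|\xi|^2+\ic\tau$, $\ic\tau$, every solution should be of the form $u=\op[\ad L]_+ w$ for a scalar unknown $w$:
\begin{align*}
 u_1=w-\Delta w_2', \qquad u_2=\ldots,
\end{align*}
where the exact form is not needed at this stage. To avoid working with nonlocal objects on the half-space, I will instead write the equation directly. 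Eliminating $u_2=f_2-\Delta u_1+\partial_t u_1$ from the second equation and inserting it into the first gives
\begin{align*}
 \op[D]_+u_1=\partial_t u_1-\Delta(\partial_t u_1-\Delta u_1)=f_1+\Delta f_2 .
\end{align*}
This is a local differential operator, so it commutes with restriction to $G_+$.

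The key steps are as follows.

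(i) Check the data. We have $f_1\in L^p_\perp$ and $\Delta f_2\in \overline{H}^{(0,-1),p}_\perp(G_+)$. The latter is not in $L^p$, so part \ref{cor: corollary_trace_1_3i} of Proposition \ref{prop: CHG_det} cannot be applied directly. The $\nu=o_0$ version (part \ref{cor: corollary_trace_1_3ii}) matches the regularity of $u_1\in\mathbb{E}_1$ better: indeed $\mathbb{E}_1=\overline{H}^{\mu_D-o_0,p}_\perp$, because $\mu_D-o_0=2o_{\frac12}+o_0$ has vertices $(3,0),(1,1),(0,1)$. To handle this, I split $u_1=v+z$. Here $z$ solves the whole-space-induced problem with data $\Delta f_2$: extend $f_2$ to $F_2\in H^{(0,1),p}_\perp(G)$ and set $z:=r^+\op[D^{-1}]\Delta F_2$. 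By Theorem \ref{thm: whole space}, $\op[D^{-1}]\Delta$ maps $H^{(0,1)}$ into $H^{\mu_D-o_0}$, so $z\in\mathbb{E}_1$ with the required bound. The remainder $v$ then solves $\op[D]_+v=f_1$ with $f_1\in L^p_\perp$.

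(ii) Translate the boundary conditions. We need $\Tr_1 u_1=g_1$, and $\Tr_1 u_2=g_2$ becomes $\Tr_1(f_2-\Delta u_1+\partial_t u_1)=g_2$, i.e.
\begin{align*}
 \Tr_3 u_1=\Tr_1 f_2-g_2+(\partial_t-\Delta')\Tr_1u_1 .
\end{align*}
Substituting $\Tr_1u_1=g_1$ yields boundary data for $(\Tr_1,\Tr_3)$. To apply part \ref{cor: corollary_trace_1_3i} to $v$, the data must lie in $T^{\mu_D}_{1}\times T^{\mu_D}_{3}$. This is an issue of one spatial derivative, since $u_1\in\mathbb{E}_1$ is one derivative weaker than $\overline{H}^{\mu_D}$. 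I expect this mismatch to be the \textbf{main obstacle}. What I will try first is to work with $\mu_D-o_0$ and the traces $\Tr_1,\Tr_3$ of that space via the trace Theorem \ref{thm: trace_space}. Then $g_1\in T_1^{\mu_D-o_0}$ holds: $\mathbb{G}_1$ corresponds to the shifted polygon $(3-\frac1p,0),(0,1-\frac1{2p})$, and this is consistent. Next, $\Tr_1 f_2-g_2\in HB^{(0,1-\frac1p)}$ and $(\partial_t-\Delta')g_1$ lies in the correct trace space $T^{\mu_D-o_0}_{3}$ by the mapping properties in Corollary \ref{cor: ellipticity_HB_BH}. I will then rerun the argument of Proposition \ref{prop: CHG_det} with $\nu=-o_0$: reuse the same complemented matrix $\mathfrak{BC}$ with order functions shifted by $-o_0$, whose determinant $d_0^+d_1^+$ is still strongly $\mathcal N$-elliptic by Lemma \ref{lemma: ellipticity_coeff}, and apply Theorem \ref{thm: main_result}. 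If negative $\nu$ causes trouble in the trace theory, which requires integer vertices and strictly positive orders, I will instead lift with $\op[\omega^-_{o_0}]^{-1}$, which preserves lower support (Proposition \ref{prop:weight_fct_supp}). This shifts the whole problem by $o_0$ and places it in the setting of part \ref{cor: corollary_trace_1_3ii}.

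(iii) Recover and conclude. Define $u_2:=f_2-\Delta u_1+\partial_tu_1$. Then $u_2\in\overline{H}^{(0,1)}$ at first sight, but the first equation gives $\Delta u_2=\partial_tu_1-f_1\in\overline{H}^{(0,-1)}$. Together with the boundary condition $\partial_n u_2=g_2$, elliptic regularity for the Neumann Laplacian upgrades this to $u_2\in\overline{H}^{(0,2)}$. This can equivalently be obtained by applying Theorem \ref{thm: main_result} to $\Delta'$ plus Neumann, or by simply reading off $u_2\in\overline H^{(0,2)}$ from $u_1\in\mathbb{E}_1$ and $f_2\in\overline H^{(0,1)}$ together with the identity. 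The estimate follows from collecting the bounds. For uniqueness, any solution $u$ with zero data gives $u_1$ solving the homogeneous scalar problem, so $u_1=0$ by the uniqueness in Theorem \ref{thm: main_result}, and then $u_2=0$ from the second equation.
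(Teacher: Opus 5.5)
\textbf{The proposal has a genuine gap, and it sits in step (ii).} Neither of your fallbacks closes it.

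\emph{Why the scalar reduction fails.} For $u_1\in\mathbb{E}_1=\overline{H}^{\mu_D-o_0,p}_\perp(G_+)$ the Newton polygon has $r_1=3$, so Theorem \ref{thm: trace_space} only provides $\Tr^{(3)}=(\Tr_0,\Tr_1,\Tr_2)$. Consequently:
\begin{itemize}
\item $\Tr_3u_1$ does not exist, and there is no space $T^{\mu_D-o_0,p}_{3\perp}(G^{n-1})$;
\item $\Tr_1 f_2$ does not exist for $f_2\in\overline{H}^{(0,1),p}_\perp(G_+)$;
\item $\partial_t g_1$ has negative time regularity $-\frac{1}{2p}$.
\end{itemize}
In $\Tr_1u_2=\Tr_1f_2+\Tr_1(\partial_t-\Delta)u_1$ only the sum is defined. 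So your condition $\Tr_3u_1=\Tr_1f_2-g_2+(\partial_t-\Delta')\Tr_1u_1$ is not a boundary condition for $u_1$ at this regularity.

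\emph{Why the fallbacks do not help.} Theorem \ref{thm: main_result} cannot be rerun with $\nu=-o_0$. Besides requiring $\nu$ strictly positive, it only admits boundary operators of the form $\opB\Tr^{(N+M)}$, here $\opB\Tr^{(3)}$, and that excludes $\Tr_3$. The lift fails too. The operator $\op[\omega^-_{o_0}]$ is first order and differential in $x_n$, so writing $u_1=\op[\omega^-_{o_0}]_+\tilde u_1$ turns $\Tr_3u_1$ into an expression containing $\Tr_4\tilde u_1$. That trace is undefined on $\overline{H}^{\mu_D,p}_\perp(G_+)$. (The lifted problem would also be that of part \ref{cor: corollary_trace_1_3i}, not \ref{cor: corollary_trace_1_3ii}.)

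\emph{Knock-on problems in step (iii).} Your uniqueness argument appeals to a scalar uniqueness result at level $\mu_D-o_0$ that does not exist. Also, $u_2\in\overline{H}^{(0,2),p}_\perp(G_+)$ cannot be read off from $u_2=f_2+(\partial_t-\Delta)u_1$, because both terms lie only in $\overline{H}^{(0,1),p}_\perp(G_+)$.

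\textbf{The missing idea} is to keep both unknowns and work one spatial order higher via the adjugate. Set $u=\op[\ad L]_+w$, that is, $u_1=w_1+\Delta w_2$ and $u_2=(\partial_t-\Delta)w_1+\partial_t w_2$, so that $\op[L]_+u=\op[D]_+w$.
\begin{itemize}
\item The problems $\op[D]_+w_1=f_1\in L^p_\perp(G_+)$ and $\op[D]_+w_2=f_2\in\overline{H}^{o_0,p}_\perp(G_+)$ are exactly those of parts \ref{cor: corollary_trace_1_3i} and \ref{cor: corollary_trace_1_3ii} of Proposition \ref{prop: CHG_det}.
\item They give $w_1\in\overline{H}^{\mu_D,p}_\perp(G_+)$ and $w_2\in\overline{H}^{\mu_D+o_0,p}_\perp(G_+)$. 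This is why that proposition is stated for $\nu=0$ and $\nu=o_0$.
\item Now all traces exist, and $u\in\mathbb{E}$ follows from the mapping properties.
\end{itemize}
The boundary conditions read
\[
\Tr_1w_1+\Delta'\Tr_1w_2+\Tr_3w_2=g_1,\qquad (\partial_t-\Delta')\Tr_1w_1-\Tr_3w_1+\partial_t\Tr_1w_2=g_2 .
\]
One admissible choice is:
\begin{itemize}
\item $\Tr_3w_1=\Tr_1w_2=0$;
\item $\Tr_1w_1=(\partial_t-\Delta')^{-1}g_2$, which lies in $T^{\mu_D,p}_{1\perp}(G^{n-1})$ by Corollary \ref{cor: ellipticity_HB_BH};
\item $\Tr_3w_2=g_1-\Tr_1w_1$, which lies in $T^{\mu_D+o_0,p}_{3\perp}(G^{n-1})=BH^{(1-\frac1{2p},0),p}_\perp(G^{n-1})\cap HB^{(0,2-\frac1p),p}_\perp(G^{n-1})$. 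The stated $\mathbb{G}_1$ embeds into this space.
\end{itemize}
Uniqueness then has to be argued separately, not through a scalar problem for $u_1$ in $\mathbb{E}_1$.
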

\begin{proof}
    The proof follows the same arguments as in \cite[Theorem 1.1]{NeS25}.
    Indeed, the algebraic reduction used there is independent of $p$, while Proposition \ref{prop: CHG_det} and Theorem \ref{thm: trace_space} provide the corresponding $L^p$-isomorphisms and trace spaces, respectively.
    Thus, replacing the Hilbert spaces in \cite[Theorem 1.1]{NeS25} by the spaces defined above, the proof carries over without further changes.
\end{proof}
\subsection{Parabolic Problems with Dynamic Boundary Conditions}
As seen in Proposition \ref{prop: admissible_parabolic}, parabolic operators admit a simple, triangular Newton polygon and a trivial admissible structure.
In particular, maximal time-periodic $L^p$-regularity for such operators with boundary conditions independent of the time variable was established in \cite{kyed2019time} without requiring the Newton polygon machinery, relying on Lopatinski\u{\i}-Shapiro conditions to guarantee well-posedness.
In contrast, dynamic boundary conditions fall outside the classical Lopatinski\u{\i}–Shapiro framework.
Nevertheless, they fit naturally into the approach developed in this article, as we demonstrate in the following examples.
\subsubsection*{The Heat Equation with Dynamic Boundary Conditions}

We consider the problem
\begin{equation}\label{eqn: heat_dyn}
    \left\{\begin{array}{rcll}
      \partial_tu-\Delta u&=& f     & \text{ on }\mathbb{T}\times \mathbb{R}^n_+ \\
      \partial_t u+\partial_n u&=& g   & \text{ on }\mathbb{T}\times \mathbb{R}^{n-1}.
    \end{array}\right.
\end{equation}
As in Section \ref{sec:heat}, we may use Example \ref{js004} and Proposition \ref{prop: admissible_parabolic} to see that the symbol $H(\tau,\xi)= \ic \tau + \vert \xi \vert^2$ of the heat equation is admissible with $H=H^-H^+$, where $H$ is strongly $\mathcal{N}$-elliptic with order function $\mu_H:=2o_{\frac{1}{2}}$ and $H^\pm$ is strongly $\mathcal{N}$-elliptic with order function $\mu_\pm:=o_{\frac{1}{2}}$.

\medskip

The following theorem presents two complementary well-posedness results. Conceptually, they differ in the choice of boundary space.
In the first part, we keep the solution and forcing spaces identical to those of the whole-space problem and identify the optimal boundary space for which the problem remains  well-posed.
This space is not an interpolation space of Newton polygon spaces and, in particular, is not itself a Newton polygon space, even for $p=2$.
In the second part, we instead identify a suitable restriction of the solution space for which the corresponding boundary space is a Newton polygon interpolation space.

\begin{theorem}\label{thm: heat_dyn}
\begin{enumerate}
    \item\label{heat_dyn_part_i}   Consider the solution space $\mathbb{E}:= \overline{H}^{(1, 0), p}_\perp(G_+) \cap \overline{H}^{(0,2),p}_\perp(G_+)$, the boundary data space
    \begin{equation}\label{eqn: def_G_heat_dyn}
        \mathbb{G}:=(\partial_t+(-\Delta')^{\frac{1}{2}})\Tr_0\mathbb{E =}(H^{2o_\frac{1}{2}-o_1, p}_\perp(G^{n-1}), H^{o_{\frac{1}{2}}-o_1, p}_\perp(G^{n-1}))_{1-\frac{1}{p}, p}
    \end{equation} and $\mathbb{F}:= L^p_\perp(G_+)$.
    Then $S:=(\partial_t-\Delta,\partial_t\Tr_0+\Tr_1)\in \call_{\mathrm{iso}}(\mathbb{E},\mathbb{F}\times\mathbb{G})$.
    In particular, for all $(f,g)\in \mathbb{F}\times \mathbb{G}$, the problem (\ref{eqn: heat_dyn}) admits a unique solution $u\in \mathbb{E}$ and it holds 
    \begin{equation*}
        \| u\|_\mathbb{E}\lesssim \| f\|_{\mathbb{F}}+\| g\|_{\mathbb{G}}. 
    \end{equation*}
    \item If we keep the notations from part \ref{heat_dyn_part_i} and set 
    \begin{align*}
        \mathbb{E}'&:=\{u\in \mathbb{E}\mid \Tr_0 u \in \mathbb{H}_0 \},\\
        \|u\|_{\mathbb{E}'} &:= \|u\|_{\mathbb{E}}+\|\Tr_0u\|_{\mathbb{H}_0},
    \end{align*}
    where 
    \begin{equation*}
        \mathbb{H}_0:= T_{0\perp}^{o_1+o_{\frac{1}{2}}}(G^{n-1})= BH_\perp^{(\frac{3}{2}-\frac{1}{2p}, 0), p}(G^{n-1})\cap HB^{(1, 1-\frac{1}{p}), p}_\perp(G^{n-1})\cap HB^{(0, 2-\frac{1}{p}),p}_\perp(G^{n-1}),
    \end{equation*}
    then for 
    \begin{align*}
        \mathbb{G}'&:= T_{0\perp}^{o_{\frac{1}{2}}, p}(G^{n-1})= BH^{(\frac{1}{2}-\frac{1}{2p}, 0), p}_\perp(G^{n-1}) \cap HB^{(0, 1-\frac{1}{p}),p}_\perp(G^{n-1}),
    \end{align*}
     it holds $S\in \call_{\mathrm{iso}}(\mathbb{E}',\mathbb{F}\times\mathbb{G}')$.
     In particular, for all $f\in \mathbb{F}$ and $g \in \mathbb{G}'$, problem (\ref{eqn: heat_dyn}) admits a unique solution $u\in \mathbb{E}'$ and it holds 
    \begin{equation*}
        \| u\|_\mathbb{E} + \|\Tr_0 u\|_{\mathbb{H}_0} \lesssim \| f\|_{\mathbb{F}}+\| g\|_{\mathbb{G}'}. 
    \end{equation*}
    
\end{enumerate}
  
\end{theorem}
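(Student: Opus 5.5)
Both parts follow from Theorem \ref{thm: main_result}, applied with $P=H=H^-H^+$, $\mu=2o_{\frac12}$, $\mu_\pm=o_{\frac12}$ (Proposition \ref{prop: admissible_parabolic}), $\nu=0$ (so $M=0$, $N=2$, $N_+=1$) and the single boundary operator $B_1=\partial_t\Tr_0+\Tr_1$. Its boundary row is $(\ic\tau,\,-\ic)$ in the convention $\Tr_1\leftrightarrow$ column $2$. Writing $H^+(\tau,\xi',\xi_n)=\xi_n-\rho_1^+ = -\ic(\ic\xi_n)+c_0^+$ with $c_0^+=-\rho_1^+$, the complementing row from $\Tr_0\op[H^+]_+$ is $(c_0^+,-\ic)$. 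Hence
\begin{equation*}
\mathfrak{BC}=\begin{pmatrix}\ic\tau & -\ic\\ c_0^+ & -\ic\end{pmatrix},\qquad \det\mathfrak{BC}=\tau+\ic c_0^+ = \tau-\ic\rho_1^+ .
\end{equation*}
The key algebraic step is to show that this determinant is nonvanishing for $\tau\ne0$ and, up to an $\caln$-elliptic factor of order $0$, comparable to the symbol $\ic\tau+|\xi'|$ of $\partial_t+(-\Delta')^{1/2}$. Since $\Im\rho_1^+>0$, we have $\Re(-\ic\rho_1^+)>0$. Moreover $\rho_1^+=\sgn(\tau)\sqrt{\ic\tau+|\xi'|^2}$ has argument in $(0,\pi/2)$ for $\tau>0$ (symmetrically for $\tau<0$). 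Thus $\tau$ and $-\ic\rho_1^+$ lie in a common closed half-plane with an angle bounded away from $\pi$, so $|\det\mathfrak{BC}|\simeq|\tau|+|\rho_1^+|\simeq\langle\tau\rangle+\langle\xi'\rangle$ on $|\tau|\ge\lambda$. Upper derivative bounds follow from Lemma-\ref{lemma: roots}-type computations and Proposition \ref{prop: arithmetics_order_functions}. Hence $\det\mathfrak{BC}$ is strongly $\caln$-elliptic with order function $o_1$ (weight $\langle\tau\rangle+\langle\xi'\rangle$), and $\op[\det\mathfrak{BC}]\op[\ic\tau+\langle\xi'\rangle]^{-1}$ is an isomorphism on every scale.

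For part (i), I take columns $t_{1,\mathfrak k}=(2o_{\frac12})^{(\mathfrak k)}$, $t_{2,\mathfrak k}=(2o_{\frac12})^{(1+\mathfrak k)}$, i.e. $(2o_{\frac12},o_{\frac12})$ and $(o_{\frac12},0)$, and the complementing row $s_{2,\mathfrak k}=(o_{\frac12})^{(\mathfrak k)}\in\{o_{\frac12},0\}$. I choose the boundary row as $s_{1,\mathfrak k}=t_{1,\mathfrak k}-o_1$, matching \eqref{eqn: def_G_heat_dyn}. The entry checks are immediate: $\ic\tau$ needs $o_1$ but is bounded by $o_\infty\le o_1$, and $-\ic$ needs $t_{2}-s_1=o_1-o_{\frac12}\ge 0$ (evaluate on weights), etc. Then $\sum t-\sum s=o_1$ for both $\mathfrak k$, and part 2 of Theorem \ref{thm: main_result} gives the isomorphism onto $\mathbb F\times\mathbb G$. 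The identification $\mathbb G=(\partial_t+(-\Delta')^{1/2})\Tr_0\mathbb E$ follows by interpolating the lifting property of $\op[\ic\tau+\langle\xi'\rangle]$ (Proposition \ref{js014}) against Theorem \ref{thm: trace_space_interpolation}. Replacing $|\xi'|$ by $\langle\xi'\rangle$ is harmless on purely oscillatory spaces.

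For part (ii) I would not redo the matrix. Instead, I show that $S$ restricts: for $u\in\mathbb E$ solving the problem with $g\in\mathbb G'$, the identity $\Tr_0 u=\op[\det\mathfrak{BC}]^{-1}$ applied to a combination of $g$ and $\Tr_0\op[H^-]_+^{-1}f$ (the first row of $\mathfrak{BC}^{-1}$) lies in $\mathbb H_0$. This holds because $\Tr_0\op[H^-]_+^{-1}f\in T_0^{o_{\frac12}}=\mathbb G'$, and $\op[\det\mathfrak{BC}]^{-1}$ maps $\mathbb G'=(H^{0},H^{o_{1/2}})_{1-1/p,p}$ isomorphically onto $(H^{o_1},H^{o_1+o_{1/2}})_{1-1/p,p}=\mathbb H_0$ by Proposition \ref{js014}, interpolation and Theorem \ref{thm: trace_space_interpolation}. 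Conversely, $\partial_t\Tr_0 u\in\mathbb G'$ for $\Tr_0u\in\mathbb H_0$ and $\Tr_1u\in T_1^{2o_{1/2}}=\mathbb G'$, so $S$ maps $\mathbb E'$ into $\mathbb F\times\mathbb G'$. Since $\mathbb G'\hookrightarrow\mathbb G$, bijectivity and the estimate then follow from part (i).

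The main obstacle is verifying the lower bound of $\det\mathfrak{BC}$ uniformly, including the sign bookkeeping of $\rho_1^+$ for $\tau<0$, together with the identification of the interpolation spaces involving the non-Newton-polygon order function $2o_{\frac12}-o_1$.
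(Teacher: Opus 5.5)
Your strategy for part (i) is the paper's: part 2 of Theorem \ref{thm: main_result} with $\nu=0$ and the same order functions ($t_{j,\mathfrak k}$ from $T^{2o_{1/2},p}_\perp$, $s_{2,\mathfrak k}=(o_{\frac12})^{(\mathfrak k)}$, $s_{1,\mathfrak k}=t_{1,\mathfrak k}-o_1$).

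However, your complemented boundary matrix is wrong, and with it the symbol whose ellipticity is the key step. You use the convention $\Tr_j(\ic\xi_n)^k=\Tr_{j+k}$ for the complementing row, which gives $(c_0^+,c_1^+)=(c_0^+,-\ic)$. In that convention the operator $\partial_t\Tr_0+\Tr_1$ has row $(\ic\tau,1)$; your row $(\ic\tau,-\ic)$ belongs to $\partial_t\Tr_0-\ic\Tr_1$. Hence
\begin{equation*}
\det\mathfrak{BC}=\det\begin{pmatrix}\ic\tau & 1\\ c_0^+ & -\ic\end{pmatrix}=\tau-c_0^+=\tau+\rho_1^+,
\end{equation*}
as in the paper. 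Your lower bound for $\tau-\ic\rho_1^+$ therefore concerns a different boundary problem.

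There is a second issue. The formula $\rho_1^+=\sgn(\tau)\sqrt{\ic\tau+|\xi'|^2}$ that you take from Section \ref{sec:heat} (also used in Lemma \ref{lemma : heat_dyn_ellipticity}) solves $\xi_n^2=\ic\tau+|\xi'|^2$. The roots of $\xi_n\mapsto\ic\tau+|\xi'|^2+\xi_n^2$ are $\pm\ic\sqrt{\ic\tau+|\xi'|^2}$. So $\rho_1^+=\ic\sqrt{\ic\tau+|\xi'|^2}$, and the symbol to control is $\tau+\ic\sqrt{\ic\tau+|\xi'|^2}$.

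Your angle idea handles this symbol uniformly in the sign of $\tau$. Since $\arg\sqrt{\ic\tau+|\xi'|^2}\in[-\frac\pi4,\frac\pi4]$, $\rho_1^+$ lies in the sector $\arg z\in[\frac\pi4,\frac{3\pi}4]$. Its angle with the real number $\tau$ is then at most $\frac{3\pi}4$, so $|\tau+\rho_1^+|\gtrsim|\tau|+|\rho_1^+|\ge|\tau|+|\xi'|\gtrsim_\lambda\langle\tau\rangle+\langle\xi'\rangle$ on $|\tau|\ge\lambda$. Note that the information $\arg\rho_1^+\in(0,\frac\pi2)$ alone, as you wrote it, would not keep the angle away from $\pi$ for $\tau<0$; you need a sector strictly inside the half-plane.

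After this correction the rest of part (i) stands. The entry $1$ satisfies the same requirement $t_{2,\mathfrak k}-s_{1,\mathfrak k}=o_1-o_{\frac12}$ as $-\ic$, and $\sum_j(t_{j,\mathfrak k}-s_{j,\mathfrak k})=o_1$ for both $\mathfrak k$.

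For part (ii) you take a different, lighter route than the paper. The paper changes $t_{1,\mathfrak k}$ to $o_{\frac12}+o_1,\,o_1$ and $s_{1,\mathfrak k}$ to $o_{\frac12},\,0$, and rechecks the mixed-order conditions. It then uses Theorem \ref{thm: whole_space_systems} with interpolation to get $\op[\mathfrak{BC}]\in\call_{\mathrm{iso}}(\mathbb{H},\widetilde{\mathbb{G}})$. That single statement yields both the boundedness of $S$ from $\mathbb{E}'$ into $\mathbb{F}\times\mathbb{G}'$ and the bound on $\Tr_0u$.

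You instead use that $\mathfrak{BC}$ has a constant second column, so the first row of $\mathfrak{BC}^{-1}$ is $(\det\mathfrak{BC})^{-1}$ times constants. With the corrected matrix this reads $\Tr_0u=\op[(\det\mathfrak{BC})^{-1}](-\ic g-\Tr_0\op[H^-]_+^{-1}f)$. Only the scalar isomorphism from $\mathbb{G}'=(H^{0},H^{o_{1/2}})_{1-\frac1p,p}$ onto $\mathbb{H}_0=(H^{o_1},H^{o_1+o_{1/2}})_{1-\frac1p,p}$ is then needed. You check the forward mapping directly from $\op[\ic\tau]\colon\mathbb{H}_0\to\mathbb{G}'$ and $T^{2o_{1/2},p}_{1\perp}=\mathbb{G}'$.

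This is correct and more elementary for this $2\times2$ problem. The paper's version is the systematic one, and it is what carries over to the $4\times4$ matrix in Theorem \ref{thm:CH}, where the relevant row of the inverse involves nonconstant symbols. For the scalar isomorphism, and for the identification of $\mathbb{G}$, cite Theorem \ref{thm: whole space} plus interpolation rather than Proposition \ref{js014}, since neither $\det\mathfrak{BC}$ nor $\ic\tau+\langle\xi'\rangle$ is a weight function.
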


\begin{proof}
    With the same notations as in Section \ref{sec:heat} the complemented boundary matrix associated with (\ref{eqn: heat_dyn}) is 
    \begin{equation*}
        \mathfrak{BC}(\tau, \xi')= \begin{pmatrix}
            \ic \tau & 1 \\
            c_0^+(\tau, \xi') & -\ic  \\
            
        \end{pmatrix}
    \end{equation*}
and we have $\det \mathfrak{BC}=  \tau -c_0^+(\tau, \xi')$.
By Lemma \ref{lemma : heat_dyn_ellipticity} below, the symbol $\det\mathfrak{BC}$ is strongly $\mathcal{N}$-elliptic with order function $o_1$. 
\begin{enumerate}
    \item Let us first comment on the equality (\ref{eqn: def_G_heat_dyn}) defining the data space $\mathbb{G}$.
    Observe that the operator $\partial_t+(-\Delta')^{\frac{1}{2}}$ is given by the symbol $\ic\tau+|\xi|$, which is strongly $\mathcal{N}$-elliptic with order function $o_1$. Since by Theorem \ref{thm: trace_space_interpolation} it holds 
    \begin{equation*}
        \Tr_0 \mathbb{E} = T_{0\perp}^{2o_{\frac{1}{2}}, p}(G^{n-1})=(H^{2o_{\frac{1}{2}}, p}_\perp(G^{n-1}), H^{o_{\frac{1}{2}}, p}_\perp(G^{n-1}))_{1-\frac{1}{p}, p},
    \end{equation*}
    we obtain (\ref{eqn: def_G_heat_dyn}) by Theorem \ref{thm: whole space}. 
    \medskip 
    
    The order functions associated with the columns will be those of the trace space $T^{\mu_H, p}_\perp= T^{2o_{\frac{1}{2}}, p}_\perp$
\begin{equation*}
    \begin{array}{ll}
       t_{1,0}:= \mu_H^{(0)}= 2o_{\frac{1}{2}} & t_{1,1} : = \mu_{H}^{(1)}= o_{\frac{1}{2}}\\
         t_{2, 0}:= \mu_{H}^{(1)}= o_{\frac{1}{2}} & t_{2, 1} : = \mu_{H}^{(2)}=0.\\
    \end{array}
\end{equation*}
For the rows we choose 
\begin{equation*}
    \begin{array}{ll}
       s_{1,0}:=2o_{\frac{1}{2}}-o_1 & s_{1,1} : =  o_{\frac{1}{2}}-o_1\\
         s_{2,0}:= \mu_-^{(0)}= o_{\frac{1}{2}} & s_{2, 1} : = \mu_-^{(1)}=0,
    \end{array}
\end{equation*}
where the choice of $s_{1,0}$ and $s_{1, 1}$ is motivated by the equality (\ref{eqn: def_G_heat_dyn}). 
Then $t_{j, \mathfrak{k}}-s_{i, \mathfrak{k}}$ is a strong upper order function for $\mathfrak{BC}_{ij}$ for all $i,j\in\{1,2\}$, $\mathfrak{k}\in\{0,1\}$.
Moreover
\begin{equation*}
    o_1= \sum_{j=1}^2 t_{j,0}-s_{j,0} = \sum_{j=1}^2 t_{j,1}-s_{j,1},
\end{equation*}
so that the conclusion follows by Theorem \ref{thm: main_result}.

\item Since $\mathbb{E}'$ embeds continuously in $\mathbb{E}$, we immediately get $\partial_t-\Delta = \op[H] \in \mathcal{L}(\mathbb{E}', \mathbb{F})$ from part \ref{thm:CHi}. To account for the claimed extra regularity of the trace of the solution $u$, we set 
\begin{equation*}
    \mathbb{H}:= \Tr^{(2)}(\mathbb{E}')= \mathbb{H}_0 \times T_{1\perp}^{2o_\frac{1}{2}, p}(G^{n-1}) 
\end{equation*}
and
\begin{equation*}
    \widetilde{\mathbb{G}}:= \mathbb{G}' \times T_{0\perp}^{\mu_-}(G^{n-1})= T_{0\perp}^{o_\frac{1}{2}}(G^{n-1})^2.
\end{equation*}
Moreover, we choose all the order functions as above except
\begin{equation*}
    t_{1,0}:= o_{\frac{1}{2}}+o_1 \qquad t_{1,1}:= o_1,
\end{equation*}
and 
\begin{equation*}
    s_{1,0}:=o_{\frac{1}{2}} \qquad s_{1,1}:=0,
\end{equation*}
the choice of $t_{1, \mathfrak{k}}$ being motivated by $\mathbb{H}_0$ and that of $s_{1, \mathfrak{k}}$ by $\mathbb{G}'$.
The fact that $t_{j,\mathfrak{k}}-s_{i,\mathfrak{k}}$ is a strong upper order function for $(\mathfrak{BC})_{ij}$ for all $i,j\in\{1,2\}$ and $\mathfrak{k}\in\{0,1\}$ remains true.
    Using Theorem \ref{thm: trace_space_interpolation} to write the spaces $\mathbb{H}$ and $\widetilde{\mathbb{G}}$ as interpolation spaces, we may argue as in the proof of Theorem \ref{thm: main_result} to obtain $\op[\mathfrak{BC}] \in \mathcal{L}(\mathbb{H}, \widetilde{\mathbb{G}})$.
    Considering only the first components of $\mathbb{H}$ and $\widetilde{\mathbb{G}}$, we infer in particular that $\partial_t\Tr_0+\Tr_1\in \mathcal{L}(\mathbb{E}', \mathbb{G}')$ and thus $S\in \mathcal{L}(\mathbb{E}', \mathbb{F}\times\mathbb{G}')$.

    \medskip

    Now fix $(f, g)\in \mathbb{F}\times \mathbb{G}'$. Since $\mathbb{G}'\hookrightarrow \mathbb{G}$, the equation (\ref{eqn: heat_dyn}) admits a unique solution $u\in \mathbb{E}$ by \ref{thm:CHi} with 
    \begin{equation*}
        \|u\|_{\mathbb{E}}\lesssim \|f\|_{\mathbb{F}}+\|g\|_{\mathbb{G}}\lesssim \|f\|_{\mathbb{F}}+\|g\|_{\mathbb{G}'}.
    \end{equation*}
    Furthermore, using the explicit formula for $S^{-1}$ given in Theorem \ref{thm: main_result}, we get that the solution $u$ must satisfy 
    \begin{equation*}
      \op[\mathfrak{BC}] \Tr^{(2)}u= (g, \Tr_0(\op[H^-]_+^{-1} f))\in \widetilde{\mathbb{G}}.
    \end{equation*}
    Since the equality $\delta= o_{1}= \sum_{l=1}^2 t_{l, \mathfrak{k}}-s_{l, \mathfrak{k}}$ for $\mathfrak{k}=0$ and $\mathfrak{k}=1$ remains unchanged, we have $\op[\mathfrak{BC}] \in \mathcal{L}_{\mathrm{iso}}(\mathbb{H}, \widetilde{\mathbb{G}})$ in light of Theorem \ref{thm: whole_space_systems}.
    Consequently,
    \begin{equation*}
        \|\Tr_0 u \|_{\mathbb{H}_0}
        \le \|\Tr^{(2)}u\|_\mathbb{H}
        \lesssim \|\op[\mathfrak{BC}]\Tr^{(2)}u\|_{\widetilde{\mathbb{G}}}
        \lesssim\|f\|_{\mathbb{F}}+ \|g\|_{\mathbb{G}'}, 
    \end{equation*}
    where we used $\Tr_0\in \mathcal{L}(\overline{H}^{\mu_-, p}_\perp(G_+), T_{0\perp}^{\mu_-, p}(G^{n-1}))$ and $\op[H^-]_+ \in \mathcal{L}_{\mathrm{iso}}(\overline{H}^{\mu_-, p}_\perp(G_+), \mathbb{F})$ in the last step.
    The latter is valid due to Proposition \ref{prop: boundedness_lower_support} since $\op[H^-]$ and $\op[H^-]^{-1}$ preserve lower support.
    Summarizing, we have shown $\|u\|_{\mathbb{E}'}\lesssim \|f\|_{\mathbb{F}}+\|g\|_{\mathbb{G}'}$, i.e. $S^{-1}\in\call(\mathbb{F}\times\mathbb{G}',\mathbb{E}')$, which together with the already established boundedness of $S$ yields $S\in \call_{\mathrm{iso}}(\mathbb{E}',\mathbb{F}\times\mathbb{G}')$.
    \end{enumerate}
	\end{proof}

\begin{lemma}\label{lemma : heat_dyn_ellipticity}
    Let $c_0^+:(\R\setminus\{0\})\times\R^{n-1}$ be given by $c_0^+(\tau,\xi'):=-\sgn(\tau)\sqrt{\ic\tau+|\xi'|^2}$.
    Then the symbol 
    \begin{equation*}
        d(\tau, \xi'):=  \tau -c_0^+(\tau, \xi')= \tau+\sgn(\tau)\sqrt{\ic\tau+\vert\xi'\vert^2}
    \end{equation*}
    is strongly $\mathcal{N}$-elliptic with order function $o_1$. 
\end{lemma}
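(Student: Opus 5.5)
We must show that $d(\tau,\xi')=\tau+\sgn(\tau)\sqrt{\ic\tau+|\xi'|^2}$ satisfies, on every strip $|\tau|\ge\lambda$, the two-sided estimate $|d|\simeq_\lambda w_{o_1}(\tau,\xi')=\langle\tau\rangle+\langle\xi'\rangle$, together with the derivative bounds $|(\tau,\xi')^{(\alpha,\beta)}\partial^{(\alpha,\beta)}d|\lesssim_\lambda w_{o_1}$. The plan is to treat the square-root term $q(\tau,\xi'):=\sqrt{\ic\tau+|\xi'|^2}$ first. This is exactly $\pm c_0^+$, i.e.\ minus the root of the heat symbol from Section \ref{sec:heat}. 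Since $\ic\tau+|\xi'|^2$ is $2$-homogeneous of degree $2$ and nonvanishing off the origin, Example \ref{js004} together with Proposition \ref{prop: arithmetics_order_functions}\ref{prop: arithmetics_order_functionsiv} gives that $q$ is strongly $\mathcal{N}$-elliptic with order function $o_{\frac12}$ on $\tau\neq0$. (Formally, part \ref{prop: arithmetics_order_functionsiv} is stated for positive symbols; for the principal branch with $\Re(\ic\tau+|\xi'|^2)\ge0$, the same Faà di Bruno computation works verbatim.) Multiplying by $\sgn(\tau)$ changes nothing, because it is locally constant on $\tau\ne0$.

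\emph{Upper estimates.} The term $\tau$ has strong upper order function $o_\infty$, and $q$ has strong upper order function $o_{\frac12}$. Since $\caln_\infty\subseteq\caln_1$ and $\caln_{\frac12}\subseteq\caln_1$, Lemma \ref{lemma: point_inside} shows that both $o_\infty$ and $o_{\frac12}$ are dominated by $o_1$. Here $w_{o_{1/2}}=\langle\tau\rangle^{1/2}+\langle\xi'\rangle\lesssim\langle\tau\rangle+\langle\xi'\rangle$. Proposition \ref{prop: arithmetics_order_functions}\ref{prop: arithmetics_order_functionsi} then yields that $o_1$ is a strong upper order function for $d$.

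\emph{Lower estimate (main step).} We write $q=a+\ic b$ with $a=\Re q\ge0$ and $\sgn b=\sgn\tau$, since $\Im(\ic\tau+|\xi'|^2)=\tau$. Then
\[
d=\sgn(\tau)\bigl(|\tau|+a+\ic b\bigr),
\]
and the key observation is that there is no cancellation, since $|\tau|$ and $a$ are both nonnegative. Hence $|d|\ge|\tau|+a$. It remains to show $a\gtrsim_\lambda\langle\xi'\rangle$ in the regime where $|\xi'|$ dominates.

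For this we use the identities $a^2-b^2=|\xi'|^2$ and $2ab=\tau$, so that $a^2=\tfrac12\bigl(|\xi'|^2+\sqrt{|\xi'|^4+\tau^2}\bigr)\ge|\xi'|^2$, i.e.\ $a\ge|\xi'|$. Consequently $|d|\ge|\tau|+|\xi'|$. For $|\tau|\ge\lambda$ we have $|\tau|\gtrsim_\lambda\langle\tau\rangle$, and $|\tau|+|\xi'|\gtrsim_\lambda 1+|\xi'|$, which gives
\[
|d|\gtrsim_\lambda\langle\tau\rangle+\langle\xi'\rangle=w_{o_1}.
\]
Combined with the upper bounds, this shows that $d$ is $\mathcal{N}$-elliptic, and indeed strongly $\mathcal{N}$-elliptic, with order function $o_1$.

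The only delicate point is the sign bookkeeping for the principal square root, namely confirming that $\sgn(\Im q)=\sgn(\tau)$ and $\Re q\ge|\xi'|$; the rest is routine.
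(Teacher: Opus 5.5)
Your proof is correct and follows the paper's own argument. The upper bounds come from $o_\infty$ (for $\tau$) and $o_{\frac12}$ (for $c_0^+$) both being dominated by $o_1$, and the lower bound comes from $|d|\ge|\Re d|=|\tau|+\Re\sqrt{\ic\tau+|\xi'|^2}\ge|\tau|+|\xi'|$.

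Your sign bookkeeping ($\Re q\ge 0$, hence $d=\sgn(\tau)(|\tau|+a+\ic b)$) is in fact the correct version of the paper's displayed identity. The paper's version wrongly places a factor $\sgn(\tau)$ in front of the formula for $\Re\sqrt{\ic\tau+|\xi'|^2}$.
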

\begin{proof}
    As mentioned in the proof of Theorem \ref{js011}, the symbol $c_0^+(\tau, \xi')= H^+(\tau, \xi', 0)$ is strongly $\mathcal{N}$-elliptic with order function $o_\frac12$.
    In particular, both symbols $\ic \tau$ and $c_0^+(\tau, \xi)$ admit $o_1$ as strong upper order function and so does $d(\tau, \xi')$ by Proposition \ref{prop: arithmetics_order_functions}.

    We show the lower estimates.  It holds $\vert d(\tau, \xi')\vert \geq \vert \Re d(\tau, \xi')\vert=\vert \tau +\sgn(\tau)\Re(\sqrt{\ic\tau+ \vert\xi'\vert^2})\vert$ and 
    \begin{equation*}
        \Re(\sqrt{\ic\tau+ \vert\xi'\vert^2})= \sgn(\tau) \sqrt{\frac{\sqrt{\tau^2+\vert \xi'\vert^4}+\vert\xi'\vert^2}{2}}.
    \end{equation*}
    Thus 
    \begin{equation*}
        \vert \tau +\sgn(\tau)\Re(\sqrt{\ic\tau+ \vert\xi'\vert^2})\vert= \vert \tau\vert + \vert\Re(\sqrt{\ic\tau+ \vert\xi'\vert^2})\vert \geq \vert \tau\vert+ \vert\xi'\vert
    \end{equation*}
    Since for all $\lambda>0$, there exists $C_\lambda>0$ such that $ \vert \tau\vert+ \vert\xi'\vert\geq C_\lambda (\langle\tau\rangle+\langle\xi'\rangle)$ holds for all $\vert\tau\vert\geq \lambda$, this completes the proof. 
\end{proof}

\subsubsection*{The Cahn-Hilliard Equation with Dynamic Boundary Conditions and Surface Diffusion}

Let us consider the following Cahn-Hilliard equation with dynamic boundary condition and surface diffusion
 \begin{equation}\label{eqn: ch_dynamic_diff}
            \left\{\begin{array}{rcll}
            \partial_tu+\Delta^2u&=& f     & \text{ on }\mathbb{T}\times \mathbb{R}^n_+ \\
             \partial_n \Delta u&=&g_1    & \text{ on }\mathbb{T}\times \mathbb{R}^{n-1}\\
             (\partial_t u +  \partial_n u-\Delta_{\mathbb{R}^{n-1}}u) &=& g_2& \text{ on }\mathbb{T}\times \mathbb{R}^{n-1},
            \end{array}\right.
\end{equation}
see \cite{denk2008parabolic}.
We will realize the boundary conditions as $\opB\Tr^{(4)}u$ with the operator
\begin{align*}
    \opB=
    \begin{pmatrix}
        0 & \Delta_{\R^{n-1}} & 0 & 1 \\
        \partial_t-\Delta_{\R^{n-1}} & 1 & 0 & 0
    \end{pmatrix}.
\end{align*}
The symbol $P_{CH}(\tau, \xi):= \ic \tau +\vert\xi\vert^4$ is $4$-homogeneous of degree 4 and thus admissible by Proposition \ref{prop: admissible_parabolic}. 

\begin{theorem}\label{thm:CH}
\begin{enumerate}
    \item\label{thm:CHi} It holds $S=(\partial_t+\Delta^2,\opB\Tr^{(4)})\in \call_{\mathrm{iso}}(\mathbb{E},\mathbb{F}\times\mathbb{G})$, where 
    \begin{align*}
        \mathbb{E}&:= \overline{H}^{(1, 0),p}_\perp(G_+) \cap \overline{H}^{(0,4),p}_\perp(G_+), \\
        \mathbb{F}&:= L^p_\perp(G_+), \quad \mathbb{G}:= \mathbb{G}_1 \times \mathbb{G}_2,\\
        \mathbb{G}_1&:= T^{o_\frac14,p}_{0\perp}(G^{n-1})=BH^{(\frac{1}{4}-\frac{1}{4p}, 0), p}_\perp(G^{n-1}) \cap HB^{(0, {1-\frac{1}{p}}),p}_\perp(G^{n-1}),\\
        \mathbb{G}_2&:=(H^{4o_{\frac{1}{4}}-2o_{\frac{1}{2}}, p}_\perp(G^{n-1}),H^{3o_{\frac{1}{4}}-2o_{\frac{1}{2}},p}_\perp(G^{n-1}))_{1-\frac{1}{p}, p} .
    \end{align*}
    In particular, for all $f\in \mathbb{F}$ and $g=(g_1, g_2) \in \mathbb{G}$, problem (\ref{eqn: ch_dynamic_diff}) admits a unique solution $u\in \mathbb{E}$ and it holds 
    \begin{equation*}
        \| u\|_\mathbb{E}\lesssim \| f\|_{\mathbb{F}}+\| g\|_{\mathbb{G}}. 
    \end{equation*}
    \item\label{thm:CHii} If we keep the notation from part \ref{thm:CHi}, and set 
    \begin{align*}
        \mathbb{E}'&:=\{u\in \mathbb{E}\mid \Tr_0 u \in \mathbb{H}_0 \},\\
        \|u\|_{\mathbb{E}'} &:= \|u\|_{\mathbb{E}}+\|\Tr_0u\|_{\mathbb{H}_0},
    \end{align*}
    where 
    \begin{equation*}
        \mathbb{H}_0:= T_{0\perp}^{3o_{\frac{1}{4}}+2o_{\frac{1}{2}}}(G^{n-1})= BH_\perp^{(\frac{7}{4}-\frac{1}{4p}, 0), p}(G^{n-1})\cap HB^{(1, 3-\frac{1}{p}), p}_\perp(G^{n-1})\cap HB^{(0, 5-\frac{1}{p}),p}_\perp(G^{n-1}),
    \end{equation*}
    then for $\mathbb{G}':=\mathbb{G}_1\times\mathbb{G}_2'$ with
    \begin{align*}
        \mathbb{G}_2'&:= T_{0\perp}^{3o_{\frac{1}{4}}, p}(G^{n-1})= BH^{(\frac{3}{4}-\frac{1}{4p}, 0), p}_\perp(G^{n-1}) \cap HB^{(0, 3-\frac{1}{p}),p}_\perp(G^{n-1}),
    \end{align*}
     it holds $S\in \call_{\mathrm{iso}}(\mathbb{E}',\mathbb{F}\times\mathbb{G}')$.
     In particular, for all $f\in \mathbb{F}$ and $g=(g_1, g_2) \in \mathbb{G}'$, problem (\ref{eqn: ch_dynamic_diff}) admits a unique solution $u\in \mathbb{E}'$ and it holds 
    \begin{equation*}
        \| u\|_\mathbb{E} + \|\Tr_0 u\|_{\mathbb{H}_0} \lesssim \| f\|_{\mathbb{F}}+\| g\|_{\mathbb{G}'}. 
    \end{equation*}
\end{enumerate}
    
\end{theorem}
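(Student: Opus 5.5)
The plan is to mirror the proof of Theorem \ref{thm: heat_dyn}, applying Theorem \ref{thm: main_result} to the admissible factorisation $P_{CH}=P^-P^+$ from Proposition \ref{prop: admissible_parabolic}. Here $\mu=4o_{\frac14}$, $\mu_\pm=2o_{\frac14}$, $N=4$, $N_\pm=2$ and $\nu=0$, so $M=0$. Writing $P^+(\tau,\xi',\xi_n)=\sum_{j=0}^2 c_j^+(\tau,\xi')(\ic\xi_n)^j$, the complemented boundary matrix $\mathfrak{BC}$ is the $4\times4$ matrix whose first two rows come from $\opB$. On the symbol level these rows are $(0,-|\xi'|^2,0,1)$ and $(\ic\tau+|\xi'|^2,1,0,0)$, up to the sign convention $\Tr_j(\ic\xi_n)^k=\Tr_{j+k}$. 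The last two rows are the shifted coefficient rows $(c_0^+,c_1^+,c_2^+,0)$ and $(0,c_0^+,c_1^+,c_2^+)$.

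The first step is to compute $\det\mathfrak{BC}$ explicitly in terms of $c_0^+,c_1^+,c_2^+$, i.e. in terms of the two roots $\rho_1^+,\rho_2^+$ of $P_{CH}(\tau,\xi',\cdot)$ in $\C_-$ (or $\C_+$, depending on convention). These roots are $\rho=\omega\,(\ic\tau+|\xi'|^4)^{1/4}$-type expressions; more precisely they are the roots of $\xi_n^4+2|\xi'|^2\xi_n^2+|\xi'|^4+\ic\tau$. The second step is to prove, in a lemma analogous to Lemma \ref{lemma : heat_dyn_ellipticity}, that $\det\mathfrak{BC}$ is strongly $\mathcal N$-elliptic with order function $\delta$. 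I expect $\delta=2o_{\frac12}+\,$something; the bookkeeping below forces its value.

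The upper estimates for $\det\mathfrak{BC}$ follow from Proposition \ref{prop: arithmetics_order_functions}, since $c_j^+$ has strong upper order function $(2-j)o_{\frac14}$: it is a $4$-homogeneous symbol of degree $2-j$, by Example \ref{js004}. The lower bound is the main obstacle. I would first use quasihomogeneity to reduce it to the compact set $\{|\tau|^{1/2}+|\xi'|^2=1\}$ for the top-order part. There one must exclude zeros of $\det\mathfrak{BC}$, which is a Lopatinski\u{\i}-type check for dynamic boundary conditions. I would then handle the non-homogeneous lower-order terms, such as the $\Tr_1$ entry, by comparing with the weight $\langle\tau\rangle+\langle\xi'\rangle^2\cdot(\dots)$ on strips $|\tau|\ge\lambda$. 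The zeros are excluded by a real-part sign argument as in Lemma \ref{lemma : heat_dyn_ellipticity}, using $\Re\sqrt{\cdot}>0$ for the relevant roots.

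Next comes the choice of order functions for part \ref{thm:CHi}. For the columns I take $t_{j,\mathfrak k}=\mu^{(j-1+\mathfrak k)}=(5-j-\mathfrak k)o_{\frac14}$. For the $\opC$ rows I take $s_{3,\mathfrak k}=(2-\mathfrak k)o_{\frac14}$ and $s_{4,\mathfrak k}=(1-\mathfrak k)o_{\frac14}$. For the boundary rows I take $s_{1,\mathfrak k}=\mu^{(3+\mathfrak k)}=(1-\mathfrak k)o_{\frac14}$, which matches $\mathbb G_1=T^{o_{\frac14}}_{0\perp}$ via Theorem \ref{thm: trace_space_interpolation}. I also take $s_{2,\mathfrak k}=(4-\mathfrak k)o_{\frac14}-2o_{\frac12}$, read off from $\mathbb G_2$, so that the entry $\ic\tau+|\xi'|^2$ in column $1$ has upper order $t_{1,\mathfrak k}-s_{2,\mathfrak k}=2o_{\frac12}$. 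The remaining routine step is to check entrywise that $t_{j,\mathfrak k}-s_{i,\mathfrak k}$ is a strong upper order function: for instance $-|\xi'|^2$ in row $1$, column $2$ needs $3o_{\frac14}-o_{\frac14}=2o_{\frac14}\ge 2o_0$. The common sum is then $\delta=\sum_i(t_{i,\mathfrak k}-s_{i,\mathfrak k})=2o_{\frac12}$ for both $\mathfrak k$, which fixes the target of the ellipticity lemma. Part (2) of Theorem \ref{thm: main_result} together with Lemma \ref{js013} then gives part \ref{thm:CHi}.

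For part \ref{thm:CHii} I repeat the second half of the proof of Theorem \ref{thm: heat_dyn}. I replace $t_{1,\mathfrak k}$ by $3o_{\frac14}+2o_{\frac12}$ and $2o_{\frac12}+2o_{\frac14}$, matching $\mathbb H_0$, and $s_{2,\mathfrak k}$ by $(3-\mathfrak k)o_{\frac14}$, matching $\mathbb G_2'$. Both changes shift the sums by the same amount, so $\delta$ is unchanged. I then interpolate to get $\op[\mathfrak{BC}]\in\call_{\mathrm{iso}}(\mathbb H,\widetilde{\mathbb G})$ via Theorem \ref{thm: whole_space_systems}, where $\mathbb H=\mathbb H_0\times\prod_{i\ge1}T_{i\perp}^{\mu}$. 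The bound on $\Tr_0u$ follows from $\op[\mathfrak{BC}]\Tr^{(4)}u=(g,\Tr^{(2)}\op[P^-]_+^{-1}f)$ together with $\mathbb G'\hookrightarrow\mathbb G$ and part \ref{thm:CHi}.
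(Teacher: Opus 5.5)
\textbf{Gap: the ellipticity lemma for $\det\mathfrak{BC}$.} Your architecture matches the paper's: the same $\mathfrak{BC}$ and the same order functions $t_{j,\mathfrak k}$, $s_{i,\mathfrak k}$; Theorem \ref{thm: main_result}(2) for part (i); and the same modification of $t_{1,\mathfrak k}$, $s_{2,\mathfrak k}$ plus Theorem \ref{thm: whole_space_systems} for part (ii). However, the one non-bookkeeping ingredient, strong $\mathcal N$-ellipticity of $\det\mathfrak{BC}$, is aimed at the wrong order function and is not actually proved.

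\textbf{Wrong target.} With your own choices, $\sum_j t_{j,\mathfrak k}=(10-4\mathfrak k)o_{\frac14}$ and $\sum_i s_{i,\mathfrak k}=(8-4\mathfrak k)o_{\frac14}-2o_{\frac12}$. Hence $\delta=2o_{\frac14}+2o_{\frac12}$, with significant vertices $(4,0),(2,1),(0,\frac32)$, exactly as in the paper, and not $2o_{\frac12}$. The lemma you would set out to prove is false. Indeed, $\det\mathfrak{BC}=(\ic\tau+|\xi'|^2)\bigl(|\xi'|^2-(c_1^+)^2-c_0^+\bigr)+c_1^+c_0^+$, and at $\xi'=0$ this reduces to $c_0^+(\ic\tau+c_1^+)$. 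That has modulus $\sim|\tau|^{\frac32}$, so even the upper estimate against $\langle\tau\rangle+\langle\xi'\rangle^2$ fails.

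\textbf{The lower-bound strategy does not work here.} $\det\mathfrak{BC}$ is not quasihomogeneous: it multiplies the $2$-homogeneous heat factor by a $4$-homogeneous factor. Moreover, as $\tau\to0$ one has $c_0^+\to-|\xi'|^2$ and $c_1^+\to-2|\xi'|$, so
\[
\det\mathfrak{BC}\to 2|\xi'|^3\bigl(1-|\xi'|\bigr).
\]
This vanishes at $|\xi'|=1$, which is a point of your compact set $\{|\tau|^{\frac12}+|\xi'|^2=1\}$. So no compactness argument on a set reaching $\tau=0$ can work; only strip-wise bounds on $|\tau|\ge\lambda$, with constants degenerating in $\lambda$, are available. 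The same limit shows that $\Re\det\mathfrak{BC}$ changes sign across $|\xi'|=1$. Likewise $\Im\det\mathfrak{BC}$ is positive at $(\tau,\xi')=(1,0)$ but negative for large $\tau$ with $\xi'=0$. Therefore a real-part sign argument as in Lemma \ref{lemma : heat_dyn_ellipticity} cannot exclude zeros.

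\textbf{What the paper does instead.} It obtains the lower bound for large $|\tau|+|\xi'|$ in three steps:
\begin{enumerate}
\item It uses the two-sided estimate $\bigl||\xi'|^2-(c_1^+)^2-c_0^+\bigr|\simeq|\tau|^{\frac12}+|\xi'|^2$ from \cite[Lemma 3.5]{denk2008parabolic}.
\item It multiplies this by the heat lower bound.
\item It absorbs $c_1^+c_0^+$, whose upper order $3o_{\frac14}$ lies inside the polygon of $\delta$.
\end{enumerate}
On the remaining compact part of each strip it proves $\det\mathfrak{BC}(\tau,\xi')\neq0$ for $\tau\neq0$. For this it uses the identities $(c_0^+)^2=\ic\tau+|\xi'|^4$ and $(c_1^+)^2=2(|\xi'|^2-c_0^+)$ to factor
\[
\det\mathfrak{BC}=\tfrac12c_1^+\bigl(2c_0^+-(\ic\tau+|\xi'|^2)c_1^+\bigr).
\]
It then excludes $2c_0^+=(\ic\tau+|\xi'|^2)c_1^+$ by an explicit algebraic contradiction argument. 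Both parts of the theorem rest on this lemma, so as written neither part is established.
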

\begin{proof}
     Since $P_{CH}$ is $4$-homogeneous of degree $4$ and $P_{CH}(\tau, \xi)\neq 0$ for all $(\tau, \xi)\in \mathbb{R}\times \mathbb{R}^n$ with $\tau\neq 0$, it is strongly $\mathcal{N}$-elliptic with order function $\mu_{CH}:= 4o_{\frac{1}{4}}$. Let us write  $P_{CH}= P^-P^+$, where $P^\pm$ strongly $\mathcal{N}$-elliptic with order function $\mu_\pm := 2o_{\frac{1}{4}}$. Writing $P^+(\tau, \xi', \xi_n)= -(\ic \xi_n)^2+ c_1^+(\tau, \xi')(\ic \xi_n)+ c_0^+(\tau, \xi')$, the complementary boundary matrix associated with (\ref{eqn: ch_dynamic_diff}) is given by 
     \begin{equation*}
         \mathfrak{BC}(\tau, \xi'):= \begin{pmatrix}
             0 & -\vert \xi'\vert^2 &0 & 1\\
             \ic \tau +\vert \xi'\vert^2 & 1 & 0 &0\\
          c_0^+(\tau, \xi') &  c_1^+(\tau, \xi')    & -1 &0 \\
          0& c_0^+(\tau, \xi') &  c_1^+(\tau, \xi')    & -1.
         \end{pmatrix}
     \end{equation*}
     and we obtain $\det(\mathfrak{BC})(\tau, \xi')= (\ic \tau +\vert \xi'\vert^2)(\vert \xi'\vert^2-{c_1^+}(\tau, \xi')^2-c_0^+(\tau, \xi'))+ c_1^+(\tau, \xi')c_0^+(\tau, \xi')$. By Lemma \ref{lemma: ellipticity_CH} below, $\det(\mathfrak{BC})$ is strongly $\mathcal{N}$-elliptic with order function $2o_{\frac{1}{2}}+2o_{\frac{1}{4}}$. 
    \begin{enumerate}
        \item The order functions  associated with the columns of the complementary boundary matrix will be those of the trace space $T^{4o_{\frac{1}{4}, p}}_\perp(G^{n-1})$
    \begin{equation*}
        \begin{array}{ll}
       t_{1,0}:= \mu_{CH}^{(0)}= 4o_{\frac{1}{4}} & t_{1,1} : = \mu_{CH}^{(1)}= 3o_{\frac{1}{4}}\\
         t_{2,0}:= \mu_{CH}^{(1)}= 3o_{\frac{1}{4}} & t_{2, 1} : = \mu_{CH}^{(2)}= 2o_{\frac{1}{4}}\\
          t_{3, 0}:= \mu_{CH}^{(2)}= 2o_{\frac{1}{4}} & t_{3,1} : = \mu_{CH}^{(3)}= o_{\frac{1}{4}}\\
           t_{4,0}:= \mu_{CH}^{(3)}= o_{\frac{1}{4}} & t_{4,1} : = \mu_{CH}^{(4)}=0.
    \end{array}
    \end{equation*}
   For the rows of the complementary boundary matrix, we set 
     \begin{equation*}
         \begin{array}{ll}
       s_{1,0}:=  o_{\frac{1}{4}} & s_{1,1} : =  0\\
         s_{2,0}:=  4o_{\frac{1}{4}}-2o_{\frac{1}{2}} & s_{2,1} : = 3o_{\frac{1}{4}}-2o_{\frac{1}{2}}\\
          s_{3,0}:= \mu_+^{(0)}= 2o_{\frac{1}{4}} & s_{3, 1} : = \mu_+^{(1)}= o_{\frac{1}{4}}\\
           s_{4,0}:= \mu_+^{(1)}= o_{\frac{1}{4}} & s_{4, 1} : = \mu_+^{(2)}=0,
    \end{array}
     \end{equation*}
     the choice of $s_{i,\mathfrak{k}}$, $\mathfrak{k}\in\{0,1\}$, $i\in\{1,2\}$, being motivated by the equality 
     \begin{equation*}
         \mathbb{G}_i:= (H^{s_{i,1}, p}_\perp(G^{n-1}), H^{s_{i,0}, p}_\perp(G^{n-1}))_{1-\frac{1}{p}, p}, \qquad i\in\{1,2\}.
     \end{equation*}
    Then $t_{j,\mathfrak{k}}-s_{i,\mathfrak{k}}$ is a strong upper order function for $\mathfrak{BC}_{ij}$ for all $i,j\in\{1,2,3,4\}$ and $\mathfrak{k}\in\{0,1\}$, and additionally $\delta=2o_{\frac{1}{4}}+2o_{\frac{1}{2}}= \sum_{l=1}^4 t_{l, \mathfrak{k}}-s_{l, \mathfrak{k}}$.
    Thus, \ref{thm:CHi} follows by the second point of Theorem \ref{thm: main_result}.
        \item Since $\mathbb{E}'$ embeds continuously in $\mathbb{E}$, we immediately get $\partial_t+\Delta^2= \op[P_{CH}] \in \mathcal{L}(\mathbb{E}', \mathbb{F})$ from the previous point. Let us set 
    \begin{equation*}
        \mathbb{H}:= \Tr^{(4)}(\mathbb{E}')= \mathbb{H}_0 \times T_{1\perp}^{4o_{\frac{1}{4}}, p}(G^{n-1}) \times  T_{2\perp}^{4o_{\frac{1}{4}}, p}(G^{n-1}) \times  T_{3\perp}^{4o_{\frac{1}{4}}, p}(G^{n-1})
    \end{equation*}
    and 
    \begin{equation*}
        \widetilde{\mathbb{G}}:=\mathbb{G}_1 \times \mathbb{G}_2' \times T_{0\perp}^{\mu_-}(G^{n-1}) \times T_{1\perp}^{\mu_-}(G^{n-1}).
    \end{equation*}
    Setting all the order functions as above except 
     \begin{equation*}
        \begin{array}{ll}
       
         t_{1,0}:=  3o_{\frac{1}{4}}+2o_{\frac{1}{2}} & t_{1,1} : =2o_{\frac{1}{4}}+2o_{\frac{1}{2}}
       
    \end{array}
    \end{equation*}
    and
    \begin{equation*}
        \begin{array}{ll}
       
         s_{2, 0}:=  3o_{\frac{1}{4}} & s_{2,1} : = 2o_{\frac{1}{4}}, 
       
    \end{array}
    \end{equation*}
    the fact that $t_{j,\mathfrak{k}}-s_{i,\mathfrak{k}}$ is a strong upper order function for $\mathfrak{BC}_{ij}$ for all $i,j\in\{1,2,3,4\}$ and $\mathfrak{k}\in\{0,1\}$ remains true.
    Writing the spaces as interpolation spaces and arguing as in the proof of Theorem \ref{thm: main_result}, this yields $\op[\mathfrak{BC}] \in \mathcal{L}(\mathbb{H}, \widetilde{\mathbb{G}})$.
    Considering only the first two rows of $\mathfrak{BC}$, this means $\opB \Tr^{(4)}\in \mathcal{L}(\mathbb{E}', \mathbb{G}')$ and thus $S\in \mathcal{L}(\mathbb{E}', \mathbb{F}\times\mathbb{G}')$.

    \medskip

    On the other hand, we also find that the equality $\delta= 2o_{\frac{1}{4}}+2o_{\frac{1}{2}}= \sum_{l=1}^4 t_{l, \mathfrak{k}}-s_{l, \mathfrak{k}}$ still holds for  $\mathfrak{k}=0$ and $\mathfrak{k}=1$, and so $\mathfrak{BC}$ is a strong mixed-order system, i.e., $\op[\mathfrak{BC}] \in \mathcal{L}_{\mathrm{iso}}(\mathbb{H}, \widetilde{\mathbb{G}})$.
    Now fix $(f, g)\in \mathbb{F}\times \mathbb{G}'$. Since $\mathbb{G}'\hookrightarrow \mathbb{G}$, the equation (\ref{eqn: ch_dynamic_diff}) admits a unique solution $u\in \mathbb{E}$ by \ref{thm:CHi} with 
    \begin{equation*}
        \|u\|_{\mathbb{E}}\lesssim \|f\|_{\mathbb{F}}+\|g\|_{\mathbb{G}}\lesssim \|f\|_{\mathbb{F}}+\|g\|_{\mathbb{G}'}.
    \end{equation*}
    Furthermore, using the explicit formula for $S^{-1}$ given in Theorem \ref{thm: main_result}, we get that the solution $u$ must satisfy 
    \begin{equation*}
      \op[\mathcal{BC}] \Tr^{(4)}u= (g_1, g_2, \Tr_0(\op[P^-]_+^{-1} f), \Tr_1(\op[P^-]^{-1}_+f))\in \widetilde{\mathbb{G}}.
    \end{equation*}
    Therefore, $\op[\mathfrak{BC}] \in \mathcal{L}_{\mathrm{iso}}(\mathbb{H}, \widetilde{\mathbb{G}})$ yields
    \begin{equation*}
        \|\Tr_0 u \|_{\mathbb{H}_0} \leq \|\Tr^{(4)}u\|_\mathbb{H}\lesssim\|f\|_{\mathbb{F}}+ \|g\|_{\mathbb{G}'}, 
    \end{equation*}
    where we additionally used the boundedness of the trace operators $\Tr_0\in \mathcal{L}(\overline{H}^{\mu_+, p}_\perp(G_+), \mathbb{G}_3)$, $\Tr_1\in \mathcal{L}(\overline{H}^{\mu_+, p}_\perp(G_+), \mathbb{G}_4)$ and the bulk operator $\op[P^-]_+ \in \mathcal{L}_{\mathrm{iso}}(\overline{H}^{\mu_+, p}_\perp(G_+), \mathbb{F})$, which holds due to Proposition \ref{prop: boundedness_lower_support} since $\op[P^-]$ and $\op[P^-]^{-1}$ preserve lower support.
    This completes the proof.
    \end{enumerate}
	\end{proof}
\begin{lemma}\label{lemma: ellipticity_CH}
    The symbol 
    \begin{equation*}
        d(\tau, \xi'):=  (\ic \tau +\vert \xi'\vert^2)(|\xi'|^2-{c_1^+}^2-c_0^+)+ c_1^+c_0^+
    \end{equation*}
    is strongly $\mathcal{N}$-elliptic with order function $2o_{\frac{1}{4}}+2o_{\frac{1}{2}}$.
   
\end{lemma}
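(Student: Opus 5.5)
Since the symbol $d$ is built from $\ic\tau+|\xi'|^2$, $|\xi'|^2$, $c_0^+$ and $c_1^+$, the strong upper order estimates follow from Proposition \ref{prop: arithmetics_order_functions} once the coefficients of $P^+$ are understood. Write the roots of $z\mapsto P_{CH}(\tau,\xi',z)$ in $\C_-$ (those defining $P^+$) explicitly. Since $\xi_n^4+2|\xi'|^2\xi_n^2+|\xi'|^4+\ic\tau=0$, we have $\xi_n^2=-|\xi'|^2\pm\sqrt{-\ic\tau}$, hence the roots are $\pm\sqrt{-|\xi'|^2\pm\omega}$ with $\omega:=\sqrt{-\ic\tau}$. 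Choose $\rho_1,\rho_2$ as the two roots with the appropriate sign of imaginary part, as was done for the CHG determinant in Lemma \ref{lemma: roots}. Then
\begin{align*}
c_1^+&=\ic(\rho_1+\rho_2), & c_0^+&=\rho_1\rho_2, & \rho_1\rho_2&=\sqrt{|\xi'|^4-\ic\tau}\cdot(\text{a sign}), & \rho_1^2+\rho_2^2&=-2|\xi'|^2.
\end{align*}
Each $\rho_j$ is strongly $\mathcal N$-elliptic with order function $o_{\frac14}$; this follows from quasi-homogeneity (Proposition \ref{prop: admissible_parabolic}) together with the fact that the roots never vanish for $\tau\neq0$. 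Consequently $c_1^+$ has strong upper order function $o_{\frac14}$, and $c_0^+$ is strongly $\mathcal N$-elliptic with order function $2o_{\frac14}$.

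Next I would simplify $d$ algebraically. We have $(c_1^+)^2=-(\rho_1+\rho_2)^2=2|\xi'|^2-2\rho_1\rho_2$, so
\[
|\xi'|^2-(c_1^+)^2-c_0^+=-|\xi'|^2+\rho_1\rho_2 .
\]
Setting $q:=\rho_1\rho_2$, with $q^2=|\xi'|^4-\ic\tau$ (up to sign), this gives
\[
d=(\ic\tau+|\xi'|^2)(q-|\xi'|^2)+\ic(\rho_1+\rho_2)q .
\]
The upper estimate is now immediate: $\ic\tau+|\xi'|^2$ has order function $2o_{\frac12}$, while $q-|\xi'|^2$ and $(\rho_1+\rho_2)q$ have order functions $2o_{\frac14}$ and $3o_{\frac14}\le 2o_{\frac14}+2o_{\frac12}$ respectively. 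All of these are handled by Proposition \ref{prop: arithmetics_order_functions}, using the fact that $o_{\frac14}\le o_{\frac12}$ on the relevant cone.

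The main obstacle is the lower bound $|d|\gtrsim_\lambda w$ with $w=(\langle\tau\rangle^{\frac12}+\langle\xi'\rangle)^2(\langle\tau\rangle^{\frac14}+\langle\xi'\rangle)^2$. By homogeneity ($\tau\mapsto\eta^4\tau$, $\xi'\mapsto\eta\xi'$), the first term is quasi-homogeneous only after splitting orders. I would therefore argue by regimes.

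In the regime $|\xi'|^4\gtrsim|\tau|$, the quantity $q-|\xi'|^2=-\ic\tau/(q+|\xi'|^2)$ is of size $|\tau|/|\xi'|^2$. The dominant term is then $(\ic\tau+|\xi'|^2)(q-|\xi'|^2)$, of size $\sim |\tau|$ times $(1+|\tau|/|\xi'|^2)$. This has to be compared with $(\rho_1+\rho_2)q$, of size $|\xi'|^3$, which is the larger one. So in this regime I expect $|d|\sim|\xi'|^3+|\tau|$ (and $|\tau|^{\ldots}$ terms), and I would check that this matches $w$ up to lower-order factors. That check should actually use $w$ restricted to the significant vertices, namely $(4,0)$, $(2,\frac12)$ and $(0,1)$.

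In the regime $|\tau|\gg|\xi'|^4$, the term $(\ic\tau)q$ of size $|\tau|^{\frac32}$ dominates, matching the vertex $(0,1)$ and $(0,\frac12)$ contributions.

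In the intermediate region, a sign/argument argument is needed. I would show that $\Re$ or $\Im$ of $\overline{(\ldots)}d$ cannot cancel, using the argument information on the roots analogous to $|\arg\rho_2^\pm|\in(\frac\pi4,\frac{3\pi}4)$ in Lemma \ref{lemma: roots}. If this fails, I would instead use compactness: reduce to the quasi-homogeneous sphere $\{|\tau|^{\frac14}+|\xi'|=1\}$ after factoring out each homogeneous component, and show that $d\neq0$ there via the injectivity/Lopatinski\u{\i}–Shapiro-type argument. Concretely, $d=0$ would yield a nontrivial decaying solution of the homogeneous half-space problem, and an energy identity shows none exists. This nonvanishing, together with asymptotics at the two ends of the sphere, then gives the bound on every strip $|\tau|\ge\lambda$.
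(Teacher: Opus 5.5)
Your upper estimates and your reduction $|\xi'|^2-(c_1^+)^2-c_0^+=c_0^+-|\xi'|^2$ are correct and agree with the paper. The lower bound, however, has a genuine gap, and it starts with the branch of $q=c_0^+=\rho_1\rho_2$.

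Since $\rho_{1,2}^2=-|\xi'|^2\pm\sqrt{-\ic\tau}$, one has $q^2=|\xi'|^4+\ic\tau$, not $|\xi'|^4-\ic\tau$. Moreover, $\rho_1,\rho_2$ lie in the same half-plane and tend to the same point $\pm\ic|\xi'|$ as $\tau\to0$. This fixes the sign: $q=-\sqrt{|\xi'|^4+\ic\tau}$ with the principal root, so $\Re q<0$. Hence $|q-|\xi'|^2|\simeq|\tau|^{1/2}+|\xi'|^2$, and it is never of size $|\tau|/|\xi'|^2$.

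Your conclusion $|d|\sim|\xi'|^3+|\tau|$ in the regime $|\xi'|^4\gtrsim|\tau|$ would actually refute the lemma, because the weight contains $\langle\xi'\rangle^4$. (The significant vertices of $\caln(2o_{\frac14}+2o_{\frac12})$ are $(4,0),(2,1),(0,\frac32)$, not the ones you list.)

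With the correct branch, $|(\ic\tau+|\xi'|^2)(c_0^+-|\xi'|^2)|\gtrsim_\lambda w$ on $|\tau|\ge\lambda$, while $c_1^+c_0^+=O(\langle\tau\rangle^{3/4}+\langle\xi'\rangle^3)$. So the first term dominates for $|\tau|+|\xi'|\ge R_\lambda$, and no regime splitting is needed. A reduction to a single quasi-homogeneous sphere is not available anyway, since $d$ mixes the scalings $\tau\sim|\xi'|^2$ and $\tau\sim|\xi'|^4$. Everything therefore reduces to showing $d\neq0$ on the compact set $\{|\tau|\ge\lambda,\ |\tau|+|\xi'|\le R_\lambda\}$.

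That nonvanishing is the real content of the lemma, and your proposal does not supply it. The energy argument you suggest does not close as stated. The dynamic condition contains $+\partial_n$, which is the inward normal derivative for $\R^n_+$, so the boundary terms come with the wrong sign. Take a decaying solution $v$ of $\ic\tau v+(|\xi'|^2-\partial_n^2)^2v=0$, set $\mu:=(|\xi'|^2-\partial_n^2)v$, and impose $\mu'(0)=0$ and $v'(0)=-(\ic\tau+|\xi'|^2)v(0)$. Testing with $\bar\mu$ gives
\begin{equation*}
\bigl(|\xi'|^2\|\mu\|^2+\|\mu'\|^2-\tau^2|v(0)|^2\bigr)+\ic\tau\bigl(|\xi'|^2\|v\|^2+\|v'\|^2-|\xi'|^2|v(0)|^2\bigr)=0,
\end{equation*}
which does not by itself force $v=0$.

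The missing idea in the paper is algebraic. From $(c_1^+)^2=2(|\xi'|^2-c_0^+)$ one gets the factorization $d=\frac12c_1^+\bigl(2c_0^+-(\ic\tau+|\xi'|^2)c_1^+\bigr)$, where $\Re c_1^+<0$ so $c_1^+\neq0$. For $\tau>0$ (the case $\tau<0$ follows by conjugation), write $c_0^+=-x-\ic y$ and $c_1^+=-\alpha-\ic\beta$ with $x,y,\alpha,\beta>0$. Comparing real and imaginary parts in $(\ic\tau+|\xi'|^2)c_1^+=2c_0^+$ and using $\beta>0$ forces $0<|\xi'|^2<x<\frac12$. The imaginary part of the squared identity then yields a polynomial relation in $x$ and $|\xi'|^2$ whose left side is strictly negative in that range, a contradiction.
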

\begin{proof}
	For the sake of the proof, let us write $c_0:=c_0^+$ and $c_1:=c_1^+$.
	For all  $(\tau, \xi')\in \mathbb{R}\times \mathbb{R}^n$ with $\tau \neq 0$, let us denote by $\rho_1(\tau, \xi'), \rho_2(\tau, \xi')$ the roots of $z\mapsto P_{CH}(\tau, \xi', z)$ with $\Im(\rho_i(\tau, \xi'))>0$.
	These roots are explicitly given by $\eps_1\sqrt{-|\xi'|^2+\eps_2\sqrt{-\ic\tau}}$ with $\eps_1,\eps_2\in\set{-1,1}$ depending on the sign of $\tau$.
	They are $4$-parabolic of degree $1$, and strongly $\mathcal{N}$-elliptic with order function $o_{\frac{1}{4}}$ (cf.\@ Example \ref{js004}).
    
	Since $c_0=\rho_1\rho_2$ and $c_1= \ic(\rho_1 + \rho_2)$, an application of Proposition \ref{prop: arithmetics_order_functions} yields that $(|\xi'|^2-c_1^2-c_0)$ has strong upper order function $2o_{\frac{1}{4}}$.
	Since the symbol $\ic \tau + \vert \xi'\vert^2$ is $2$-parabolic of degree 2, it admits a strong upper order function $2o_{\frac{1}{2}}$.
	Thus the symbol $(\ic \tau +\vert \xi'\vert^2)(|\xi'|^2-c_1^2-c_0)$  has strong upper order function $2o_{\frac{1}{4} }+ 2o_{\frac{1}{2}}$.
	A similar argument also shows that $c_1c_0$ admits a strong upper order function $3o_{\frac{1}{4}}$.
	Since the Newton polygon of $3o_{\frac{1}{4}}$ is included in the one of $2o_{\frac{1}{4} }+ 2o_{\frac{1}{2}}$, we conclude that $c_1c_0$ also admits $2o_{\frac{1}{4} }+ 2o_{\frac{1}{2}}$ as strong upper order function and so does $d$.
    
    \medskip
    
    For the lower bounds, \cite[Lemma 3.5]{denk2008parabolic} shows that $||\xi'|^2-c_1^2-c_0|\simeq |\tau|^\frac12+|\xi'|^2$.
    In particular, for $\lambda>0$ we have $||\xi'|^2-c_1^2-c_0|\gtrsim_\lambda \langle\tau\rangle^\frac12+\langle\xi'\rangle^2$ for all $(\tau,\xi')$ with $|\tau|\ge \lambda$.
    Since the heat symbol $\ic\tau+|\xi'|^2$ is $\caln$-elliptic for the order function $2o_{\frac12}$, and $c_1c_0$ has upper order function $3o_\frac14$, we obtain $c_\lambda',c_\lambda''>0$ such that
    \begin{align*}
		|d(\tau,\xi')|&\ge c_\lambda'\bigl(\langle\tau\rangle+\langle\xi'\rangle^2\bigr)\bigl(\langle\tau\rangle^\frac12+\langle\xi'\rangle^2\bigr)-c_\lambda''(\langle\tau\rangle^\frac34+\langle\xi'\rangle^3)\\
        &\ge c_\lambda'\bigl(\langle\tau\rangle^\frac32+\langle\tau\rangle\langle\xi'\rangle^2+\langle\xi'\rangle^4\bigr)-c_\lambda''(\langle\tau\rangle^\frac34+\langle\xi'\rangle^3).
	\end{align*}
	Since
	\begin{align*}
	\langle\tau\rangle^{3/4}+\langle\xi'\rangle^3
	\ll
	\langle\tau\rangle^{3/2}
	+\langle\tau\rangle\langle\xi'\rangle^2
	+\langle\xi'\rangle^4
	\end{align*}
	as $|\tau|+|\xi'|\to\infty$, we may choose $R_\lambda>0$ so large that there is $c_\lambda>0$ with
	\begin{align*}
	c_\lambda' \bigl(\langle\tau\rangle^\frac32 + \langle\tau\rangle\langle\xi'\rangle^2 +\langle\xi'\rangle^4\bigr)-c_\lambda''(\langle\tau\rangle^\frac34+\langle\xi'\rangle^3)  \ge c_\lambda(\langle\tau\rangle^\frac32 +\langle\tau\rangle\langle\xi'\rangle^2+\langle\xi'\rangle^4) = c_\lambda w_{2o_\frac14+2o_\frac12}(\tau,\xi')
    	\end{align*}
    	whenever $|\tau|+|\xi'|\ge R_\lambda$.
    	Here, in the last step we have used that the Newton polygon $\caln(2o_\frac14+2o_\frac12)$ has the significant vertices $\set{(4,0),(2,1),(0,\frac32)}$.
    	It therefore remains to establish the lower estimate $|d|\ge c_\lambda w_{2o_\frac14+2o_\frac12}$ on the compact set $K_{\lambda}:=\set{(\tau,\xi')\in\R\times\R^{n-1}\mid |\tau|\ge \lambda, |\tau|+|\xi'|\le R_\lambda}$.
	By continuity, it suffices to show that $d(\tau,\xi')\ne 0$ whenever $\tau\ne 0$.
    
    \medskip

    To show this, let first $\tau>0$.
    The case $\tau<0$ then follows by complex conjugation: indeed, the roots in the upper half-plane satisfy $\rho_j(-\tau,\xi')=-\overline{\rho_j(\tau,\xi')}$, up to their numbering, and hence $c_0(-\tau,\xi')=\overline{c_0(\tau,\xi')}$ and $c_1(-\tau,\xi')=\overline{c_1(\tau,\xi')}$.
    Consequently, $d(-\tau,\xi')=\overline{d(\tau,\xi')}$.

    \medskip
    
    We observe that $\rho_1^2$ and $\rho_2^2$ are the two roots of $z\mapsto (z+|\xi'|^2)^2+\ic \tau=z^2+2|\xi'|^2z+|\xi'|^4+\ic\tau$, and thus $\rho_1^2+\rho_2^2=-2|\xi'|^2$ and $c_0^2=\rho_1^2\rho_2^2=|\xi'|^4+\ic \tau$.
    In particular, $-c_1^2=\rho_1^2+\rho_2^2+2\rho_1\rho_2=2(c_0-|\xi'|^2)$, and we may rewrite
    \begin{align*}
    	d(\tau,\xi')=\frac12 c_1(2c_0-(\ic\tau+|\xi'|^2)c_1).
    \end{align*}
    Since $c_1=\ic(\rho_1+\rho_2)$ has negative real part, it is nonzero.
    It remains to show that $(\ic\tau+|\xi'|^2)c_1\ne 2 c_0$.

    Observe that $c_0=\rho_1\rho_2=-\sqrt{\ic\tau+|\xi'|^4}$ lies in the third quadrant.
    Write $c_0=:-x-\ic y$ with $x,y>0$.
    By $c_0^2=\ic\tau+|\xi'|^4$ we have $x^2-y^2=|\xi'|^4$ and $2xy=\tau$.
    We record in particular that $x>|\xi'|^2$.
    Since $c_1^2=2(|\xi'|^2-c_0)=2(|\xi'|^2+x+\ic y)$ lies in the first quadrant and $\Re c_1<0$, also $\Im c_1<0$.
    We write $c_1=-\alpha-\ic\beta$ with $\alpha,\beta>0$.
    
    To derive a contradiction, assume that $(\ic\tau+|\xi'|^2)c_1= 2 c_0$.
    Comparing real and imaginary parts gives
    \begin{align*}
    |\xi'|^2\alpha-\tau\beta=2x, \qquad \tau\alpha+|\xi'|^2\beta=2y.
    \end{align*}
    Solving this system for $\beta$, we find $\beta = \frac{2(|\xi'|^2y-\tau x)}{|\xi'|^4+\tau^2}$.
    Since $\beta>0$, it follows from $2xy=\tau$ that $|\xi'|^2y>\tau x=2x^2y$ and hence $|\xi'|^2>2x^2$.
    By our earlier observation $x>|\xi'|^2$, this shows $0<|\xi'|^2<x<\frac12$.
    
    We now square the assumed identity $(\ic\tau+|\xi'|^2)c_1= 2 c_0$. Using $c_1^2=2(|\xi'|^2-c_0)$ and $c_0^2=\ic\tau+|\xi'|^4$, we obtain $(\ic\tau+|\xi'|^2)^2(|\xi'|^2-c_0)= 2 (\ic\tau + |\xi'|^4)$.
    Taking imaginary parts gives $(|\xi'|^4-\tau^2)y+2|\xi'|^2\tau(|\xi'|^2+x)=2\tau$.
    Using $x^2-y^2=|\xi'|^4$ and $2xy=\tau$, we obtain
    \begin{align}\label{lemma: ellipticity_CH_e1}
    |\xi'|^4-4x^4+4|\xi'|^4x^2+4|\xi'|^4x+4|\xi'|^2x^2-4x=0.
    \end{align}
    However, by $0<|\xi'|^2<x<\frac12$,
    \begin{align*}
    |\xi'|^4-4x^4+4|\xi'|^4x^2+4|\xi'|^4x+4|\xi'|^2x^2-4x &< x^2+8x^3-4x =x(8x^2+x-4)<0.
    \end{align*}
    This contradicts \eqref{lemma: ellipticity_CH_e1}.
    Therefore  $d(\tau,\xi')\ne 0$ for $\tau\ne 0$.
    This completes the proof.
\end{proof}
\paragraph{\bf Data availability} No data are available.
\paragraph{\bf Conflict of interest} The authors declare that there is no conflict of interest.


\end{document}